\DeclareMathSymbol{\mlq}{\mathord}{operators}{``}
\DeclareMathSymbol{\mrq}{\mathord}{operators}{`'}
\newcommand\HUGE{\@setfontsize\Huge{30}{40}}
\newcommand*{\justifyheading}{\raggedleft}
\titleformat{\chapter}[display]
  {\normalfont\huge\bfseries\justifyheading}{\chaptertitlename\ \thechapter}
  {20pt}{\HUGE}
\providecommand{\boldsymbol}[1]{\mbox{\boldmath $#1$}}
\newtheorem{theo}{Theorem}[section]
\newtheorem{defi}[theo]{Definition}
\newtheorem{coro}[theo]{Corollary}
\newtheorem{lemm}[theo]{Lemma}
\newtheorem{conj}[theo]{Conjecture}
\newtheorem*{theom}{Theorem}
\newtheorem*{conje}{Conjecture}
\newtheorem*{corol}{Corollary}
\makeindex\usepackage{babel}
\newcommand\underrel[2]{\mathrel{\mathop{#2}\limits_{#1}}}
 \def\makenomenclature{%
\newwrite\@nomenclaturefile
\immediate\openout\@nomenclaturefile=\jobname\@outputfileextension
\def\@nomenclature{%
 \@bsphack
\begingroup
\@sanitize
 \@ifnextchar[%
{\@@@nomenclature}{\@@@nomenclature[\nomprefix]}}%
\typeout{Writing nomenclature file \jobname\@outputfileextension}%
 \let\makenomenclature\@empty}
\begin{document}

\title{Unramified Euler sums and Hoffman $\star$ basis}
\author{Claire \textsc{Glanois}}

\maketitle

\abstract{When looking at how periods of  $\pi_{1}^{\mathfrak{m}}(\mathbb{P}^{1}\diagdown \lbrace 0, 1, \infty \rbrace )$, i.e. multiple zeta values, embeds into periods of $\pi_{1}^{\mathfrak{m}}(\mathbb{P}^{1}\diagdown \lbrace 0, \pm 1, \infty \rbrace )$, i.e. Euler sums, an explicit criteria via the coaction $\Delta$ acting on their motivic versions\footnote{Following Brown's point of view.} comes out. In this paper, adopting this Galois descent approach, we present a new basis for the space $\mathcal{H}^{1}$ of motivic multiple zeta values via motivic Euler sums. Up to an analytic conjecture\footnote{Similar to Zagier's one for Hoffman basis case done by F. Brown.},  we also prove that the motivic Hoffman star basis $\zeta^{\star, \mathfrak{m}} (2^{a_{1}},3,\cdots,3, 2^{a_{p}}, 3, 2^{b})$ is a basis of $\mathcal{H}^{1}$. Under a general motivic identity that we conjecture\footnote{The motivic version of a Linebarger Zhao's identity, $\cite{LZ}$.}, these bases are identical. Other examples of unramified ES with alternating patterns of even and odds are also provided.}

\tableofcontents

\section{Introduction}

\textbf{Euler sums} which we shall denote by \textbf{ES} are defined by:
\begin{equation}\label{eq:defes} \text{     }  \zeta\left(n_{1}, \ldots , n_{p}\right)\mathrel{\mathop:}= \sum_{0<k_{1}<k_{2} \cdots <k_{p}} \frac{\epsilon_{1}^{k_{1}} \cdots \epsilon_{p}^{k_{p}}}{k_{1}^{n_{1}} \cdots k_{p}^{n_{p}}} \text{, } \quad n_{i}\in\mathbb{Z}^{\ast}, \text{ } \epsilon_{i}\mathrel{\mathop:}= \textrm{sign}(n_{i}) \in \lbrace\pm 1\rbrace \text{, } n_{p}\neq 1.
\end{equation}

The \textit{weight}, often denoted $w$ below, is defined as $w\mathrel{\mathop:}= \sum \mid n_{i}\mid $, the \textit{depth} is the length $p$, whereas the \textit{height}, denoted $h$, is the number of $\mid n_{i}\mid$ greater than $1$. The weight is conjecturally a grading, whereas the depth is only a filtration. The special case for which all $n_{i}$ are positive corresponds to the well-known \textit{\textbf{multiple zeta values}}, denoted MZV. Let's introduce also $\boldsymbol{\mathcal{Z}^{2}}$, resp. $\boldsymbol{\mathcal{Z}^{1}}$ the $\mathbb{Q}$-vector space spanned by these Euler sums, resp. multiple zeta values.\\
These Euler sums are particularly interesting examples of \textit{periods} in the sense of Kontsevich-Zagier \footnote{Via their integral representation $\ref{eq:reprinteg}$, cf. $\cite{KZ}$}, and are flourishing in the literature, for their savory multiple connections with mixed Tate motives, quantum field theory (with Feynman amplitudes), modular forms, etc. From the zoo of relations satisfied by these Euler sums, let remember the regularized double shuffle \footnote{For a good introduction to MZV, cf. \cite{Ca}, \cite{Wa}. Conjecturally, it generates all relations between MZV.}, which turns $\mathcal{Z}^{1},\mathcal{Z}^{2}$ into algebras, and the so-called octagon relation\footnote{The hexagon relation for MZV (cf. $\cite{Fu}$) is turned into an \textit{octagon} relation (cf. $\ref{fig:octagon}$) for ES.}, which is used below.\\

This article mainly focuses on the \textit{\textbf{motivic}} versions of those numbers: i.e. motivic Euler sums, resp. motivic multiple zeta values, denoted $\boldsymbol{\zeta^{\mathfrak{m}}}(\cdot)$ and shortened \textbf{MES}, resp. \textbf{MMZV}. They are \textit{motivic periods} of the category of Mixed Tate Motives $\mathcal{MT}\left( \mathbb{Z}\left[ \frac{1}{2}\right] \right) $, resp. $\mathcal{MT}\left( \mathbb{Z} \right)$, generated by the motivic fundamental group of $\mathbb{P}^{1}\diagdown\lbrace 0, \pm 1, \infty\rbrace$, resp. $\mathbb{P}^{1}\diagdown\lbrace 0,  1, \infty\rbrace$. They span the $\mathbb{Q}$-vector spaces $\mathcal{H}^{2}$, resp. $\mathcal{H}^{1}$. Moreover,  $\mathcal{H}^{2}$ is an Hopf algebra, whose coaction is explicitly given by a combinatorial formula ($\ref{eq:coaction}$). This coaction is the dual of an action of a so-called \textit{motivic} Galois group $\boldsymbol{\mathcal{G}}$ on these specific motivic periods. \\
Furthermore, there is a surjective homomorphism, called the \textit{period map}, which is conjectured to be an isomorphism (special case of Grothendieck's period conjecture):
\begin{equation}\label{eq:per} \textbf{per} : w:\quad \mathcal{H}^{N} \rightarrow \mathcal{Z}^{N} \text{ ,  } \quad \zeta^{\mathfrak{m}} (\cdot) \mapsto \zeta ( \cdot ).
\end{equation}
Working on the motivic side, besides being conjecturally identical to the complex numbers side, turns out to be somehow simpler, since the motivic theory provides this Hopf Algebra structure. Notably, each identity between MES implies an identity for ES, by application of this period map; for instance, a motivic basis for MMZV, as the one obtained in section $4$ is hence a generating family (conjecturally basis) for MZV.\\
\\
Before delving into the main results of this paper, let introduce the following variants of ES (Definition $3.1$):
\begin{description}
\item[Euler $\star$ sums] are the analogue multiple sums than ES $(\ref{eq:defes})$ with large inequalities and verifies:\footnote{These have already been studied in many papers: $\cite{BBB}, \cite{IKOO}, \cite{KST}, \cite{LZ}, \cite{OZ}, \cite{Zh3}$.}
\begin{equation} \label{eq:esstar}\zeta ^{\star}(n_{1}, \ldots, n_{p})= \sum_{\circ=\mlq + \mrq \text{ or } ,} \zeta (n_{1}\circ \cdots \circ n_{p}).
\end{equation} 
\item[Euler $\sharp$ sums] are similarly linear combinations of MZV, with $2$-power coefficients:
\begin{equation} \label{eq:essharp} \zeta^{\sharp}(n_{1}, \ldots, n_{p})= \sum_{\circ=\mlq + \mrq \text{ or } ,} 2^{p-n_{+}} \zeta(n_{1}\circ \cdots \circ n_{p}), \quad \text{   with } n_{+} \text{ the number of  } +.
\end{equation}
\end{description} 
\texttt{Notations:}
\begin{itemize}
\item[$\cdot$]  This $\mlq + \mrq$ operation in $\mathbb{Z}$, is a summation of absolute values, while signs are multiplied.
\item[$\cdot$] A negative $n$ in a (motivic) Euler sum is also denoted below by an \textit{overline} $\overline{m}$, where $m\mathrel{\mathop:}= -n$ positive. For instance $\zeta^{\bullet}(\overline{3}, 1, \overline{5})=\zeta^{\bullet}(-3, 1, -5)$.

\end{itemize}

\paragraph{Main Results:}
\begin{itemize}
\item[$(i)$]
\begin{theom} Motivic Euler $\sharp$ sums with only positive $odd$ and negative $even$ integers as arguments are unramified: i.e. $\mathbb{Q}$ linear combinations of motivic multiple zeta values. 
\end{theom}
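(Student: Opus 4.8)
The plan is to use Brown's Galois-descent criterion: working inside the Hopf comodule $\mathcal{H}^{2}$, an element is \emph{unramified}, i.e. lies in $\mathcal{H}^{1}$, precisely when it is fixed by the part of the motivic Galois group $\mathcal{G}$ which distinguishes $\mathcal{MT}(\mathbb{Z}[\tfrac{1}{2}])$ from $\mathcal{MT}(\mathbb{Z})$. Since the graded Lie coalgebra of $\mathcal{MT}(\mathbb{Z}[\tfrac{1}{2}])$ has the same generators as that of $\mathcal{MT}(\mathbb{Z})$ in each odd degree $\geq 3$, together with exactly one extra generator in degree $1$ (the class of $\log 2$, equivalently $\zeta^{\mathfrak{L}}(\overline{1})$), this criterion reads: a motivic Euler sum $\xi$ of weight $w$ lies in $\mathcal{H}^{1}$ if and only if $D_{1}\xi=0$ and $D_{2r+1}\xi\in\mathcal{L}_{2r+1}\otimes\mathcal{H}^{1}$ for every $r\geq 1$, where the $D_{r}$ are the infinitesimal coactions read off from the combinatorial formula $(\ref{eq:coaction})$. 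Here the vanishing of the even-degree Lie coalgebra is what leaves only $D_{1}$ and the $D_{2r+1}$ to be controlled.

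I would run the argument by induction on the weight. First, using the explicit coaction, I would compute $D_{2r+1}$ on the $\sharp$ sums and check that the normalisation $2^{p-n_{+}}$ is exactly what forces the right-hand tensor factors to be again motivic Euler $\sharp$ sums, so that the family of positive-odd/negative-even $\sharp$ sums is stable (possibly after admissible reductions) under the right-hand side of $D_{2r+1}$. Granting this stability, the induction hypothesis yields $D_{2r+1}\xi\in\mathcal{L}_{2r+1}\otimes\mathcal{H}^{1}$ for all $r\geq 1$ automatically, and the whole theorem collapses to the single statement $D_{1}\xi=0$.

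The heart of the proof, and the step I expect to be the main obstacle, is therefore the vanishing of the degree-$1$ part $D_{1}\xi$. Translating the $\sharp$ sum into iterated integrals with letters in $\{0,\pm 1\}$, the coefficient of $\log^{\mathfrak{L}}2$ produced by $D_{1}$ is a sum of local terms $I^{\mathfrak{L}}(a_{p};a_{p+1};a_{p+2})$, each nonzero only when two consecutive sign-letters form the pair $\{+1,-1\}$. This is where the arithmetic hypothesis enters decisively: because every negative argument is even and every positive argument is odd, a sign change $\epsilon_{i}=-1$ always sits at an argument of absolute value $\geq 2$, hence is always buffered by at least one intervening $0$; opposite sign-letters are never adjacent in the interior, so all interior $\log 2$-contributions vanish and only boundary terms survive.

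The remaining difficulty is that inserting $\mlq+\mrq$ in the $\sharp$-expansion merges adjacent arguments and breaks this clean parity pattern: merging a positive-odd with a negative-even argument, for instance, produces a negative-odd argument of absolute value $1$, which can create precisely the forbidden $\{+1,-1\}$ adjacency. I would therefore show that the surviving boundary contributions, summed over the whole $\sharp$-expansion with the weights $2^{p-n_{+}}$, cancel; concretely I would organise this as a telescoping identity on the $2$-power coefficients, which is exactly the combinatorial role for which the $\sharp$ normalisation was designed. Establishing that cancellation cleanly — uniformly in the position of the merged blocks — is the crux of the whole argument.
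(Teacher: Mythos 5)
Your overall skeleton agrees with the paper's: the Galois-descent criterion (Corollary \ref{criterehonoraire}), induction on the weight, and stability of the family under the derivations $D_{2r+1}$. But you have inverted where the real work lies, and the step you wave through is precisely the one that fails without new input. Stability under $D_{2r+1}$ is \emph{not} a formal consequence of the $2^{p-n_{+}}$ normalisation. Computing the coaction directly on the word in $\lbrace 0,\pm\sharp\rbrace$ (Lemma \ref{lemmt}), individual cuts produce right-hand factors whose integral word contains subsequences $\epsilon\, 0^{2a+1}\,\epsilon$ or $\epsilon\, 0^{2a}\,(-\epsilon)$ --- i.e.\ positive even or negative odd arguments --- and also terms $T_{\epsilon,-\epsilon}$ whose right factor carries an argument $\overline{1}$; all of these lie outside the family, hence outside the reach of the induction hypothesis. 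The entire content of the paper's proof is that these \emph{unstable} cuts cancel pairwise in $\mathcal{L}$: this requires the antipode relations and, crucially, the hybrid relation (Theorem \ref{hybrid}) extracted from the linearized octagon relation, packaged as the rules \textsc{Shift}, \textsc{Cut}, \textsc{Minus}, \textsc{Sign} of Corollary \ref{esharprel}, together with the parity count $w\equiv p-s \pmod{2}$ special to this family (which is what makes \textsc{Sign} applicable and kills the $\overline{1}$-terms). Your proposal contains no mechanism for any of this; the parenthetical ``possibly after admissible reductions'' is exactly the gap, and the stability statement genuinely does not follow from the coaction formula and the normalisation alone.

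By contrast, the step you single out as the crux, $D_{1}=0$, is immediate and requires no telescoping cancellation. Your worry rests on a miscalculation: the merging operation adds absolute values and multiplies signs, so merging a positive odd with a negative even argument produces a negative odd argument of absolute value at least $3$, never $\overline{1}$; and no argument of the family itself is $\overline{1}$. More structurally, the expansion of the $\sharp$-letters into $\lbrace 0,\pm 1\rbrace$ replaces each letter in place, so two opposite-sign letters are adjacent in some term of the expansion only if they are already adjacent in the $\sharp$-word --- which never happens here, since opposite signs are separated by an odd (hence nonzero) number of zeros, and same signs by an even number. Every local term of $D_{1}$ therefore vanishes individually, with no cancellation across the expansion; this is the paper's one-line observation, and the effort you allocate to it should be redirected to the cancellation of unstable cuts above.
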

By $\cite{Br2}$, it means that they are Frobenius invariant geometric motivic periods of $\mathcal{MT}(\mathbb{Z})$. This family is even a generating family of MMZV from which we extract a basis:
\begin{theom}
A graded basis of $\mathcal{H}^{1}$, the space of motivic multiple zeta values is:
$$\lbrace\zeta^{\sharp,\mathfrak{m}} \left( 2a_{0}+1,2a_{1}+3,\cdots, 2 a_{p-1}+3, \overline{2a_{p}+2}\right) \text{ , } a_{i}\geq 0 \rbrace .$$
\end{theom}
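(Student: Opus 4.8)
The plan is to prove that the proposed family forms a basis of $\mathcal{H}^{1}$ by a dimension-counting argument combined with a proof of linear independence in the associated graded for the depth/level filtration, using the coaction as the main tool. First I would recall that $\dim_{\mathbb{Q}} \mathcal{H}^{1}_{w}$ equals the Zagier numbers $d_{w}$, where $d_{w}=d_{w-2}+d_{w-3}$, so it suffices to exhibit a spanning family of the correct cardinality in each weight $w$ and show it is linearly independent. The indexing by tuples $(2a_{0}+1, 2a_{1}+3, \ldots, 2a_{p-1}+3, \overline{2a_{p}+2})$ with $a_{i}\geq 0$ is designed precisely so that the generating-function count of admissible words of weight $w$ matches $d_{w}$; I would verify this enumeration first, matching each such tuple to a binary word built from blocks of the Hoffman-type shape and confirming the cardinality equals $d_{w}$ via the recursion.

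Next, I would establish linear independence using the coaction $\Delta$ (formula $\ref{eq:coaction}$) together with the fact, guaranteed by the first main theorem, that each $\zeta^{\sharp,\mathfrak{m}}$ with positive odd and negative even arguments is unramified and hence lies in $\mathcal{H}^{1}$. The strategy is to analyze the infinitesimal coaction operators $D_{2r+1}$ (the pieces of $\Delta$ landing in $\mathcal{L}_{2r+1}\otimes\mathcal{H}$) acting on these generators. I would show that, filtered appropriately (by depth, or by the number of blocks $p$), the "level-lowering" derivations send a generator of the family to a combination involving strictly smaller generators plus a computable leading term, so that the transition matrix expressing $D_{\bullet}$ on this family is, in the graded sense, triangular with invertible diagonal. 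This reduces linear independence of the full family to the invertibility of an explicit arithmetic matrix of the leading coefficients, which I expect to be controlled by the analytic conjecture flagged in the abstract (the analogue of Zagier's 2-3-2 evaluation proved by Brown in the Hoffman case).

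The main obstacle will be precisely this last point: computing the leading term of the coaction on the $\sharp$-generators and proving the resulting coefficient matrix is nonsingular. Unlike the pure Hoffman $\zeta^{\mathfrak{m}}(2^{a},3,2^{b})$ situation, here the alternating even/odd $\sharp$-combination introduces $2$-power weightings and sign contributions from the negative-even entries, so the explicit evaluation of the relevant period-normalized coefficients is the delicate analytic input; I expect to isolate this into a single stated conjecture (the analogue of Zagier's) and prove everything else unconditionally modulo it. Concretely, the hard computation is to show that the projection of $D_{2r+1}\,\zeta^{\sharp,\mathfrak{m}}(\ldots)$ onto the appropriate $\zeta^{\mathfrak{m}}(2r+1)$-component, read through the period map, yields a nonzero rational multiple of the analogous coefficient in the Hoffman case, whence nonvanishing of the determinant by the conjectured nondegeneracy. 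Once triangularity and the nonvanishing leading determinant are in hand, linear independence follows, and together with the cardinality match and the spanning property from the first theorem, this yields that the family is a basis of $\mathcal{H}^{1}$.
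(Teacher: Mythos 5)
Your overall architecture does match the paper's: reduce to linear independence by the cardinality count against $d_{n}=d_{n-2}+d_{n-3}$, use the fact that the family lies in $\mathcal{H}^{1}$ (Theorem 4.1), show the family is stable under the derivations $D_{2r+1}$, and prove injectivity of the depth-graded piece of the coaction by exhibiting a deconcatenation-type leading term. The filtration you propose (depth / number of blocks) is exactly the one the paper uses: for this family the motivic depth is the depth minus one.

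However, there is a genuine error in where you locate the difficulty: you make the theorem conditional on the analytic conjecture, whereas the paper's Theorem 4.2 is \emph{unconditional}, and the conjecture ($\ref{conjcoeff}$) is needed only for the Hoffman $\star$ basis of Section 5. The reason is precisely the stability you mention but do not exploit: because the $\sharp$ family with $\lbrace \overline{\text{even}}, \text{odd}\rbrace$ arguments is stable under the coaction (Lemma A.4, itself resting on the hybrid and antipodal relations of Section 3 which kill the unstable cuts), every left-hand factor surviving in the depth-graded derivation $D^{-1}_{2r+1,p}$ has depth one, and is therefore \emph{explicitly} evaluated by the known depth-one relations $\zeta^{\mathfrak{l}}(2r+1)=\frac{-2^{2r}}{2^{2r}-1}\zeta^{\mathfrak{l}}(\overline{2r+1})$ and $\zeta^{\mathfrak{l}}_{2r+1-a}(a)=(-1)^{a+1}\binom{2r}{a-1}\zeta^{\mathfrak{l}}(2r+1)$. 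The resulting coefficients are pure products of powers of $2$ and binomial coefficients, as in $(\ref{eq:dgrderiv})$; no analytic identity is required to pin them down. (Contrast with Section 5, where the coaction of a Hoffman $\star$ element produces $\zeta^{\star\star}$ terms \emph{outside} the family, whose expression as rational multiples of $\zeta^{\mathfrak{l}}(\overline{2r+1})$ is exactly what Lemma $\autoref{lemmcoeff}$ and the conjecture supply.) Moreover, your mechanism for concluding---nonvanishing of the leading coefficients implying nonvanishing of the determinant---is not a valid reduction: entries being nonzero does not make a matrix invertible. The paper's argument is a $2$-adic dominance statement: after multiplying the row of $D_{2r+1}$ by $2^{-2r}$, every non-deconcatenation term has $2$-adic valuation at least $1$, while the deconcatenation coefficients $2^{-2r+1}\binom{2r}{2a_{p}+1}$ are the unique $2$-adic units; hence, in the lexicographic ordering, the matrix is triangular with $1$'s on the diagonal modulo $2$, so its determinant is odd and in particular nonzero. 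Repairing your proposal along these lines removes the conjectural input entirely and recovers the unconditional statement.
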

The proof is based on the good behaviour of this family with respect to the coaction and the depth filtration; the suitable filtration corresponding to the \textit{motivic depth} for this family is the usual depth minus $1$. By application of the period map, this leads to:
\begin{corol}
Each multiple zeta value of depth $<d$ is a $\mathbb{Q}$ linear combination of elements $\zeta^{\sharp} \left( 2a_{0}+1,2a_{1}+3,\cdots, 2 a_{p-1}+3, \overline{2a_{p}+2}\right) $, of the same weight with $a_{i}\geq 0$, $p\leq d$.
\end{corol}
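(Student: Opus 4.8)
The plan is to obtain this statement as a formal descent, through the period map, of the basis Theorem just stated, the only genuine input being the compatibility of the family with the depth filtration. First I would fix a multiple zeta value $\zeta(n_{1},\dots,n_{q})$ of weight $w$ and depth $q<d$ and pass to its motivic lift $\zeta^{\mathfrak{m}}(n_{1},\dots,n_{q})\in\mathcal{H}^{1}$. Recall from $(\ref{eq:per})$ that $\textbf{per}\colon\mathcal{H}^{1}\twoheadrightarrow\mathcal{Z}^{1}$ is a weight-graded surjection sending $\zeta^{\mathfrak{m}}(\cdot)$ to $\zeta(\cdot)$; hence any $\mathbb{Q}$-linear identity proved on the motivic side descends verbatim to the corresponding identity between the actual numbers, and it suffices to establish the claim for $\zeta^{\mathfrak{m}}(n_{1},\dots,n_{q})$.

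By the Theorem, $\zeta^{\mathfrak{m}}(n_{1},\dots,n_{q})$ is a unique $\mathbb{Q}$-linear combination of the basis elements $\zeta^{\sharp,\mathfrak{m}}(2a_{0}+1,\dots,\overline{2a_{p}+2})$, and, the basis being homogeneous, only those of weight $w$ can occur. The whole content therefore lies in bounding the parameter $p$ of the contributing vectors. Here I would invoke the filtration statement that already enters the proof of the Theorem: such a basis element has $p+1$ arguments, hence usual depth $p+1$, but motivic depth exactly $p$, so that for each $\ell$ the span of the basis elements with $p\leq\ell$ coincides with the step $\mathcal{F}^{\mathfrak{m}}_{\ell}\mathcal{H}^{1}$ of the motivic depth filtration.

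It then remains to compare the two filtrations. Writing $\mathcal{D}_{\bullet}$ for the usual depth filtration and $\mathcal{F}^{\mathfrak{m}}_{\bullet}$ for the motivic one, a motivic multiple zeta value of usual depth $q$ has motivic depth at most $q$, since relations among motivic periods can only lower the naive depth; thus $\mathcal{D}_{q}\subseteq\mathcal{F}^{\mathfrak{m}}_{q}$. Consequently $\zeta^{\mathfrak{m}}(n_{1},\dots,n_{q})\in\mathcal{D}_{q}\subseteq\mathcal{F}^{\mathfrak{m}}_{q}$ is a combination of basis vectors of weight $w$ with $p\leq q<d$, in particular with $p\leq d$. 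Applying $\textbf{per}$ returns the desired expression of $\zeta(n_{1},\dots,n_{q})$ as a $\mathbb{Q}$-linear combination of Euler $\sharp$ sums $\zeta^{\sharp}(2a_{0}+1,\dots,\overline{2a_{p}+2})$ of the same weight with $p\leq d$.

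I expect the only delicate point to be the second step, namely the identification of the span of the low-$p$ basis vectors with a step of $\mathcal{F}^{\mathfrak{m}}_{\bullet}$, equivalently the shift ``motivic depth $=$ usual depth $-1$'' for this family. This is, however, precisely the ingredient established while proving that the family is a basis, so no new argument is required; everything else is the formal transport of a motivic relation through the period map, and the corollary holds unconditionally on the basis Theorem.
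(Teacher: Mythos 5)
Your proposal is correct and follows essentially the same route as the paper: the corollary is deduced from the basis theorem together with the compatibility of the family $\mathcal{B}^{\sharp}$ with the depth filtration (the motivic depth of $\zeta^{\sharp,\mathfrak{m}}(2a_{0}+1,\ldots,\overline{2a_{p}+2})$ being the usual depth minus one, i.e. $p$), the bound $\text{motivic depth}\leq\text{depth}$ for the given MZV, and finally an application of the period map. The filtered spanning statement you single out as the delicate point is indeed exactly what the bijectivity of $\partial_{<n,p}$ in the proof of the theorem provides (it is the same argument as the cross-level independence in Claim 2), so no new input is needed.
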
 
\item[$(ii)$]
\begin{theom}\footnote{The analogous real family Hoffman $\star$ was already conjectured (in $\cite{IKOO}$) to be a basis of the space of MZV.}
If the analytic conjecture ($\ref{conjcoeff}$) holds, then the motivic \textit{Hoffman} $\star$ family $\lbrace \zeta^{\star,\mathfrak{m}} (\lbrace 2,3 \rbrace^{\times})\rbrace$ is a basis of $\mathcal{H}^{1}$, the space of MMZV.
\end{theom}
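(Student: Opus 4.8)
The plan is to mirror F.~Brown's proof that the non-star Hoffman family $\zeta^{\mathfrak{m}}(\lbrace 2,3\rbrace^{\times})$ is a basis \cite{Br2}, transporting his level-by-level coaction argument to the $\star$ setting and isolating the single analytic input as the conjecture $(\ref{conjcoeff})$. First I would reduce the statement to linear independence. Writing each Hoffman $\star$ word in block form $\zeta^{\star,\mathfrak{m}}(2^{a_{0}}, 3, 2^{a_{1}}, 3, \cdots, 3, 2^{a_{k}})$, these words are in bijection with compositions of the weight $w$ into parts $2$ and $3$; their number in weight $w$ is therefore the dimension $d_{w}$ satisfying $d_{w}=d_{w-2}+d_{w-3}$, which by Brown's theorem equals $\dim_{\mathbb{Q}}\mathcal{H}^{1}_{w}$. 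Since the cardinalities already match in each weight, it suffices to prove that the family is linearly independent (equivalently, that it spans).

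For independence I would filter $\mathcal{H}^{1}$ by the \emph{motivic depth}, which for this family is governed by the number $k$ of letters $3$ (the \emph{level}), and argue by induction on weight and level through the infinitesimal coaction derived from $(\ref{eq:coaction})$. Writing $D_{2r+1}\colon\mathcal{H}^{1}\to \mathcal{L}_{2r+1}\otimes\mathcal{H}^{1}$ for the weight-graded pieces of $\Delta$, with $\mathcal{L}_{2r+1}$ the one-dimensional space of indecomposables generated by $\zeta^{\mathfrak{l}}(2r+1)$, the heart of the argument is an explicit formula for the leading term of $D_{2r+1}$ applied to a Hoffman $\star$ element: excising a single block $3$ together with its surrounding $2$-powers should produce $\zeta^{\mathfrak{l}}(2r+1)$ tensored with a Hoffman $\star$ element of level $k-1$, up to an explicit scalar, all remaining contributions lying strictly lower in the depth filtration. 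Collecting these leading terms over all $r$ and over the positions of the removed $3$ yields a matrix that is block-triangular with respect to the level filtration, whose non-degeneracy reduces to the non-vanishing of the diagonal scalars.

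These diagonal scalars are exactly the quantities controlled by $(\ref{conjcoeff})$: they are the $\star$-analogue of Zagier's evaluation of $\zeta(2,\ldots,2,3,2,\ldots,2)$, namely the coefficients expressing $\zeta^{\star,\mathfrak{m}}(2^{a},3,2^{b})$ through $\zeta^{\mathfrak{m}}(2r+1)$ and powers of $\zeta^{\mathfrak{m}}(2)$. Granting $(\ref{conjcoeff})$ that these coefficients do not vanish, the transition matrix is invertible, the induction closes at each level, and the family is a basis of $\mathcal{H}^{1}$.

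I expect the main obstacle to be establishing the triangular formula in the inductive step, rather than the final non-vanishing, which is packaged into $(\ref{conjcoeff})$. Because $\zeta^{\star}$ expands as a sum of ordinary motivic Euler sums of many different depths, the coaction $D_{2r+1}$ \emph{a priori} entangles several levels at once; the delicate point is to show that the top-level ($k-1$) contribution factorizes cleanly as a single $\star$ element, while every cut that does not excise an entire block $3$ drops the level by at least one, so that nothing of level $\geq k$ survives off the diagonal. Controlling these lower-order terms is where the combinatorial work of the proof concentrates.
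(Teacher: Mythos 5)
Your overall skeleton --- reduce to linear independence by matching cardinalities, filter by the number of $3$'s (the level), and induct on weight and level through the level-graded pieces of the coaction --- is exactly the strategy of the paper (\S 5), so the architecture is right. But the crucial inductive step, as you formulate it, would fail. You claim that excising one $3$ produces a single leading term per position, so that the matrix of the level-graded derivations is (block-)triangular and its invertibility reduces to the non-vanishing of diagonal scalars. Neither half of this is what happens. The level-graded derivation $gr^{L}_{l}D_{2r+1}$ applied to $\zeta^{\star,\mathfrak{m}}(\boldsymbol{2}^{a_{0}},3,\cdots,3,\boldsymbol{2}^{a_{l}})$ contains, besides the deconcatenation cut, many other \emph{same-level} terms (interior cuts straddling a single $3$, in both directions --- see $(\ref{eq:gdr3})$ and the explicit formula for $\partial^{L}_{r,l}$), so the matrix $M_{n,l}$ of $\partial^{L}_{<n,l}$ is full, not triangular over $\mathbb{Q}$, and non-vanishing of its ``diagonal'' entries proves nothing. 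The paper's actual mechanism is $2$-adic: after rescaling each line of $D_{2r+1}$ by $2^{-v_{2}(r)-2}$, the deconcatenation coefficients $B^{a,b}C_{r}$ have \emph{strictly smaller} $2$-adic valuation than every other same-level coefficient (namely $v_{2}(C_{r})=2r+1$, $v_{2}(\widetilde{B}^{a,b})\geq 2r+1$, while $v_{2}(B^{a,b}C_{r})<2r+1$, cf. $(\ref{eq:valuations})$), so that modulo $2$ only the deconcatenation terms survive and $M_{n,l}$ becomes triangular with $1$'s on the diagonal, hence invertible. This is also why Conjecture $\ref{conjcoeff}$ must be invoked in the form of an exact value, $B^{a,b}=1-\frac{2}{C_{a+b+1}}\binom{2a+2b+2}{2b+1}$, and not as a mere non-vanishing statement: without the precise valuations your induction cannot close.

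The second gap is in what the left-hand factors actually are. The coaction of a $\zeta^{\star,\mathfrak{m}}$ does not directly produce ``the $\star$-analogue of Zagier's evaluation'' of $\zeta^{\star,\mathfrak{m}}(\boldsymbol{2}^{a},3,\boldsymbol{2}^{b})$; it produces regularized, non-convergent $\star\star$-elements such as $\zeta^{\star\star,\mathfrak{l}}_{2}(\boldsymbol{2}^{a},3,\boldsymbol{2}^{b})$ and $\zeta^{\star\star,\mathfrak{l}}_{1}(\boldsymbol{2}^{a},3,\boldsymbol{2}^{b},3)$, and it is these (not the $\star$-elements of identity $(iv)$, which is classical Ohno--Zagier) that Conjecture $\ref{conjcoeff}$ addresses via identity $(v)$ of Lemma $\ref{lemmcoeff}$. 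Proving that these $\star\star$-elements are rational multiples of $\zeta^{\mathfrak{l}}(2r+1)$ in $\mathcal{L}$, and indeed even reaching the clean shape $(\ref{eq:dr3})$--$(\ref{eq:gdr3})$ of the coaction in which every cut removes at least one $3$, requires the antipodal and hybrid relations of \S 3 (derived from the linearized octagon relation) together with an intertwined recursion on weight in which the analytic input enters. So the ``delicate point'' you correctly anticipate is not resolved by a clean factorization into a single $\star$-element per cut; it is resolved by Lemma $\ref{lemmcoeff}$ plus the $2$-adic dominance argument, both of which are missing from your proposal.
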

Denote by $\mathcal{H}^{2,3}$ the $\mathbb{Q}$-vector space spanned by the motivic Hoffman $\star$ family. The idea of the proof is similar as in the non-star case done by Francis Brown. We define an increasing filtration $\mathcal{F}^{L}_{\bullet}$ on $\mathcal{H}^{2,3}$, called the \textit{level}, such that:\footnote{It corresponds to the \textit{motivic depth}, as we will see through the proof.}
\begin{center}
$\mathcal{F}^{L}_{l}\mathcal{H}^{2,3}$ is spanned by $\zeta^{\star,\mathfrak{m}} (\boldsymbol{2}^{a_{0}},3,\cdots,3, \boldsymbol{2}^{a_{l}}) $, with less than \say{l} $3$.
\end{center}
One key feature is that the vector space $\mathcal{F}^{L}_{l}\mathcal{H}^{2,3}$ is stable under the action of $\mathcal{G}$. The linear independence is then proved thanks to a recursion on the level and on the weight, using the injectivity of a map $\partial$ where $\partial$ came out of the level and weight-graded part of the coaction $\Delta$ (cf. $\S 5.1$). The injectivity is proved via $2$-adic properties of some coefficients with Conjecture $\autoref{conjcoeff}$. One noteworthy difference is that, when computing the coaction on the motivic MZV$^{\star}$, some motivic MZV$^{\star\star}$ arise, which are a non convergent analogues of MZV$^{\star}$ and have to be renormalized. Therefore, where Brown in the non-star case needed an analytic formula proven by Don Zagier ($\cite{Za}$), we need some slightly more complicated identities (in Lemma $\autoref{lemmcoeff}$) because the elements involved, such as $\zeta^{\star \star,\mathfrak{m}} (\boldsymbol{2}^{a},3, \boldsymbol{2}^{b}) $ for instance, are not of depth $1$ but are linear combinations of products of depth $1$ motivic MZV times a power of $\pi$, as proved in Lemma $\autoref{lemmcoeff}$.\\
\item[$(iii)$] We also pave the way for a motivic version of a generalization of a Linebarger and Zhao's equality which expresses each motivic MZV$^{\star}$ as a motivic Euler $\sharp$ sums:
\begin{conje} 
For $a_{i},c_{i} \in \mathbb{N}^{\ast}$, $c_{i}\neq 2$,
$$\zeta^{\star, \mathfrak{m}} \left( \boldsymbol{2}^{a_{0}},c_{1},\cdots,c_{p}, \boldsymbol{2}^{a_{p}}\right)  =(-1)^{1+\delta_{c_{1}}}\zeta^{\sharp,  \mathfrak{m}} \left(B_{0},\boldsymbol{1}^{c_{1}-3 },\cdots,\boldsymbol{1}^{ c_{i}-3 },B_{i}, \ldots, B_{p}\right), $$
\begin{flushright}
where\footnote{With the Kronecker symbol $\delta_{c}=\delta_{c=1}=\left\lbrace \begin{array}{ll}
1 & \textrm{ if } c=1\\
0 & \textrm{ else }
\end{array} \right. $ and $\boldsymbol{1}^{\gamma}$ sequence of $\gamma$ 1 if $\gamma\in\mathbb{N}^{\ast}$, empty else.} $\left\lbrace \begin{array}{l}
B_{0}\mathrel{\mathop:}=  (-1)^{2a_{0}-\delta_{c_{1}}} (2a_{0}+1-\delta_{c_{1}})\\
B_{i}\mathrel{\mathop:}= (-1)^{2a_{i}-\delta_{c_{i}}-\delta_{c_{i+1}}} (2a_{i}+3-\delta_{c_{i}}-\delta_{c_{i+1}})\\
B_{p}\mathrel{\mathop:}=(-1)^{2a_{p}+1-\delta_{c_{p}}} ( 2 a_{p}+2-\delta_{c_{p}})
\end{array}\right.$.
\end{flushright}
\end{conje}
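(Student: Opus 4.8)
The plan is to lift the real identity of Linebarger and Zhao ($\cite{LZ}$) --- which already yields the equality of the corresponding complex numbers --- to the motivic level by the standard Galois descent through the coaction, arguing by induction on the weight $w$. Both members lie in $\mathcal{H}^{1}$: the left-hand side $\zeta^{\star,\mathfrak{m}}(\boldsymbol{2}^{a_{0}},c_{1},\cdots,c_{p},\boldsymbol{2}^{a_{p}})$ manifestly, and the right-hand side because the sign bookkeeping forces each $B_{i}$ to be either a positive odd or a negative even integer (the intervening $\boldsymbol{1}^{c_{i}-3}$ being positive odd), so that Theorem $(i)$ applies and guarantees that the motivic Euler $\sharp$ sum is unramified. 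Writing $L_{w}$ and $R_{w}$ for the two members in weight $w$, it suffices to prove $L_{w}-R_{w}=0$ in $\mathcal{H}^{1}$.

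The heart of the argument is to compare the infinitesimal coactions $D_{r}\colon\mathcal{H}^{2}_{w}\to\mathcal{L}_{r}\otimes\mathcal{H}^{2}_{w-r}$, for $2\leq r<w$, of both sides, extracted from the combinatorial formula $(\ref{eq:coaction})$. First I would compute $D_{r}R_{w}$ directly from the iterated-integral presentation of the $\sharp$ sum: each consecutive cut of length $r+1$ contributes a term whose right-hand factor is again a motivic Euler $\sharp$ sum of the same shape but of lower weight, hence governed by the induction hypothesis. Then I would compute $D_{r}L_{w}$, which is the delicate part: applying the coaction to a motivic $\zeta^{\star}$ produces motivic $\zeta^{\star\star}$ entries --- the non-convergent star analogues --- that must be renormalized through identities of the type established in Lemma $\autoref{lemmcoeff}$, rewriting blocks such as $\zeta^{\star\star,\mathfrak{m}}(\boldsymbol{2}^{a},3,\boldsymbol{2}^{b})$ and their generalizations as linear combinations of products of depth-one motivic MZV with even powers of $\pi$. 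After this renormalization and an appeal to the induction hypothesis on the residual factors, both $D_{r}L_{w}$ and $D_{r}R_{w}$ should be expressible as the same element of $\mathcal{L}_{r}\otimes\mathcal{H}^{1}_{w-r}$; note that the even-weight derivations vanish on both members, on $L_{w}$ because it is a motivic MZV and on $R_{w}$ by the unramifiedness supplied by Theorem $(i)$, so that only the odd $D_{r}$ carry information.

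Granting $D_{r}(L_{w}-R_{w})=0$ for every $r$ with $2\leq r<w$, the difference lies in the common kernel of these derivations, which by Brown's structure theorem is the one-dimensional space $\mathbb{Q}\,\zeta^{\mathfrak{m}}(w)$ (equivalently $\mathbb{Q}\,(\zeta^{\mathfrak{m}}(2))^{w/2}$ in even weight). Hence $L_{w}-R_{w}=\lambda\,\zeta^{\mathfrak{m}}(w)$ for some $\lambda\in\mathbb{Q}$. Applying the period map $(\ref{eq:per})$ and invoking the proven real Linebarger--Zhao equality forces $\lambda\,\mathbf{per}(\zeta^{\mathfrak{m}}(w))=0$; since $\zeta(w)\neq 0$, we conclude $\lambda=0$ and therefore $L_{w}=R_{w}$, the base cases (small weight and the single-block situation $p\le 1$) being checked by the same scheme or absorbed into Lemma $\autoref{lemmcoeff}$.

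The main obstacle is the exact combinatorial matching of $D_{r}L_{w}$ and $D_{r}R_{w}$: one must verify, cut by cut, that the consecutive subsequences of the $\sharp$ sum on the right reproduce --- after the $\zeta^{\star\star}$ renormalization and the induction step applied to the residual factors --- precisely the contributions arising from the star sum on the left, keeping exact track of the signs $(-1)^{\delta_{c_{i}}}$ and of the length shifts $2a_{i}+3-\delta_{c_{i}}-\delta_{c_{i+1}}$ that define the $B_{i}$. The boundary cuts, those reaching the extreme letters and thereby governing the leftmost datum $B_{0}$ and the rightmost datum $B_{p}$, are the most error-prone; and the renormalization of the $\zeta^{\star\star}$ factors is exactly the place where this argument, unlike Brown's non-star case, requires the more intricate analytic input of Lemma $\autoref{lemmcoeff}$ in place of Zagier's single evaluation.
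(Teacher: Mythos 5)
Your overall strategy --- induction on the weight, comparison of the derivations $D_{r}$, the kernel Theorem $\autoref{kerdn}$, then the period map together with the real Linebarger--Zhao identity (proved by J.~Zhao, $\cite{Zh3}$) to kill the residual rational multiple of $\zeta^{\mathfrak{m}}(w)$ --- is exactly the scheme the paper has in mind, and your observation that the right-hand side is unramified via Theorem $\autoref{ESsharphonorary}$ matches the paper's remark. (A minor slip: the even $D_{r}$ vanish identically because $\mathcal{L}_{2r}=0$; what unramifiedness actually buys is $D_{1}=0$ on both sides.) The genuine gap is at the step you describe as ``renormalization through identities of the type established in Lemma $\autoref{lemmcoeff}$.'' The left-hand tensor factors produced by $D_{r}$ on the star side are \emph{regularized} double-star terms $\zeta^{\star\star,\mathfrak{l}}_{1}(\cdots)$, $\zeta^{\star\star,\mathfrak{l}}_{2}(\cdots)$ whose arguments contain arbitrarily many blocks $c_{i}$, whereas Lemma $\autoref{lemmcoeff}$ only treats the Hoffman-type blocks $\boldsymbol{2}^{a},3,\boldsymbol{2}^{b}$ (and even there it is conditional on the analytic Conjecture $\autoref{conjcoeff}$). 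In the general case these left factors are \emph{not} combinations of depth-one zetas times powers of $\pi$, and no renormalization reduces them to something controlled by the induction hypothesis: the induction hypothesis is an identity in $\mathcal{H}$ between the unregularized $\zeta^{\star,\mathfrak{m}}$ and $\zeta^{\sharp,\mathfrak{m}}$, and it says nothing about the shifted objects.

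The exact left-factor matching your plan needs is isolated in the paper as identity $(\ref{eq:conjid})$ of Theorem $\autoref{lzgt}$: an identity in the coalgebra $\mathcal{L}$, in odd weight, of the form
$$\zeta^{\sharp,\mathfrak{l}}_{B_{0}-1}(\boldsymbol{1}^{\gamma_{1}},\cdots,\boldsymbol{1}^{\gamma_{p}},B_{p})\equiv \zeta^{\star\star,\mathfrak{l}}_{2}(\boldsymbol{2}^{a_{0}-1},c_{1},\cdots,\boldsymbol{2}^{a_{p}})-\zeta^{\star\star,\mathfrak{l}}_{1}(\boldsymbol{2}^{a_{0}},c_{1}-1,\cdots,\boldsymbol{2}^{a_{p}}).$$
This is not a lower-weight instance of the conjecture (because of the regularization subscripts), and it cannot be imported from the known analytic identity either: a relation in $\mathcal{L}$ is strictly stronger than its real analogue, and lifting it via Theorem $\autoref{kerdn}$ would require controlling \emph{its} coaction, which loops back to the same problem. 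This is precisely why the statement remains a Conjecture in the paper: only the conditional implication $(\ref{eq:conjid})\Rightarrow$ Conjecture is proven there, by the cut-by-cut bookkeeping of Appendix~E that your last paragraph gestures at. So your proposal, carried out, does not prove the statement; it reduces it to the same unproven coalgebra identity.
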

It extends the Two One formula [Ohno-Zudilin], the Three One Formula [Zagier], and Linebarger Zhao formula, and in particular, thanks to $(i)$, it implies $(ii)$, i.e. that the Hoffman$^{\star}$ family is a basis.\\
\texttt{Nota Bene}: Such a \textit{motivic relation} between MES is stronger than its analogue between ES since it contains more information; it implies many other relations because of its Galois conjugates. This explain why its is not always simple to lift an identity from ES to MES from the Theorem $\autoref{kerdn}$. If the family concerned is not stable via the coaction, such as $(iv)$ in Lemma $\autoref{lemmcoeff}$, we may need other analytic equalities before concluding.
\end{itemize}

\textsc{Remarks}
\begin{itemize}
\item The first (very naive) idea, when looking for a basis for the space of MZV, is to choose:
$$\lbrace \zeta\left( 2n_{1}+1,2n_{2}+1, \ldots, 2n_{p}+1 \right) (2 i \pi)^{2s}, n_{i}\in\mathbb{N}^{\ast}, s\in \mathbb{N} \rbrace .$$
However, considering Broadhurst-Kreimer conjecture, the depth filtration clearly does \textit{not} behave so nicely in the case of MZV \footnote{As we can see in $\cite{De}$, for motivic Euler sums, the depth filtration is dual of the descending central series of $\mathcal{U}$: in that sense, it does \textit{behave well}. }. Consequently, in order to find a basis of motivic MZV, we have to:
\begin{itemize}
\item[\texttt{Either}:] Pass by motivic Euler sums, as the Euler $\sharp$ basis ($(i)$ above). 
\item[\texttt{Or}:] Allow \textit{higher} depths, as Hoffman basis ($\cite{Br2}$), or Hoffman $\star$ basis ($(ii)$ above). 
\end{itemize}
\item Looking at how periods of $\mathcal{MT}(\mathbb{Z})$ embed into periods of $\mathcal{MT}(\mathbb{Z}[\frac{1}{2}])$, i.e. when a motivic Euler sums is \textit{unramified} (i.e. in $\mathcal{H}^{1}$) is a fragment of the Galois descent ideas, as we will explain below. We have at our disposal a recursive criteria to determine if an ES is unramified, stated in $\S 2.3.4$ via the coaction. 
\item Such results on linear independence of a family of motivic MZV are proved recursively, once we have found the \textit{appropriate level} filtration on the elements; ideally, the family considered is stable under the derivations \footnote{If the family is not \textit{a priori} \textit{stable} under the coaction, we need to incorporate in the recursion an hypothesis on the coefficients which appear when we express the right side with the elements of the family.}; the filtration, as we will see below, should correspond to the \textit{motivic depth} defined in $\S 2.3.3$, and decrease under the derivations \footnote{In the case of Hoffman $\star$ basis ($\S 5.$) it is the number of $3$, whereas in the case of Euler $\sharp$ sums basis ($\S 4.$), it is the depth minus one. The filtration by the level has to be stable under the coaction, and more precisely, the derivations $D_{r}$ decrease the level on the elements of the conjectured basis, which allows a recursion.}; if the derivations, modulo some spaces, act as a deconcatenation on these elements, linear independence follows naturally from this recursion. Nevertheless, to start this procedure, we need an analytic identity, which is left as a conjecture here. This conjecture is of an entirely different nature from the techniques developed here. We expect that it could be proved using analytic methods along the lines of $\cite{Za}, \cite{Li}$.
\item Finding a \textit{good} basis for the space of motivic multiple zeta values is a fundamental question. Hoffman basis may be unsatisfactory for various reasons, while this basis with Euler sums (linear combinations with $2$ power coefficients) may appear slightly more natural, in particular since the motivic depth is here the depth minus $1$. However, both of these two bases are not bases of the $\mathbb{Z}$ module and the primes appearing in the determinant of the \textit{passage matrix} are growing rather fast.\footnote{Don Zagier has checked this for small weights with high precision; he suggested that the primes involved in the case of this basis could have some predictable features, such as being divisor of $2^{n}-1$. The passage matrix is the inverse of the matrix expressing the considered basis in term of a $\mathbb{Z}$ basis.}
\item For these two theorems, in order to simplify the coaction, we crucially need a motivic identity in the coalgebra $\mathcal{L}$, proved in $\S 3.3$, coming from the octagon relation pictured in Figure $\ref{fig:octagon}$. More precisely, we need to consider the linearized version of the anti-invariant part by the Frobenius at infinity of this relation, in order to prove the following relation (Theorem $\autoref{hybrid}$), for $n_{i}\in\mathbb{Z}^{\ast}$:
$$\zeta^{\mathfrak{l}}_{k}\left(n_{0},\cdots, n_{p}  \right) + \zeta^{\mathfrak{l}}_{\mid n_{0}\mid +k}\left( n_{1}, \ldots, n_{p}  \right) \equiv (-1)^{w+1}\left(  \zeta^{\mathfrak{l}}_{k}\left( n_{p}, \ldots, n_{0} \right) + \zeta^{\mathfrak{l}}_{k+\mid n_{p}\mid}\left( n_{p-1}, \ldots,n_{0}\right)  \right).$$
Thanks to this hybrid relation, and the antipodal relations presented in $\S 3.2$, the coaction expression is considerably simplified in Appendix $A$.
\end{itemize}

\paragraph{\texttt{Contents}:}
After a quick overview on the motivic background in order to introduce the Hopf algebra of motivic Euler sums, we present the $\star$ and $\sharp$ versions, with some useful motivic relations (antipodal and hybrid). The fourth section focuses on some specific Euler $\sharp$ sums, starting by a broad subfamily of \textit{unramified} elements and extracting from it a new basis for $\mathcal{H}^{1}$, whereas the fifth section deals with the Hoffman star family, proving it is a basis of $\mathcal{H}^{1}$, up to an analytic conjecture ($\autoref{conjcoeff}$). The last section presents a conjectured motivic equality ($\autoref{lzg}$) which turns each motivic MZV $\star$ into a motivic Euler $\sharp$ sums of the previous honorary family; in particular, under this conjecture, the two previous bases are identical. Appendix A. gathers the coaction calculus used both in section 4 and 5, via relations proved in section 3. Appendix B simply lists the homographies of $\boldsymbol{\mathbb{P}^{1}\diagdown \lbrace 0, \mu_{N}, \infty\rbrace}$, for $N=1,2$. Some identities coming from the linearized octagon relation, not used in this paper, are presented in appendix C. Appendix D provides some examples of unramified ES up to depth 5, with alternated patterns of even and odd, while Appendix E gives the proof of Theorem $6.2$ and Appendix $F$ highlights some missing coefficients in Lemma $\autoref{lemmcoeff}$, although not needed for the proof of the Hoffman $\star$ Theorem $\autoref{Hoffstar}$.

\paragraph{\texttt{Aknowledgements}:}
 The author deeply thanks Francis Brown for a few discussions and fundamental suggestions\footnote{First by pointing out the paper of Linebarger, Zhao \cite{LZ}, linked with the Hoffman $\star$ proof and then by suggesting the octagon relation could bring the missing relations.} on this work, and Jianqiang Zhao for his careful reading of this article proofs, as included in my PhD. The author is also thankful to Don Zagier, who implements some part of it, to look the distance of these basis to a $\mathbb{Z}$ module basis, and to Herbert Gangl for pointing out the Yamamoto interpolation$\cite{Ya}$. This work was partly supported by ERC Grant 257638, and also is very grateful to the stay in the Dirk Kreimer team (Humboldt University, Berlin), where it really grew up. 

\section{Motivic Background}
This section sketches the motivic context in which this works mostly takes place: the category of Mixed Tate Motives, the motivic iterated integrals, the motivic Galois group, the motivic fundamental groupoid. For more details and enlightened surveys on some of these motivic aspects, we refer to $\cite{An}$ and $\cite{Ka}$ for motives\footnote{Motives are supposed to play the role of a universal (and algebraic) cohomology theory.}; $\cite{DM}$ for tannakian categories; $\cite{An2}$ for motivic Galois theory; $\cite{An3}$, $\cite{D1}$, $\cite{Br4}$, $\cite{Br6}$, $\cite{DG}$ for motivic periods and motivic iterated integrals; $\cite{Go2}$, $\cite{DG}$ for motivic fundamental groupoid; $\cite{Go1}$, $\cite{Br2}$ for the motivic Hopf algebra; and the recent report \cite{BJ} which gives a luxuriant overview of the motivic scene around the MZV. Notice that this paper is essentially an extract of the author PhD $\cite{Gl}$, whose Chapter $2$ provides more details on this (motivic) scenery too.\\
Let point out a few highlights for this section:
\begin{itemize}
\item[$\cdot$] The combinatorial expression of the coaction ($\ref{eq:coaction}$), dual of the motivic Galois action is the cornerstone of this work.
\item[$\cdot$] Theorem $\ref{kerdn}$ which states which elements are in the kernel of these derivations, sometimes allows to lift identities from MZV to motivic MZV, up to rational coefficients.\\
\texttt{Nota Bene}: A \textit{motivic relation} is stronger and it may require several relations between MZV in order to lift an identity to motivic MZV. An example of such a behaviour occurs with some Hoffman $\star$ elements, in Lemma $\autoref{lemmcoeff}$.
\item[$\cdot$] The Galois descent between MES and MMZV alluded in $\S 2.3.4$
\end{itemize}

\subsection{Motivic scenery}

\paragraph{Mixed Tate Motives}
For $k$ a number field (soon $\mathbb{Q}$), we have at our disposal:
\begin{center}
$\boldsymbol{\mathcal{MT}(k)}$, the tannakian \textit{category of Mixed Tate motives} over k with rational coefficients equipped with a weight filtration $W_{r}$ indexed by even integers such that:
\end{center}
\begin{itemize}
\item[$\cdot$]  Every object $M\in \mathcal{MT}(k)_{\mathbb{Q}}$ is an iterated extension of Tate motives\footnote{The \textit{Lefschetz motive} $\mathbb{L}\mathrel{\mathop:}=\mathbb{Q}(-1)= H^{1}(\mathbb{G}_{\mathfrak{m}})=H^{1}(\mathbb{P}^{1} \diagdown \lbrace 0, \infty\rbrace)$ is a pure motive with period $(2i\pi)$. Its dual is the so-called \textit{Tate motive} $\mathbb{T}\mathrel{\mathop:}=\mathbb{Q}(1)=\mathbb{L}^{\vee}$. More generally, Tate motives $\mathbb{Q}(-n)\mathrel{\mathop:}= \mathbb{Q}(-1)^{\otimes n}$ resp. $\mathbb{Q}(n)\mathrel{\mathop:}= \mathbb{Q}(1)^{\otimes n}$ have periods in $(2i\pi)^{n} \mathbb{Q}$ resp. $(\frac{1}{2i\pi})^{n} \mathbb{Q}$.} $\mathbb{Q}(n), n\in\mathbb{Z}$.\\
i.e., $\quad gr_{-2r}^{W}(M)$ is a sum of copies of $\mathbb{Q}(r)$ for $M\in \mathcal{MT}(k)$.
\item[$\cdot$] $\left\lbrace \begin{array}{ll}
\text{Ext}^{1}_{\mathcal{MT}(k)}(\mathbb{Q}(0),\mathbb{Q}(n) )\cong  K_{2n-1}(k)_{\mathbb{Q}} \otimes \mathbb{Q} &  \\
\text{Ext}^{i}_{\mathcal{MT}(k)}(\mathbb{Q}(0),\mathbb{Q}(n) )\cong 0 & \quad \text{ if } i>1 \text{ or } n\leq 0.\\
\end{array} \right. $
\end{itemize}
In particular, the weight defines a canonical \textit{fiber functor} $\omega$:
$$\begin{array}{lll}
\omega: & \mathcal{MT}(k) \rightarrow \text{Vec}_{\mathbb{Q}} &    \\
 &M \mapsto \oplus \omega_{r}(M) & \quad  \quad \text{ with  } \left\lbrace  \begin{array}{l}
  \omega_{r}(M)\mathrel{\mathop:}= \text{Hom}_{\mathcal{MT}(k)}(\mathbb{Q}(r), gr_{-2r}^{W}(M))\\
  \text{ i.e. }  \quad  gr_{-2r}^{W}(M)= \mathbb{Q}(r)\otimes \omega_{r}(M).
\end{array} \right. 
\end{array} $$
Moreover, this category is equivalent to the category of representations of the so-called \textit{motivic Galois group} $\boldsymbol{\mathcal{G}^{\mathcal{M}}}$:\footnote{This equivalence is a distinctive feature of tannakian categories, and the decomposition comes from the grading: $1 \rightarrow \mathcal{U}^{\mathcal{M}} \rightarrow \mathcal{G}^{\mathcal{M}} \leftrightarrows \mathbb{G}_{m} \rightarrow 1$ is an exact sequence.}
\begin{equation}\label{eq:catrep}
\mathcal{M} \mathrel{\mathop:}=\mathcal{MT}(k)_{\mathbb{Q}}\cong \text{Rep}_{k} \mathcal{G}^{\mathcal{M}} \cong \text{Comod } (\mathcal{O}(\mathcal{G}^{\mathcal{M}})) \quad \text{ where } \mathcal{G}^{\mathcal{M}}\mathrel{\mathop:}=\text{Aut}^{\otimes } \omega = \mathbb{G}_{m} \ltimes \mathcal{U}^{\mathcal{M}},
\end{equation}
and $\mathcal{U}^{\mathcal{M}}$ a prounipotent group scheme$_{\diagup\mathbb{Q}}$, acting trivially on the graded pieces $\omega(\mathbb{Q}(n))$.\\
Let introduce the \textit{fundamental Hopf algebra} of $\mathcal{M}$:\footnote{Note that the completion, $\mathfrak{u}$ of the pro-nilpotent graded Lie algebra of $\mathcal{U}^{\mathcal{M}}$ is free and graded with negative degrees from the $\mathbb{G}_{m}$-action 	and $\mathfrak{u}^{ab} \cong \bigoplus \text{Ext}^{1}_{\mathcal{M}} (\mathbb{Q}(0), \mathbb{Q}(n))^{\vee} \text{ in degree } n$.}
\begin{equation}
\label{eq:Amt}
\mathcal{A}^{\mathcal{M}}\mathrel{\mathop:}=\mathcal{O}(\mathcal{U}^{\mathcal{M}}) \cong T(\oplus_{n\geq 1} \text{Ext}^{1}_{\mathcal{M}_{N}} (\mathbb{Q}(0), \mathbb{Q}(n))^{\vee} ), \quad \text{ and } \mathcal{M} \cong \text{Comod}^{gr} \mathcal{A}^{\mathcal{M}}. 
\end{equation}
From now, let's restrict to $k=\mathbb{Q}$. Betti, resp. de Rham cohomology lead to the \textit{realization functors}:
$$\omega_{B}: \begin{array}{lll}
 \mathcal{MT}(\mathbb{Q}) &  \rightarrow &\text{Vec}_{\mathbb{Q}}\\
  M &\mapsto& M_{B}
\end{array}, \quad \textrm{ resp. } \quad \omega_{dR}\cong \omega:\begin{array}{lll}
\mathcal{MT}(\mathbb{Q}) &\rightarrow &\text{Vec}_{\mathbb{Q}}\\
M &\mapsto &M_{dR} 
\end{array}$$
Between all these realizations, we have comparison isomorphisms, such as $\text{comp}_{dR, B}: M_{B}\otimes_{\mathbb{Q}} \mathbb{C} \xrightarrow[\sim]{}  M_{dR} \otimes_{\mathbb{Q},B} \mathbb{C}$ or its inverse  $\text{comp}_{B,dR}$. Below, we will be looking at tensor-preserving isomorphisms such as: $\mathcal{G}_{B}\mathrel{\mathop:}=\text{Aut}^{\otimes}(\omega_{B})$ and the $(\mathcal{G}_{B}, \mathcal{G}^{\mathcal{M}})$ bitorsor $\boldsymbol{P_{B,\omega}}\mathrel{\mathop:}=\text{Isom}^{\otimes}(\omega,\omega_{B}).$\\
\\
\\
\textsc{Remarks:}
\begin{itemize}
\item[$\cdot$] A Mixed Tate motive over a $\mathbb{Q}$ is hence uniquely defined by its de Rham realization, a vector space $M_{dR}$, with an action of its motivic Galois group.
\item[$\cdot$] The tannakian category of Mixed Tate Motives over the ring of S-integers $\mathcal{O}_{S}$ of any number field $k$ (defined in \cite{DG}) has the same extensions groups than $\mathcal{MT}(k)$, apart from $\text{Ext}^{1}_{\cdot}(\mathbb{Q}(0),\mathbb{Q}(1))$. More precisely, for the categories of Mixed Tate motives $\boldsymbol{\mathcal{MT}\left( \mathbb{Z}\right)}$ and $\boldsymbol{\mathcal{MT}\left( \mathbb{Z}\left[ \frac{1}{2}\right]\right)}$ involved in this paper:
\begin{description}\label{extension}
\item[$\cdot$] $\text{Ext}^{1}_{\mathcal{MT}\left( \mathbb{Z}\right)}(\mathbb{Q}(0),\mathbb{Q}(n) )\cong  K_{2n-1}\left( \mathbb{Z}\right)_{\mathbb{Q}} \otimes \mathbb{Q} \cong  \left\lbrace  \begin{array}{ll}
 \mathbb{Z}^{\ast}\otimes_{\mathbb{Z}} \mathbb{Q}  & \text{ if } n=1 .\\
 \mathbb{Q}^{\frac{\phi(n)}{2}}  & \text{ if } n>1 \text{ odd }
\end{array} \right. .$
\item[$\cdot$] $\text{Ext}^{1}_{\mathcal{MT}\left( \mathbb{Z}\left[ \frac{1}{2}\right]\right)}(\mathbb{Q}(0),\mathbb{Q}(n) )\cong  K_{2n-1}\left( \mathbb{Z}\left[ \frac{1}{2}\right]\right)_{\mathbb{Q}} \otimes \mathbb{Q} \cong  \left\lbrace  \begin{array}{ll}
\left( \mathbb{Z}\left[ \frac{1}{2}\right]\right) ^{\ast}\otimes_{\mathbb{Z}} \mathbb{Q}  & \text{ if } n=1 .\\
 \mathbb{Q}^{\frac{\phi(n)}{2}}  & \text{ if } n>1 \text{ odd }
\end{array} \right. .$
\end{description}
\end{itemize}

\paragraph{Motivic periods} The \textit{algebra of motivic periods} of a tannakian category of mixed Tate motives $\mathcal{M}$, is defined as (cf. $\cite{D1}$, $\cite{Br6}$, $\cite{Br4}$):
$$\boldsymbol{\mathcal{P}_{\mathcal{M}}^{\mathfrak{m}}}\mathrel{\mathop:}=\mathcal{O}(\text{Isom}^{\otimes}_{\mathcal{M}}(\omega, \omega_{B}))=\mathcal{O}(P_{B,\omega}).$$
A \textbf{\textit{motivic period}} denoted as a triplet  $\boldsymbol{\left[M,v,\sigma \right]^{\mathfrak{m}}}$, element of $\mathcal{P}_{\mathcal{M}}^{\mathfrak{m}}$,  is constructed from a motive $M\in \text{ Ind } (\mathcal{M})$, and classes $v\in\omega(M)$, $\sigma\in\omega_{B}(M)^{\vee}$. It is a function $P_{B,\omega} \rightarrow \mathbb{A}^{1}$, which, on its rational points, is given by:
\begin{equation}\label{eq:mper}  \quad  P_{B,\omega} (\mathbb{Q}) \rightarrow \mathbb{Q}\text{  ,  } \quad  \alpha \mapsto \langle \alpha(v), \sigma\rangle .
\end{equation} 
Its \textit{period} is obtained by the evaluation on the complex point $\text{comp}_{B, dR}$:
\begin{equation}\label{eq:perm} 
per: \quad \begin{array}{lll}
\mathcal{P}_{\mathcal{M}}^{\mathfrak{m}} & \rightarrow & \mathbb{C} \\
\left[M,v,\sigma \right]^{\mathfrak{m}} & \mapsto &  \langle \text{comp}_{B,dR} (v\otimes 1), \sigma \rangle .
\end{array}
\end{equation}
\noindent
\texttt{Example}: The first example is the \textit{Lefschetz motivic period}: $\mathbb{L}^{\mathfrak{m}}\mathrel{\mathop:}=[H^{1}(\mathbb{G}_{m}), [\frac{dx}{x}], [\gamma_{0}]]^{\mathfrak{m}}$, period  of the Lefschetz motive $\mathbb{L}$; it can be seen as the $\mlq \textit{ motivic }  (2 i\pi)^{\mathfrak{m}}\mrq$.\\
\\
\textsc{Remarks}:
\begin{itemize}
\item[$\cdot$] This construction can be generalized for any pair of fiber functors. For instance, the \textit{de Rham periods} are $\mathcal{P}_{\mathcal{M}}^{dR}\mathrel{\mathop:}= \mathcal{O} \left( \text{Aut}^{\otimes}(\omega_{dR})\right)= \mathcal{O} \left(\mathcal{G}^{\mathcal{M}}\right)$. \textit{Unipotent variants} of these periods form the fundamental Hopf algebra defined above:
$$\mathcal{P}_{\mathcal{M}}^{\mathfrak{a}}\mathrel{\mathop:}=\mathcal{O} \left( \mathcal{U}^{\mathcal{M}}\right)= \mathcal{A}^{\mathcal{M}}, \quad \text{ with at our disposal a restriction map } \quad \mathcal{P}_{\mathcal{M}}^{\omega} \rightarrow  \mathcal{P}_{\mathcal{M}}^{\mathfrak{a}}.$$
\item[$\cdot$] Any structure carried by these fiber functors (weight grading on $\omega_{dR}$, complex conjugation on $\omega_{B}$, etc.) is transmitted to the corresponding ring of periods. In particular, $\mathcal{P}_{\mathcal{M}}^{\mathfrak{m}}$ inherits a weight grading and we can define:
\begin{center}
$\boldsymbol{\mathcal{P}_{\mathcal{M}}^{\mathfrak{m},+}} \subset \mathcal{P}_{\mathcal{M}}^{\mathfrak{m}}$, the ring of \textit{geometric periods}, is generated by periods of motives with non-negative weights:  $\left\lbrace  \left[M,v,\sigma \right]^{\mathfrak{m}}\in  \mathcal{P}_{\mathcal{M}}^{\mathfrak{m}} \mid W_{-1} M=0 \right\rbrace $.
\end{center} 
\begin{equation}\label{eq:projpiam}
\textrm{There is a morphism (details in $\cite{Br4}$, $\S 2.6$):}  \quad \quad \boldsymbol{\pi_{\mathfrak{a},\mathfrak{m}}}: \quad \mathcal{P}_{\mathcal{M}}^{\mathfrak{m},+}  \rightarrow \mathcal{P}_{\mathcal{M}}^{\mathfrak{a}}. 
\end{equation}
\item[$\cdot$] The groupoid structure (composition) on the isomorphisms of fiber functors on $\mathcal{M}$, by dualizing, leads to a coalgebroid structure on the spaces of motivic periods:\\
 $$\boldsymbol{\Delta^{\mathfrak{m}, \omega}}:\mathcal{P}_{\mathcal{M}}^{\mathfrak{m}}  \rightarrow \mathcal{P}_{\mathcal{M}}^{dR} \otimes \mathcal{P}_{\mathcal{M}}^{\mathfrak{m}}.$$
\item[$\cdot$] Bear in mind also the non-canonical isomorphisms, compatible with weight and coaction between these $\mathbb{Q}$ algebras:
\begin{equation} \label{eq:periodgeom}
 \mathcal{P}_{\mathcal{M}}^{\mathfrak{m}} \cong \mathcal{P}_{\mathcal{M}}^{\mathfrak{a}} \otimes_{\mathbb{Q}} \mathbb{Q} \left[ (\mathbb{L}^{\mathfrak{m}} )^{-1} ,\mathbb{L}^{\mathfrak{m}} \right], \quad \text{and} \quad \mathcal{P}_{\mathcal{M}}^{\mathfrak{m},+} \cong \mathcal{P}_{\mathcal{M}}^{\mathfrak{a}} \otimes_{\mathbb{Q}} \mathbb{Q} \left[ \mathbb{L}^{\mathfrak{m}} \right]. 
\end{equation}
\item[$\cdot$] The complex conjugation defines the \textit{real Frobenius} $\mathcal{F}_{\infty}: M_{B} \rightarrow M_{B}$, and induces an involution on motivic periods $\mathcal{F}_{\infty}: \mathcal{P}_{\mathcal{M}}^{\mathfrak{m}} \rightarrow\mathcal{P}_{\mathcal{M}}^{\mathfrak{m}}$, such that $\mathbb{L}^{\mathfrak{m}}$ is anti invariant. From now, we will mostly restrict to the following motivic periods:
\begin{center}
$\boldsymbol{\mathcal{P}_{\mathcal{M}, \mathbb{R} }^{\mathfrak{m},+}}$ the subset of $\mathcal{P}_{\mathcal{M}}^{\mathfrak{m},+}$ invariant under $\mathcal{F}_{\infty}$, which satisfies:
\end{center}
\begin{equation}\label{eq:periodgeomr}
\mathcal{P}_{\mathcal{M}}^{\mathfrak{m},+}\cong  \mathcal{P}_{\mathcal{M}, \mathbb{R}}^{\mathfrak{m},+} \oplus  \mathcal{P}_{\mathcal{M}, \mathbb{R}}^{\mathfrak{m},+}. \mathbb{L}^{\mathfrak{m}} \quad \text{and} \quad  \mathcal{P}_{\mathcal{M}, \mathbb{R}}^{\mathfrak{m},+}\cong  \mathcal{P}_{\mathcal{M}}^{\mathfrak{a}} \otimes_{\mathbb{Q}} \mathbb{Q}\left[ (\mathbb{L}^{\mathfrak{m}})^{2} \right] .
\end{equation}
\item[$\cdot$] The ring of motivic periods $\mathcal{P}_{\mathcal{M}}^{\mathfrak{m}} $ is a bitorsor under Tannaka groups $(\mathcal{G}^{\mathcal{M}}, \mathcal{G}_{B})$. If Grothendieck conjecture holds, via the period isomorphism, there is therefore a (left) action of the motivic Galois group $\mathcal{G}^{\mathcal{M}}$ on periods. More precisely, for each period $p$ there would exist well defined conjugates and an algebraic group over $\mathbb{Q}$, the Galois group of $p$, which transitively permutes the conjugates. It would extend the classical Galois theory for algebraic numbers to periods; cf. $\cite{An2}$.
\end{itemize}

\paragraph{Motivic fundamental groupoid}

Let $X\mathrel{\mathop:}=\mathbb{P}^{1}\diagdown \lbrace 0, \pm 1, \infty\rbrace$.\footnote{This whole set up could be applied to $\mathbb{P}^{1}\diagdown \lbrace 0, 1 \infty\rbrace$ but also to other roots of unity, as in $\cite{DG}, \cite{De}, \cite{Gl}$.} The fundamental group $\pi_{1}(X, x)$ is freely generated by $\gamma_{0}$, $\gamma_{\pm 1}$: loops around $0$, $\pm 1$. Its \textit{completed} Hopf algebra is:
$$\boldsymbol{\widehat{\pi_{1}}}\mathrel{\mathop:}= \varprojlim \mathbb{Q}[\pi_{1}(X, x)] \diagup I^{n} , \text{ with }I\mathrel{\mathop:}= \langle \gamma-1 , \gamma\in \Pi \rangle \text{ the augmentation ideal} ,$$
equipped with the completed coproduct $\Delta$ such that the elements of $\pi_{1}(X, x)$ are primitive. It is isomorphic to the Hopf algebra of non commutative formal series:
$$\widehat{\pi_{1}} \xrightarrow[\gamma_{i}\mapsto \exp(e_{i}) ]{\sim} \mathbb{Q} \langle\langle e_{0}, e_{-1},e_{1}\rangle\rangle. $$
The \textit{prounipotent completion} of  $\pi_{1}(X, x)$ is an affine group scheme $\pi_{1}^{un}(X, x)$:
\begin{equation}\label{eq:prounipcompletion}
\boldsymbol{\pi_{1}^{un}(X, x)}(R)=\lbrace x\in \widehat{\pi_{1}} \widehat{\otimes} R \mid \Delta x=x\otimes x\rbrace \cong \lbrace S\in R\langle\langle e_{0}, e_{-1}, e_{1} \rangle\rangle^{\times}\mid \Delta S=S\otimes S, \epsilon(S)=1\rbrace ,
\end{equation}
i.e. the set of non-commutative formal series with $N+1$ generators which are group-like for the completed coproduct for which $e_{i}$ are primitive. Its affine ring of regular function is the Hopf algebra for the shuffle product, and deconcatenation coproduct:
\begin{equation}
\boldsymbol{\mathcal{O}(\pi_{1}^{un}(X, x))}= \varinjlim \left(  \mathbb{Q}[\pi_{1}(X, x)] \diagup I^{n+1} \right) ^{\vee} \cong \mathbb{Q} \left\langle e^{0},  e^{-1}, e^{1} \right\rangle .
\end{equation}
This can be applied to the bitorsor $\pi_{1}(X,x,y)$, with $x,y$, rational points of $X$ and then $\mathcal{O}\left( \pi_{1}^{un}(X,x,y)\right)$ defines an Ind object in the category of Mixed Tate Motives over $k$:\footnote{Indeed, by by Beilinson theorem ($\cite{Go2}$, Theorem $4.1$): 
\begin{equation} \label{eq:opiunvarinjlim}
 \mathcal{O}(\pi_{1}^{un}(X,x,y)) \xrightarrow{\sim} \varinjlim_{n} H^{n}(X^{n}, Y^{(n)}), \text{ where }  \begin{array}{l}
Y_{i}\mathrel{\mathop:}= X^{i-1}\times \Delta \times X^{n-i-1} \\
Y_{0}\mathrel{\mathop:}= \lbrace x\rbrace \times X^{n-1},  Y_{n}\mathrel{\mathop:}=   X^{n-1} \times \lbrace y\rbrace\\
Y^{(n)}\mathrel{\mathop:}=\cup_{i} Y_{i}, \Delta \text{ the diagonal } \subset X \times X  
 \end{array}.
\end{equation}
And $Y_{I}^{(n)}\mathrel{\mathop:}=\cap Y_{i}^{(n)}$ is the complement of hyperplanes, hence of type Tate.}
\begin{equation}\label{eq:pi1unTate0}
\mathcal{O}\left( \pi_{1}^{un}(\mathbb{P}^{1}\diagdown \lbrace 0, \pm 1 ,\infty \rbrace,x,y)\right)  \in  \text{Ind } \mathcal{MT}(\mathbb{Q}).
\end{equation}
We denote it $\boldsymbol{\mathcal{O}\left( \pi_{1}^{\mathfrak{m}}(X,x,y)\right) }$, and $\mathcal{O}\left( \pi_{1}^{dR}(X,x,y)\right)$, $\mathcal{O}\left( \pi_{1}^{B}(X,x,y)\right)$ its realizations, 
resp. $\boldsymbol{\pi_{1}^{\mathfrak{m}}(X)}$ for the corresponding $\mathcal{MT}(\mathbb{Q})$-groupoid scheme, called the \textit{motivic fundamental groupoid}, with the composition of path.\\

This construction can be applied for \textit{tangential base points }\footnote{I.e. here non-zero tangent vectors in a point of $\lbrace 0, \pm 1, \infty\rbrace $ are seen as \say{base points at infinite}, cf. \cite{DG}.} The motivic torsor of path associated to tangential basepoints corresponding to the straight path between $x,y$ in $\lbrace 0, \pm 1\rbrace$ depends only on $x,y$, and we denote it $\boldsymbol{_{x}\Pi^{\mathfrak{m}}_{y}}=\pi_{1}^{\mathfrak{m}} (X, \overrightarrow{xy})$, and $_{x}\Pi_{y}=_{x}\Pi_{y}^{dR}$, resp. $_{x}\Pi_{y}^{B}$ its de Rham resp. Betti realizations. Furthermore, for tangential base points, the motivic torsor of path corresponding has good reduction outside N (cf. $\cite{DG}, \S 4.11$):
\begin{equation}\label{eq:pi1unTate}
\mathcal{O}\left( _{x}\Pi^{\mathfrak{m}}_{y} \right)  \in  \text{ Ind }  \mathcal{MT}\left( \mathbb{Z} \left[ \frac{1}{2} \right] \right) .
\end{equation}
\textsc{Remark}: There is an action of the dihedral group $Di_{2}= \mathbb{Z}\diagup 2 \mathbb{Z} \ltimes \mu_{2}$ on X (from $x \mapsto x^{-1}$, $x\mapsto \pm x$) and therefore on $\pi^{\mathfrak{m}}_{1}(X,x,y)$ by permuting the tangential base points. Because of the groupoid structure of $_{x}\Pi^{\mathfrak{m}}_{y}$, the $\mu_{2}$ equivariances and inertias (all respected by the Galois action, cf. $\cite{DG}$, $\S 5$), we can restrict our attention to: $_{0}\Pi^{\mathfrak{m}}_{-1}$ or equivalently to $_{0}\Pi^{\mathfrak{m}}_{1}.$\\
Furthermore, it is proven that (by Brown \cite{Br2}, resp. Deligne\cite{De}), for $\xi_{N}$ primitive:\footnote{I.e. generated by $\mathcal{O}(\pi_{1}^{\mathfrak{m}} (X,\overrightarrow{01}))$ by sub-objects, quotients, $\otimes$, $\oplus$, duals. }
\begin{framed}
For $N=1$, resp. $N=2$, the fundamental motivic groupoid  $\Pi^{\mathfrak{m}} (\mathbb{P}^{1} \diagdown \lbrace 0, 1, \infty \rbrace, \overrightarrow{0 \xi_{N}})$ generates the Tannakian subcategory of $\mathcal{MT}(\mathbb{Z}\left[ \frac{1}{N}\right] )$.
 \end{framed} 

Let us introduce $dch_{0,1}^{B}=_{0}1^{B}_{1}$, the image of the straight path (\textit{droit chemin}) in $_{0}\Pi_{1}^{B}(\mathbb{Q})$, and $\Phi_{KZ}$, called the \textit{Drinfeld associator}, the corresponding element in $_{0}\Pi^{dR}_{1}(\mathbb{C})$ via the Betti-De Rham comparison isomorphism:
\begin{equation}\label{eq:kz}
 \boldsymbol{\Phi_{KZ}}\mathrel{\mathop:}= dch_{0,1}^{dR}\mathrel{\mathop:}= \text{comp}_{dR,B}(_{0}1^{B}_{1})= \sum_{W\in \lbrace e_{0}, e_{-1}, e_{1} \rbrace^{\times}} \zeta_{\shuffle}(w) w \quad \in \mathbb{C} \langle\langle e_{0},e_{-1},e_{1} \rangle\rangle ,
\end{equation}
where the correspondence between MZV and words in $e_{0},e_{\pm 1}$ is similar to the iterated integral representation $(\ref{eq:reprinteg})$ below.

\subsection{Motivic Iterated Integrals}

\texttt{Reminder}: A fundamental feature of Euler sums, is their \textit{integral representation}:\footnote{The use of bold in the iterated integral indicates a repetition of the corresponding number, as $0$ here.}
\begin{equation}\label{eq:reprinteg}
\zeta \left(n_{1}, \ldots , n_{p} \right) = (-1)^{p} I(0; \eta_{1}, \boldsymbol{0}^{n_{1}-1}, \eta_{2},\boldsymbol{0}^{n_{2}-1}, \ldots, \eta_{p}, \boldsymbol{0}^{n_{p}-1} ;1),  
\end{equation}
\begin{equation}
\texttt{ where } \left\lbrace \begin{array}{l}
I(0; a_{1}, \ldots , a_{n} ;1)\mathrel{\mathop:}=  \int_{0< t_{1} < \cdots < t_{n} < 1} \frac{dt_{1} \cdots dt_{n}}{ (t_{1}-a_{1}) \cdots (t_{n}-a_{n}) }=\int_{0}^{1}  \omega_{a_{1}} \ldots \omega_{a_{n}}\\
\\
n_{i}\in\mathbb{N}^{\ast}, \quad \epsilon_{i}\mathrel{\mathop:}=\textrm{sign}(n_{i}), \quad \eta_{i}\mathrel{\mathop:}=  \epsilon_{i}\ldots \epsilon_{p}, \quad a_{i}\in \lbrace 0, \pm 1 \rbrace \texttt{ and } \omega_{a} \mathrel{\mathop:}=\frac{dt}{t-a}
\end{array}\right..
\end{equation}

\begin{defi}\label{peri}
Motivic Euler sums are the motivic periods associated to $M=\mathcal{O}(\pi^{\mathfrak{m}}_{1}(\mathbb{P}^{1}-\lbrace 0,\pm 1,\infty\rbrace ,\overrightarrow{xy} ))$ in the category $\mathcal{M}=\mathcal{MT}(\mathbb{Z}\left[ \frac{1}{2}\right] )$:
$$\boldsymbol{\zeta_{k}^{\mathfrak{m}} \left( n_{1}, \ldots , n_{p} \right) }\mathrel{\mathop:}= (-1)^{p} I^{\mathfrak{m}} \left(0;\boldsymbol{0}^{k}, \epsilon_{1}\cdots \epsilon_{p}, \boldsymbol{0}^{n_{1}-1} ,\cdots, \epsilon_{i}\cdots \epsilon_{p}, \boldsymbol{0}^{n_{i}-1} ,\cdots, \epsilon_{p}, \boldsymbol{0}^{n_{p}-1} ;1 \right), \textrm{ for } n_{i}\in\mathbb{Z}^{\ast} ,$$
where such a motivic iterated integral is the triplet\footnote{Referring to $\ref{eq:mper}$. Restricting to positive $n_{i}$, i.e. $a_{i}\in \lbrace0, 1 \rbrace$ leads to the motivic multiple zeta values.}:
$$\boldsymbol{I^{\mathfrak{m}}(x;w;y)}\mathrel{\mathop:}= \left[\mathcal{O} \left( \Pi^{\mathfrak{m}} \left( X_{N}, \overrightarrow{xy}\right) \right) ,w,_{x}dch_{y}^{B}\right]^{\mathfrak{m}}, \quad \text{ with } \begin{array}{l}
w\in \omega(\mathcal{O}(_{x}\Pi^{\mathfrak{m}}_{y}))\cong \mathbb{Q}  \left\langle  \omega_{0}, \omega_{-1}, \omega_{1}  \right\rangle\\
_{x}dch_{y}^{B}\in\omega_{B}(M)^{\vee}
\end{array}$$
$$\text{and whose period is:} \quad \text{per}(I^{\mathfrak{m}}(x;w;y))= I(x;w;y) =\int_{x}^{y}w= \langle \text{comp}_{B,dR}(w\otimes 1),_{x}dch_{y}^{B} \rangle \in\mathbb{C}.$$
\end{defi}
\noindent
\texttt{Notation: } We identify, for $a_{i}\in \lbrace 0, \pm 1\rbrace $: $I^{\mathfrak{m}} (a_{0}; a_{1}, \ldots, a_{n}; a_{n+1})\mathrel{\mathop:}= I^{\mathfrak{m}} (a_{0}; \omega_{a_{1}} \cdots \omega_{a_{n}} ; a_{n+1})$.\\
In a similar vein, define:
\begin{itemize}
\item[$\cdot$] De Rham motivic period in $\mathcal{O}(\mathcal{G})$, $I^{dR}(x;w;y)=\left[\mathcal{O} \left( _{x}\Pi^{\mathfrak{m}}_{y}\right)  ,w,_{x}1^{dR}_{y} \right]^{dR}$ resp. $\zeta^{dR}(\ldots)$, where $w\in\omega_{dR}(\mathcal{O} \left( _{x}\Pi^{\mathfrak{m}}_{y}\right) )$ and $_{x}1^{dR}_{y}\in \omega(M)^{\vee}=\mathcal{O}\left( _{x}\Pi_{y}\right)^{\vee}$.
\item[$\cdot$] Unipotent motivic periods, $\boldsymbol{I^{\mathfrak{a}}}$ resp. $\boldsymbol{\zeta^{\mathfrak{a}}}$, images of $I^{dR}$ resp. $\zeta^{dR}$ in $\mathcal{A}$. \footnote{By the projection $\mathcal{O}(\mathcal{G})\twoheadrightarrow \mathcal{O}(\mathcal{U})=\mathcal{A}$. These \textit{unipotent} motivic periods are the ones studied by Goncharov.}
\item[$\cdot$] $\boldsymbol{I^{\mathfrak{l}}}$ resp. $\boldsymbol{\zeta^{\mathfrak{l}}}$, the images of $I^{\mathfrak{a}}$ resp. $\zeta^{\mathfrak{a}}$ in the coalgebra $\mathcal{L}\mathrel{\mathop:}=\mathcal{A}_{>0} \diagup \mathcal{A}_{>0}. \mathcal{A}_{>0}$.\\
\end{itemize}

\paragraph{Properties.}\label{propii}
In depth $1$, by Deligne and Goncharov (\cite{DG}, Theorem $6.8$), the only relations satisfied in $\mathcal{A}^{2}$ are distribution relations\footnote{More generally, distribution relations for Euler sums are:
$$\forall n_{i}>0, \quad \zeta\left(  n_{1}, \ldots , n_{p}   \right) = 2^{\sum n_{i} - p} \sum_{\epsilon_{i}\in \lbrace \pm 1 \rbrace} \zeta \left( \epsilon_{1}\cdot n_{1}, \ldots , \epsilon_{p}\cdot n_{p} \right) .$$}
\begin{equation}\label{eq:depth1}
\forall r>0, \quad \zeta^{\mathfrak{a}} (2r+1)= \frac{2^{r-1}}{1-2^{r-1}} \zeta^{\mathfrak{a}}(-2r-1)  \quad  \textrm{ and }  \quad \zeta^{\mathfrak{a}} (2r)= \zeta^{\mathfrak{a}} (-2r)=0.
\end{equation}
Besides, they satisfy the usual properties for (motivic) iterated integrals (here $a_{i}\in\lbrace 0, \pm 1 \rbrace$):
\begin{itemize}
	\item[(i)] $I^{\mathfrak{m}}(a_{0}; a_{1})=1$.
	\item[(ii)] $I^{\mathfrak{m}}(a_{0}; a_{1}, \cdots a_{n}; a_{n+1})=0$ if $a_{0}=a_{n+1}$.
	\item[(iii)] Shuffle product:
	\begin{equation}\label{eq:shufflereg}
\hspace*{-0.5cm} \zeta_{k}^{\mathfrak{m}} \left( n_{1}, \ldots , n_{p} \right)= (-1)^{k}\sum_{i_{1}+ \cdots + i_{p}=k} \binom {\mid n_{1}\mid+i_{1}-1} {i_{1}} \cdots \binom {\mid n_{p}\mid +i_{p}-1} {i_{p}} \zeta^{\mathfrak{m}} \left( n_{1}\mlq + \mrq i_{1}, \ldots , n_{p}\mlq +\mrq i_{p} \right).
 \end{equation}
	\item[(iv)] Path composition: 
	$$ \forall x\in \mu_{N} \cup \left\{0\right\} ,  I^{\mathfrak{m}}(a_{0}; a_{1}, \ldots, a_{n}; a_{n+1})=\sum_{i=1}^{n} I^{\mathfrak{m}}(a_{0}; a_{1}, \ldots, a_{i}; x) I^{\mathfrak{m}}(x; a_{i+1}, \ldots, a_{n}; a_{n+1}) .$$
	\item[(v)] Path reversal: $I^{\mathfrak{m}}(a_{0}; a_{1}, \ldots, a_{n}; a_{n+1})= (-1)^n I^{\mathfrak{m}}(a_{n+1}; a_{n}, \ldots, a_{1}; a_{0}).$
	\item[(vi)] Homothety: $\forall \alpha \in \mu_{N}, I^{\mathfrak{m}}(0; \alpha a_{1}, \ldots, \alpha a_{n}; \alpha a_{n+1})  = I^{\mathfrak{m}}(0; a_{1}, \ldots, a_{n}; a_{n+1})$.
\end{itemize}
\textsc{Remark}: These (motivic) iterated integrals verify stuffle $\ast$ relations, but also pentagon, and hexagon or octagon (cf. $\cite{EF}, \cite{Fu}$).

\paragraph{Comodule of MES} Our main object of interest are the graded $\mathcal{A}^{N}$-comodules of the $\mathcal{F}_{\infty}$-invariant geometric periods of $\mathcal{M}=\mathcal{MT}(\mathbb{Z}\left[ \frac{1}{N}\right] )$, for $N=1,2$:\footnote{Cf. $(\ref{eq:periodgeom}), (\ref{eq:periodgeomr})$. $\mathbb{L}^{\mathfrak{m}}$, in degree 1, has a trivial coaction.}  
 \begin{equation}\label{eq:hn}
 \boldsymbol{\mathcal{H}^{N}} \mathrel{\mathop:}= \mathcal{P}_{\mathcal{M},\mathbb{R}}^{\mathfrak{m},+}= \mathcal{A}^{N} \otimes \mathbb{Q}\left[  (\mathbb{L}^{\mathfrak{m}})^{2} \right]  \quad  \subset \quad \mathcal{O}(\mathcal{G}^{N})=  \mathcal{A}^{N}\otimes  \mathbb{Q}[\mathbb{L}^{\mathfrak{m}}, (\mathbb{L}^{\mathfrak{m}})^{-1}].   
\end{equation}
\noindent
\texttt{Nota Bene:}  $\mathcal{H}^{2}$, resp. $\mathcal{H}^{1}$ is generated by the motivic Euler sums, resp. motivic MZV;  Euler sums are periods of the motivic fundamental groupoid of $\mathbb{P}^{1}\diagdown \lbrace 0, \infty, \pm 1 \rbrace$. \footnote{More precisely, ES of weight $n$ are periods of $X^{n}$ relative to $Y^{(n)}$, with the notations of $\ref{eq:opiunvarinjlim}$. The case of tangential base points requires blowing up to get rid of singularities. Most interesting periods are often those whose integration domain meets the singularities of the differential form.}\\
\\
From $(\ref{eq:projpiam})$, there is a surjective homomorphism called the \textbf{\textit{period map}}, conjectured to be isomorphism:
 \begin{equation}\label{eq:period}\text{per}:\mathcal{H} \rightarrow \mathcal{Z} \text{ ,  } \zeta^{\mathfrak{m}} \left(n_{1}, \ldots , n_{p} \right)\mapsto  \zeta\left(n_{1}, \ldots , n_{p} \right).
 \end{equation}
\texttt{Nota Bene:} Each identity between motivic Euler sums is then true for Euler sums. In particular. Conversely, we can sometimes \textit{lift} an identity between Euler sums to an identity between motivic Euler sums, via the coaction, by Theorem $\ref{kerdn}$; this is discussed below, and illustrated throughout this work in different examples or counterexamples, as in Lemma $\autoref{lemmcoeff}$.\\
\\
By results of P. Deligne ($N=2$, \cite{De}), resp. F. Brown ($N=1$, \cite{Br2}):
\begin{description}
\item[$\boldsymbol{N=2}$]: The Deligne family, $\lbrace\zeta^{\mathfrak{m}}(2a_{1}+1, \cdots, 2a_{p-1}+1, \overline{2a_{p}+1})(\mathbb{L}^{\mathfrak{m}})^{2s}\rbrace_{a_{i} \geq 0 \atop s\geq 0}$ is a basis of $\mathcal{H}^{2}$. In particular, the dimensions $d^{2}_{n}\mathrel{\mathop:}= \dim \mathcal{H}^{2}_{n}$ satisfy $d^{2}_{n}=d^{2}_{n-2}+d^{2}_{n-1}$, and we have a non-canonical isomorphism with the free Lie algebra:
\begin{equation}
\label{eq:LieAlg}
\mathfrak{u}^{2} \underrel{n.c}{\cong} L\mathrel{\mathop:}= \mathbb{L}_{\mathbb{Q}} \left\langle    \sigma^{2r+1} \mid  r\geq 0,  \sigma_{i} \text{ in degree } -i.\right\rangle 
\end{equation}
These generators $\sigma_{i}$ are non-canonical: only their classes in the abelianization are. Then, $\mathcal{A}^{2}$ is a cofree commutative graded Hopf algebra cogenerated by one element $f_{r}=(\sigma_{r})^{\vee}$ in each odd degree $r$ and:\footnote{Non-canonical isomorphisms: we can fix the image of algebraically independent elements with trivial coaction.}
\begin{equation}
\label{eq:phih}
\mathcal{A}^{2} \underrel{n.c}{\cong} A^{2}\mathrel{\mathop:}= \mathbb{Q} \left\langle  f_{1}, f_{3}, f_{5}, \cdots \right\rangle,  \quad \mathcal{H}^{2}    \xhookrightarrow[n.c. \sim]{\quad\phi^{2}\quad}  H^{2}\mathrel{\mathop:}=  \mathbb{Q} \left\langle f_{1}, f_{3}, f_{5}, \cdots \right\rangle  \otimes \mathbb{Q}\left[ \left( \mathbb{L}^{\mathfrak{m}}\right) ^{2} \right].
\end{equation}

\item[$\boldsymbol{N=1}$]: The Hoffman family, namely MMZV with only 2 and 3 as arguments, is a basis of $\mathcal{H}^{1}$, the space of MMZV. In particular, the dimensions satisfy $d^{1}_{n}=d^{1}_{n-2}+d^{1}_{n-3}$. Similarly than above,  except that there is no generator in degree $1$, we have non canonical isomorphisms with $L^{1}, A^{1}, H^{1}$, such as:
$$\mathcal{H}^{1}    \xhookrightarrow[n.c. \sim]{\quad\phi^{1}\quad}  H^{1}\mathrel{\mathop:}=  \mathbb{Q} \left\langle f_{3}, f_{5}, f_{7}, \cdots \right\rangle  \otimes \mathbb{Q}\left[  \left( \mathbb{L}^{\mathfrak{m}}\right) ^{2}\right].$$
\end{description}

\subsection{Motivic Hopf algebra}

Here, we assume (if not precised) $N=2$,  i.e. we are looking at the comodule of motivic Euler sums, omitting the exponent $2$; the case $N=1$ being here similar.

\subsubsection{Motivic Lie algebra}

Let $\mathfrak{g}$ the free graded Lie algebra generated by $e_{0}, e_{-1}, e_{1}$ in degree $-1$. Then, the completed Lie algebra $\mathfrak{g}^{\wedge}$ is the Lie algebra of $_{0}\Pi_{1}(\mathbb{Q})$ and the universal enveloping algebra $ U\mathfrak{g}$ is the cocommutative Hopf algebra which is the graded dual of $O(_{0}\Pi_{1})$:
\begin{equation} \label{eq:ug}  (U\mathfrak{g})_{n}=\left( \mathbb{Q}e_{0} \oplus \mathbb{Q}e_{-1}\oplus\mathbb{Q}e_{1} \right) ^{\otimes n}= (O(_{0}\Pi_{1})^{\vee})_{n}.
\end{equation}
The product is the concatenation, and the coproduct is such that $e_{0},e_{\pm 1}$ are primitive.\\
\\
The motivic Drinfeld associator is (cf. $\ref{eq:kz}$), with $n_{i}\mathrel{\mathop:}=\epsilon_{i}m_{i}$ and $\epsilon_{i}\mathrel{\mathop:}=\eta_{i}\eta_{i+1}$:
\begin{equation} \label{eq:associator}
\boldsymbol{\Phi^{\mathfrak{m}}}\mathrel{\mathop:}= \sum_{w} \zeta^{\mathfrak{m}} (w) w \in \mathcal{H}\left\langle \left\langle e_{0}, e_{-1}, e_{1} \right\rangle \right\rangle \text{, where }  \zeta^{\mathfrak{m}} (e_{0}^{n}e_{\eta_{1}}e_{0}^{m_{1}-1}\cdots e_{\eta_{p}}e_{0}^{m_{p}-1})  =\zeta^{\mathfrak{m}}_{n}\left( n_{1}, \ldots, n_{p} \right)
\end{equation} 
$$\textrm{ defines a map} : \quad \oplus \mathcal{H}_{n}^{\vee} \rightarrow U \mathfrak{g} \quad \textrm{ which induces: } \oplus \mathcal{L}_{n}^{\vee} \rightarrow U \mathfrak{g}.$$ 
The \textit{motivic Lie algebra}, $\boldsymbol{\mathfrak{g}^{\mathfrak{m}}}$ is the image of $\oplus \mathcal{L}_{n}^{\vee}$ in $U \mathfrak{g}$: $\oplus \mathcal{L}_{n}^{\vee} \xrightarrow{\sim} \mathfrak{g}^{\mathfrak{m}} \hookrightarrow U \mathfrak{g}$. It is equipped with the Ihara bracket and non-canonically isomorphic to the free Lie algebra $L$ defined in $(\ref{eq:LieAlg})$, generated by $(\sigma_{i})'s$. 

\subsubsection{Coaction}

The motivic Galois group $\mathcal{G}$ and hence $\mathcal{U}$ acts on the de Rham realization $_{0}\Pi_{1}$ (cf. $\cite{DG}, \S 4.12$) and (non trivially) it factorizes through the Ihara action $\circ$: $_{0}\Pi_{1}  \times _{0}\Pi_{1}  \rightarrow  _{0}\Pi_{1} $. By duality, this action gives rise to a coaction $\Delta^{\mathcal{MT}}$. The  combinatorial coaction $\Delta^{c}$ (on words on $0, \pm 1$) factorizes through $\Delta^{\mathcal{MT}}$ which factorizes through $\mathcal{A}$, since $\mathcal{U}$ is the quotient of $\mathcal{U}$ by the kernel of its action on $_{0}\Pi_{1}$. By passing to the quotient, it induces a coaction $\Delta$ on $\mathcal{H}$:
$$  \xymatrix{
\mathcal{O}(_{0}\Pi_{1}) \ar[r]^{\Delta^{c}} \ar[d]^{\sim} & \mathcal{A} \otimes_{\mathbb{Q}} \mathcal{O} (_{0}\Pi_{1})  \ar[d] \\
\mathcal{O}(_{0}\Pi_{1}) \ar[d]\ar[r]^{\Delta^{\mathcal{MT}}} & \mathcal{A}^{N} \otimes_{\mathbb{Q}} \mathcal{O} (_{0}\Pi_{1})  \ar[d]\\
\mathcal{H} \ar[r]^{\Delta}  & \mathcal{A} \otimes \mathcal{H}. 
}$$

\begin{theom}[Goncharov $\cite{Go1}$, Brown$\cite{Br2}$] \label{eq:coaction}
The coaction $\Delta: \mathcal{H} \rightarrow \mathcal{A} \otimes_{\mathbb{Q}} \mathcal{H}$ is given by:
$$\Delta^{c} I^{\mathfrak{m}}(a_{0}; a_{1}, \cdots a_{n}; a_{n+1}) =$$
$$\sum_{k ;i_{0}= 0<i_{1}< \cdots < i_{k}<i_{k+1}=n+1} \left( \prod_{p=0}^{k} I^{\mathfrak{a}}(a_{i_{p}}; a_{i_{p}+1}, \cdots a_{i_{p+1}-1}; a_{i_{p+1}}) \right) \otimes I^{\mathfrak{m}}(a_{0}; a_{i_{1}}, \cdots a_{i_{k}}; a_{n+1}) .$$
\end{theom}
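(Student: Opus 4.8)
The plan is to reduce the assertion to a purely combinatorial statement about the coaction $\Delta^{c}$ on the de Rham coordinate ring $\mathcal{O}(_{0}\Pi_{1})$, and then to identify $\Delta^{c}$ with the dual of the Galois action. By the commutative diagram preceding the statement, the coaction $\Delta$ on $\mathcal{H}$ is induced by $\Delta^{c}$ via $\Delta^{\mathcal{MT}}$ (which factorizes through $\mathcal{A}$ since $\mathcal{U}$ is the quotient of the Galois group by the kernel of its action on $_{0}\Pi_{1}$). It therefore suffices to prove the displayed formula for $\Delta^{c}$ at the level of words in $0,\pm 1$. The starting point is that $\mathcal{G}$, and hence $\mathcal{U}$, acts on the de Rham realization $_{0}\Pi_{1}$ and that this action factorizes through the Ihara action $\circ\colon {}_{0}\Pi_{1}\times{}_{0}\Pi_{1}\to{}_{0}\Pi_{1}$; by construction $\Delta^{c}$ is the dual of this action. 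Thus the entire content is to compute, in coordinates, the dual of the Ihara composition on group-like series and to check that it matches the right-hand side.

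On the formal side, I would first verify that the proposed right-hand side does define a coassociative coaction compatible with properties (i)--(vi). Here the convenient device is the semicircle/polygon encoding: the indices $0=i_{0}<i_{1}<\cdots<i_{k}<i_{k+1}=n+1$ mark a distinguished subsequence of vertices; the right tensor factor is the motivic iterated integral along the marked subsequence (the \emph{main} polygon), while the left factor is the product of the \emph{unipotent} integrals $I^{\mathfrak{a}}$ over the complementary arcs $(i_{p},i_{p+1})$ (the \emph{sub}-polygons). Coassociativity then follows from the associativity of iterating this marking, i.e. from the fact that a nested decomposition of arcs can be performed in either order; compatibility with path reversal (v), homothety (vi), and with the vanishing $I^{\mathfrak{m}}(a_{0};\dots;a_{n+1})=0$ when $a_{0}=a_{n+1}$ are direct checks on the polygon picture. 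This shows the formula at least defines a legitimate coproduct structure (Goncharov's Hopf algebra).

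The substance is the identification of this formal $\Delta^{c}$ with the dual of the Galois/Ihara action. I would expand the Ihara action of a group-like de Rham series on the generating series $\Phi^{\mathfrak{m}}$, extract the coefficient of each word $w=\omega_{a_{1}}\cdots\omega_{a_{n}}$, and compare it termwise with the formal $\Delta^{c}$. The natural organization is an induction on the number of points $n$, peeling off one arc at a time using the path-composition property (iv) to split $I^{\mathfrak{m}}(a_{0};\dots;a_{n+1})$ at an interior point, and using the base case of length one together with the depth-one relations $(\ref{eq:depth1})$. A crucial structural input is that $\mathcal{U}$ acts trivially on the Tate graded pieces $\omega(\mathbb{Q}(n))$: this is what forces the correction terms to land in the unipotent ring $\mathcal{A}$, i.e. to be the $I^{\mathfrak{a}}$ over the complementary arcs rather than general periods, and to keep the right factor motivic.

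I expect the genuine obstacle to be precisely this last matching: showing that the dualized Ihara action has the exact \say{product-over-gaps} shape, namely that the main term is the restriction to the marked subsequence and that all correction terms reassemble into products of unipotent integrals over the complementary arcs, with no spurious cross-terms between non-adjacent arcs. This is the heart of Goncharov's formula, and it is here that the specific combinatorics of the Ihara insertion action must be disentangled; once it is established, coassociativity and the remaining compatibilities are formal bookkeeping on the polygon picture. Everything then descends, via the diagram, from $\Delta^{c}$ on $\mathcal{O}(_{0}\Pi_{1})$ to the coaction $\Delta$ on $\mathcal{H}$, yielding the stated formula.
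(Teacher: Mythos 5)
This theorem is not proved in the paper at all: it is imported verbatim from the literature, attributed to Goncharov \cite{Go1} (the combinatorial formula, at the level of what are here called the $\mathfrak{a}$-periods) and to Brown \cite{Br2} (the lift to a genuine coaction $\mathcal{H}\rightarrow\mathcal{A}\otimes\mathcal{H}$ on motivic periods). So there is no internal argument to compare yours against; what can be assessed is whether your proposal would constitute a proof on its own. It would not, for two reasons.

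First, your outline defers exactly the step that \emph{is} the theorem. You correctly set up the reduction (dualize the Ihara action, compare coefficients of words), and you correctly identify that the whole content is the ``product-over-gaps'' shape of the dualized insertion action — but you then say you \emph{expect} this to be the genuine obstacle and leave it unproved. The formal checks you do propose to carry out (coassociativity of the polygon formula, compatibility with properties (i)--(vi)) cannot substitute for that computation: many coassociative coactions on $\mathbb{Q}\langle e^{0},e^{\pm 1}\rangle$ satisfy those compatibilities, and nothing in your argument pins down the displayed formula as the dual of the Ihara action. The actual proof (Goncharov's) consists precisely of expanding $(a\circ b)$ for group-like series, extracting the coefficient of a word, and performing the nontrivial bookkeeping that shows the cross-terms between non-adjacent arcs cancel; an induction on $n$ via path composition, as you suggest, does not obviously close, because splitting at an interior point destroys the group-likeness that the Ihara formula acts on.

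Second, the final sentence — ``everything then descends, via the diagram, from $\Delta^{c}$ to $\Delta$ on $\mathcal{H}$'' — hides Brown's contribution. Goncharov's computation lives entirely in the unipotent (de Rham) world, i.e.\ it produces a coproduct $\mathcal{A}\rightarrow\mathcal{A}\otimes\mathcal{A}$. Obtaining the asymmetric statement above, with $I^{\mathfrak{a}}$ on the left but the full motivic period $I^{\mathfrak{m}}$ (which remembers $\mathbb{L}^{\mathfrak{m}}$, in particular $\zeta^{\mathfrak{m}}(2)\neq 0$) on the right, requires the formalism of motivic periods as functions on the bitorsor $P_{B,\omega}$ and the identification of the $\mathcal{G}^{\mathcal{M}}$-action there with the Ihara action; this is not a formal quotient-passing step, and your proposal does not address it.
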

\noindent
\textsc{Remark:} Considering the $a_{i}$ as vertices on a half-circle, it is: 
$$\sum_{\text{ polygons on circle  } \atop \text{ with vertices } (a_{i_{p}})}     \prod_{p}  I^{\mathfrak{a}}\left(  \text{ arc between consecutive vertices  } \atop \text{ from } a_{i_{p}} \text{ to } a_{i_{p+1}} \right)  \otimes I^{\mathfrak{m}}(\text{  vertices } ).$$
\texttt{Example}: In the reduced coaction ($\Delta':=\Delta-1\otimes id-id\otimes 1$) of $\zeta^{\mathfrak{m}}(-1,3)$, there are three non zero cuts:
$$\hspace*{-0.5cm}\begin{array}{ll}
\Delta'\left( \zeta^{\mathfrak{m}}(-1,3)\right) & =\Delta'\left( I^{\mathfrak{m}}(0; -1,1,0,0;1)\right)\\
& = I^{\mathfrak{a}}(0; -1;1) \otimes I^{\mathfrak{m}}(0; 1,0,0;1)+ I^{\mathfrak{a}}(-1; 1;0) \otimes I^{\mathfrak{m}}(0; -1,0,0;1)+ I^{\mathfrak{a}}(-1; 1,0,0;1) \otimes I^{\mathfrak{m}}(0; -1;1)\\
& =\zeta^{\mathfrak{a}}(-1)\otimes \zeta^{\mathfrak{m}}(3)-\zeta^{\mathfrak{a}}(-1)\otimes \zeta^{\mathfrak{m}}(-3)+ (\zeta^{\mathfrak{a}}(3)-\zeta^{\mathfrak{a}}(-3) )\otimes \zeta^{\mathfrak{m}}(-1)
\end{array}$$  
Define for $r\geq 1$, the \textit{derivation operators}, weight graded parts of the coaction:
\begin{equation}\label{eq:dr}
\boldsymbol{D_{r}}: \mathcal{H} \rightarrow \mathcal{L}_{r} \otimes_{\mathbb{Q}} \mathcal{H},
\end{equation}
 composite of $\Delta'= \Delta^{c}- 1\otimes id$ with $\pi_{r} \otimes id$, where $\pi_{r}$ is the projection $\mathcal{A} \rightarrow \mathcal{L} \rightarrow \mathcal{L}_{r}$.\\
\\
These maps $D_{r}$ are derivations: $D_{r} (XY)= (1\otimes X) D_{r}(Y) + (1\otimes Y) D_{r}(X)$
and:
\begin{framed}
\begin{equation}
\label{eq:Der}
D_{r}I^{\mathfrak{m}}(a_{0}; a_{1}, \cdots, a_{n}; a_{n+1})= 
\end{equation}
$$\sum_{p=0}^{n-1} I^{\mathfrak{l}}(a_{p}; a_{p+1}, \cdots, a_{p+r}; a_{p+r+1}) \otimes I^{\mathfrak{m}}(a_{0}; a_{1}, \cdots, a_{p}, a_{p+r+1}, \cdots, a_{n}; a_{n+1}) .$$
\end{framed}
\noindent
\texttt{Example}: By the previous example:
$$D_{3}(\zeta^{\mathfrak{m}}(-1,3))=(\zeta^{\mathfrak{a}}(3)-\zeta^{\mathfrak{a}}(-3) )\otimes \zeta^{\mathfrak{m}}(-1), \texttt{and} \quad  D_{1}(\zeta^{\mathfrak{m}}(-1,3))= \zeta^{\mathfrak{a}}(-1)\otimes ( \zeta^{\mathfrak{m}}(3)- \zeta^{\mathfrak{m}}(-3)) $$

\paragraph{Kernel of $\boldsymbol{D_{<n}}$. }

Let first highlight that since $\mathcal{L}_{2r}=0$, we can consider only $\left\lbrace D_{\textrm{2r+1}}\right\rbrace _{r\geq 0}$.\\
A key point for the use of these derivations is the ability to prove some relations (and possibly lift some from MZV to motivic MZV) up to rational coefficients. This comes from the following theorem, looking at primitive elements:
\begin{theo}\label{kerdn}
Let $D_{<n}\mathrel{\mathop:}= \oplus_{2r+1<n} D_{2r+1}$, $n>1$. Then:
$$\ker D_{<n}\cap \mathcal{H}^{N}_{n} =  \mathbb{Q}\zeta^{\mathfrak{m}}\left( n  \right) .$$
\end{theo}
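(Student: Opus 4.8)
The plan is to prove the statement by induction on the weight $n$, exploiting the fact that the collection $\{D_{2r+1}\}_{r\geq 0}$ detects essentially all the information carried by the coaction modulo products. The key structural input is the general principle, dual to the freeness of $\mathfrak{u}^{N}$ and the cofreeness of $\mathcal{A}^{N}$ recorded in $(\ref{eq:phih})$: an element $\xi\in\mathcal{H}^{N}_{n}$ whose reduced coaction $\Delta'\xi$ lands entirely in $\mathcal{A}^{N}_{\geq 1}\otimes(\text{primitives}+\mathbb{Q})$ is constrained, and the operators $D_{r}$ see exactly the primitive-to-the-right part of the coaction in internal degree $r$. Concretely, I would first recall that because $\mathcal{L}=\mathcal{A}_{>0}/\mathcal{A}_{>0}\cdot\mathcal{A}_{>0}$ is the space of indecomposables, the single map $D:=\bigoplus_{r\geq 1}D_{r}\colon \mathcal{H}_{n}\to\bigoplus_{r}\mathcal{L}_{r}\otimes\mathcal{H}_{n-r}$ has kernel exactly the primitives of $\mathcal{H}$ in weight $n$ together with $\mathbb{Q}\cdot(\mathbb{L}^{\mathfrak{m}})^{n}$-type classes; this is the infinitesimal/Lie-coalgebra reformulation of the coaction.

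The core of the argument is then to show $\ker D_{<n}\cap\mathcal{H}^{N}_{n}$ cannot be larger than the one-dimensional space $\mathbb{Q}\,\zeta^{\mathfrak{m}}(n)$. First I would verify the inclusion $\supseteq$: for $\zeta^{\mathfrak{m}}(n)$ a single motivic zeta value of weight $n$, formula $(\ref{eq:Der})$ applied to $I^{\mathfrak{m}}(0;\boldsymbol{0}^{\,*},\ldots;1)$ shows that every cut of internal length $r<n$ produces a subword $I^{\mathfrak{l}}$ that is either a zeta value of even argument (hence $0$ in $\mathcal{L}$ by $(\ref{eq:depth1})$) or vanishes by property (ii); so $D_{2r+1}\zeta^{\mathfrak{m}}(n)=0$ for all $2r+1<n$. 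For the reverse inclusion, I would take $\xi\in\ker D_{<n}$ and analyse $D_{n-1}$ or the top available derivation together with the inductive description of $\mathcal{H}^{N}_{n-r}$: since all $D_{2r+1}\xi=0$ for $2r+1<n$, the element $\xi$ is primitive modulo weight strictly below $n$, and primitivity in the cofree coalgebra picture $(\ref{eq:phih})$ forces $\phi^{N}(\xi)$ to be a linear combination of the single cogenerator $f_{n}$ (when $n$ is odd) plus powers of $(\mathbb{L}^{\mathfrak{m}})^{2}$; the $\mathcal{F}_{\infty}$-invariance defining $\mathcal{H}^{N}$ and the geometric (non-negative weight) condition then pin this down to the image of $\zeta^{\mathfrak{m}}(n)$, the $(\mathbb{L}^{\mathfrak{m}})^{2}$-powers being excluded because they have strictly smaller weight unless $n$ is even, in which case $\zeta^{\mathfrak{m}}(n)\in\mathbb{Q}(\mathbb{L}^{\mathfrak{m}})^{n}$ already spans the line.

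I expect the main obstacle to be the careful bookkeeping that distinguishes the two realisations $N=1$ and $N=2$ uniformly, and in particular ensuring that the only primitive element surviving in $\mathcal{H}^{N}_{n}$ is genuinely one-dimensional. The delicate point is that the cogenerators $f_{r}$ exist only in odd degrees, so for even $n$ the sole primitive is the power of $\mathbb{L}^{\mathfrak{m}}$, and one must check that $\zeta^{\mathfrak{m}}(n)=\text{(rational)}\cdot(\mathbb{L}^{\mathfrak{m}})^{n}$ reproduces it, whereas for odd $n$ one must rule out any admixture of $(\mathbb{L}^{\mathfrak{m}})^{2}\cdot\zeta^{\mathfrak{m}}(n-2)$-type terms using the weight grading and the fact that $\mathbb{L}^{\mathfrak{m}}$ has trivial coaction (so it cannot be detected by $D_{<n}$). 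The safest route is to argue abstractly via $(\ref{eq:LieAlg})$–$(\ref{eq:phih})$: the graded dual of $D$ restricted to weight $n$ is the projection onto indecomposables, whose kernel is spanned by the one generator in degree $n$, and the remaining work is to transport this through the isomorphism $\phi^{N}$ while respecting the $\mathcal{F}_{\infty}$-real structure of $(\ref{eq:periodgeomr})$.
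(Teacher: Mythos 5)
Your proposal is correct and follows essentially the same route as the paper: the paper's (one-line) proof is precisely to transport the statement through the non-canonical isomorphism of graded Hopf comodules $(\ref{eq:phih})$, where by $(\ref{eq:derf})$ the $D_{2r+1}$ become deconcatenation on the $f$-alphabet with $\mathbb{L}^{\mathfrak{m}}$ as a spectator, so the kernel of $D_{<n}$ in weight $n$ is visibly spanned by $f_{n}$ ($n$ odd) or $(\mathbb{L}^{\mathfrak{m}})^{n}$ ($n$ even), hence by $\zeta^{\mathfrak{m}}(n)$. Your explicit check that $\zeta^{\mathfrak{m}}(n)\in\ker D_{<n}$ and your elimination of terms $f_{i_{1}}\cdots f_{i_{k}}(\mathbb{L}^{\mathfrak{m}})^{2s}$ with $i_{1}<n$ (they are detected by $D_{i_{1}}$) fill in the same details the paper calls \say{straight-forward}, so the two arguments coincide.
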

\begin{proof} Straight-forward via the isomorphism of graded Hopf comodules $(\ref{eq:phih})$, since the analogue statement for $H^{2}$ is obviously true.
\end{proof}
\noindent
\texttt{Nota Bene} By this result proving an identity between motivic MZV (resp. MES), amounts to:
\begin{enumerate}
\item Prove that the coaction is identical on both sides, computing $D_{r}$ for $r>0$ smaller than the weight. If the families are not stable under the coaction, this step would require other identities.
\item Use the corresponding analytic result for MZV (resp. Euler sums) to deduce the remaining rational coefficient; if the analytic equivalent is unknown, we can at least evaluate numerically this rational coefficient.
\end{enumerate}
Another important use of this corollary, is the decomposition of (motivic) multiple MZV into a conjectured basis, which has been explained by F. Brown in \cite{Br1}; similarly for MES.

\subsubsection{Depth filtration}

The inclusion $\mathbb{P}^{1}\diagdown \lbrace 0, \pm 1,\infty\rbrace \subset \mathbb{P}^{1}\diagdown \lbrace 0,\infty\rbrace$ implies a surjection for the de Rham realizations of fundamental groupoid: $  _{0}\Pi_{1} \rightarrow  \pi_{1}^{dR}(\mathbb{G}_{m}, \overrightarrow{01})$. The dual leads to the inclusion:
\begin{equation} \label{eq:drsurjdual}
 \mathcal{O}  \left( \pi_{1}^{dR}(\mathbb{G}_{m}, \overrightarrow{01} ) \right)  \cong \mathbb{Q} \left\langle e^{0} \right\rangle  \xhookrightarrow[\quad \quad]{} \mathcal{O} \left(  _{0}\Pi_{1} \right) \cong \mathbb{Q} \left\langle e^{0}, e^{-1}, e^{1} \right\rangle  .
   \end{equation}
This leads to the definition of  an increasing \textit{depth filtration} $\mathcal{F}^{\mathfrak{D}}$ such that:\footnote{Dual to the filtration given by the descending central series of the kernel of $ _{0}\Pi_{1} \rightarrow  \pi_{1}^{dR}(\mathbb{G}_{m}, \overrightarrow{01})$.}
 \begin{equation}\label{eq:filtprofw} \boldsymbol{\mathcal{F}_{p}^{\mathfrak{D}}\mathcal{O}(_{0}\Pi_{1})} \mathrel{\mathop:}= \left\langle  \text{ words } w \text{ in } \left\lbrace e^{0},e^{-1},e^{1} \right\rbrace   \text{ such that }  deg _{e^{1}}(w) + deg _{e^{-1}} \leq p \right\rangle _{\mathbb{Q}}.
 \end{equation}
This filtration, preserved by $\Delta$, descends to $\mathcal{H}$: $\mathcal{F}_{p}^{\mathfrak{D}}\mathcal{H}\mathrel{\mathop:}= \left\langle  \zeta^{\mathfrak{m}}\left( n_{1}, \ldots, n_{r} \right) , r\leq p \right\rangle _{\mathbb{Q}}$.\\
Idem for $\mathcal{F}_{p}^{\mathfrak{D}}\mathcal{A}$ and  $\mathcal{F}_{p}^{\mathfrak{D}}\mathcal{L}$, and the graded spaces $gr^{\mathfrak{D}}_{p}$ are the quotient $\mathcal{F}_{p}^{\mathfrak{D}}/\mathcal{F}_{p-1}^{\mathfrak{D}}$. Beware, the grading on $\mathcal{O}(_{0}\Pi_{1})$ is not motivic and the depth is not a grading on $\mathcal{H}$.\\
Similarly, the increasing depth filtration on $U\mathfrak{g}$ (degree in $\lbrace e_{1}, e_{-1}\rbrace$) passes to the motivic Lie algebra $\mathfrak{g}^{\mathfrak{m}}$ such that the graded pieces $gr^{r}_{\mathfrak{D}} \mathfrak{g}^{\mathfrak{m}}$ are dual to $gr^{\mathfrak{D}}_{r} \mathcal{L}$.\\
\\
The \textit{motivic depth} of an element in $\mathcal{H}^{N}$ is defined, via $\ref{eq:phih}$, as the degree of the polynomial in the $(f_{i})$. It can also be defined recursively as, for $\mathfrak{Z}\in \mathcal{H}^{N}$:
$$\begin{array}{lll}
\mathfrak{Z} \text{ of motivic depth } 1 & \text{ if and only if } & \mathfrak{Z}\in  \mathcal{F}_{1}^{\mathfrak{D}}\mathcal{H}^{N}.\\
\mathfrak{Z} \text{ of motivic depth } \leq p & \text{ if and only if } &  \left( \forall r< n, D_{r}(\mathfrak{Z})  \text{ of motivic depth } \leq p-1 \right).
\end{array}.$$
Clearly: $ \text {depth } p \geq p_{c} \geq $ motivic depth $p_{\mathfrak{m}}$,  where $ p_{c}$ is the smallest $i$ such that $\mathfrak{Z}\in \mathcal{F}_{i}^{\mathfrak{D}}\mathcal{H}^{2}$. In fact, $p_{\mathfrak{m}}$ always coincides with $p_{c}$ for MES, whereas for MMZV, they may differ.

\paragraph{Depth graded derivation}

Translating ($\ref{eq:dr}$) for motivic Euler sums:\footnote{This sum $\mlq\sum\mrq$, as a $\mlq + \mrq$ indicate that the absolute values are summed whereas the signs are multiplied.}
\begin{lemm}
\label{drz}
$$\hspace*{-0.3cm}\begin{array}{ll}
 D_{2r+1} \left(\zeta^{\mathfrak{m}} \left(n_{1}, \ldots , n_{p}  \right)\right) =  \quad \quad \delta_{2r+1=\mid n_{1}\mid + \cdots +\mid n_{i}\mid} \zeta^{\mathfrak{l}} \left(n_{1}, \cdots, n_{i} \right) &\otimes \zeta^{\mathfrak{m}} \left(  n_{i+1},\cdots, n_{p} \right)\\
 &\\
   \sum_{1 \leq i<j\leq p \atop \lbrace 2r+1 \leq \sum_{k=i}^{j} \mid n_{k}\mid -1\rbrace}  
 \left[ \begin{array}{l}
- \delta_{ \sum_{k=i}^{j-1} \mid n_{k}\mid \leq 2r+1}  \zeta^{\mathfrak{l}}_{2r+1-  \sum_{k=i}^{j-1}n_{k}} \left( n_{j-1}, \cdots, n_{i} \right) \\
 +  \delta_{ \sum_{k=i+1}^{j} n_{k} \leq 2r+1}  \zeta^{\mathfrak{l}}_{2r+1-  \sum_{k=i+1}^{j}n_{k}} \left( n_{i+1}, \ldots , n_{j} \right)
 \end{array} \right]  & \otimes \zeta^{\mathfrak{m}} \left( \cdots, \mlq\sum_{k=i}^{j}\mrq n_{k} -2r-1,\cdots \right) 
\end{array}$$
\end{lemm}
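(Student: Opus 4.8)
The plan is to specialize the general formula for $D_r$ on motivic iterated integrals, namely equation $(\ref{eq:Der})$, to the specific word that represents a motivic Euler sum, and then translate every surviving term back into the $\zeta^{\mathfrak{l}}, \zeta^{\mathfrak{m}}$ notation. Recall that by Definition $\ref{peri}$ the sum $\zeta^{\mathfrak{m}}(n_1,\ldots,n_p)$ corresponds (up to sign) to the integral $I^{\mathfrak{m}}(0;\,\eta_1,\boldsymbol{0}^{|n_1|-1},\ldots,\eta_p,\boldsymbol{0}^{|n_p|-1};\,1)$, where the letters alternate between a signed $\pm 1$ marking the start of each block and strings of zeros. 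First I would write out this word explicitly, indexing its letters $a_0,\ldots,a_{n+1}$, and then apply $(\ref{eq:Der})$, which extracts from each position $p$ a consecutive subword of length $r$ (here $r=2r+1$, odd) as the $\mathfrak{l}$-factor and leaves the complementary word as the $\mathfrak{m}$-factor. The main work is purely combinatorial bookkeeping: determining for which starting positions the length-$(2r+1)$ subinterval $(a_p;a_{p+1},\ldots,a_{p+r};a_{p+r+1})$ gives a nonzero $I^{\mathfrak{l}}$, and identifying the resulting quotient integral.

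The key structural fact I would exploit is that an $\mathfrak{l}$-factor $I^{\mathfrak{l}}(a_p;\ldots;a_{p+r+1})$ vanishes unless its endpoints and interior align with a genuine (sub-)Euler-sum pattern; because we quotient into the coalgebra $\mathcal{L}=\mathcal{A}_{>0}/\mathcal{A}_{>0}\cdot\mathcal{A}_{>0}$, only the "indecomposable" cuts survive, and these are exactly the subwords running between two block-markers. This forces the cut to span a contiguous range of blocks, say from block $i$ to block $j$, which is the origin of the double sum over $1\le i<j\le p$. I would then split into two cases according to whether the extracted subword reads a block-interval \emph{forwards} or is obtained by the path-reversal/boundary contribution reading it \emph{backwards}; applying property (v) (path reversal) and property (ii) (vanishing when endpoints coincide) converts these into the two $\zeta^{\mathfrak{l}}$ terms, with the shift in the lower index $2r+1-\sum n_k$ recording how many zeros at the boundary of the interval are consumed by the cut. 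The Kronecker deltas $\delta_{2r+1=|n_1|+\cdots}$, $\delta_{\sum|n_k|\le 2r+1}$, etc., simply encode the range constraints on which cuts of total length $2r+1$ actually fit inside a given block-range.

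The first displayed line, with $i=1$ (a "boundary" cut starting at the very first letter), must be treated separately from the generic interior cuts, since a cut touching the initial point $a_0=0$ produces a deconcatenation-type term $\zeta^{\mathfrak{l}}(n_1,\ldots,n_i)\otimes\zeta^{\mathfrak{m}}(n_{i+1},\ldots,n_p)$ with no index shift, rather than an interior removal. I would verify this term arises precisely when the cut covers whole initial blocks, i.e. when $2r+1=|n_1|+\cdots+|n_i|$. The merged argument $\mlq\sum_{k=i}^{j}\mrq n_k-2r-1$ in the $\mathfrak{m}$-factor then reflects the path-composition property (iv): excising the subword fuses the blocks $i$ through $j$ into a single block whose absolute value is $\sum_{k=i}^j|n_k|$ minus the $2r+1$ letters removed, with the sign being the product of the $\epsilon_k$ (this is exactly the $\mlq+\mrq$ operation defined in the Notations).

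The step I expect to be the main obstacle is the careful tracking of signs and of the index offsets in the two interior terms: one has to match the motivic letters $\eta_i=\epsilon_i\cdots\epsilon_p$ appearing in the integral representation against the $\mlq+\mrq$ convention on the $n_k$, and to confirm that the lower indices $2r+1-\sum_{k=i}^{j-1}n_k$ and $2r+1-\sum_{k=i+1}^{j}n_k$ are the correct number of trailing zeros surviving on each side after the reversal. Getting the overall relative sign (the $-$ versus $+$ in front of the two $\zeta^{\mathfrak{l}}$ terms) right requires invoking path reversal (v) with its $(-1)^n$ factor and checking it against the $(-1)^p$ normalization in Definition $\ref{peri}$; once these signs are reconciled, the formula follows by reassembling all surviving cuts.
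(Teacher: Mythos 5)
Your overall strategy --- specializing the derivation formula $(\ref{eq:Der})$ to the word of Definition $\ref{peri}$, treating the cut anchored at $a_{0}$ separately as the deconcatenation term, and converting the interior cuts into the two bracket terms via path reversal with its sign $(-1)^{2r+1}=-1$ --- is exactly the route the paper takes: the paper gives no proof of this lemma beyond the phrase ``translating $(\ref{eq:dr})$'', and the detailed cut-by-cut analysis appears only in Lemma $\ref{lemmt}$ of Appendix A and in formula $(\ref{eq:derhonorary})$. However, one step of your sketch is misstated and another is genuinely missing.

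The misstatement: what kills the irrelevant cuts is property (ii) --- an iterated integral whose two endpoints coincide vanishes --- applied when both endpoints of the cut are $0$, or both are markers $\eta_{i}=\epsilon_{i}\cdots\epsilon_{p}$ with the same sign. It is \emph{not} the passage to $\mathcal{L}$ killing ``decomposable'' cuts, and the surviving cuts are not ``exactly the subwords running between two block-markers'': the generic surviving cuts run from a marker to a zero, or from a zero to a marker (or to the endpoint $1$). The genuine gap: for Euler sums the markers take both values $\pm 1$, so there are cuts running from a marker $\epsilon$ to a marker $-\epsilon$. These have distinct endpoints, hence do \emph{not} vanish, and they are of neither of your two types (forwards/backwards); they must first be decomposed in the coalgebra by path composition modulo products, $I^{\mathfrak{l}}(\epsilon;W;-\epsilon)\equiv I^{\mathfrak{l}}(\epsilon;W;0)+I^{\mathfrak{l}}(0;W;-\epsilon)$, after which each piece becomes one term of the bracket. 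This is where property (iv) is actually needed --- not for the fusing of blocks in the right-hand factor, which is mere concatenation of the remaining letters of the quotient word. These $T_{\epsilon,-\epsilon}$ cuts produce precisely the boundary case $2r+1=\sum_{k=i}^{j}\mid n_{k}\mid-1$ allowed by the lemma, where the merged entry is $\overline{1}$ (compare term $\textsc{(c)}$ of $(\ref{eq:derhonorary})$ and the Remark following Corollary $\ref{esharprel}$), and they are also what forces the bracket to self-cancel at that boundary value when $\prod_{k=i}^{j}\epsilon_{k}=+1$, since then the two composition pieces are negatives of each other in $\mathcal{L}$. As written, your enumeration establishes the formula only in the range $2r+1\leq\sum_{k=i}^{j}\mid n_{k}\mid-2$, i.e.\ for the cuts already visible in the multiple zeta value case, and misses exactly the feature that distinguishes the Euler-sum coaction from Brown's MZV computation.
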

\noindent
A key point is that the Galois action respects the weight grading and depth filtration:
$$D_{2r+1} (\mathcal{H}_{n}) \subset \mathcal{L}_{2r+1} \otimes_{\mathbb{Q}} \mathcal{H}_{n-2r-1}, \quad \textrm{ and } \quad D_{2r+1} (\mathcal{F}_{p}^{\mathfrak{D}}  \mathcal{H}_{n}) \subset \mathcal{L}_{2r+1} \otimes_{\mathbb{Q}} \mathcal{F}_{p-1}^{\mathfrak{D}} \mathcal{H}_{n-2r-1}.$$
Since these derivations $D_{r}$ decrease the depth, it enables some depth recursion. Passing to the depth-graded, we could define:
$$gr^{\mathfrak{D}}_{p} D_{2r+1}: gr_{p}^{\mathfrak{D}} \mathcal{H} \rightarrow \mathcal{L}_{2r+1} \otimes gr_{p-1}^{\mathfrak{D}} \mathcal{H} \text{, as the composition } (id\otimes gr_{p-1}^{\mathfrak{D}}) \circ D_{2r+1 |gr_{p}^{\mathfrak{D}}\mathcal{H}}.$$
The terms in the left side of $gr^{\mathfrak{D}}_{p} D_{2r+1}$ have depth $1$ (by Lemma $\autoref{drz}$). \\
\\
\textsc{Remark}: The derivations on the $f_{i}$ alphabet, on $H$ (cf. $\ref{eq:phih}$) are deconcatenation-like:
\begin{equation}\label{eq:derf}
D_{2r+1} : \quad H_{n} \quad \longrightarrow \quad L_{2r+1} \otimes H_{n-2r-1} \quad \quad\text{ such that :}
\end{equation}
$$ f_{i_{1}} \cdots f_{i_{k}}\longmapsto\left\{
\begin{array}{ll}
  f_{i_{1}} \otimes f_{i_{2}} \ldots f_{i_{k}} & \text{ if } i_{1}=2r+1 .\\
  0 & \text{ else }.\\
\end{array}
\right.$$ 
As we will see in both section 4 and 5, for linear independance proof, an idea is to find an appropriate filtration on the conjectural basis, such that the derivations in the graded space act on this family, modulo some space, as deconcatenations.

\subsubsection{Ramification}

Looking at how periods of $\mathcal{MT}(\mathbb{Z})$ embed into periods of $\mathcal{MT}(\mathbb{Z}[\frac{1}{2}])$, is a fragment of the Galois descent ideas, pictured by:\\
\includegraphics[]{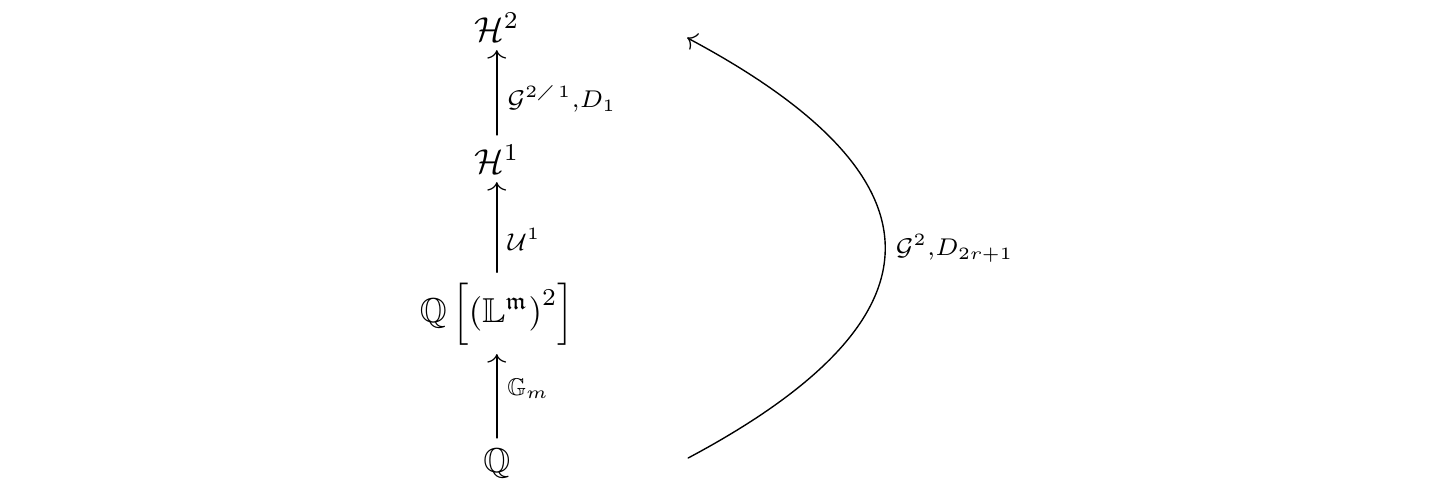}\\
A motivic ES is said to be \textit{unramified} if it can be expressed as a $\mathbb{Q}$-linear combinations of MMZV, i.e. is in $\mathcal{H}^{1}$; the corresponding ES is then also unramified, or \textit{honorary MZV} from D. Broadhurst terminology (cf. $\cite{BBB}$).\\
From motivic theory, we have at our disposal a recursive criterion for MES to be unramified:\footnote{Note that this criterion has been generalised to MZV at other roots of unity : cf. $\cite{Gl}$ which looks at different Galois descents, and higher ramification level.}
 \begin{coro}\label{criterehonoraire}
Let $\mathfrak{Z}\in\mathcal{H}^{2}$, a motivic Euler sum.\\
Then $\mathfrak{Z}\in\mathcal{H}^{1}$, i.e. $\mathfrak{Z}$ is a motivic multiple zeta value if and only if:
$$D_{1}(\mathfrak{Z})=0 \quad \textrm{  and  } \quad D_{2r+1}(\mathfrak{Z})\in\mathcal{H}^{1}.$$ 
\end{coro}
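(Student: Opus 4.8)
The plan is to transport the entire question to the combinatorial Hopf comodule $H^{2}$ via the non-canonical isomorphism $\phi^{2}$ of $(\ref{eq:phih})$, under which the problem reduces to tracking a single letter $f_{1}$. Recall from $(\ref{eq:phih})$ and the $N=1$ discussion that $\phi^{2}\colon\mathcal{H}^{2}\xrightarrow{\sim}H^{2}=\mathbb{Q}\langle f_{1},f_{3},f_{5},\dots\rangle\otimes\mathbb{Q}[(\mathbb{L}^{\mathfrak{m}})^{2}]$ is an isomorphism of graded Hopf comodules which can be chosen compatibly with the inclusion $\mathcal{H}^{1}\hookrightarrow\mathcal{H}^{2}$, so that $\mathcal{H}^{1}$ corresponds precisely to the sub-comodule $H^{1}=\mathbb{Q}\langle f_{3},f_{5},\dots\rangle\otimes\mathbb{Q}[(\mathbb{L}^{\mathfrak{m}})^{2}]$ of words avoiding the letter $f_{1}$. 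The only cogenerator lost in passing from $N=2$ to $N=1$ is the weight-one letter $f_{1}$, because $\mathrm{Ext}^{1}_{\mathcal{MT}(\mathbb{Z})}(\mathbb{Q}(0),\mathbb{Q}(1))\cong\mathbb{Z}^{\ast}\otimes\mathbb{Q}=0$ while its $\mathbb{Z}[\tfrac12]$-analogue is one-dimensional. Consequently $\mathcal{L}_{1}=\mathbb{Q}\,\zeta^{\mathfrak{l}}(-1)$ is spanned by the class of $f_{1}$ (consistent with $(\ref{eq:depth1})$), whereas $\mathcal{L}^{1}_{1}=0$; and since $\phi^{2}$ intertwines the coactions, each $D_{2r+1}$ becomes, by $(\ref{eq:derf})$, the left-deconcatenation stripping the leftmost letter when it equals $f_{2r+1}$ and returning $0$ otherwise, the tower $\mathbb{Q}[(\mathbb{L}^{\mathfrak{m}})^{2}]$ being coinvariant and merely riding along.

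\textbf{Necessity.} If $\mathfrak{Z}\in\mathcal{H}^{1}$ then $\Delta(\mathfrak{Z})\in\mathcal{A}^{1}\otimes\mathcal{H}^{1}$, since $\mathcal{H}^{1}$ is a sub-$\mathcal{A}^{1}$-comodule of $\mathcal{H}^{2}$. Projecting the left factor onto $\mathcal{L}_{2r+1}$ gives $D_{2r+1}(\mathfrak{Z})\in\mathcal{L}^{1}_{2r+1}\otimes\mathcal{H}^{1}\subset\mathcal{L}_{2r+1}\otimes\mathcal{H}^{1}$, so the right-hand factor is unramified; for $r=0$ this reads $D_{1}(\mathfrak{Z})\in\mathcal{L}^{1}_{1}\otimes\mathcal{H}^{1}=0$ because $\mathcal{L}^{1}_{1}=0$. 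This yields both conditions.

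\textbf{Sufficiency.} Conversely, assume $D_{1}(\mathfrak{Z})=0$ and $D_{2r+1}(\mathfrak{Z})\in\mathcal{L}_{2r+1}\otimes\mathcal{H}^{1}$ for all $r\geq 1$; I may take $\mathfrak{Z}$ homogeneous and write $\phi^{2}(\mathfrak{Z})=\sum_{w}c_{w}\,w\,(\mathbb{L}^{\mathfrak{m}})^{2s_{w}}$. It suffices to show $c_{w}=0$ for every word $w$ containing $f_{1}$. If $w=f_{1}w'$, then by $(\ref{eq:derf})$ the condition $D_{1}(\mathfrak{Z})=0$ reads $\sum_{w=f_{1}w'}c_{w}\,f_{1}\otimes w'(\mathbb{L}^{\mathfrak{m}})^{2s_{w}}=0$, and linear independence of the tails forces $c_{w}=0$. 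If $w=f_{2r+1}w'$ with $r\geq 1$, then $D_{2r+1}(\mathfrak{Z})\in\mathcal{L}_{2r+1}\otimes\mathcal{H}^{1}$ says exactly that the tail $w'$ of every surviving word lies in $H^{1}$, i.e.\ contains no $f_{1}$; as the head $f_{2r+1}\neq f_{1}$, such a $w$ contains no $f_{1}$ at all. The empty word contributes only powers of $(\mathbb{L}^{\mathfrak{m}})^{2}\in\mathcal{H}^{1}$. Hence no word of $\phi^{2}(\mathfrak{Z})$ contains $f_{1}$, so $\phi^{2}(\mathfrak{Z})\in H^{1}$ and therefore $\mathfrak{Z}\in\mathcal{H}^{1}$.

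The arithmetic content is light; the delicate step is the setup in the first paragraph, namely arranging the non-canonical isomorphisms $\phi^{1},\phi^{2}$ compatibly so that $\mathcal{H}^{1}$ is \emph{literally} the no-$f_{1}$ part of $\mathcal{H}^{2}$ and that under this choice $D_{1}$ is precisely the $f_{1}$-deconcatenation. This rests on the comparison of the two $\mathrm{Ext}$-computations (whose sole discrepancy is in degree one), on the coinvariance of $\mathbb{L}^{\mathfrak{m}}$, and on Theorem $\ref{kerdn}$, proved by the same $\phi$-transport, which guarantees that an element of $\mathcal{H}^{2}$ is determined by its family of derivatives up to the one-dimensional primitive space $\mathbb{Q}\,\zeta^{\mathfrak{m}}(n)\subset\mathcal{H}^{1}$, so no ramified contribution can remain invisible to the operators $D_{2r+1}$.
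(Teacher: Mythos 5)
Your proposal is correct and follows the route the paper itself intends: the paper states Corollary \ref{criterehonoraire} without proof (deferring to the motivic theory and to \cite{Gl}), and the evident argument --- the same transport through the non-canonical comodule isomorphism $(\ref{eq:phih})$ that the paper uses to prove Theorem \ref{kerdn}, under which $\mathcal{H}^{1}$ becomes the span of $f_{1}$-free words, $D_{1}=0$ controls the leading letter, and $D_{2r+1}(\mathfrak{Z})\in\mathcal{L}_{2r+1}\otimes\mathcal{H}^{1}$ controls the tails --- is exactly what you write. Two remarks on the points you yourself flag as delicate: the compatibility $\phi^{2}(\mathcal{H}^{1})=H^{1}$ is the real content but is true (it amounts to the kernel of $\mathfrak{u}^{2}\twoheadrightarrow\mathfrak{u}^{1}$ being the Lie ideal generated by $\sigma_{1}$, dual to your comparison of the two $\mathrm{Ext}$-computations); and although $(\ref{eq:derf})$ taken literally omits the contributions of prefixes of length $\geq 2$, whose classes in $\mathcal{L}_{2r+1}$ need not vanish, your argument survives this imprecision, since pairing the left factor of $D_{2r+1}(\mathfrak{Z})$ with the generator $\sigma_{2r+1}\in\mathfrak{u}_{2r+1}$ annihilates those classes, extracts precisely the single-letter deconcatenation term, and carries $\mathcal{L}_{2r+1}\otimes\mathcal{H}^{1}$ into $\mathcal{H}^{1}$, so the conclusion that every tail is $f_{1}$-free still holds.
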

It enables to prove than a (motivic) ES is unramified by recursion on depth and weight. Some examples of unramified motivic ES, are provided in section $4$ or in Appendix D, with alternated patterns of even and odds.

\section{Definitions, antipode and hybrid relations}

\subsection{Star, Sharp versions}

In the motivic iterated integrals as defined in $\S 2.2$, $I^{\mathfrak{m}}(\cdots, a_{i}, \cdots)$, $a_{i}$ were in $\lbrace 0, \pm 1 \rbrace$. By linearity, we can extend it to $a_{i}\in \lbrace \pm \star, \pm \sharp\rbrace$, referring to the following differential forms:
$$\boldsymbol{\omega_{\pm\star}}\mathrel{\mathop:}= \omega_{\pm 1}- \omega_{0}=\frac{dt}{t(\pm t -1)} \quad \text{ and }  \quad \boldsymbol{\omega_{\pm\sharp}}\mathrel{\mathop:}=2 \omega_{\pm 1}-\omega_{0}=\frac{(t \pm 1)dt}{t(t\mp 1)}.$$
Now let introduce the variants of motivic Euler sums needed later \footnote{Possibly regularized with $(\ref{eq:shufflereg})$.}
\begin{defi}
Using the expression in terms of motivic iterated integrals ($\ref{eq:reprinteg}$), motivic Euler sums are, with $n_{i}\in\mathbb{Z}^{\ast}$, $\epsilon_{i}\mathrel{\mathop:}=\textrm{sign}(n_{i})$:
\begin{equation}\label{eq:mes}
\zeta^{\mathfrak{m}}_{k} \left(n_{1}, \ldots , n_{p} \right) \mathrel{\mathop:}= (-1)^{p}I^{\mathfrak{m}} \left(0; 0^{k}, \epsilon_{1}\cdots \epsilon_{p}, 0^{\mid n_{1}\mid -1} ,\ldots, \epsilon_{i}\cdots \epsilon_{p}, 0^{\mid n_{i}\mid -1} ,\ldots, \epsilon_{p}, 0^{\mid n_{p}\mid-1} ;1 \right).
\end{equation}
\begin{description}
\item[MES${\star}$] are defined by a similar integral representation as $(\ref{eq:mes})$ with $\omega_{\pm \star}$ (instead of $\omega_{\pm 1}$), $\omega_{0}$ and a $\omega_{\pm 1}$ at the beginning:
$$\zeta_{k}^{\star,\mathfrak{m}} \left(n_{1}, \ldots , n_{p} \right) \mathrel{\mathop:}= (-1)^{p} I^{\mathfrak{m}} \left(0; 0^{k}, \epsilon_{1}\cdots \epsilon_{p}, 0^{\mid n_{1}\mid-1}, \epsilon_{2}\cdots \epsilon_{p}\star, 0^{\mid n_{2}\mid -1}, \ldots, \epsilon_{p}\star, 0^{\mid n_{p}\mid-1} ;1 \right).$$
\item[MES${\star\star}$,] with only $\omega_{\pm \star}, \omega_{0}$:
$$\zeta_{k}^{\star\star,\mathfrak{m}} \left(n_{1}, \ldots , n_{p} \right) \mathrel{\mathop:}= (-1)^{p} I^{\mathfrak{m}}  \left(0; 0^{k}, \epsilon_{1}\cdots \epsilon_{p}\star, 0^{\mid n_{1}\mid-1}, \epsilon_{2}\cdots \epsilon_{p}\star, 0^{\mid n_{2}\mid-1}, \ldots, \epsilon_{p}\star, 0^{\mid n_{p}\mid-1} ;1 \right).$$
\item[MES${\sharp}$,] with $\omega_{\pm \sharp},\omega_{0} $ and a $\omega_{\pm 1}$ at the beginning:
$$\zeta_{k}^{\sharp,\mathfrak{m}} \left(n_{1}, \ldots , n_{p} \right) \mathrel{\mathop:}= 2 (-1)^{p} I^{\mathfrak{m}}  \left(0; 0^{k}, \epsilon_{1}\cdots \epsilon_{p}, 0^{\mid n_{1}\mid-1}, \epsilon_{2}\cdots \epsilon_{p}\sharp, 0^{\mid n_{2}\mid -1}, \ldots, \epsilon_{p}\sharp, 0^{\mid n_{p}\mid-1} ;1 \right).$$
\item[MES$\sharp\sharp$,] with only $\omega_{\pm \sharp}, \omega_{0}$:
$$\zeta_{k}^{\sharp\sharp,\mathfrak{m}} \left(n_{1}, \ldots , n_{p} \right) \mathrel{\mathop:}= (-1)^{p} I^{\mathfrak{m}}  \left(0; 0^{k}, \epsilon_{1}\cdots \epsilon_{p}\sharp, 0^{\mid n_{1}\mid-1},  \epsilon_{2}\cdots \epsilon_{p}\sharp, 0^{\mid n_{2}\mid-1}, \ldots, \epsilon_{p}\sharp, 0^{\mid n_{p}\mid-1} ;1 \right).$$
\end{description}
\end{defi}

\textsc{Remarks}:
\begin{itemize}
\item[$\cdot$] The Lie algebra of the fundamental group $\pi_{1}^{dR}(\mathbb{P}^{1}\diagdown \lbrace 0, 1, \infty\rbrace)=\pi_{1}^{dR}(\mathcal{M}_{0,4})$ is generated by $e_{0}, e_{1},e_{\infty}$ with the only condition than $e_{0}+e_{1}+e_{\infty}=0$\footnote{ For the case of motivic Euler sums, it is the Lie algebra generated by $e_{0}, e_{1}, e_{-1}, e_{\infty}$ with the only condition than $e_{0}+e_{1}+e_{-1}+e_{\infty}=0$. Note that $e_{i}$ corresponds to the class of the residue around $i$ in $H_{dR}^{1}(\mathbb{P}^{1} \diagdown \lbrace 0, \pm 1, \infty \rbrace)^{\vee}$. }. If we keep $e_{0}$ and $e_{\infty}$ as generators, instead of the usual $e_{0},e_{1}$, it leads towards MMZV  $^{\star\star}$ up to a sign, instead of MMZV since $-\omega_{0}+\omega_{1}- \omega_{\star}=0$. We could also choose $e_{1}$ and $e_{\infty}$ as generators, which leads to another version of MMZV, not well-spread. 
\item[$\cdot$] Euler $\star$ sums, which already appear in the literature, correspond to the summation $\ref{eq:defes}$ with large inequalities:
$$ \zeta^{\star}\left(n_{1}, \ldots , n_{p} \right) =  \sum_{0 < k_{1}\leq k_{2} \leq \cdots \leq k_{p}} \frac{\epsilon_{1}^{k_{1}} \cdots \epsilon_{p}^{k_{p}}}{k_{1}^{\mid n_{1}\mid} \cdots k_{p}^{\mid n_{p}\mid}}, \quad \epsilon_{i}\mathrel{\mathop:}=\textrm{sign}(n_{i}), \quad n_{i}\in\mathbb{Z}^{\ast}, n_{p}\neq 1. $$
\item[$\cdot$]  Yamamoto in $\cite{Ya}$ defined a polynomial in t, $\zeta^{t}(\bullet)$ which naturally interpolates $\zeta,\zeta^{\star}, \zeta^{\sharp}$ versions:  $\zeta^{t}(n_{1}, \cdots, n_{p})= \sum 2^{n_{+}} \zeta(n_{1} \circ \cdots \circ n_{p})$.\footnote{He proved in particular generalisations of sum and cyclic formulas.}
\item[$\cdot$] By linearity, for $A,B$ sequences in $\lbrace 0, \pm 1, \pm \star, \pm \sharp \rbrace$:
\begin{equation} \label{eq:miistarsharp}
 I^{\mathfrak{m}}(A, \pm \star, B)= I^{\mathfrak{m}}(A, \pm 1, B) - I^{\mathfrak{m}}(A, 0, B),  \text{ and }  I^{\mathfrak{m}}(A, \pm \sharp, B)=  2 I^{\mathfrak{m}}(A, \pm 1, B) - I^{\mathfrak{m}}(A, 0, B).
\end{equation} 
Therefore by linearity and $\shuffle$-regularisation $(\ref{eq:shufflereg})$, all these versions ($\star$, $\star\star$, $\sharp$ or $\sharp\sharp$) are $\mathbb{Q}$-linear combination of MES. Indeed, with $n_{+}$ the number of $+$ among $\circ$:\footnote{As before, the $\mlq + \mrq$ is a summation of absolute values while signs are multiplied.}
$$\begin{array}{llll}
\zeta ^{\star,\mathfrak{m}}(n_{1}, \ldots, n_{p})  &=& \sum_{\circ=\mlq + \mrq \text{ or } ,} & \zeta ^{\mathfrak{m}}(n_{1}\circ \cdots \circ n_{p}) \\
\\
\zeta ^{\mathfrak{m}}(n_{1}, \ldots, n_{p})  &=& \sum_{\circ=\mlq + \mrq \text{ or } ,} (-1)^{n_{+}} & \zeta ^{\star,\mathfrak{m}}(n_{1}\circ \cdots \circ n_{p})  \\
\\
 \zeta ^{ \sharp,\mathfrak{m}}(n_{1}, \ldots, n_{p}) &=& \sum_{\circ=\mlq + \mrq \text{ or } ,} 2^{p-n_{+}} & \zeta ^{\mathfrak{m}}(n_{1}\circ \cdots \circ n_{p}) \\
 \\
 \zeta ^{\mathfrak{m}}(n_{1}, \ldots, n_{p}) &=&  \sum_{\circ=\mlq + \mrq \text{ or } ,} (-1)^{n_{+}} 2^{-p} & \zeta ^{ \sharp,\mathfrak{m}}(n_{1}\circ \cdots \circ n_{p})   \\
 \\
  \zeta ^{\star\star,\mathfrak{m}}(n_{1}, \ldots, n_{p}) &=& \sum_{i=0}^{p-1} & \zeta ^{\star,\mathfrak{m}}_{\mid n_{1}\mid+\cdots+\mid n_{i}\mid}(n_{i+1}, \cdots , n_{p}) \\
  & = & \sum_{\circ=\mlq + \mrq \text{ or } ,\atop i=0}^{p-1} & \zeta^{\mathfrak{m}}_{\mid n_{1}\mid+\cdots+\mid n_{i}\mid}(n_{i+1}\circ \cdots \circ n_{p})\\
  \\
 \zeta ^{ \sharp\sharp,\mathfrak{m}}(n_{1}, \ldots, n_{p}) &=& \sum_{ i=0}^{p-1} & \zeta ^{\sharp,\mathfrak{m}}_{\mid n_{1}\mid+\cdots+\mid n_{i}\mid}(n_{i+1}, \cdots , n_{p})  \\
 & = & \sum_{\circ=\mlq + \mrq \text{ or } ,\atop i=0}^{p-1} 2^{p-i-n_{+}} & \zeta^{\mathfrak{m}}_{\mid n_{1}\mid+\cdots+\mid n_{i}\mid}(n_{i+1}\circ \cdots \circ n_{p}) \\
 \\
  \zeta ^{\star,\mathfrak{m}}(n_{1}, \ldots, n_{p})  &=& \zeta ^{\star\star,\mathfrak{m}}(n_{1}, \ldots, n_{p})& -\quad \zeta ^{\star\star,\mathfrak{m}}_{\mid n_{1}\mid}(n_{2}, \ldots, n_{p})
 \\
\\
\zeta ^{\sharp,\mathfrak{m}}(n_{1}, \ldots, n_{p}) &=& \zeta ^{\sharp\sharp}(n_{1}, \ldots, n_{p})
&-\quad \zeta ^{\sharp\sharp,\mathfrak{m}}_{\mid n_{1}\mid}(n_{2}, \ldots, n_{p}) \\ 

\end{array}$$
\end{itemize}
\texttt{Examples:} Expressing them as $\mathbb{Q}$ linear combinations of motivic Euler sums\footnote{To get rid of the $0$ in front of the MZV, as in the last example, we use the shuffle regularisation $\ref{eq:shufflereg}$.}:
$$\begin{array}{lll}
\zeta^{\star,\mathfrak{m}}(2,\overline{1},3) & = & -I^{\mathfrak{m}}(0;-1,0,-\star, \star,0,0; 1) \\
& = &  \zeta^{\mathfrak{m}}(2,\overline{1},3)+ \zeta^{\mathfrak{m}}(\overline{3},3)+ \zeta^{\mathfrak{m}}(2,\overline{4})+\zeta^{\mathfrak{m}}(\overline{6}) \\
\zeta^{\sharp,\mathfrak{m}}(2,\overline{1},3) &=& - 2 I^{\mathfrak{m}}(0;-1,0,-\sharp, \sharp,0,0; 1) \\
 &=& 8\zeta^{\mathfrak{m}}(2,\overline{1},3)+ 4\zeta^{\mathfrak{m}}(\overline{3},3)+ 4\zeta^{\mathfrak{m}}(2,\overline{4})+2\zeta^{\mathfrak{m}}(\overline{6})\\
 \zeta^{\star\star,\mathfrak{m}}(2,\overline{1},3) &=& -I^{\mathfrak{m}}(0;-\star,0,-\star, \star,0,0; 1) \\
 &=& \zeta^{\star,\mathfrak{m}}(2,\overline{1},3)+ \zeta^{\star,\mathfrak{m}}_{2}(\overline{1},3)+\zeta^{\star,\mathfrak{m}}_{3}(3) \\
  &=&  \zeta^{\star,\mathfrak{m}}(2, \overline{1}, 3)+\zeta^{\star,\mathfrak{m}}(\overline{3},3)+3 \zeta^{\star,\mathfrak{m}}(\overline{2},4)+6\zeta^{\star,\mathfrak{m}}(1, 5)-10\zeta^{\star,\mathfrak{m}}(6)\\
 &=& 11\zeta^{\mathfrak{m}}(\overline{6})+2\zeta^{\mathfrak{m}}(\overline{3}, 3)+\zeta^{\mathfrak{m}}(2, \overline{4})+\zeta^{\mathfrak{m}}(2,\overline{1}, 3)+3\zeta^{\mathfrak{m}}(\overline{2}, 4)+6\zeta^{\mathfrak{m}}(\overline{1}, 5)-10\zeta^{\mathfrak{m}}(6)\\
\end{array}$$

\textsc{Remark:} Remark that the shuffle relation, coming from the iterated integral representation is clearly \textit{motivic}, and true for MES. It was not obvious that the stuffle relation, coming from the multiplication of series, was motivic but it is now proven for MII.\footnote{It can be deduced from works by Goncharov on mixed Hodge structures, but was also proved directly by G. Racinet, in his thesis, or I. Souderes in $\cite{So}$ via blow-ups. } In particular, for $\textbf{ a }= (a_{1}, \ldots, a_{r} ), \textbf{ b }= (b_{1}, \ldots, b_{s})$ (inner order in $\textbf{ a }$, resp. $\textbf{ b }$ preserved):
$$\begin{array}{lll}
\zeta^{\mathfrak{m}}\left( \textbf{ a } \right) \zeta^{\mathfrak{m}}\left( \textbf{ b } \right) & = \sum_{c_{j}   = \left( a_{i} , b_{i'} \right)  \text{ or }\left(  a_{i}\mlq + \mrq b_{i'} \right)} & \zeta^{\mathfrak{m}}\left( c_{1}, \ldots, c_{m} \right)\\
\zeta^{\star,\mathfrak{m}}\left(\textbf{ a } \right) \zeta^{\star, \mathfrak{m}}\left(  \textbf{ b } \right) & =\sum_{c_{j} = (a_{i},b_{i'})  \text{ or }\left(a_{i}\mlq + \mrq b_{i'} \right) } (-1)^{r+s+m} &\zeta^{\star, \mathfrak{m}}\left( c_{1}, \ldots, c_{m}  \right)\\
\zeta^{\sharp , \mathfrak{m}}\left( \textbf{a} \right) \zeta^{\sharp, \mathfrak{m}}\left( \textbf{ b } \right)& =\sum_{k\geq 0, \left(  c_{j} \right)  = \left( a_{i}\mlq + \mrq \sum_{l=1}^{k} a_{i+l} \mlq + \mrq b_{i'+l}\right)  \atop \text{ or }  \left( b_{i'}\mlq + \mrq \sum_{l=1}^{k} a_{i+l} \mlq + \mrq b_{i'+l} \right) }  (-1)^{\frac{r+s-m}{2}} & \zeta^{\sharp, \mathfrak{m}}\left( c_{1}, \ldots, c_{m}\right).
\end{array}$$
Note that in the depth graded, stuffle corresponds to shuffle the sequences $\boldsymbol{a} , \boldsymbol{b}$.

\subsection{Antipode relation}

We have at our disposal two combinatorial Hopf algebra structures, which lead to some so-called \textit{antipodal} relations for MES in the coalgebra $\mathcal{L}$, i.e. modulo products.\\
First recall that if $A$ is a graded connected bialgebra, there exists an unique antipode S leading to a Hopf algebra structure: the graded map defined by, using Sweedler notations\footnote{There, $\Delta (x)= 1\otimes x+ x\otimes 1+ \sum x_{(1)}\otimes x_{(2)}= \Delta'(x)+ 1\otimes x+ x\otimes 1 .$}:
\begin{equation} \label{eq:Antipode} S(x)= -x-\sum S(x_{(1)}) \cdot x_{(2)},
\end{equation}
Hence, in the quotient $A/ A_{>0}\cdot A_{>0} $: $S(x) \equiv -x $.
\subsubsection{The $\shuffle$ Hopf algebra}

Let $X=\lbrace a_{1},\cdots, a_{n} \rbrace$. Then $A_{\shuffle}\mathrel{\mathop:}=\mathbb{Q} \langle X^{\times} \rangle$ the $\mathbb{Q}$-vector space of non commutative polynomials in $a_{i}$ is a connected graded Hopf algebra, called the \textit{shuffle Hopf algebra}, with the $\shuffle$ shuffle product, the deconcatenation coproduct $\Delta_{D}$ and antipode $S_{\shuffle}$:
\begin{equation} \label{eq:shufflecoproduct}  \Delta_{D}(a_{i_{1}}\cdots a_{i_{n}})= \sum_{k=0}^{n} a_{i_{1}}\cdots a_{i_{k}} \otimes a_{i_{k+1}} \cdots a_{i_{n}}, \quad S_{\shuffle} (a_{i_{1}} \cdots a_{i_{n}})= (-1)^{n} a_{i_{n}} \cdots a_{i_{1}}.
\end{equation}
Via the equivalence of category between $\mathbb{Q}$-Hopf algebra and $\mathbb{Q}$-Affine group scheme, it corresponds to:
\begin{equation} \label{eq:gpschshuffle} 
G=\text{Spec} A_{\shuffle} : R \rightarrow \text{Hom}(\mathbb{Q} \langle X \rangle, R)=\lbrace S\in R\langle\langle a_{i} \rangle\rangle \mid \Delta_{\shuffle} S= S\widehat{\otimes} S, \epsilon(S)=1 \rbrace,
\end{equation} 
where $\Delta_{\shuffle}$ is the coproduct dual to $\shuffle$: $\Delta_{\shuffle}(a_{i_{1}}\cdots a_{i_{n}})= \left( 1\otimes a_{i_{1}}+ a_{i_{1}}\otimes 1\right) \cdots \left( 1\otimes a_{i_{n}}+ a_{i_{n}}\otimes 1\right)$.
Let restrict now to $X=\lbrace 0,-1,+1 \rbrace$. The shuffle relation for motivic Euler sums amounts to say:
\begin{equation}\label{eq:shuffleim}
I^{\mathfrak{m}}(0; \cdot ; 1) \text{ is a morphism of Hopf algebra from } A_{\shuffle} \text{ to }  (\mathbb{R},\times): 
\end{equation}

\begin{lemm}[\textbf{Antipode $\shuffle$}] \label{eq:antipodeshuffle2}
In the coalgebra $\mathcal{L}$, with $w$ the weight, $W$ any word in $\lbrace 0,\pm 1\rbrace$, $\lbrace 0, \pm \star\rbrace$ or $\lbrace 0, \pm\sharp\rbrace$, and $\bullet$ standing for MES, MES$^{\star\star}$ or MES$^{\sharp\sharp}$:
$$\begin{array}{l}
 \zeta^{\bullet,\mathfrak{l}}_{n-1}\left( n_{1}, \ldots, n_{p} \right) \equiv(-1)^{w+1}\zeta^{\bullet,\mathfrak{l}}_{\mid n_{p}\mid -1}\left( n_{p-1}, \ldots, n_{1},\epsilon_{1}\cdots \epsilon_{p} \cdot n \right), \quad \text{with } \epsilon_{i}:=\textrm{sign}(n_{i}).\\
 \text{ } \\
 I^{\mathfrak{l}}(0;W;\epsilon)\equiv (-1)^{w} I^{\mathfrak{l}}(\epsilon;\widetilde{W};0) \equiv (-1)^{w+1} I^{\mathfrak{l}}(0; \widetilde{W}; \epsilon) , \quad\text{with } \widetilde{W} \text{ the \textit{reversed} word }.
\end{array}$$
\end{lemm}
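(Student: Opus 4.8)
The plan is to regard the second displayed line, stated for an arbitrary word $W$, as the real content of the lemma, and then to obtain the first line by specialising $W$ to the word attached to a motivic Euler sum. Write $w$ for the weight, i.e. the number of letters of $W$, and let $\widetilde{W}$ be its reversal. The first of the two relations, $I^{\mathfrak{l}}(0;W;\epsilon)=(-1)^{w}I^{\mathfrak{l}}(\epsilon;\widetilde{W};0)$, is simply the path-reversal property of motivic iterated integrals (item (v) among the properties recalled above): it holds already at the level of $I^{\mathfrak{m}}$ and thus descends to $\mathcal{L}$, with no reduction modulo products needed. Everything therefore reduces to the second relation $I^{\mathfrak{l}}(0;W;\epsilon)\equiv(-1)^{w+1}I^{\mathfrak{l}}(0;\widetilde{W};\epsilon)$.

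For this I would use the shuffle Hopf algebra $A_{\shuffle}$. Its antipode is explicit, $S_{\shuffle}(W)=(-1)^{w}\widetilde{W}$ by $(\ref{eq:shufflecoproduct})$, while the general antipode formula $(\ref{eq:Antipode})$ gives $S_{\shuffle}(W)\equiv -W$ modulo shuffle products; comparing the two yields $W\equiv(-1)^{w+1}\widetilde{W}$ in $A_{\shuffle}/\left((A_{\shuffle})_{>0}\shuffle(A_{\shuffle})_{>0}\right)$. Now $W\mapsto I^{\mathfrak{a}}(0;W;\epsilon)$ is an algebra morphism from $A_{\shuffle}$ to $\mathcal{A}$ — this is the shuffle relation for iterated integrals with fixed endpoints $(\ref{eq:shuffleim})$, the case $\epsilon=-1$ being reduced to $\epsilon=1$ by the homothety $x\mapsto -x$ — so it carries shuffle products into $\mathcal{A}_{>0}\cdot\mathcal{A}_{>0}$ and hence descends to $\mathcal{L}=\mathcal{A}_{>0}/\mathcal{A}_{>0}\cdot\mathcal{A}_{>0}$. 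Applying this descended map to $W\equiv(-1)^{w+1}\widetilde{W}$ gives exactly $I^{\mathfrak{l}}(0;W;\epsilon)\equiv(-1)^{w+1}I^{\mathfrak{l}}(0;\widetilde{W};\epsilon)$. Equivalently, and perhaps more transparently, one pushes the antipode identity $\sum_{k=0}^{w}S_{\shuffle}(a_{1}\cdots a_{k})\shuffle(a_{k+1}\cdots a_{w})=0$ through the morphism and observes that in $\mathcal{L}$ only the two extreme terms $k=0$ and $k=w$ survive, every intermediate term being a genuine product of two positive-weight factors. The argument is insensitive to the alphabet: since $\omega_{\pm\star}$ and $\omega_{\pm\sharp}$ are ordinary one-forms, path reversal, the shuffle relation and the antipode formula apply verbatim to words in $\{0,\pm\star\}$ or $\{0,\pm\sharp\}$, which is precisely why the statement is made for MES$^{\star\star}$ and MES$^{\sharp\sharp}$ (whose words are homogeneous in one alphabet) rather than for the mixed single-$\star$ or single-$\sharp$ versions.

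It remains to read the first line off the second. Taking the word of $\zeta^{\mathfrak{l}}_{n-1}(n_{1},\ldots,n_{p})$ from $(\ref{eq:mes})$, namely $W=0^{\,n-1}\,\eta_{1}\,0^{|n_{1}|-1}\cdots\eta_{p}\,0^{|n_{p}|-1}$ with $\eta_{i}=\epsilon_{i}\cdots\epsilon_{p}$, its reversal is $\widetilde{W}=0^{|n_{p}|-1}\,\eta_{p}\,0^{|n_{p-1}|-1}\cdots\eta_{1}\,0^{\,n-1}$. Parsing $\widetilde{W}$ back into Euler-sum shape, the leading block of zeros gives the index $|n_{p}|-1$, the successive blocks give arguments of absolute value $|n_{p-1}|,\ldots,|n_{1}|,n$, and the signs of the new arguments are recovered by the telescoping identity $\eta_{i}\eta_{i+1}=\epsilon_{i}$, so that the first $p-1$ arguments are exactly $n_{p-1},\ldots,n_{1}$ while the last is $(\epsilon_{1}\cdots\epsilon_{p})\,n$. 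Hence $\widetilde{W}$ is the word of $\zeta^{\mathfrak{l}}_{|n_{p}|-1}(n_{p-1},\ldots,n_{1},\epsilon_{1}\cdots\epsilon_{p}\,n)$, and the second line (with $\epsilon=1$) produces precisely the claimed first line; the $\star\star$ and $\sharp\sharp$ cases are identical, their words having the same block shape.

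The only place demanding genuine care is this final reindexing: one must verify that the interleaved strings of zeros relabel consecutive arguments correctly and, above all, that the products of the $\eta_{i}$ collapse to the signs $\epsilon_{i}=\mathrm{sign}(n_{i})$ and to the global sign $\epsilon_{1}\cdots\epsilon_{p}$ carried by the last, newly created argument. The conceptual input (antipode $\equiv -\,\mathrm{id}$ modulo products, transported along the shuffle morphism) is short and robust; it is the combinatorial sign bookkeeping in passing between words and Euler-sum indices where an error would most easily creep in, so I would verify that step on a small explicit example such as $p=2$ before writing it up in general.
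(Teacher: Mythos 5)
Your proof is correct and is essentially the paper's own argument: the paper likewise combines the explicit shuffle antipode $S_{\shuffle}(W)=(-1)^{w}\widetilde{W}$ with the recursive antipode formula $(\ref{eq:Antipode})$, which gives $S_{\shuffle}\equiv -\mathrm{id}$ in the coalgebra $\mathcal{L}$, transported through the shuffle morphism $I^{\mathfrak{m}}(0;\cdot;1)$ of $(\ref{eq:shuffleim})$. You merely spell out in more detail the descent to $\mathcal{L}$ and the word-to-index sign bookkeeping yielding the first displayed line, which the paper relegates to a footnote.
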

\noindent
This antipode $\shuffle$ corresponds to a composition of path followed by a reverse of path in $\mathcal{L}$.
\begin{proof}
For motivic iterated integrals: $ S_{\shuffle} (I^{\mathfrak{m}}(0; a_{1}, \ldots, a_{n}; 1))= (-1)^{n}I^{\mathfrak{m}}(0; a_{n}, \ldots, a_{1}; 1)$.\footnote{Equivalently: $S_{\shuffle}\left( \zeta^{\bullet,\mathfrak{l}}_{n-1}\left( n_{1}, \ldots, n_{p}  \right) \right)  \equiv (-1)^{w}\zeta^{\bullet,\mathfrak{l}}_{\mid n_{p}\mid -1}\left( n_{p-1}, \ldots, n_{1}, \epsilon_{1}\cdot\ldots\cdot\epsilon_{p} \cdot n  \right)$.} Then, looking at the antipode recursive formula $\eqref{eq:Antipode}$ in the coalgebra $\mathcal{L}$:
$$ S_{\shuffle} (I^{\mathfrak{l}}(0; a_{1}, \ldots, a_{n}; 1))\equiv - I^{\mathfrak{l}}(0; a_{1}, \ldots, a_{n}; 1).$$
\end{proof}

\subsubsection{The $\ast$ Hopf algebra}
Let $Y=\lbrace \cdots, y_{-n}, \ldots, y_{-1}, y_{1},\cdots, y_{n}, \cdots \rbrace$ an infinite alphabet, and $y_{0}=1$ the empty word. Then $A_{\ast}\mathrel{\mathop:}=\mathbb{Q} \langle Y^{\times} \rangle$ the non commutative polynomials in $y_{i}$ is a graded connected Hopf algebra called the\textit{ stuffle Hopf algebra}, with the stuffle $\ast$ product and the coproduct:
\begin{equation} \label{eq:stufflecoproduct} 
\Delta_{D\ast}(y_{n_{1}} \cdots y_{n_{p}})= \sum_{} y_{n_{1}} \cdots y_{n_{i}}\otimes y_{n_{i+1}}, \ldots, y_{n_{p}}, \quad n_{i}\in \mathbb{Z}^{\ast}. 
\end{equation}
The completed dual is the Hopf algebra of series $\mathbb{Q}\left\langle \left\langle Y \right\rangle \right\rangle $ with the coproduct:
$$\Delta_{\ast}(y_{n})= \sum_{ k =0 \atop \textrm{sign}(n)=\epsilon_{1}\epsilon_{2}}^{\mid n \mid} y_{\epsilon_{1} k} \otimes y_{\epsilon_{2}( n-k)}.$$
Stuffle for MES amounts to say that
$\zeta^{\mathfrak{m}}(\cdot)$ is a morphism of Hopf algebra from $A_{\ast}$ to $(\mathbb{R},\times)$.\\
Now, let introduce the notations, with $n_{i}\in\mathbb{Z}^{\ast}$:\footnote{Here $\star$ resp. $\sharp$ refers naturally to the Euler $\star$ resp. $\sharp$, sums, as we see in the next lemma.}
$$(y_{n_{1}} \cdots y_{n_{p}})^{\star} \mathrel{\mathop:}=  \sum_{1=i_{0}< i_{1} < \cdots < i_{k-1}\leq  i_{k+1}=p \atop k\geq 0} y_{n_{i_{0}}\mlq + \mrq\cdots \mlq + \mrq n_{i_{1}-1}} \cdots y_{n_{i_{j}}\mlq + \mrq\cdots \mlq + \mrq n_{i_{j+1}-1}} \cdots  y_{n_{i_{k}}\mlq + \mrq \cdots \mlq + \mrq n_{i_{k+1}}}.$$
$$(y_{n_{1}} \cdots y_{n_{p}})^{\sharp} \mathrel{\mathop:}=  \sum_{1=i_{0}< i_{1} < \cdots < i_{k-1}\leq  i_{k+1}=p \atop k\geq 0} 2^{k+1} y_{n_{i_{0}}\mlq + \mrq\cdots \mlq + \mrq n_{i_{1}-1}} \cdots y_{n_{i_{j}} \mlq + \mrq \cdots \mlq + \mrq n_{i_{j+1}-1}} \cdots  y_{n_{i_{k}}\mlq + \mrq \cdots \mlq + \mrq n_{i_{k+1}}},$$
The operation $\mlq + \mrq$ still indicates that signs are multiplied whereas absolute values are summed. It is straightforward to check that:
\begin{equation} 
\Delta_{D\ast}(w^{\star})=(\Delta_{D\ast}(w))^{\star} , \quad \text{ and } \quad  \Delta_{D\ast}(w^{\sharp})=(\Delta_{D\ast}(w))^{\sharp}.
\end{equation}

\begin{lemm}[\textbf{Antipode $\ast$}]
In the coalgebra $\mathcal{L}$, with $n_{i}\in\mathbb{Z}^{\ast}$
$$\zeta^{\mathfrak{l}}_{n-1}(n_{1}, \ldots, n_{p}) \equiv (-1)^{p+1}\zeta^{\star,\mathfrak{l}}_{n-1}(n_{p}, \ldots, n_{1}).$$
$$\zeta^{\sharp,\mathfrak{l}}_{n-1}(n_{1}, \ldots, n_{p})\equiv (-1)^{p+1}\zeta^{\sharp,\mathfrak{l}}_{n-1}(n_{p}, \ldots, n_{1}).$$
\end{lemm}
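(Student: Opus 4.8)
The plan is to mirror the proof of the Antipode $\shuffle$ Lemma (Lemma~\ref{eq:antipodeshuffle2}): combine an \emph{explicit} formula for the antipode of the stuffle Hopf algebra $A_{\ast}$ with the general principle, recalled around \eqref{eq:Antipode}, that the antipode of any graded connected bialgebra reduces to $-\mathrm{id}$ modulo decomposables, i.e. in the coalgebra $\mathcal{L}=\mathcal{A}_{>0}/\mathcal{A}_{>0}\cdot\mathcal{A}_{>0}$. The only structural input about Euler sums that is needed is the \emph{motivic} stuffle relation, which makes $\zeta^{\mathfrak{m}}(\cdot)$ an algebra morphism out of $(A_{\ast},\ast)$; the coproduct will never have to be transported.

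First I would compute $S_{\ast}$ on $A_{\ast}=(\mathbb{Q}\langle Y\rangle,\ast,\Delta_{D\ast})$ explicitly, proving by induction on the depth $p$ that $S_{\ast}(y_{n_{1}}\cdots y_{n_{p}})=(-1)^{p}(y_{n_{p}}\cdots y_{n_{1}})^{\star}$. The base case $p=1$ is immediate, since $y_{n_{1}}$ is primitive for the deconcatenation coproduct, so $S_{\ast}(y_{n_{1}})=-y_{n_{1}}=-(y_{n_{1}})^{\star}$. For the inductive step I feed the recursion $S_{\ast}(w)=-w-\sum S_{\ast}(w_{(1)})\ast w_{(2)}$ over the proper deconcatenations into the induction hypothesis; the compatibility $\Delta_{D\ast}(w^{\star})=(\Delta_{D\ast}(w))^{\star}$ stated just before the Lemma is exactly what makes the stuffle products regroup into the single $\star$-sum over the reversed word. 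The mechanism is already visible at $p=2$: using $y_{n_{1}}\ast y_{n_{2}}=y_{n_{1}}y_{n_{2}}+y_{n_{2}}y_{n_{1}}+y_{n_{1}\mlq+\mrq n_{2}}$ one gets $S_{\ast}(y_{n_{1}}y_{n_{2}})=y_{n_{2}}y_{n_{1}}+y_{n_{1}\mlq+\mrq n_{2}}=(y_{n_{2}}y_{n_{1}})^{\star}$. This is the standard quasi-shuffle antipode, the salient feature being that for the \emph{ordinary} stuffle the reversed word must be \emph{star-ised}.

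I then transport this to $\mathcal{L}$. Because the stuffle morphism preserves the augmentation, it sends $A_{\ast,>0}\cdot A_{\ast,>0}$ into $\mathcal{A}_{>0}\cdot\mathcal{A}_{>0}$ and hence descends to a linear map on indecomposables $A_{\ast}/A_{\ast,>0}\cdot A_{\ast,>0}\to\mathcal{L}$, under which $(y_{n_{1}}\cdots y_{n_{p}})^{\star}\mapsto\zeta^{\star,\mathfrak{l}}(\cdots)$. Applying $S_{\ast}(x)\equiv -x$ in $\mathcal{L}$ to the explicit formula gives $(-1)^{p}\zeta^{\star,\mathfrak{l}}(n_{p},\ldots,n_{1})\equiv-\zeta^{\mathfrak{l}}(n_{1},\ldots,n_{p})$, which is the first identity. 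The free leading index, written $n-1$, is the shuffle-regularisation label carried by the spectator block of zeros: since the stuffle operations only rearrange and merge the letters $y_{n_{i}}$, this subscript is preserved identically on both sides, and one checks the weights match, $(n-1)+\sum|n_{i}|$ on each side.

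For the $\sharp$ identity I would run the same argument with the \emph{sharp} stuffle Hopf algebra, whose product is the modified stuffle displayed in \S 3.1 and whose coproduct is again $\Delta_{D\ast}$, compatible with $(\cdot)^{\sharp}$ via $\Delta_{D\ast}(w^{\sharp})=(\Delta_{D\ast}(w))^{\sharp}$. The crucial structural difference is that every merge in the sharp stuffle consumes one letter from \emph{each} factor, so the antipode recursion, which stuffles pieces cut from a \emph{single} word, produces no extra merged terms: one obtains the pure reversal $S_{\ast}^{\sharp}(y_{n_{1}}\cdots y_{n_{p}})=(-1)^{p}\,y_{n_{p}}\cdots y_{n_{1}}$, so the $\sharp$-version is self-conjugate under the antipode (this is why both sides of the second identity are $\sharp$, with no $\star$ appearing). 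Reducing modulo products in $\mathcal{L}$ then yields $\zeta^{\sharp,\mathfrak{l}}(n_{1},\ldots,n_{p})\equiv(-1)^{p+1}\zeta^{\sharp,\mathfrak{l}}(n_{p},\ldots,n_{1})$. I expect the main obstacle to be establishing these two explicit antipode formulas with the correct signs and, above all, the correct conversion — $\star$ in the ordinary case, none in the $\sharp$ case — the self-duality of the $\sharp$-version resting on the precise combinatorics of the modified stuffle; a secondary bookkeeping difficulty is to check that the regularisation subscript $n-1$ is genuinely inert throughout, so that the statement holds for the regularised representatives used later in the coaction computations, and not merely for the convergent ones.
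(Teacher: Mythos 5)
Your proof of the first identity coincides with the paper's: compute the stuffle antipode explicitly, $S_{\ast}(y_{n_{1}}\cdots y_{n_{p}})=(-1)^{p}(y_{n_{p}}\cdots y_{n_{1}})^{\star}$, by feeding the induction hypothesis into the recursion \eqref{eq:Antipode}, then reduce modulo products (where $S\equiv -\mathrm{id}$) and push forward along the motivic stuffle morphism $\zeta^{\mathfrak{m}}$. Note, however, that your inductive step is exactly the combinatorial identity the paper leaves to the reader, namely
$\sum_{i=0}^{p-1}(-1)^{i}(y_{n_{i}}\cdots y_{n_{1}})^{\star}\ast(y_{n_{i+1}}\cdots y_{n_{p}})=-(-1)^{p}(y_{n_{p}}\cdots y_{n_{1}})^{\star}$;
the compatibility $\Delta_{D\ast}(w^{\star})=(\Delta_{D\ast}(w))^{\star}$ you invoke does not by itself produce it, since starred words occur in your recursion only as outputs of $S_{\ast}$, never as arguments of the coproduct. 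So you still owe the same telescoping/sign-reversing verification as the paper does.

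The genuine gap is in the $\sharp$ case. Your transported \emph{sharp stuffle Hopf algebra} is a legitimate reformulation, equivalent to the paper's computation: $w\mapsto w^{\sharp}$ is invertible (it is $2^{p}\,\mathrm{id}$ plus lower-depth terms) and is a coalgebra isomorphism by the displayed compatibility, so conjugation gives $S^{\sharp}=\sharp^{-1}\circ S_{\ast}\circ\sharp$ and your formula $S^{\sharp}(y_{n_{1}}\cdots y_{n_{p}})=(-1)^{p}y_{n_{p}}\cdots y_{n_{1}}$ is indeed equivalent to the paper's $S_{\ast}(w^{\sharp})=(-1)^{p}\tilde{w}^{\sharp}$. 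But your justification --- ``every merge in the sharp stuffle consumes one letter from each factor, so the recursion produces no extra merged terms'' --- is not a proof, and as stated it cannot be the explanation: in the \emph{ordinary} stuffle every merge also consumes exactly one letter from each factor, yet there the merged terms do not cancel (already at $p=2$ one has $y_{n_{1}}y_{n_{2}}-y_{n_{1}}\ast y_{n_{2}}+y_{n_{2}}y_{n_{1}}=-y_{n_{1}\mlq + \mrq n_{2}}\neq 0$), and it is precisely to absorb them that the $\star$ appears. What makes the $\sharp$ case merge-free is the specific sign $(-1)^{\frac{r+s-m}{2}}$ and the alternating-block structure of the $\sharp$-stuffle (e.g.\ $y_{a}\ast^{\sharp}y_{b}=y_{a}y_{b}+y_{b}y_{a}$, with no term $y_{a\mlq + \mrq b}$), and checking that all merged terms cancel in the telescoping sum is exactly the $\sharp$-analogue of the combinatorial identity above: it must be proved, not asserted. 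The paper's route is the clean repair: stay inside $(A_{\ast},\ast,\Delta_{D\ast})$, apply the recursion to the element $w^{\sharp}$ using $\Delta_{D\ast}(w^{\sharp})=(\Delta_{D\ast}(w))^{\sharp}$, and conclude from
$\sum_{i=0}^{p-1}(-1)^{i}(y_{n_{i}}\cdots y_{n_{1}})^{\sharp}\ast(y_{n_{i+1}}\cdots y_{n_{p}})^{\sharp}=-(-1)^{p}(y_{n_{p}}\cdots y_{n_{1}})^{\sharp}$,
then transport to your setting by conjugation if you prefer that language. (Your closing worry about the inertness of the regularisation subscript $n-1$ is fair, but it affects the paper's proof equally and is independent of the issue above.)
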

\begin{proof}
By recursion, using $\eqref{eq:Antipode}$, and the following 
identity (left to the reader):
$$\sum_{i=0}^{p-1} (-1)^{i}(y_{n_{i}} \cdots y_{n_{1}})^{\star} \ast (y_{n_{i+1}} \cdots y_{n_{p}})= -(-1)^{p} (y_{n_{p}} \cdots y_{n_{1}})^{\star}, $$
we deduce: $S_{\ast} (y_{n_{1}} \cdots y_{n_{p}})= (-1)^{p} (y_{n_{p}} \cdots y_{n_{1}})^{\star}$. Similarly:
$$\begin{array}{ll}
S_{\ast}((y_{n_{1}} \cdots y_{n_{p}})^{\sharp}) &= -\sum_{i=0}^{n-1} S_{\ast}((y_{n_{1}} \cdots y_{n_{i}})^{\sharp}) \ast (y_{n_{i+1}} \cdots y_{n_{p}})^{\sharp}\\
& =-\sum_{i=0}^{n-1} (-1)^{i}(y_{n_{i}} \cdots y_{n_{1}})^{\sharp} \ast (y_{n_{i+1}} \cdots y_{n_{p}})^{\sharp}\\
&=(-1)^{p}(y_{n_{p}} \cdots y_{n_{1}})^{\sharp}
\end{array}$$
Since $\zeta^{\mathfrak{m}}(\cdot)$ is a morphism of Hopf algebra, the lemma straightly follows.
\end{proof}

\subsection{Hybrid relation in $\mathcal{L}$}

In this part, we look at a new relation called \textit{hybrid relation} between motivic Euler sums in the coalgebra $\mathcal{L}$, i.e. modulo products, which comes from the motivic version of the octagon relation (cf. $\cite{EF}$)\footnote{The hexagon relation for MZV (i.e. $e^{i\pi e_{0}} \Phi(e_{\infty}, e_{0}) e^{i\pi e_{\infty}} \Phi(e_{1},e_{\infty}) e^{i\pi e_{1}}\Phi( e_{0},e_{1})=1$) turns to an octagon for MZV at greater roots of unity. It is pictured by: \begin{center}
\scalebox{0.5}{\includegraphics[]{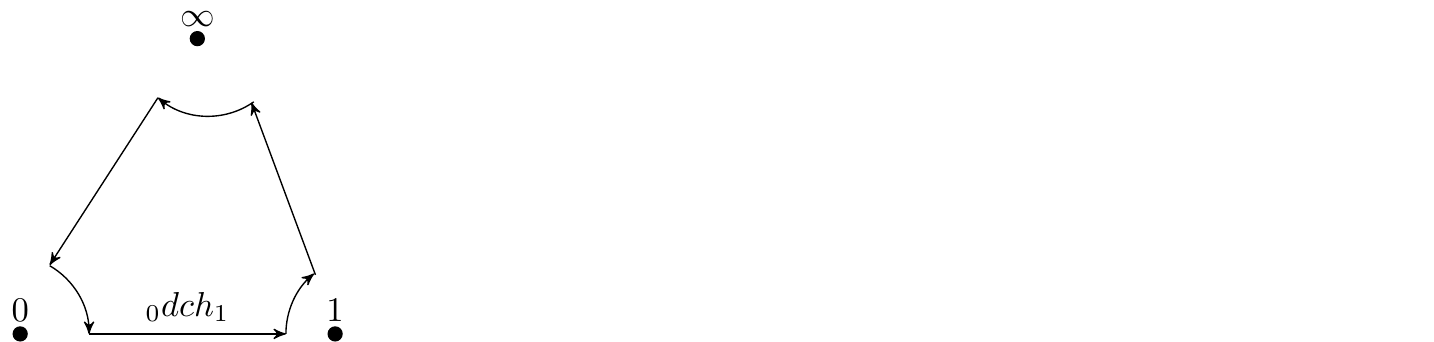}} $\quad \quad$ resp. $\quad \quad \quad $\scalebox{0.5}{\includegraphics[]{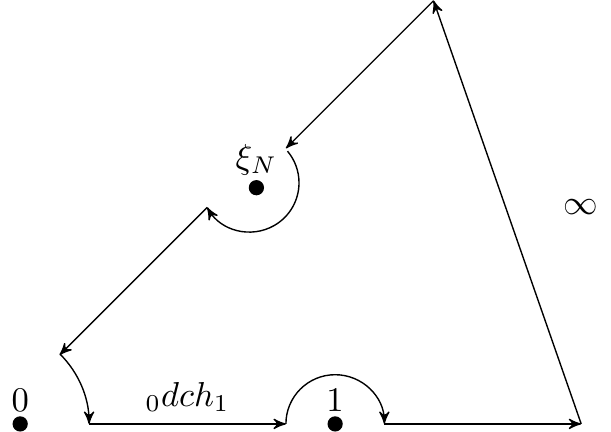}}
\end{center} }. It corresponds to a contractible path in the moduli space of genus $0$ curves with $4$ ordered marked points $\mathcal{M}_{0,4}=\mathbb{P}^{1}\diagdown\lbrace 0,\pm 1, \infty\rbrace$. Seeing the path on the Riemann sphere, for ES, it is pictured by:
\begin{figure}[H]
\centering
\scalebox{0.8}{\includegraphics[]{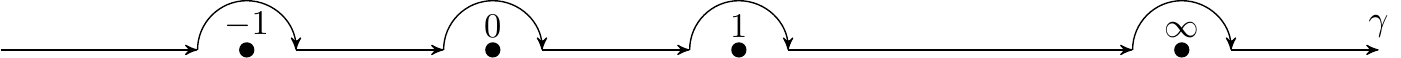}}
\caption{Octagon relation} \label{fig:octagon}
\end{figure}
\noindent This relation is motivic: it is valid for the motivic Drinfeld associator $\Phi^{\mathfrak{m}}$ ($\ref{eq:associator}$) when replacing $2 i \pi$ by the Lefschetz motivic period $\mathbb{L}^{\mathfrak{m}}$: \\
\begin{equation}\label{eq:octagon}
\hspace*{-0.5cm} e^{\frac{\mathbb{L}^{\mathfrak{m}} e_{-1}}{2}} \Phi^{\mathfrak{m}}(e_{0}, e_{-1},e_{1})^{-1} e^{ \frac{\mathbb{L}^{\mathfrak{m}}e_{0}}{2}} \Phi^{\mathfrak{m}}(e_{0},e_{1},e_{-1}) e^{\frac{\mathbb{L}^{\mathfrak{m}} e_{1}}{2}}\Phi^{\mathfrak{m}}( e_{\infty},e_{1},e_{-1})^{-1} e^{\frac{\mathbb{L}^{\mathfrak{m}} e_{\infty}}{2}}\Phi^{\mathfrak{m}}( e_{\infty},e_{-1},e_{1})=1
\end{equation}
Remind that $e_{0} + e_{1}  + e_{-1} +e_{\infty} =0$.\\
\\
Let $X= \mathbb{P}^{1}\diagdown \left\lbrace 0, \pm 1, \infty \right\rbrace $. The action of the \textit{real Frobenius} $\boldsymbol{\mathcal{F}_{\infty}}$ on $X(\mathbb{C})$ is induced by complex conjugation. The real Frobenius acts on the Betti realization $\pi^{B}(X (\mathbb{C}))$\footnote{It is compatible with the groupoid structure of $\pi^{B}$, and the local monodromy. }, and induces an involution on motivic periods, compatible with the Galois action:
$$\mathcal{F}_{\infty}: \mathcal{P}_{\mathcal{MT}(\mathbb{Z}[\frac{1}{2}])}^{\mathfrak{m}} \rightarrow\mathcal{P}_{\mathcal{MT}(\mathbb{Z}[\frac{1}{2}])}^{\mathfrak{m}}.$$
The Lefschetz motivic period $\mathbb{L}^{\mathfrak{m}}$ is anti-invariant by $\mathcal{F}_{\infty}$: $\mathcal{F}_{\infty} \mathbb{L}^{\mathfrak{m}}= -\mathbb{L}^{\mathfrak{m}},$
whereas terms corresponding to real paths in Figure $\ref{fig:octagon}$, such as Drinfeld associator terms, are $\mathcal{F}_{\infty}$-invariant.\\
Recall that the space of motivic periods of $\mathcal{MT}\left( \mathbb{Z}[\frac{1}{2}]\right)$ decomposes as (cf. $\ref{eq:periodgeomr}$):
\begin{equation}\label{eq:perioddecomp2}
\mathcal{P}_{\mathcal{MT}\left( \mathbb{Z}[\frac{1}{2}]\right)}^{\mathfrak{m}}= \mathcal{H}^{2} \oplus  \mathcal{H}^{2}. \mathbb{L}^{\mathfrak{m}}, \quad \text{ where } \begin{array}{l}
\mathcal{H}^{2} \text{ is } \mathcal{F}_{\infty} \text{ invariant} \\
\mathcal{H}^{2}. \mathbb{L}^{\mathfrak{m}} \text{ is } \mathcal{F}_{\infty} \text{ anti-invariant} 
\end{array}.
\end{equation}
The linearized $\mathcal{F}_{\infty}$-anti-invariant part of the octagon relation $\ref{eq:octagon}$ leads to the following: 

\begin{theo}\label{hybrid}
In the coalgebra $\mathcal{L}^{2}$, with $n_{i}\in \mathbb{Z}^{\ast}$, $w$ the weight:
$$\zeta^{\mathfrak{l}}_{k}\left( n_{0}, n_{1},\ldots, n_{p} \right) + \zeta^{\mathfrak{l}}_{\mid n_{0} \mid +k}\left( n_{1}, \ldots, n_{p} \right) \equiv (-1)^{w+1}\left(  \zeta^{\mathfrak{l}}_{k}\left( n_{p}, \ldots, n_{1}, n_{0} \right) + \zeta^{\mathfrak{l}}_{k+\mid n_{p}\mid}\left( n_{p-1}, \ldots, n_{1},n_{0} \right)\right)$$
Equivalently, for $W$ any word in $\lbrace 0, \pm 1 \rbrace$, with $\widetilde{W}$ the reversed word:
$$\begin{array}{llll}
(\boldsymbol{\dagger}) & I^{\mathfrak{l}} (0; 0^{k}, \star, W; 1)& \equiv I^{\mathfrak{l}} (0; W, \star, 0^{k}; 1)& \equiv (-1)^{w+1} I^{\mathfrak{l}} (0; 0^{k}, \star, \widetilde{W}; 1)\\
&&&\\
(\boldsymbol{\ddagger}) & I^{\mathfrak{l}} (0; 0^{k}, -\star, W; 1)& \equiv I^{\mathfrak{l}} (0;- W, -\star, 0^{k}; 1)&\equiv (-1)^{w+1} I^{\mathfrak{l}} (0; 0^{k}, -\star, -\widetilde{W}; 1) 

\end{array}$$
\end{theo}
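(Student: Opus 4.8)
The plan is to extract the statement from the motivic octagon relation \eqref{eq:octagon} by isolating its part that is linear in $\mathbb{L}^{\mathfrak{m}}$ and then reducing modulo products, i.e.\ projecting the coefficients to the coalgebra $\mathcal{L}$. The starting observation is that the octagon is an identity $\mathrm{Oct}=1$ in $\mathcal{P}^{\mathfrak{m}}_{\mathcal{MT}(\mathbb{Z}[1/2])}\langle\langle e_0,e_{-1},e_1\rangle\rangle$, whose right-hand side is $\mathcal{F}_\infty$-invariant. Since $\mathcal{F}_\infty$ fixes each Drinfeld associator factor (real paths) and sends $\mathbb{L}^{\mathfrak{m}}\mapsto-\mathbb{L}^{\mathfrak{m}}$, the $\mathcal{F}_\infty$-anti-invariant part of $\mathrm{Oct}$ must vanish identically; by the decomposition \eqref{eq:perioddecomp2} this anti-invariant part is exactly the odd-$\mathbb{L}^{\mathfrak{m}}$ component. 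Working modulo products, every term carrying $(\mathbb{L}^{\mathfrak{m}})^{2k+1}$ for $k\ge 1$ factors through $(\mathbb{L}^{\mathfrak{m}})^{2}\sim\zeta^{\mathfrak{m}}(2)$ and therefore dies in $\mathcal{L}$, so the surviving information is precisely the coefficient of $\mathbb{L}^{\mathfrak{m}}$ to the first power, projected to $\mathcal{L}$.

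First I would take the logarithm of \eqref{eq:octagon}. Writing $\mathrm{Oct}=A_1\cdots A_8$, with $A_1,A_3,A_5,A_7$ the exponential factors $\exp(\tfrac{\mathbb{L}^{\mathfrak{m}}}{2}e_j)$ and $A_2,A_4,A_6,A_8$ the associators $\Phi^{\mathfrak{m}}(\cdots)^{\pm1}$, Baker--Campbell--Hausdorff gives $\sum_i\log A_i+(\text{iterated brackets})=0$. The coefficient of any word in an iterated bracket is a sum of products of coefficients coming from two distinct factors; these are products of positive-weight motivic periods and hence vanish in $\mathcal{L}$. Moreover, for a group-like $\Phi^{\mathfrak{m}}=\sum_w\zeta^{\mathfrak{m}}(w)\,w$ one has $\log\Phi^{\mathfrak{m}}\equiv\Phi^{\mathfrak{m}}-1$ modulo products, so the coefficient of a word $w$ in $\log\Phi^{\mathfrak{m}}(\cdots)^{\pm1}$ reduces in $\mathcal{L}$ to $\pm\zeta^{\mathfrak{l}}(w)$. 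Consequently, keeping only the $\mathbb{L}^{\mathfrak{m}}$-linear part, a surviving word in $\mathcal{L}$ is built from exactly one single letter $e_j$ supplied by one exponential factor (its linear term $\tfrac{\mathbb{L}^{\mathfrak{m}}}{2}e_j$) and one nontrivial subword supplied by a single associator, all remaining factors contributing their unit term. This collapses the eight-fold product to a manageable sum of $I^{\mathfrak{l}}$'s, each carrying one boundary letter prepended or appended according to the relative position of the two active slots.

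The next step is the bookkeeping. The four exponential slots contribute the letters $e_{-1},e_0,e_1$ and $e_\infty=-(e_0+e_1+e_{-1})$; expanding the last via $e_0+e_1+e_{-1}+e_\infty=0$ turns the $\infty$-contributions into signed combinations of $e_0,e_{\pm1}$. The two associators with arguments in $\{e_0,e_{\pm1}\}$ (slots $2$ and $4$) produce the \emph{forward} integrals $I^{\mathfrak{l}}(0;\dots;1)$, while the two associators built on $e_\infty$ (slots $6$ and $8$) produce, after the substitution and an application of the homography and path-reversal relations of \S3.2 together with the antipode identities of Lemma \ref{eq:antipodeshuffle2}, the \emph{reversed} integrals $I^{\mathfrak{l}}(0;\widetilde W;\dots)$ carrying the weight sign $(-1)^{w+1}$. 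Grouping the boundary-letter terms according to whether the active letter is $e_1$ or $e_{-1}$ against the $e_0$ terms assembles the differences $\omega_{\star}=\omega_1-\omega_0$ and $\omega_{-\star}=\omega_{-1}-\omega_0$, which is exactly how the $\star$ and $-\star$ letters enter the forms $(\boldsymbol{\dagger})$ and $(\boldsymbol{\ddagger})$; since an $e_0$ prepended to a word is precisely what increments the leading-zero index in $\zeta^{\mathfrak{l}}_{k}$, the two displayed forms translate into the claimed identity on $\zeta^{\mathfrak{l}}_k(n_0,\dots,n_p)$ through the integral representation \eqref{eq:reprinteg} and the linearity rule \eqref{eq:miistarsharp}.

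The main obstacle is the combinatorial reorganization in this last step rather than any deep structural input: one must correctly determine, for each of the $4\times4$ pairs of an exponential slot and an associator slot, whether the boundary letter is prepended or appended, and with which sign the associator inverses and the $e_\infty$-expansion contribute, and then verify that all these terms coalesce into the two clean symmetric expressions $(\boldsymbol{\dagger}),(\boldsymbol{\ddagger})$. Checking that the reversed-integral terms arising from the $e_\infty$-associators match the forward terms under the single global sign $(-1)^{w+1}$ --- using path reversal (property (v) of \S2.2) and the shuffle-antipode Lemma \ref{eq:antipodeshuffle2} --- is the step most prone to sign errors and is where the real care is needed; everything else is a formal consequence of passing to $\mathcal{L}$ and truncating at first order in $\mathbb{L}^{\mathfrak{m}}$.
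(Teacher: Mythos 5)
Your overall strategy is the same as the paper's: start from the motivic octagon relation \eqref{eq:octagon}, use $\mathcal{F}_{\infty}$-anti-invariance to isolate the part linear in $\mathbb{L}^{\mathfrak{m}}$, reduce modulo products, and extract coefficients of words with prescribed boundary letters. The genuine gap is in your BCH linearization step, which as written is internally inconsistent and, read literally, destroys exactly the terms the proof needs. You claim that \emph{every} iterated bracket dies in $\mathcal{L}$ because its coefficients are products of positive-weight periods; but the exponential--associator cross terms, which in the very next sentence you declare to be the surviving terms (one letter $e_{j}$ from an exponential, one subword from an associator), are themselves such brackets: their coefficients are $\tfrac{\mathbb{L}^{\mathfrak{m}}}{2}\zeta^{\mathfrak{m}}(\cdots)$, i.e.\ products of two positive-weight periods in precisely the same sense. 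If those are killed, BCH leaves only $\sum_{i}\log A_{i}\equiv 0$; the exponential slots contribute $\tfrac{\mathbb{L}^{\mathfrak{m}}}{2}(e_{0}+e_{1}+e_{-1}+e_{\infty})=0$ and what remains is a relation among the $\Phi^{\mathfrak{l}}$'s alone, with no boundary letters. From such a relation the hybrid identity --- whose entire content is the boundary-letter shift relating $\zeta^{\mathfrak{l}}_{k}\left( n_{0},\ldots, n_{p}\right)$ to $\zeta^{\mathfrak{l}}_{\mid n_{0}\mid +k}\left( n_{1},\ldots, n_{p}\right)$ --- cannot be recovered.

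The repair is the order of operations you announce in your first paragraph but then do not use: since by \eqref{eq:perioddecomp2} the anti-invariant part of the octagon lies in $\mathcal{H}^{2}\cdot\mathbb{L}^{\mathfrak{m}}\left\langle \left\langle e_{0},e_{1},e_{-1}\right\rangle \right\rangle$, one must \emph{first divide by} $\mathbb{L}^{\mathfrak{m}}$ and \emph{only then} project coefficients to $\mathcal{L}$. After division an exponential--associator cross term has coefficient $\tfrac{1}{2}\zeta^{\mathfrak{m}}(\cdots)$, which survives in $\mathcal{L}$, while brackets involving two associators still die (products of two non-Lefschetz positive-weight periods) and brackets involving two exponentials contribute only to $(\mathbb{L}^{\mathfrak{m}})^{\geq 2}$ and are killed by the projection. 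So ``product of positive-weight periods'' is not a sufficient vanishing criterion here; the division by $\mathbb{L}^{\mathfrak{m}}$ is exactly what separates the two kinds of products, and this is how the paper obtains the linearized octagon relation \eqref{eq:octagonlin} (by direct expansion, without needing BCH at all). A secondary inaccuracy in your bookkeeping: the final identities are not obtained by pairing ``forward'' integrals from the slots built on $e_{0},e_{\pm 1}$ against ``reversed'' integrals from the $e_{\infty}$-slots. In the actual extraction, both terms of $(\boldsymbol{\dagger})$, resp.\ the two terms of $(\boldsymbol{\ddagger})$, arise as $\zeta^{\star\star,\mathfrak{l}}$-coefficients of the $e_{\infty}$-associators (computed through the homographies of Appendix B), one with the boundary letter prepended and one with it appended; the reversed word $\widetilde{W}$ and the global sign $(-1)^{w+1}$ are then produced by applying the antipode $\shuffle$ to one of these two terms, not by matching different slots of the octagon.
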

The proof is given below, firstly for $k=0$, using octagon relation ($\ref{eq:octagon}$). The generalization for any $k >0$ is deduced directly from the shuffle regularization $(\ref{eq:shufflereg})$.\\
\\
\textsc{Remarks}:
\begin{itemize} 
\item[$\cdot$] Equivalently, this statement is true for $W$ any word in $\lbrace 0, \pm \star \rbrace$, by linearity (cf. $\ref{eq:miistarsharp}$).
\item[$\cdot$] The point of view adopted by Francis Brown in $\cite{Br3}$, and its use of commutative polynomials (also seen in Ecalle work), applied in the coalgebra $\mathcal{L}$ leads to a new proof of Theorem $\ref{hybrid}$ in the case of MMZV only (not for MES).
\end{itemize}
Since Antipode $\ast$ expresses $\zeta^{\mathfrak{l}}_{n-1}(n_{1}, \ldots, n_{p})+(-1)^{p} \zeta^{\mathfrak{l}}_{n-1}(n_{p}, \ldots, n_{1})$ in terms of smaller depth (cf. Lemma $3.3$), a MES can be expressed by smaller depth when weight and depth have not the same parity. This \textit{depth-drop phenomena}, for motivic ES:\footnote{Erik Panzer recently generalised this parity theorem for MZV at roots of unity (not motivic there), which appears as a special case of some functional equations of polylogarithms in several variables: $\cite{Pa}$.}
\begin{coro}\label{hybridc}
If $w+p$ odd, a motivic Euler sum in $\mathcal{L}$ is reducible to smaller depth:
$$2\zeta^{\mathfrak{l}}_{n-1}(n_{1}, \ldots, n_{p}) \equiv$$
$$-\zeta^{\mathfrak{l}}_{n+\mid n_{1}\mid -1}(n_{2}, \ldots, n_{p})+(-1)^{p} \zeta^{\mathfrak{l}}_{n+\mid n_{p}\mid -1}(n_{p-1}, \ldots, n_{1})+\sum_{\circ=+ \text{ or } ,\atop \text{at least one } +} (-1)^{p+1} \zeta^{\mathfrak{l}}_{n-1}(n_{p}\circ \cdots \circ n_{1}).$$
\end{coro}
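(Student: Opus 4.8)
The plan is to derive Corollary \ref{hybridc} purely mechanically in the coalgebra $\mathcal{L}$ by adding together two relations already established above — the hybrid relation (Theorem \ref{hybrid}) and the antipode $\ast$ relation (Lemma 3.3) — the parity hypothesis $w+p$ odd serving only to force a single cancellation at the very end. No new analytic input is needed.

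First I would specialize Theorem \ref{hybrid} to the tuple at hand. Applying it to $(n_1,\ldots,n_p)$ (that is, relabelling its $n_0,\ldots,n_p$ as $n_1,\ldots,n_p$) with $k\mathrel{\mathop:}=n-1$ prepended zeros gives
$$\zeta^{\mathfrak{l}}_{n-1}(n_1,\ldots,n_p)+\zeta^{\mathfrak{l}}_{n+\mid n_1\mid-1}(n_2,\ldots,n_p)\equiv(-1)^{w+1}\Big(\zeta^{\mathfrak{l}}_{n-1}(n_p,\ldots,n_1)+\zeta^{\mathfrak{l}}_{n+\mid n_p\mid-1}(n_{p-1},\ldots,n_1)\Big),$$
where $w$ is the total word length $(n-1)+\sum\mid n_i\mid$; this produces the two ``absorbed'' depth-$(p-1)$ terms of the statement. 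Next I would rewrite the antipode $\ast$ relation $\zeta^{\mathfrak{l}}_{n-1}(n_1,\ldots,n_p)\equiv(-1)^{p+1}\zeta^{\star,\mathfrak{l}}_{n-1}(n_p,\ldots,n_1)$, expanding the star sum through its very definition $(\ref{eq:esstar})$ into its all-comma term plus the terms carrying at least one $\mlq+\mrq$:
$$\zeta^{\mathfrak{l}}_{n-1}(n_1,\ldots,n_p)\equiv(-1)^{p+1}\zeta^{\mathfrak{l}}_{n-1}(n_p,\ldots,n_1)+(-1)^{p+1}\sum_{\circ=+ \text{ or } ,\atop \text{at least one } +}\zeta^{\mathfrak{l}}_{n-1}(n_p\circ\cdots\circ n_1),$$
which supplies the summation in the corollary.

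Finally I would add the two displayed relations. The two reversed full-depth terms $\zeta^{\mathfrak{l}}_{n-1}(n_p,\ldots,n_1)$ combine with coefficient $(-1)^{w+1}+(-1)^{p+1}$, and this is exactly where the hypothesis enters: $w+p$ odd forces $(-1)^{w+1}=-(-1)^{p+1}$, so that coefficient vanishes and the reversed depth-$p$ term drops out, while the two left-hand sides add to $2\zeta^{\mathfrak{l}}_{n-1}(n_1,\ldots,n_p)$. Rewriting the surviving absorbed term via $(-1)^{w+1}=(-1)^{p}$ then gives precisely
$$2\zeta^{\mathfrak{l}}_{n-1}(n_1,\ldots,n_p)\equiv-\zeta^{\mathfrak{l}}_{n+\mid n_1\mid-1}(n_2,\ldots,n_p)+(-1)^{p}\zeta^{\mathfrak{l}}_{n+\mid n_p\mid-1}(n_{p-1},\ldots,n_1)+(-1)^{p+1}\sum_{\circ=+ \text{ or } ,\atop \text{at least one } +}\zeta^{\mathfrak{l}}_{n-1}(n_p\circ\cdots\circ n_1).$$

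I do not expect a genuine obstacle. The only thing demanding care is the index bookkeeping — matching the $(p+1)$-fold indexing of Theorem \ref{hybrid} to the $p$-fold index of the corollary and correctly tracking the subscripts $n+\mid n_1\mid-1$ and $n+\mid n_p\mid-1$ — together with the parity computation $(-1)^{w+1}+(-1)^{p+1}=0$, which is exactly the reason the depth drop occurs only for $w+p$ odd: when $w+p$ is even the two coefficients reinforce rather than cancel, the reversed term survives, and no reduction is obtained.
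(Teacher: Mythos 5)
Your proposal is correct and follows exactly the route the paper intends: the corollary is stated there as an immediate consequence of combining Theorem \ref{hybrid} (with $k=n-1$) with the Antipode $\ast$ relation of Lemma 3.3, expanded via $\zeta^{\star,\mathfrak{l}}_{n-1}(n_p,\ldots,n_1)=\sum_{\circ}\zeta^{\mathfrak{l}}_{n-1}(n_p\circ\cdots\circ n_1)$, the hypothesis $w+p$ odd killing the coefficient $(-1)^{w+1}+(-1)^{p+1}$ of the reversed depth-$p$ term. Your index bookkeeping and sign computations all check out against the paper's statements.
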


\paragraph{Proof of Theorem $\ref{hybrid}$}
First, the octagon relation ($\ref{eq:octagon}$) is equivalent to:
\begin{lemm} 
\begin{itemize}
\item[$(i)$] The octagon relation, in  $\mathcal{P}_{\mathcal{MT}\left( \mathbb{Z}[\frac{1}{2}]\right)}^{\mathfrak{m}}\left\langle \left\langle e_{0}, e_{1}, e_{-1}\right\rangle \right\rangle $:
\begin{equation}\label{eq:octagon21}
\Phi^{\mathfrak{m}}(e_{0}, e_{1},e_{-1}) e^{\frac{\mathbb{L}^{\mathfrak{m}} e_{0}}{2}} \Phi^{\mathfrak{m}}(e_{-1}, e_{0},e_{\infty}) e^{\frac{\mathbb{L}^{\mathfrak{m}} e_{-1}}{2}} \Phi^{\mathfrak{m}}(e_{\infty}, e_{-1},e_{1}) e^{\frac{\mathbb{L}^{\mathfrak{m}} e_{\infty}}{2}} \Phi^{\mathfrak{m}}(e_{1}, e_{\infty},e_{0}) e^{\frac{\mathbb{L}^{\mathfrak{m}} e_{1}}{2}}  =1,
\end{equation}

\item[$(ii)$] The linearized octagon relation, 
\begin{multline}\label{eq:octagonlin}
 - e_{0} \Phi^{\mathfrak{l}}(e_{-1}, e_{0},e_{\infty})+  \Phi^{\mathfrak{l}}(e_{-1}, e_{0},e_{\infty})e_{0} +(e_{0}+e_{-1}) \Phi^{\mathfrak{l}}(e_{\infty}, e_{-1},e_{1}) - \Phi^{\mathfrak{l}}(e_{\infty}, e_{-1},e_{1}) (e_{0}+e_{-1})\\
   - e_{1} \Phi^{\mathfrak{l}}(e_{1}, e_{\infty},e_{0}) + \Phi^{\mathfrak{l}}(e_{1}, e_{\infty},e_{0}) e_{1}  \equiv 0.
\end{multline}
\end{itemize}
\end{lemm}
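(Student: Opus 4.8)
The plan is to prove the two equivalences in succession: first rewrite the octagon \eqref{eq:octagon} as the cyclically ordered, non-inverted form \eqref{eq:octagon21} at the level of group-like series, and then linearize \eqref{eq:octagon21} in the coalgebra $\mathcal{L}^{2}$ to obtain \eqref{eq:octagonlin}. For the first step, note that \eqref{eq:octagon} and \eqref{eq:octagon21} are two presentations of the same group-like identity in $\mathcal{P}_{\mathcal{MT}(\mathbb{Z}[\frac12])}^{\mathfrak{m}}\langle\langle e_{0},e_{1},e_{-1}\rangle\rangle$. Since the octagon equals $1$, I may freely conjugate it and cyclically permute its eight factors; the passage between the two presentations then requires only (a) the inversion/duality relations of the associator, which replace each inverted factor $\Phi^{\mathfrak{m}}(e_{a},e_{b},e_{c})^{-1}$ by a non-inverted associator with suitably permuted arguments, and (b) the linear relation $e_{0}+e_{1}+e_{-1}+e_{\infty}=0$ used to rewrite the exponents of the group-like factors $e^{\mathbb{L}^{\mathfrak{m}}e_{j}/2}$. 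Tracking these substitutions and re-grouping the factors yields \eqref{eq:octagon21}.

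For the second step, I expand $e^{\mathbb{L}^{\mathfrak{m}}e_{j}/2}=1+\tfrac12\mathbb{L}^{\mathfrak{m}}e_{j}+O((\mathbb{L}^{\mathfrak{m}})^{2})$ and write each associator as $\Phi^{\mathfrak{m}}=1+(\text{positive-weight part})$. Because $\mathbb{L}^{\mathfrak{m}}$ is $\mathcal{F}_{\infty}$-anti-invariant while the associator coefficients are $\mathcal{F}_{\infty}$-invariant, extracting the $\mathcal{F}_{\infty}$-anti-invariant part to first order amounts to extracting the coefficient of $\mathbb{L}^{\mathfrak{m}}$. Since the right-hand side is the constant $1$, this coefficient vanishes, giving, with $A_{1},\dots,A_{4}$ denoting the four associators of \eqref{eq:octagon21} in order,
\[
A_{1}e_{0}A_{2}A_{3}A_{4}+A_{1}A_{2}e_{-1}A_{3}A_{4}+A_{1}A_{2}A_{3}e_{\infty}A_{4}+A_{1}A_{2}A_{3}A_{4}e_{1}=0 .
\]
I then project the coefficients to $\mathcal{L}$, where every product of two positive-weight motivic Euler sums vanishes; hence, writing $A_{i}=1+a_{i}$ with $a_{i}\equiv\Phi^{\mathfrak{l}}(\dots)$ modulo products, each term keeps at most one factor $a_{i}$.

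Collecting the surviving contributions finishes the proof: the pure generator terms sum to $e_{0}+e_{-1}+e_{\infty}+e_{1}=0$, and for each $a_{i}$ the flanking generators telescope via $\sum_{j}e_{j}=0$. The $a_{1}$-terms cancel identically (their generators sum to $0$), so no $\Phi^{\mathfrak{l}}(e_{0},e_{1},e_{-1})$ survives, while the $a_{2},a_{3},a_{4}$-terms assemble into the three commutators $[\Phi^{\mathfrak{l}}(e_{-1},e_{0},e_{\infty}),e_{0}]$, $[e_{0}+e_{-1},\Phi^{\mathfrak{l}}(e_{\infty},e_{-1},e_{1})]$ and $[\Phi^{\mathfrak{l}}(e_{1},e_{\infty},e_{0}),e_{1}]$, which is exactly \eqref{eq:octagonlin}. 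The main obstacle is the first step: pinning down the correct associator inversion and argument-permutation relations at $N=2$ and tracking the conjugations and the $\sum_{j}e_{j}=0$ substitutions so that all eight factors land in the stated cyclic arrangement with the right orientations, since a misplaced sign or a transposed pair of arguments propagates through the entire linearization and corrupts the final commutator identity; the vanishing of the $a_{1}$-terms is the clean internal check that the arrangement is correct.
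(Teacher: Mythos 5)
Your linearization strategy in part (ii) is the same as the paper's (take the $\mathcal{F}_{\infty}$-anti-invariant part of $(\ref{eq:octagon21})$, divide by $\mathbb{L}^{\mathfrak{m}}$, project to $\mathcal{L}$ so that at most one associator survives with positive weight), but the proposal has a genuine gap in part (i) and an error in the conclusion of part (ii). For (i), you reduce the passage from $(\ref{eq:octagon})$ to $(\ref{eq:octagon21})$ to ``the inversion/duality relations of the associator'' and explicitly leave them as ``the main obstacle''. Those relations,
$$\Phi^{\mathfrak{m}}(e_{0},e_{1},e_{-1})=\Phi^{\mathfrak{m}}(e_{1},e_{0},e_{\infty})^{-1},\quad \Phi^{\mathfrak{m}}(e_{0},e_{-1},e_{1})=\Phi^{\mathfrak{m}}(e_{-1},e_{0},e_{\infty})^{-1},\quad \Phi^{\mathfrak{m}}(e_{\infty},e_{1},e_{-1})=\Phi^{\mathfrak{m}}(e_{1},e_{\infty},e_{0})^{-1},$$
are the entire content of (i): they are not off-the-shelf facts for $N=2$, and the paper's proof of (i) consists precisely of establishing them, by applying the homography $\phi_{\tau\sigma}\colon t\mapsto\frac{1-t}{1+t}$ (which exchanges $0\leftrightarrow 1$ and $-1\leftrightarrow\infty$, permuting the forms $\omega_{a}$ accordingly) to obtain $\Phi^{\mathfrak{m}}(e_{1},e_{0},e_{\infty})=\sum_{w}I^{\mathfrak{m}}(1;w;0)\,w$, and then invoking path composition: $\sum_{w=uv}I^{\mathfrak{m}}(0;u;1)I^{\mathfrak{m}}(1;v;0)=I^{\mathfrak{m}}(0;w;0)=0$ for $w$ non-empty. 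Moreover, even granting the three inversions, your stated moves (conjugation, cyclic permutation, $\sum_{j}e_{j}=0$) do not produce $(\ref{eq:octagon21})$: substituting the inversions into $(\ref{eq:octagon})$ and cyclically permuting yields
$$\Phi^{\mathfrak{m}}(e_{0},e_{1},e_{-1})\,e^{\frac{\mathbb{L}^{\mathfrak{m}}e_{1}}{2}}\,\Phi^{\mathfrak{m}}(e_{1},e_{\infty},e_{0})\,e^{\frac{\mathbb{L}^{\mathfrak{m}}e_{\infty}}{2}}\,\Phi^{\mathfrak{m}}(e_{\infty},e_{-1},e_{1})\,e^{\frac{\mathbb{L}^{\mathfrak{m}}e_{-1}}{2}}\,\Phi^{\mathfrak{m}}(e_{-1},e_{0},e_{\infty})\,e^{\frac{\mathbb{L}^{\mathfrak{m}}e_{0}}{2}}=1,$$
i.e.\ the factors of $(\ref{eq:octagon21})$ in reversed cyclic order; an order-reversing step (inverting the whole identity and applying $\mathcal{F}_{\infty}$, which costs further inversion identities) is still needed. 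So part (i) is assumed rather than proved, and the plan for it is not yet sound.

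In part (ii), your final assembly contradicts your own (correct) expansion. Collecting the terms containing $a_{2}=\Phi^{\mathfrak{l}}(e_{-1},e_{0},e_{\infty})$ from the four insertion terms gives
$$e_{0}a_{2}+a_{2}(e_{-1}+e_{\infty}+e_{1})=e_{0}a_{2}-a_{2}e_{0}=[e_{0},a_{2}],$$
not $[a_{2},e_{0}]$ as you assert (the $a_{3}$- and $a_{4}$-groups do give $[e_{0}+e_{-1},a_{3}]$ and $[a_{4},e_{1}]$). The sign is not cosmetic: every word of $e_{0}a_{2}$ begins with $e_{0}$ while every word of $a_{2}e_{0}$ ends with $e_{0}$, so for instance the coefficient of $e_{0}e_{1}$ in $e_{0}a_{2}-a_{2}e_{0}$ equals the coefficient of $e_{1}$ in $a_{2}$, namely $\zeta^{\mathfrak{l}}(\overline{1})\neq 0$; hence the relation with $[e_{0},a_{2}]$ and the one with $[a_{2},e_{0}]$ are genuinely different. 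In fact the paper's own proof computes $2\left(e_{0}\Phi^{\mathfrak{l}}(e_{-1},e_{0},e_{\infty})-\Phi^{\mathfrak{l}}(e_{-1},e_{0},e_{\infty})e_{0}\right)$ for that line, agreeing with the computation above and disagreeing in sign with the first two terms of $(\ref{eq:octagonlin})$ as displayed; there is a sign slip in the stated formula, and a derivation that had actually been carried through would have surfaced this conflict instead of silently matching the display. The correct conclusion of your expansion is $[e_{0},\Phi^{\mathfrak{l}}(e_{-1},e_{0},e_{\infty})]+[e_{0}+e_{-1},\Phi^{\mathfrak{l}}(e_{\infty},e_{-1},e_{1})]+[\Phi^{\mathfrak{l}}(e_{1},e_{\infty},e_{0}),e_{1}]\equiv 0$, together with the observation that the first two terms of $(\ref{eq:octagonlin})$ need their signs flipped.
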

\begin{proof}
\begin{itemize}
\item[$(i)$] In order to deduce $\ref{eq:octagon21}$ from $\ref{eq:octagon}$, let's first remark that:
$$ \Phi^{\mathfrak{m}}(e_{0}, e_{1},e_{-1})= \Phi^{\mathfrak{m}}(e_{1}, e_{0},e_{\infty})^{-1} .$$
Indeed, the coefficient in $\Phi^{\mathfrak{m}}(e_{1}, e_{0},e_{\infty})$ of a word $e_{0}^{a_{0}} e_{\eta_{1}} e_{0}^{a_{1}} \cdots e_{\eta_{r}} e_{0}^{a_{r}}$, where $\eta_{i}\in \lbrace\pm 1 \rbrace$ is (cf. Appendice $C$):
 $$ I^{\mathfrak{m}} \left(0;  (\omega_{1}-\omega_{-1})^{a_{0}} (-\omega_{\mu_{1}})  (\omega_{1}-\omega_{-1})^{a_{1}} \cdots  (-\omega_{\mu_{r}})(\omega_{1}-\omega_{-1})^{a_{r}} ;1 \right) \texttt{ with } \mu_{i}\mathrel{\mathop:}= \left\lbrace \begin{array}{ll}
-\star& \texttt{if } \eta_{i}=1\\
-1 & \texttt{if } \eta_{i}=-1
\end{array}  \right. .$$
Applying the homography $\phi_{\tau\sigma}= \phi_{\tau\sigma}^{-1}:t \mapsto \frac{1-t}{1+t}$ (cf. Appendix $(\ref{homography2})$) to this gives:
$$ I^{\mathfrak{m}} \left(1;  \omega_{0}^{a_{0}} \omega_{\eta_{1}} \omega_{0}^{a_{1}} \cdots  \omega_{\eta_{r}}  \omega_{0}^{a_{r}} ;0 \right).$$
Hence, summing over words $w$ in $e_{0},e_{1},e_{-1}$:
$$ \Phi^{\mathfrak{m}}(e_{1}, e_{0},e_{\infty})= \sum I^{\mathfrak{m}}(1; w; 0) w$$
Therefore:
$$\Phi^{\mathfrak{m}}(e_{0}, e_{1},e_{-1})\Phi^{\mathfrak{m}}(e_{1}, e_{0},e_{\infty})= \sum_{w, w=uv} I^{\mathfrak{m}}(0; u; 1) I^{\mathfrak{m}}(1; v; 0) w= 1.$$
We used the composition formula for iterated integral to conclude, since for $w$ non empty, $\sum_{w=uv} I^{\mathfrak{m}}(0; u; 1) I^{\mathfrak{m}}(1; v; 0)= I^{\mathfrak{m}}(0; w; 0) =0$. Similarly:
$$\Phi^{\mathfrak{m}}(e_{0}, e_{-1},e_{1})= \Phi^{\mathfrak{m}}(e_{-1}, e_{0},e_{\infty})^{-1} , \quad \text{ and } \quad  \Phi^{\mathfrak{m}}(e_{\infty}, e_{1},e_{-1})= \Phi^{\mathfrak{m}}(e_{1}, e_{\infty},e_{0})^{-1}.$$ 

\item[$(ii)$] Now, let consider both paths on the Riemann sphere $\gamma$ and $\overline{\gamma}$, its conjugate: \footnote{Path $\gamma$ corresponds to the cycle $\sigma$, $1 \mapsto \infty \mapsto -1 \mapsto 0 \mapsto 1$ (cf. in Appendix $\ref{homography2}$). Obviously in the figure the position of both path is not completely accurate in order to distinguish them.} \\
\\
\includegraphics[]{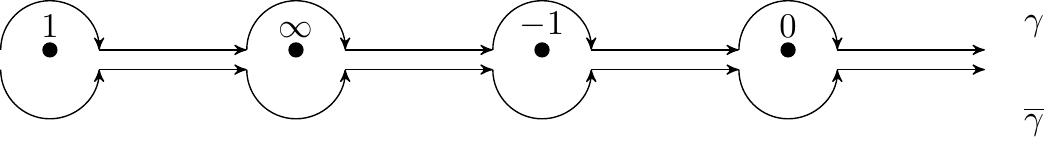}\\
Applying $(id-\mathcal{F}_{\infty})$ to the octagon identity $\ref{eq:octagon21}$ \footnote{Applying $\mathcal{F}_{\infty}$ to the path $\gamma$ corresponds to the path $\overline{\gamma}$ represented.} leads to:
\begin{small}
\begin{multline}\label{eq:octagon22}
\Phi^{\mathfrak{m}}(e_{0}, e_{1},e_{-1}) e^{\frac{\mathbb{L}^{\mathfrak{m}} e_{0}}{2}} \Phi^{\mathfrak{m}}(e_{-1}, e_{0},e_{\infty}) e^{\frac{\mathbb{L}^{\mathfrak{m}} e_{-1}}{2}} \Phi^{\mathfrak{m}}(e_{\infty}, e_{-1},e_{1}) e^{\frac{\mathbb{L}^{\mathfrak{m}} e_{\infty}}{2}} \Phi^{\mathfrak{m}}(e_{1}, e_{\infty},e_{0}) e^{\frac{\mathbb{L}^{\mathfrak{m}} e_{1}}{2}}\\
-\Phi^{\mathfrak{m}}(e_{0}, e_{1},e_{-1}) e^{-\frac{\mathbb{L}^{\mathfrak{m}} e_{0}}{2}} \Phi^{\mathfrak{m}}(e_{-1}, e_{0},e_{\infty}) e^{-\frac{\mathbb{L}^{\mathfrak{m}} e_{-1}}{2}} \Phi^{\mathfrak{m}}(e_{\infty}, e_{-1},e_{1}) e^{-\frac{\mathbb{L}^{\mathfrak{m}} e_{\infty}}{2}} \Phi^{\mathfrak{m}}(e_{1}, e_{\infty},e_{0}) e^{-\frac{\mathbb{L}^{\mathfrak{m}} e_{1}}{2}}=0.
\end{multline}
\end{small}
By $(\ref{eq:perioddecomp2})$, the left side of $(\ref{eq:octagon22})$, being anti-invariant by $\mathcal{F}_{\infty}$, lies in $ \mathcal{H}^{2}\cdot \mathbb{L}^{\mathfrak{m}} \left\langle \left\langle e_{0}, e_{1}, e_{-1} \right\rangle \right\rangle $. Consequently, we can divide it by $\mathbb{L}^{\mathfrak{m}}$ and consider its projection $\pi^{\mathcal{L}}$ in the coalgebra $\mathcal{L} \left\langle \left\langle e_{0}, e_{1}, e_{-1} \right\rangle \right\rangle $, which gives firstly:
\begin{small}
\begin{multline}\label{eq:octagon23}\hspace*{-0.5cm} 
0=\Phi^{\mathfrak{l}}(e_{0}, e_{1},e_{-1}) \pi^{\mathcal{L}} \left( (\mathbb{L}^{\mathfrak{m}})^{-1} \left[ e^{\frac{\mathbb{L}^{\mathfrak{m}} e_{0}}{2}} e^{\frac{\mathbb{L}^{\mathfrak{m}} e_{-1}}{2}}e^{\frac{\mathbb{L}^{\mathfrak{m}} e_{\infty}}{2}} e^{\frac{\mathbb{L}^{\mathfrak{m}} e_{1}}{2}} - e^{-\frac{\mathbb{L}^{\mathfrak{m}} e_{0}}{2}} e^{-\frac{\mathbb{L}^{\mathfrak{m}} e_{-1}}{2}}e^{-\frac{\mathbb{L}^{\mathfrak{m}} e_{\infty}}{2}} e^{-\frac{\mathbb{L}^{\mathfrak{m}} e_{1}}{2}} \right] \right)  \\
\hspace*{-0.5cm}  +\pi^{\mathcal{L}} \left( (\mathbb{L}^{\mathfrak{m}})^{-1} \left[  e^{\frac{\mathbb{L}^{\mathfrak{m}} e_{0}}{2}}  \Phi^{\mathfrak{l}}(e_{-1}, e_{0},e_{\infty})  e^{\frac{\mathbb{L}^{\mathfrak{m}} e_{-1}}{2}} e^{\frac{\mathbb{L}^{\mathfrak{m}} e_{\infty}}{2}}  e^{\frac{\mathbb{L}^{\mathfrak{m}} e_{1}}{2}}- e^{-\frac{\mathbb{L}^{\mathfrak{m}} e_{0}}{2}}  \Phi^{\mathfrak{l}}(e_{-1}, e_{0},e_{\infty})  e^{-\frac{\mathbb{L}^{\mathfrak{m}} e_{-1}}{2}} e^{-\frac{\mathbb{L}^{\mathfrak{m}} e_{\infty}}{2}}  e^{-\frac{\mathbb{L}^{\mathfrak{m}} e_{1}}{2}} \right]  \right) \\
 \hspace*{-0.5cm} + \pi^{\mathcal{L}} \left( (\mathbb{L}^{\mathfrak{m}})^{-1} \left[  e^{\frac{\mathbb{L}^{\mathfrak{m}} e_{0}}{2}}  e^{\frac{\mathbb{L}^{\mathfrak{m}} e_{-1}}{2}}  \Phi^{\mathfrak{l}}(e_{\infty}, e_{-1},e_{1}) e^{\frac{\mathbb{L}^{\mathfrak{m}} e_{\infty}}{2}}  e^{\frac{\mathbb{L}^{\mathfrak{m}} e_{1}}{2}} -  e^{-\frac{\mathbb{L}^{\mathfrak{m}} e_{0}}{2}}  e^{-\frac{\mathbb{L}^{\mathfrak{m}} e_{-1}}{2}}  \Phi^{\mathfrak{l}}(e_{\infty}, e_{-1},e_{1}) e^{-\frac{\mathbb{L}^{\mathfrak{m}} e_{\infty}}{2}}  e^{-\frac{\mathbb{L}^{\mathfrak{m}} e_{1}}{2}} \right]  \right) \\  
\hspace*{-0.5cm}  +\pi^{\mathcal{L}} \left( (\mathbb{L}^{\mathfrak{m}})^{-1} \left[    e^{\frac{\mathbb{L}^{\mathfrak{m}} e_{0}}{2}}  e^{\frac{\mathbb{L}^{\mathfrak{m}} e_{-1}}{2}} e^{\frac{\mathbb{L}^{\mathfrak{m}} e_{\infty}}{2}} \Phi^{\mathfrak{l}}(e_{1}, e_{\infty},e_{0})  e^{\frac{\mathbb{L}^{\mathfrak{m}} e_{1}}{2}} -  e^{-\frac{\mathbb{L}^{\mathfrak{m}} e_{0}}{2}}  e^{-\frac{\mathbb{L}^{\mathfrak{m}} e_{-1}}{2}} e^{-\frac{\mathbb{L}^{\mathfrak{m}} e_{\infty}}{2}} \Phi^{\mathfrak{l}}(e_{1}, e_{\infty},e_{0})  e^{-\frac{\mathbb{L}^{\mathfrak{m}} e_{1}}{2}}  \right] \right)
\end{multline}
\end{small}
The first line is zero (since $e_{0}+e_{1}+ e_{-1}+e_{\infty}=0$) whereas each other line will contribute by two terms, in order to give $(\ref{eq:octagonlin})$. Indeed, the projection $\pi^{\mathcal{L}}(x)$, when seeing $x$ as a polynomial (with only even powers) in $\mathbb{L}^{\mathfrak{m}}$, only keep the constant term; hence, for each term, only one of the exponentials above $e^{x}$ contributes by its linear term i.e. $x$, while the others contribute simply by $1$. For instance, if we examine carefully the second line of $(\ref{eq:octagon23})$, we get:
$$\begin{array}{ll}
= & e_{0}  \Phi^{\mathfrak{l}}(e_{-1}, e_{0},e_{\infty}) + \Phi^{\mathfrak{l}}(e_{-1}, e_{0},e_{\infty}) (e_{-1}+e_{\infty}+e_{1})\\
&  - (-e_{0})  \Phi^{\mathfrak{l}}(e_{-1}, e_{0},e_{\infty}) -    \Phi^{\mathfrak{l}}(e_{-1}, e_{0},e_{\infty}) ( - e_{-1}- e_{\infty}- e_{1}) \\
= & 2 \left[ e_{0}  \Phi^{\mathfrak{l}}(e_{-1}, e_{0},e_{\infty}) -  \Phi^{\mathfrak{l}}(e_{-1}, e_{0},e_{\infty}) e_{0}\right] 
\end{array}.$$
Similarly, the third line of  $(\ref{eq:octagon23})$ is equal to $(e_{0}+e_{-1}) \Phi^{\mathfrak{l}}(e_{\infty}, e_{-1},e_{1}) - \Phi^{\mathfrak{l}}(e_{\infty}, e_{-1},e_{1}) (e_{0}+e_{-1})$ and the last line is equal to $ -e_{1} \Phi^{\mathfrak{l}}(e_{1}, e_{\infty},e_{0}) + \Phi^{\mathfrak{l}}(e_{1}, e_{\infty},e_{0}) e_{1}$. Therefore, $(\ref{eq:octagon23})$ is equivalent to $(\ref{eq:octagonlin})$, as claimed.
\end{itemize}
\end{proof}

This linearized octagon relation $\ref{eq:octagonlin}$, while looking at the coefficient of a specific word in $\lbrace e_{0},e_{1}, e_{-1}\rbrace$, provides an identity between some $ \zeta^{\star\star,\mathfrak{l}} (\bullet)$ and $\zeta^{\mathfrak{l}} (\bullet)$ in the coalgebra $\mathcal{L}$. The different identities obtained in this way are detailed in Appendice C, and two of these are used in the following proof.

\begin{proof}[Proof of Theorem $\ref{hybrid}$]
The identity with MES is equivalent to, in terms of motivic iterated integrals, according to the sign of $\prod_{i=0}^{p} \epsilon_{i}=\pm 1$:
$$(\boldsymbol{\dagger}) \quad I^{\mathfrak{l}} (0; 0^{k}, \star, W; 1)\equiv I^{\mathfrak{l}} (0; W, \star, 0^{k}; 1), \quad  (\boldsymbol{\ddagger}) \quad I^{\mathfrak{l}} (0; 0^{k}, -\star, W; 1)\equiv I^{\mathfrak{l}} (0;- W, -\star, 0^{k}; 1).$$
Furthermore, by shuffle regularization formula ($\ref{eq:shufflereg}$), spreading the first $0$ further inside the iterated integrals, the identity  $I^{\mathfrak{l}} (0;\boldsymbol{0}^{k}, \star, W; 1)\equiv  (-1)^{w+1} I^{\mathfrak{l}} (0;\boldsymbol{0}^{k}, \star, \widetilde{W}; 1)$ boils down to the case $k=0$. \\
\texttt{Notations:} As usual $\epsilon_{i}=\textrm{sign} (n_{i})$, $\epsilon_{i}=\eta_{i}\eta_{i+1}$,$\epsilon_{p}= \eta_{p}$, $n_{i}=\epsilon_{i}(a_{i}+1)$.
\begin{itemize}
\item[$(\boldsymbol{\dagger})$] In $(\ref{eq:octagonlin})$, if we look at the coefficient of a specific word in $\lbrace e_{0},e_{1}, e_{-1}\rbrace$ ending and beginning with $e_{-1}$ (as in $\S 4.6$), only two terms contribute, i.e.:
\begin{equation}\label{eq:octagonlinpart1}
 e_{-1}\Phi^{\mathfrak{l}}(e_{\infty}, e_{-1},e_{1})- \Phi^{\mathfrak{l}}(e_{\infty}, e_{-1},e_{1})e_{-1} 
 \end{equation}
The coefficient of $e_{0}^{a_{0}}e_{\eta_{1}} e_{0}^{a_{1}} \cdots e_{\eta_{p}} e_{0}^{a_{p}}$ in $\Phi^{\mathfrak{m}}(e_{\infty}, e_{-1},e_{1})$ is $(-1)^{n+p}\zeta^{\star\star,\mathfrak{m}}_{n_{0}-1} \left( n_{1}, \cdots, n_{p-1}, -n_{p}\right)$.\footnote{The expressions of these associators are more detailed in the proof of Lemma $\ref{lemmlor}$.} Hence, the coefficient in $(\ref{eq:octagonlinpart1})$ (as in $(\ref{eq:octagonlin})$) of the word $e_{-1} e_{0}^{a_{0}} e_{\eta_{1}} \cdots  e_{\eta_{p}} e_{0}^{a_{p}} e_{-1}$ is:
$$ \zeta^{\star\star, \mathfrak{l}}_{\mid n_{0}\mid -1}(n_{1}, \cdots, - n_{p}, 1) -  \zeta^{\star\star, \mathfrak{l}}(n_{0}, n_{1}, \cdots, n_{p-1}, -n_{p})=0, \quad \text{ with} \prod_{i=0}^{p} \epsilon_{i}=1.$$
In terms of iterated integrals, reversing the first one with Antipode $\shuffle$, it is:
$$ I^{\mathfrak{l}} \left(0;-W , \star ;1 \right)\equiv I^{\mathfrak{l}} \left(0; \star, -W ;1 \right), \text{ with } W\mathrel{\mathop:}=0^{n_{0}-1} \eta_{1} 0^{n_{1}-1} \cdots \eta_{p} 0^{n_{p}-1}.$$
Therefore, since $W$ can be any word in $\lbrace 0, \pm \star \rbrace$, by linearity this is also true for any word W in $\lbrace 0, \pm 1 \rbrace$: $ I^{\mathfrak{l}} \left(0;W, \star ;1 \right)\equiv I^{\mathfrak{l}} \left(0; \star, W ;1 \right)$.
\item[$(\boldsymbol{\ddagger})$] Now, let look at the coefficient of a specific word in $\lbrace e_{0},e_{1}, e_{-1}\rbrace$ beginning by $e_{1}$, and ending by $e_{-1}$. Only two terms in the left side of $(\ref{eq:octagonlin})$ contribute, i.e.:
\begin{equation}\label{eq:octagonlinpart2}
 -e_{1}\Phi^{\mathfrak{l}}(e_{1}, e_{\infty},e_{0})- \Phi^{\mathfrak{l}}(e_{\infty}, e_{-1},e_{1})e_{-1} 
 \end{equation}
The coefficient in this expression of the word $e_{1} e_{0}^{a_{0}} e_{\eta_{1}} \cdots  e_{\eta_{p}} e_{0}^{a_{p}} e_{-1}$ is:
$$ \zeta^{\star\star, \mathfrak{l}}_{\mid n_{0}\mid -1}(n_{1}, \cdots, n_{p}, -1) -  \zeta^{\star\star, \mathfrak{l}}(n_{0}, n_{1}, \cdots, n_{p})=0, \quad \text{ with} \prod_{i=0}^{p} \epsilon_{i}=-1.$$
In terms of iterated integrals, reversing the first one with Antipode $\shuffle$, it is:
$$ I^{\mathfrak{l}} \left(0; - W , -\star ;1 \right)\equiv I^{\mathfrak{l}} \left(0; -\star, W ;1 \right).$$
Therefore, since $W$ can be any word in $\lbrace 0, \pm \star \rbrace$, by linearity this is also true for any word W in $\lbrace 0, \pm 1 \rbrace$.
\end{itemize}
\end{proof}

\paragraph{For Euler $\boldsymbol{\star\star}$ sums. }

\begin{coro}
In the coalgebra $\mathcal{L}^{2}$, with $n_{i}\in\mathbb{Z}^{\ast}$, $n\geq 1$:
\begin{equation}\label{eq:antipodestaresss}
\zeta^{\star\star,\mathfrak{l}}_{n-1}(n_{1}, \ldots, n_{p})\equiv (-1)^{w+1}\zeta^{\star\star,\mathfrak{l}}_{n-1}(n_{p}, \ldots, n_{1}).
\end{equation}
Motivic Euler $\star\star$ sums of depth $p$ in $\mathcal{L}$ form a dihedral group of order $p+1$:
$$\textsc{(Shift)   } \quad \zeta^{\star\star,\mathfrak{l}}_{\mid n\mid -1}(n_{1}, \ldots, n_{p})\equiv \zeta^{\star\star,\mathfrak{l}}_{\mid n_{1}\mid -1}(n_{2}, \ldots, n_{p},n) \quad \text{ where } \textrm{sign}(n)\mathrel{\mathop:}= \prod_{i} \textrm{sign}(n_{i}).$$
\end{coro}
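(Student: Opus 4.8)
The plan is to read off both identities from the hybrid relation (Theorem~\ref{hybrid}), the natural tool here because a motivic Euler $\star\star$ sum is by construction an iterated integral whose only non-zero letters are $\pm\star$. Setting $\sigma\mathrel{\mathop:}=\epsilon_{1}\cdots\epsilon_{p}$ with $\epsilon_{i}=\textrm{sign}(n_{i})$, Definition~3.1 gives
$$\zeta^{\star\star,\mathfrak{l}}_{n-1}(n_{1},\ldots,n_{p})=(-1)^{p}I^{\mathfrak{l}}(0;0^{n-1},\sigma\star,W';1),\quad W'\mathrel{\mathop:}=0^{\mid n_{1}\mid-1},(\epsilon_{2}\cdots\epsilon_{p})\star,0^{\mid n_{2}\mid-1},\ldots,\epsilon_{p}\star,0^{\mid n_{p}\mid-1},$$
a word in $\lbrace 0,\pm\star\rbrace$. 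By the linearity remark following Theorem~\ref{hybrid}, the equivalences $(\boldsymbol{\dagger})$, $(\boldsymbol{\ddagger})$ hold for such words, so I would feed $W'$ into them, using $(\boldsymbol{\dagger})$ when $\sigma=+1$ and $(\boldsymbol{\ddagger})$ when $\sigma=-1$.

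For the reflection~\eqref{eq:antipodestaresss} I would invoke the \emph{second} equivalence, which reverses the tail with a global factor $(-1)^{w+1}$ (together with an extra sign flip of the reversed word exactly when the leading letter is $-\star$, i.e. $\sigma=-1$), taking $k=n-1$, $W=W'$. Since the block $0^{n-1},\sigma\star$ is left in place, the index $n-1$ and the leading star $\sigma\star$ are preserved, and the whole effect is to reverse $W'$. The only genuine work is bookkeeping: re-reading the reversed integral as a $\star\star$ sum, I must verify that the reversed cumulative sign-products $\epsilon_{j}\cdots\epsilon_{p}$ recombine into the cumulative sign-products of the reversed argument string. A one-line telescoping of the ratios of consecutive star-letters shows that the $j$-th new argument is exactly $n_{p+1-j}$ (the extra flip in the $\sigma=-1$ case cancelling against the reversed products), so that the reversed integral equals $(-1)^{p}\zeta^{\star\star,\mathfrak{l}}_{n-1}(n_{p},\ldots,n_{1})$; the two depth prefactors $(-1)^{p}$ cancel and leave $\zeta^{\star\star,\mathfrak{l}}_{n-1}(n_{1},\ldots,n_{p})\equiv(-1)^{w+1}\zeta^{\star\star,\mathfrak{l}}_{n-1}(n_{p},\ldots,n_{1})$.

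For (Shift) I would use instead the \emph{first} equivalence, which transports the leading block $0^{n-1},\sigma\star$ to the tail (negating the star-signs of $W'$ precisely when $\sigma=-1$), giving a word ending in $\sigma\star,0^{n-1}$. Re-reading this as a $\star\star$ sum, the $n-1$ trailing zeros force the new last argument to have absolute value $n$, the transported star $\sigma\star$ forces its sign to be $\sigma$, and the same telescoping identifies the remaining arguments as $n_{2},\ldots,n_{p}$ with new index $\mid n_{1}\mid-1$. This is exactly $\zeta^{\star\star,\mathfrak{l}}_{\mid n\mid-1}(n_{1},\ldots,n_{p})\equiv\zeta^{\star\star,\mathfrak{l}}_{\mid n_{1}\mid-1}(n_{2},\ldots,n_{p},n)$ with $\textrm{sign}(n)=\sigma$.

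Finally, the dihedral statement follows by packaging the data of a depth-$p$ $\star\star$ sum into the $(p+1)$-tuple $(a_{0};a_{1},\ldots,a_{p})$ of index $a_{0}=n$ and arguments, subject to the single constraint that the product of all $p+1$ signs is $+1$ (equivalently $\textrm{sign}(n)=\sigma$); this constraint is symmetric and one checks it is preserved by (Shift), which acts as the cyclic rotation $(a_{0};a_{1},\ldots,a_{p})\mapsto(a_{1};a_{2},\ldots,a_{p},a_{0})$ and hence has order $p+1$. Together with the order-two reflection~\eqref{eq:antipodestaresss} this generates the dihedral symmetry of the $(p+1)$-gon. I expect the main (and only) obstacle to be precisely the sign bookkeeping: organising the two cases $\sigma=\pm1$ — where $(\boldsymbol{\ddagger})$ injects the extra flip $W\mapsto-W$ — so that they are handled uniformly and are seen to reproduce the exact cumulative-sign pattern of a $\star\star$ sum. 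Once the telescoping of consecutive star-letters is set up cleanly everything is immediate, and no analytic input is needed.
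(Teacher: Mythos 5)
Your proof is correct, and it reaches the two identities by a more direct route than the paper's own argument. The paper proves the reflection~\eqref{eq:antipodestaresss} by first expanding $\zeta^{\star\star,\mathfrak{m}}_{n-1}(n_{1},\ldots,n_{p})$ into genuine Euler sums, grouped precisely into the pairs $\zeta^{\mathfrak{m}}_{n-1}(A_{1},\ldots,A_{r})+\zeta^{\mathfrak{m}}_{n-1+\mid A_{1}\mid}(A_{2},\ldots,A_{r})$ occurring in Theorem~\ref{hybrid}, applying that theorem termwise and resumming; \textsc{Shift} is then deduced abstractly as the composition of this reflection with Antipode~$\shuffle$ (Lemma~\ref{eq:antipodeshuffle2}). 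You instead keep the single iterated integral $(-1)^{p}I^{\mathfrak{l}}(0;0^{n-1},\sigma\star,W';1)$ and exploit the remark following Theorem~\ref{hybrid}, namely that $(\boldsymbol{\dagger})$ and $(\boldsymbol{\ddagger})$ hold verbatim for words in $\lbrace 0,\pm\star\rbrace$: the equivalence between the first and third members yields the reflection, that between the first and second yields \textsc{Shift}. The two routes are cousins --- the second and third members of $(\boldsymbol{\dagger})$/$(\boldsymbol{\ddagger})$ are themselves exchanged by Antipode~$\shuffle$, so your derivation of \textsc{Shift} secretly performs the paper's composition --- but yours trades the combinatorial bookkeeping of the $(A_{i})$-decomposition for the telescoping of the cumulative sign products $\epsilon_{j}\cdots\epsilon_{p}$, which you handle correctly: in particular the extra flip $W\mapsto -W$ built into $(\boldsymbol{\ddagger})$ is exactly what makes the $\sigma=-1$ case close up on $\zeta^{\star\star,\mathfrak{l}}_{n-1}(n_{p},\ldots,n_{1})$ rather than on a sum with the sign of one argument reversed, and likewise forces the transported last argument in \textsc{Shift} to carry the sign $\sigma=\textrm{sign}(n)$. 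Your packaging of the dihedral action is also the correct reading: \textsc{Shift} is the rotation of order $p+1$ of the constrained $(p+1)$-tuple (the paper's phrase \say{cycle of order $p$} is off by one relative to its own claim of a dihedral group of the $(p+1)$-gon), and together with the order-two reflection it generates the full dihedral symmetry.
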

\noindent
Identity $(\ref{eq:antipodestaresss})$, respectively $\textsc{Shift}$, corresponds to the action of a reflection resp. of a cycle of order $p$ on motivic Euler $\star\star$ sums of depth $p$ in $\mathcal{L}$.
\begin{proof}
Writing $\zeta^{\star\star,\mathfrak{m}}$ as a sum of Euler sums:
\begin{small}
$$\hspace*{-0.5cm}\zeta^{\star\star,\mathfrak{m}}_{n-1}(n_{1}, \ldots, n_{p})=\sum_{i=1}^{p} \zeta^{\mathfrak{m}}_{n-1+\mid n_{1}\mid+\cdots+ \mid n_{i-1}\mid}(n_{i} \circ \cdots \circ n_{p})=\sum_{r \atop A_{i}} \left( \zeta^{\mathfrak{m}}_{n-1}(A_{1}, \ldots, A_{r})+ \zeta^{\mathfrak{m}}_{n-1+\mid A_{1}\mid}(A_{2}, \ldots, A_{r})\right),$$
\end{small}
where the last sum is over $(A_{i})_{i}$ such that each $A_{i}$ is a non empty \say{sum} of consecutive $(n_{j})'s$, preserving the order; the absolute value being summed whereas the sign of the $n_{i}$ involved are multiplied; moreover, $\mid A_{1}\mid \geq \mid n_{1}\mid $ resp. $\mid A_{r} \mid \geq \mid n_{p}\mid $.\\
Using Theorem $(\ref{hybrid})$ in the coalgebra $\mathcal{L}$, the previous equality turns into:
$$(-1)^{w+1}\sum_{r \atop A_{i}} \left( \zeta^{\mathfrak{l}}_{n-1}(A_{r}, \ldots, A_{1})+ \zeta^{\mathfrak{l}}_{n-1+\mid A_{r}\mid}(A_{r-1}, \ldots, A_{1})\right) \equiv (-1)^{w+1}\zeta^{\star\star,\mathfrak{m}}_{n-1}(n_{p}, \ldots, n_{1}). $$
Then, $\textsc{Shift}$ is obtained as the composition with Antipode $\shuffle$ $(\ref{eq:antipodeshuffle2})$.
\end{proof}

\paragraph{For Euler $\boldsymbol{\sharp\sharp}$ sums.}

\begin{coro}\label{esharprel}
In the coalgebra $\mathcal{L}$, for $n\in\mathbb{N}$, $n_{i}\in\mathbb{Z}^{\ast}$, $\epsilon_{i}\mathrel{\mathop:}=\textrm{sign}(n_{i})$:\footnote{Here, $\mlq - \mrq$ denotes the operation where absolute values are subtracted whereas sign multiplied.}\\
\begin{tabular}{lll}
\textsc{Reverse} & $\zeta^{\sharp\sharp,\mathfrak{l}}_{n}(n_{1}, \ldots, n_{p})+ (-1)^{w}\zeta^{\sharp\sharp,\mathfrak{l}}_{n}(n_{p}, \ldots, n_{1}) \equiv
\left\{
\begin{array}{l}
0   . \\
\zeta^{\sharp,\mathfrak{l}}_{n}(n_{1}, \ldots, n_{p})
  \end{array}\right.$ & $ \begin{array}{l}
\textrm{       if } w+p \textrm{ even } . \\
 \textrm{    if  } w+p \textrm{ odd } \end{array} .$\\
 &&\\
\textsc{Shift} &  $\zeta^{\sharp\sharp,\mathfrak{l}}_{ n -1}(n_{1}, \ldots, n_{p})\equiv \zeta^{\sharp\sharp,\mathfrak{l}}_{\mid n_{1}\mid-1}(n_{2}, \ldots, n_{p},\epsilon_{1}\cdots \epsilon_{p} \cdot n)$ & for  $w+p$ even.\\
&&\\
\textsc{Cut} & $\zeta^{\sharp\sharp,\mathfrak{l}}_{n}(n_{1},\cdots, n_{p}) \equiv \zeta^{\sharp\sharp,\mathfrak{l}}_{n+\mid n_{p}\mid}(n_{1},\cdots, n_{p-1}),$ & for $w+p$ odd.\\
&&\\
\textsc{Minus} & $\zeta^{\sharp\sharp,\mathfrak{l}}_{n-i}(n_{1},\cdots, n_{p}) \equiv  \zeta^{\sharp\sharp,\mathfrak{l}}_{n}\left( n_{1},\cdots, n_{p-1},  n_{p} \mlq - \mrq i)\right)$, & for $\begin{array}{l}
w+p \text{ odd }\\
i \leq \min(n,\mid n_{p}\mid)
\end{array}$.\\
&&\\
\textsc{Sign} & $\zeta^{\sharp\sharp,\mathfrak{l}}_{n}(n_{1},\cdots,  n_{p-1}, n_{p}) \equiv  \zeta^{\sharp\sharp,\mathfrak{l}}_{n}(n_{1},\cdots,  n_{p-1},-n_{p})$,& for $w+p$ odd.
\end{tabular}
$$\quad\Rightarrow \forall W \in \lbrace 0, \pm \sharp\rbrace^{\times} \text{with an odd number of } 0,\quad I^{\mathfrak{l}}(-1; W ; 1) \equiv 0 .$$
\end{coro}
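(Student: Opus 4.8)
The plan is to reduce the integral based at $-1$ to integrals based at $0$ — which are exactly (signed) motivic Euler $\sharp\sharp$ sums — and then to invoke the \textsc{Sign} relation. First I would apply the path composition formula (splitting at the point $0$): in the coalgebra $\mathcal{L}$ every product of two factors of positive weight vanishes, so only the two extreme terms survive, giving
$$I^{\mathfrak{l}}(-1;W;1)\equiv I^{\mathfrak{l}}(0;W;1)+I^{\mathfrak{l}}(-1;W;0).$$
Here $w$ denotes the weight, which equals the length of $W$.

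Next I would rewrite the second summand as an integral based at $0$ ending at $1$. Path reversal turns $I^{\mathfrak{m}}(-1;W;0)$ into $(-1)^{w}I^{\mathfrak{m}}(0;\widetilde{W};-1)$, and the homothety with $\alpha=-1$, which fixes $\omega_0$ and exchanges $\omega_{\sharp}\leftrightarrow\omega_{-\sharp}$, rewrites this as $(-1)^{w}I^{\mathfrak{m}}(0;-\widetilde{W};1)$, where $-W$ denotes $W$ with every $\pm\sharp$ replaced by $\mp\sharp$ (zeros unchanged). Applying the Antipode $\shuffle$ (Lemma~\ref{eq:antipodeshuffle2}) to $-\widetilde{W}$ produces a factor $(-1)^{w+1}$ and undoes the reversal, so that altogether
$$I^{\mathfrak{l}}(-1;W;1)\equiv I^{\mathfrak{l}}(0;W;1)-I^{\mathfrak{l}}(0;-W;1).$$

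It remains to see that the two integrals on the right are congruent. Each is, up to the common sign $(-1)^{p}$ with $p$ the number of $\sharp$-letters, a motivic Euler $\sharp\sharp$ sum $\zeta^{\sharp\sharp,\mathfrak{l}}_{k}(\cdots)$: writing $W=0^{k}\mu_1 0^{|n_1|-1}\cdots\mu_p 0^{|n_p|-1}$ with $\mu_i\in\{\pm\sharp\}$, the letter signs are the right partial products $\mu_i=\epsilon_i\cdots\epsilon_p$ of the argument signs $\epsilon_i=\textrm{sign}(n_i)$. Passing from $W$ to $-W$ leaves every $\epsilon_i=\mu_i\mu_{i+1}$ unchanged for $i<p$ and flips only $\epsilon_p=\mu_p$; hence $I^{\mathfrak{l}}(0;-W;1)=(-1)^p\zeta^{\sharp\sharp,\mathfrak{l}}_k(n_1,\ldots,n_{p-1},-n_p)$ differs from $I^{\mathfrak{l}}(0;W;1)=(-1)^p\zeta^{\sharp\sharp,\mathfrak{l}}_k(n_1,\ldots,n_p)$ only in the sign of the last argument. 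Finally, an odd number of zeros in $W$ means $w-p$ is odd, equivalently $w+p$ is odd, which is precisely the hypothesis of the \textsc{Sign} relation; thus the two sums are congruent and the difference vanishes. The degenerate case where $W$ has no $\sharp$-letter is automatic, since then $-W=W$.

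The path, homothety and antipode manipulations are routine; the one point requiring genuine care — and the main obstacle — is the combinatorial bookkeeping identifying \emph{negate every letter} on the integral side with \emph{flip the sign of the last argument} on the $\sharp\sharp$-sum side, together with the parity translation (odd number of zeros $\Leftrightarrow w+p$ odd) that licenses the \textsc{Sign} relation. Once these are pinned down the conclusion is immediate.
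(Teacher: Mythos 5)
Your treatment of the last line of the corollary is correct, and it is essentially the paper's own argument: split the path at $0$ (so that in $\mathcal{L}$ only the two extreme terms of the composition formula survive), convert $I^{\mathfrak{l}}(-1;W;0)$ into $-I^{\mathfrak{l}}(0;-W;1)$ --- you do this by path reversal, the homothety $t\mapsto -t$ and Antipode $\shuffle$, while the paper uses the homothety together with the endpoint-exchange rule, which amounts to the same manipulation --- and then observe that negating every letter of $W$ corresponds, on the $\zeta^{\sharp\sharp}$ side, to flipping the sign of the last argument only, so that \textsc{Sign} kills the difference. Your combinatorial bookkeeping is accurate: with $\mu_{i}=\epsilon_{i}\cdots\epsilon_{p}$ one indeed has $\epsilon_{i}=\mu_{i}\mu_{i+1}$ unchanged for $i<p$ and $\epsilon_{p}=\mu_{p}$ flipped, and \say{odd number of zeros} is equivalent to $w+p$ odd, which is exactly the hypothesis of \textsc{Sign}.

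The genuine gap is that this is only the final implication of Corollary \ref{esharprel}, not the corollary. The statement also asserts the five relations \textsc{Reverse}, \textsc{Shift}, \textsc{Cut}, \textsc{Minus} and \textsc{Sign} themselves, and these constitute the real content and the real work: you invoke \textsc{Sign} as if it were available, but it is part of what must be proved. In the paper these relations are deduced from the hybrid relation (Theorem \ref{hybrid}): \textsc{Reverse} is obtained by expanding $\zeta^{\sharp\sharp,\mathfrak{l}}_{k}$ as a sum over groupings $(A_{1},\ldots,A_{r})$ of consecutive arguments with $2$-power coefficients, applying the hybrid relation to each pair of terms, and then using the Antipode $\ast$ relation on $\zeta^{\sharp,\mathfrak{l}}$ to separate the cases $w+p$ even and odd; \textsc{Shift} is \textsc{Reverse} combined with Antipode $\shuffle$; \textsc{Cut} is \textsc{Reverse} read off in the odd case; \textsc{Minus} follows from \textsc{Cut}; and \textsc{Sign} comes from applying \textsc{Cut} in both directions with different signs. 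None of this appears in your proposal, so as a proof of the corollary it is incomplete: what you have actually established is the (correct, and correctly executed) deduction of $I^{\mathfrak{l}}(-1;W;1)\equiv 0$ from \textsc{Sign}.
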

\noindent
\textsc{Remark}:
In the coaction of Euler sums, terms with $\overline{1}$ can appear\footnote{Referring to Lemma $\autoref{lemmt}$, a $\overline{1}$ can appear in terms of the type $T_{\epsilon, -\epsilon}$ for a cut between $\epsilon$ and $-\epsilon$.}, which are clearly not motivic MZV. The left side corresponding to such a term in $D_{2r+1}(\cdot)$ is $I^{ \mathfrak{l}}(1;W;-1)$, W odd weight with $0, \pm \sharp$. It is worth underlying that, for the $\sharp$ family with $\lbrace \overline{even}, odd\rbrace$, these terms disappear by $\textsc{Sign}$, since by constraint on parity, $W$ is always of even depth for such a cut. This $\sharp$ family is then more suitable for an unramified criterion (cf. section 4.).
\begin{proof}
These are consequences of the hybrid relation in Theorem $\ref{hybrid}$.
\begin{itemize}
\item[$\cdot$] \textsc{Reverse:}
Writing $\zeta^{\sharp\sharp,\mathfrak{l}}$ as a sum of Euler sums:
\begin{flushleft}
$\hspace*{-0.5cm}\zeta^{\sharp\sharp,\mathfrak{m}}_{k}(n_{1}, \ldots, n_{p}) +(-1)^{w} \zeta^{\sharp\sharp,\mathfrak{m}}_{k}(n_{p}, \ldots, n_{1})$
\end{flushleft}
\begin{small}
$$\hspace*{-0.5cm}\begin{array}{l}
=\sum_{i=1}^{p} 2^{p-i+1-n_{+}}  \zeta^{\mathfrak{m}}_{k+n_{1}+\cdots+ n_{i-1}}(n_{i} \circ \cdots \circ n_{p})   + (-1)^{w}2^{i-n_{+}} \zeta^{\mathfrak{m}}_{k+n_{p}+\cdots+ n_{i+1}}(n_{i} \circ \cdots \circ n_{1})\\
 \\
=\sum_{r \atop A_{i}}2^{r-1} \left(\right. 2 \zeta^{\mathfrak{m}}_{k}(A_{1}, \ldots, A_{r}) +2 (-1)^{w} \zeta^{\mathfrak{m}}_{k}(A_{r}, \ldots, A_{1}) + \zeta^{\mathfrak{m}}_{k+A_{1}}(A_{2}, \ldots, A_{r}) +(-1)^{w} \zeta^{\mathfrak{m}}_{k+A_{r}}(A_{r-1}, \ldots, A_{1}) \left.  \right) 
\end{array}$$
\end{small}
where the sum is over $(A_{i})$ such that each $A_{i}$ is a non empty \say{sum} of consecutive $(n_{j})'s$, preserving the order; i.e. absolute values of $n_{i}$ are summed whereas signs are multiplied; moreover, $A_{1}$ resp. $A_{r}$ are no less than $n_{1}$ resp. $n_{p}$.\\
By Theorem $\ref{hybrid}$, the previous equality turns into, in $\mathcal{L}$:
$$\sum_{r \atop A_{i}}2^{r-1} \left( \zeta^{\mathfrak{l}}_{k}(A_{1}, \ldots, A_{r})+  (-1)^{w} \zeta^{\mathfrak{l}}_{k}(A_{r}, \ldots, A_{1})\right)$$
$$ \equiv 2^{-1} \left( \zeta^{\sharp,\mathfrak{l}}_{k}(n_{1}, \ldots, n_{p})+  (-1)^{w} \zeta^{\sharp,\mathfrak{l}}_{k}(n_{p}, \ldots, n_{1})\right)\equiv 2^{-1}  \zeta^{\sharp,\mathfrak{l}}_{k}(n_{1}, \ldots, n_{p}) \left( 1+ (-1)^{w+p+1} \right).$$
By the Antipode $\star$ relation applied to $\zeta^{\sharp,\mathfrak{l}}$, it implies the result stated, splitting the cases $w+p$ even and $w+p$ odd.
\item[$\cdot$] \textsc{Shift:} Obtained when combining \textsc{Reverse} and \textsc{Antipode} $\shuffle$, when $w+p$ even.
\item[$\cdot$] \textsc{Cut:} Reverse in the case $w+p$ odd implies:
$$\zeta^{\sharp\sharp,\mathfrak{l}}_{n+\mid n_{1}\mid }(n_{2}, \ldots, n_{p})+ (-1)^{w}\zeta^{\sharp\sharp,\mathfrak{l}}_{n}(n_{p}, \ldots, n_{1})  \equiv 0,$$
Which, reversing the variables, gives the Cut rule.
\item[$\cdot$] \textsc{Minus} follows from \textsc{Cut} since, by \textsc{Cut}, both sides are equal to $\zeta^{\sharp\sharp,\mathfrak{l}}_{n-i+ \mid  n_{p}\mid}(n_{1},\cdots, n_{p-1})$.
\item[$\cdot$] In \textsc{Cut}, the sign of $n_{p}$ does not matter, hence, using \textsc{Cut} in both directions, with different signs leads to \textsc{Sign}:
$$\zeta^{\sharp\sharp,\mathfrak{l}}_{n} (n_{1},\ldots,n_{p})\equiv \zeta^{\sharp\sharp,\mathfrak{l}}_{n+ \mid n_p\mid } (n_{1}, \ldots,n_{p-1})\equiv \zeta^{\sharp\sharp,\mathfrak{l}}_{n} ( n_{1},\ldots,-n_{p}).$$
Translating in terms of iterated integrals, it leads to:
$$ I^{\mathfrak{l}}(0; W ; 1) \equiv  I^{\mathfrak{l}}(0; -W; 1), \quad \text{ for } W \text {any sequence of } 0, \pm \sharp, \text{ with } w+p \text{ odd }, \footnote{The weight $w$ is the length of W, whereas the depth $p$ is the number of $\pm \sharp$. Hence this condition is equivalent for W to have an odd number of $0$.}$$ 
where $-W$ is obtained from $W$ after exchanging $\sharp$ and $-\sharp$. Moreover, $I^{\mathfrak{l}}(0; -W; 1)\equiv I^{\mathfrak{l}}(0; W; -1) \equiv - I^{\mathfrak{l}}(-1; W; 0)$. Hence, we obtain, using the composition rule of iterated integrals modulo product:
$$I^{\mathfrak{l}}(0; W ; 1) + I^{\mathfrak{l}}(-1; W; 0)\equiv I^{\mathfrak{l}}(-1; W ; 1)  \equiv 0.$$
\end{itemize} 
\end{proof}

\section{Some Unramified Euler $\sharp$ sums}

Let's look at the following family with only positive odd and negative even integers:
$$\boldsymbol{\zeta^{\sharp,  \mathfrak{m}} \left( \lbrace \overline{\textbf{even }} ,  \textbf{odd } \rbrace^{\times} \right)}.$$
In the iterated integral, this condition means that we see only the following sequences:
\begin{center}
$\epsilon 0^{2a} \epsilon$, $\quad$ or $\quad\epsilon 0^{2a+1} -\epsilon$, $\quad$  with $\quad\epsilon\in \lbrace \pm\sharp \rbrace$.
\end{center}
It is the family of Euler sums appearing in the conjecture $(\autoref{lzg})$, and is unramified:
\begin{theo}\label{ESsharphonorary}
The motivic Euler sums $\zeta^{\sharp,  \mathfrak{m}} \left( \lbrace \overline{\text{even }},  \text{odd } \rbrace^{\times} \right) $ are motivic geometric$^{+}$ periods of $\mathcal{MT}(\mathbb{Z})$ and therefore $\mathbb{Q}$ linear combinations of motivic multiple zeta values.
\end{theo}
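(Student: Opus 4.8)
The plan is to apply the unramified criterion of Corollary~\ref{criterehonoraire} and to run a recursion on the weight. Assuming that every element $\zeta^{\sharp,\mathfrak{m}}(\{\overline{\text{even}},\text{odd}\}^{\times})$ of weight $<w$ is already a $\mathbb{Q}$-linear combination of motivic multiple zeta values, it suffices, for a fixed $\mathfrak{Z}=\zeta^{\sharp,\mathfrak{m}}(n_1,\dots,n_p)$ of weight $w$ in the family, to check that $D_1(\mathfrak{Z})=0$ and that each $D_{2r+1}(\mathfrak{Z})$, $r\geq 1$, lands in $\mathcal{L}_{2r+1}\otimes\mathcal{H}^1$. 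Since $D_{2r+1}$ strictly lowers the weight, the right-hand tensor factors fall under the induction hypothesis once we know they remain in the family; thus the whole argument reduces to controlling the two tensor factors produced by the derivations.

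First I would compute $D_{2r+1}(\mathfrak{Z})$ directly from the derivation formula~\eqref{eq:Der}, extended to the $\omega_{\pm\sharp}$ letters by linearity as in~\eqref{eq:miistarsharp}. In the iterated integral the only admissible blocks are $\epsilon\,0^{2a}\,\epsilon$ and $\epsilon\,0^{2a+1}\,(-\epsilon)$ with $\epsilon\in\{\pm\sharp\}$; a cut of length $2r+1$ removes a sub-word and contracts the two surrounding runs of zeros. The resulting right factor is again an iterated integral whose interior letters are $0$ and $\pm\sharp$, hence, after the $\sharp\sharp\leftrightarrow\sharp$ passage of \S3.1 and the relations of Corollary~\ref{esharprel}, a $\sharp$-sum of strictly smaller weight; one checks that merging two runs of zeros preserves the $\{\overline{\text{even}},\text{odd}\}$ pattern, so the right factor stays in the family and is unramified by induction. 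The left factor is, by Lemma~\ref{drz}, a depth-graded $\zeta^{\mathfrak{l}}$ of weight $2r+1$.

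The heart of the proof is the vanishing of the ramified contributions. A left factor fails to be a motivic MZV exactly when the corresponding cut has one endpoint equal to $+1$ and the other to $-1$, which is where the spurious level-$2$ terms involving $\overline 1$ appear; as an iterated integral such a term is $I^{\mathfrak{l}}(1;W;-1)$ with $W$ a word in $\{0,\pm\sharp\}$ of odd total weight. Here the parity of the family is decisive: since every argument is a negative even or a positive odd integer, each such dangerous sub-word $W$ is forced to have \emph{even} depth, so the \textsc{Sign} relation of Corollary~\ref{esharprel} (equivalently $I^{\mathfrak{l}}(-1;W;1)\equiv 0$ for $W$ of odd weight) makes every one of these terms vanish in $\mathcal{L}$. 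The same mechanism disposes of $D_1(\mathfrak{Z})$: the only possibly nonzero weight-$1$ left factor is the ramified class $\zeta^{\mathfrak{l}}(\overline 1)$, the motivic $\log 2$, and it can arise only through an $\epsilon/(-\epsilon)$ cut, hence is annihilated by \textsc{Sign}, giving $D_1(\mathfrak{Z})=0$.

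I expect the main obstacle to be the parity bookkeeping under contraction: one must verify, cut by cut, that every sub-word triggering a level-$2$ left factor carries an even number of $\pm\sharp$ (so that \textsc{Sign} applies), while simultaneously the contracted right factor never leaves the $\{\overline{\text{even}},\text{odd}\}$ pattern. Both facts hinge on the rigidity of the two admissible blocks $\epsilon\,0^{2a}\,\epsilon$ and $\epsilon\,0^{2a+1}\,(-\epsilon)$ together with the antipodal and hybrid relations of Section~3, which are exactly what convert the raw coaction into one landing in $\mathcal{A}^1\otimes\mathcal{H}^1$. Once this combinatorial control is secured, Corollary~\ref{criterehonoraire} and the weight recursion yield $\mathfrak{Z}\in\mathcal{H}^1$.
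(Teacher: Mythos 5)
Your overall architecture is the same as the paper's: Corollary \ref{criterehonoraire} together with a recursion on the weight, the vanishing of $D_{1}$ because opposite signs are never adjacent in this family, and the parity observation that an odd-weight cut from $\epsilon$ to $-\epsilon$ encloses a word $W$ of \emph{even} depth, so that the dangerous terms $I^{\mathfrak{l}}(1;W;-1)$ of type $T_{\epsilon,-\epsilon}$ in Lemma \ref{lemmt} (whose right factor carries a $\overline{1}$) are killed by \textsc{Sign} (\ref{eq:sign}). That last point is exactly the remark following Corollary \ref{esharprel}, and it is one of the two pillars of the paper's proof.

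The gap is your claim, to be checked ``cut by cut'', that the contracted right factor never leaves the $\lbrace \overline{\text{even}},\text{odd}\rbrace$ pattern. This is false, and it is precisely where the difficulty lies. Take a cut of odd weight whose two bounding letters are sign letters of the \emph{same} sign, $\epsilon \cdots \epsilon$ (for instance, in $\zeta^{\sharp,\mathfrak{m}}(3,3,3,\overline{2})$, the weight-$5$ cut spanning the third entry is bounded by two letters $-\sharp$). If the zero-run following the second bounding letter has length $v$, admissibility of the original word forces $v$ to be even exactly when the next sign letter equals $\epsilon$; in the right factor $T_{\epsilon,0}$ this run becomes $0^{v+1}$, and symmetrically in $T_{0,\epsilon}$ the run preceding the first bounding letter gains one zero. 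Hence \emph{both} right factors contain a forbidden block $\epsilon\, 0^{\text{odd}}\,\epsilon$ or $\epsilon\, 0^{\text{even}}\,(-\epsilon)$, i.e.\ an entry $\text{even}$ or $\overline{\text{odd}}$ (in the example above, the $T_{\epsilon,0}$ right factor reads $\zeta^{\sharp,\mathfrak{m}}(3,\overline{3})$), while the left factor is a nonzero multiple of $\zeta^{\mathfrak{l}}(2r+1)$. Such a term neither vanishes on its own nor falls under your induction hypothesis, since its right factor lies outside the family. The paper says this explicitly in Lemma A.3: cuts can create sequences $\epsilon\,0^{\text{even}}\,(-\epsilon)$ or $\epsilon\,0^{\text{odd}}\,\epsilon$, and these are the \emph{unstable} cuts. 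What rescues the theorem is not cut-by-cut stability but a pairing argument: each unstable cut is matched with another cut producing the \emph{same} unstable right factor, and the two left factors cancel in $\mathcal{L}$ by the hybrid relations \textsc{Shift} (\ref{eq:shift}), \textsc{Cut} (\ref{eq:cut}), \textsc{Cut Shifted} (\ref{eq:cutshifted}) and \textsc{Minus} (\ref{eq:minus}); the case-by-case table in the proof of Lemma A.3 is exactly this bookkeeping, and it is the technical heart of the theorem. You invoke the right tools from Section 3, but for the wrong purpose (making individual cuts admissible rather than cancelling sums of inadmissible ones), so as written your recursion does not close.
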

The proof ($\S 4.2$) relies mainly upon the stability under the coaction of this family. This motivic family is even a generating family of motivic MZV:
\begin{theo}\label{ESsharpbasis}
The following family is a basis of $\mathcal{H}^{1}$:
$$\mathcal{B}^{\sharp}\mathrel{\mathop:}= \left\lbrace \zeta^{ \sharp,\mathfrak{m}} (2a_{0}+1,2a_{1}+3,\cdots, 2 a_{p-1}+3, \overline{2a_{p}+2})\text{ , } a_{i}\geq 0\right\rbrace .$$
\end{theo}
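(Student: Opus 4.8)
The plan is to match graded dimensions, reduce the statement to linear independence, and then prove independence by a double recursion on weight and on the \emph{level} (motivic depth) filtration, using that the derivations act as deconcatenations modulo lower level. First I would count the weight-$n$ members of $\mathcal{B}^\sharp$. A member $\zeta^{\sharp,\mathfrak{m}}(2a_0+1,2a_1+3,\ldots,2a_{p-1}+3,\overline{2a_p+2})$ of depth $p+1$ has weight $3p+2\sum_{i=0}^{p}a_i$ (and the degenerate depth-one members $\zeta^{\sharp,\mathfrak{m}}(\overline{2a+2})$ have weight $2a+2$). Summing the resulting binomial counts over $p$ gives the generating series $\tfrac{t^2+t^3}{1-t^2-t^3}=\tfrac{1}{1-t^2-t^3}-1$, so the number of weight-$n$ members equals $d^1_n=\dim\mathcal{H}^1_n$ for every $n\ge1$, where $d^1_n=d^1_{n-2}+d^1_{n-3}$ is Brown's recurrence. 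By Theorem~\ref{ESsharphonorary} we already know $\mathcal{B}^\sharp\subset\mathcal{H}^1$; hence it suffices to prove that $\mathcal{B}^\sharp$ is \emph{linearly independent}, the spanning — and thus the basis property — then being forced by the dimension equality.

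For independence I would filter the span of $\mathcal{B}^\sharp$ by the level $\mathcal{F}^{L}$, the level of a depth-$(p+1)$ member being $p$; this is precisely its motivic depth, i.e. the depth filtration $\mathcal{F}^{\mathfrak D}$ shifted by one. Since each $D_{2r+1}$ sends $\mathcal{F}^{\mathfrak D}_{p}\mathcal{H}_n$ into $\mathcal{L}_{2r+1}\otimes\mathcal{F}^{\mathfrak D}_{p-1}\mathcal{H}_{n-2r-1}$, the level filtration is stable under the coaction and strictly lowered by the $D_{2r+1}$. Given a linear relation among weight-$n$ members, passing to the top non-trivial level-graded piece reduces it to a relation among the level-$d$ members; I would establish their independence and induct downward on $d$ (and on $n$), the level-zero base case being the evident independence of the powers of $(\mathbb{L}^{\mathfrak m})^2$ furnished by the negative even tails, with Theorem~\ref{kerdn} ensuring that an element annihilated by all $D_{2r+1}$ sits in the bottom level.

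The core of the argument is the behaviour of $\mathrm{gr}^{L}D_{2r+1}$ on $\mathcal{B}^\sharp$. By Lemma~\ref{drz}, in the depth-graded only the deconcatenation of the initial entry and the contractions of a single adjacent pair survive; I would rewrite the $\sharp$-derivative through its $\sharp\sharp$-companion and simplify in $\mathcal{L}$ using the relations \textsc{Sign}, \textsc{Cut}, \textsc{Minus} of Corollary~\ref{esharprel} and the hybrid relation of Theorem~\ref{hybrid}. The alternating pattern $\{\overline{\text{even}},\text{odd}\}$ is exactly what forces the parity hypotheses on $w+p$ under which these relations collapse or cancel the interior-contraction terms, so that modulo lower level $D_{2r+1}$ acts as the pure deconcatenation peeling off the leading odd argument, $\zeta^{\sharp,\mathfrak l}(2a_0+1)\otimes\zeta^{\sharp,\mathfrak m}(2a_1+3,\ldots,\overline{2a_p+2})$, whose right factor, since $2a_1+3=2(a_1+1)+1$, is again a member of $\mathcal{B}^\sharp$ of level $d-1$. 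Equivalently, in the $f$-alphabet model $\phi^{1}\colon\mathcal H^1\hookrightarrow H^1$, where the $D_{2r+1}$ become deconcatenations (see \eqref{eq:derf}), this shows the images $\phi^{1}(\mathcal B^\sharp)$ are unitriangular, so distinct members carry distinct leading $f$-monomials.

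Granting the deconcatenation, injectivity of $\bigoplus_{r}\mathrm{gr}^{L}D_{2r+1}$ on the level-$d$ part follows from the inductive independence of the level-$(d-1)$ part (the right factors) together with the fact that the left factor $\zeta^{\sharp,\mathfrak l}(2r+1)$ records the peeled argument; this closes the recursion and yields the basis. \textbf{The main obstacle} I anticipate is precisely the control of the interior-contraction terms: a priori they sit at the same level as the leading term, and after shuffle regularization (see \eqref{eq:shufflereg}) their left factors are again multiples of $\zeta^{\mathfrak l}(2r+1)$, so they could genuinely interfere; moreover the members whose leading odd entry equals $1$ escape the derivations directly, since $\mathcal L_1=0$ in $\mathcal H^1$. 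Showing that all these terms cancel or drop to lower level is where the full force of the $\sharp\sharp$ antipode and hybrid relations and of the even/odd sign pattern is needed, and is carried out through the explicit simplified coaction of Appendix~A; the remainder of the proof is bookkeeping of the recursion.
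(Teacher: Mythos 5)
Your global strategy is the paper's: the generating-series count $\tfrac{t^{2}+t^{3}}{1-t^{2}-t^{3}}$ is correct and reduces Theorem \ref{ESsharpbasis}, via Theorem \ref{ESsharphonorary}, to linear independence; the filtration by depth minus one (your level), its stability under the $D_{2r+1}$, and the double recursion on weight and level driven by the injectivity of the graded map $\partial_{<n,p}$ of (\ref{eq:pderivnp}) are all exactly the paper's skeleton. The gap sits in the single step that carries all the weight: how that injectivity is proved.

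You claim that, modulo lower level, $D_{2r+1}$ acts on $\mathcal{B}^{\sharp}$ as \emph{pure} deconcatenation, the interior-contraction terms ``cancelling or dropping to lower level'' through the antipode, hybrid and \textsc{Sign}/\textsc{Cut}/\textsc{Minus} relations. This is false, and it is not what Appendix A delivers: those relations eliminate only the \emph{unstable} cuts (the ones leaving the family), while the depth-graded formula (\ref{eq:dgrderiv}) retains, besides the deconcatenation terms, genuine interior contractions with coefficients $\frac{2^{2r+1}}{1-2^{2r}}\binom{2r}{2a_{i+1}+2}$ and $\frac{2^{2r+1}}{1-2^{2r}}\binom{2r}{2a_{i-1}+2}$. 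These drop the depth by exactly one, so they live at the same level as the deconcatenation terms, and their left factors are rational multiples of $\zeta^{\mathfrak{l}}(\overline{2r+1})$ just like all others (the depth-one graded piece, cf. (\ref{eq:depth1})), so no structural feature separates them — you noticed this interference yourself, but the resolution you propose (cancellation) is not available; the same phenomenon occurs in Brown's Hoffman-basis proof and in the Hoffman $\star$ case of $\S 5$, where the extra terms never cancel either. What actually saves the proof is an arithmetic fact: after multiplying the row of $D_{2r+1}$ by $2^{-2r}$, every interior-contraction coefficient has $2$-adic valuation $\geq 1$, whereas the deconcatenation coefficients $2\binom{2r}{2a_{p}+1}$ have valuation $\leq 0$ (since $v_{2}\bigl(\binom{2r}{2a_{p}+1}\bigr)\leq 2r-1$); ordering both sides lexicographically, the matrix of $\partial_{<n,p}$ is then triangular modulo $2$ with odd diagonal, hence has nonzero determinant. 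Without this $2$-adic comparison (or a substitute), your induction does not close, and the claimed unitriangularity in the $f$-alphabet is exactly what remains unproved. Two further corrections: the triangular structure uses the \emph{tail} deconcatenation — merging $(2a_{p-1}+3,\overline{2a_{p}+2})$ into $\overline{2\alpha+2}$, i.e.\ the diagonal is $r=a_{p}+1$, $b_{i}=a_{i}$ for $i<p$ — not the peeling of the leading entry $2a_{0}+1$; consequently your worry about members with leading entry $1$ escaping the derivations is moot, since the argument never relies on the initial cut.
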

\noindent This family is conjecturally (Conjecture $(\autoref{lzg})$) the Hoffman star family $\zeta^{\star} (\boldsymbol{2}^{a_{0}},3,\cdots, 3, \boldsymbol{2}^{a_{p}})$.\\
The proof uses the increasing \textit{depth filtration} $\mathcal{F}^{\mathfrak{D}}$ on $\mathcal{H}^{2}$:
\begin{center}
  $\mathcal{F}_{p}^{\mathfrak{D}} \mathcal{H}^{2}$ is generated by Euler sums of depth smaller than $p$.
 \end{center} 
It is not a grading but the associated graded is defined as the quotient $gr_{p}^{\mathfrak{D}}\mathrel{\mathop:}=\mathcal{F}_{p}^{\mathfrak{D}} \diagup \mathcal{F}_{p-1}^{\mathfrak{D}}$. The vector space $\mathcal{F}_{p}^{\mathfrak{D}}\mathcal{H}$ is stable under the action of $\mathcal{G}$. The linear independence of $\mathcal{B}^{\sharp}$ elements is proved below thanks to a recursion on the depth and on the weight, using the injectivity of a map $\partial$ where $\partial$ came out of the depth and weight-graded part of the coaction $\Delta$, and based on the stability of $\mathcal{B}^{\sharp}$ under the coaction.

\subsection{Depth graded Coaction}

Let also introduce the following depth graded derivations: \footnote{Projecting on the right side, since $gr^{\mathcal{D}}_{1} \mathcal{L}_{2r+1}=\mathbb{Q}\zeta^{\mathfrak{l}}(\pm(2r+1))$ using depth 1 $\ref{eq:depth1}$.}
\begin{itemize}
\item[$\cdot$] $\boldsymbol{D^{\pm 1}_{2r+1,p}}: gr_{p}^{\mathfrak{D}}\mathcal{H}_{n} \rightarrow  gr_{p-1}^{\mathfrak{D}} \mathcal{H}_{n-2r-1}, \quad \textrm{is the composition }  \pi^{\pm 1} \circ D_{2r+1,p}$, where:
$$ \pi^{\pm 1}: gr_{1}^{\mathfrak{D}} \mathcal{L}_{2r+1}\otimes gr_{p-1}^{\mathfrak{D}} \mathcal{H}\rightarrow  gr_{p-1}^{\mathfrak{D}} \mathcal{H}
 \quad  \text{ such that } \left\lbrace \begin{array}{ll}
\pi^{1}\left( \zeta^{\mathfrak{m}}(\epsilon (2r+1)) \otimes X\right)  & =\frac{\epsilon 2^{r}+1-\epsilon}{2^{r}} X \\
 \pi^{-1}\left( \zeta^{\mathfrak{m}}(\epsilon (2r+1)) \otimes X \right)  &=\frac{\epsilon 2^{r}+1-\epsilon}{2-2^{r}} X
\end{array}\right. .$$
\item[$\cdot$] The following map, whose injectivity is fundamental to the Theorem $\ref{ESsharpbasis}$:
\begin{equation}
\label{eq:pderivnp}
\boldsymbol{\partial _{n,p}} \mathrel{\mathop:}=\oplus_{2r+1<n} D^{-1}_{2r+1,p} : gr_{p}^{\mathfrak{D}}\mathcal{H}_{n}  \rightarrow \oplus_{1\leq 2r+1<n } gr_{p-1}^{\mathfrak{D}}\mathcal{H}_{n-2r-1}
\end{equation}
\end{itemize}

\begin{lemm}Their explicit expression is: \footnotemark[2]\footnotetext[2]{To be accurate, the term $i=0$ in the first sum has to be understood as:
$$ \frac{2^{2r+1}}{1-2^{2r}}\binom{2r}{2a_{1}+2} \zeta^{\sharp,\mathfrak{m}} (2\alpha+3,  2 a_{2}+3,\cdots, \overline{2a_{p}+2}) . $$
Meanwhile the terms $i=1$, resp. $i=p$ in the second sum have to be understood as:
$$ \frac{2^{2r+1}}{1-2^{2r}}\binom{2r}{2a_{0}+2} \zeta^{\sharp,\mathfrak{m}} (2\alpha+3,  2 a_{2}+3,\cdots, \overline{2a_{p}+2}) \quad \text{ resp. } \quad \frac{2^{2r+1}}{1-2^{2r}}\binom{2r}{2a_{p-1}+2} \zeta^{\sharp,\mathfrak{m}} (\cdots, 2 a_{p-2}+3, \overline{2\alpha+2}).$$}
\begin{multline} \label{eq:dgrderiv} 
D^{-1}_{2r+1,p} \left(  \zeta^{\sharp,\mathfrak{m}} (2a_{0}+1,2a_{1}+3,\cdots, 2 a_{p-1}+3, \overline{2a_{p}+2}) \right) = \\
\delta_{r=a_{0}} \frac{2^{2r+1}}{1-2^{2r}}\binom{2r}{2r+2} \zeta^{\sharp,\mathfrak{m}} (2 a_{1}+3,\cdots, \overline{2a_{p}+2})\\
+ \sum_{0 \leq i \leq p-2, \quad  \alpha \leq a_{i}\atop r=a_{i+1}+a_{i}+1-\alpha} \frac{2^{2r+1}}{1-2^{2r}}\binom{2r}{2a_{i+1}+2} \zeta^{\sharp,\mathfrak{m}} (\cdots, 2 a_{i-1}+3,2\alpha+3,  2 a_{i+2}+3,\cdots, \overline{2a_{p}+2})\\
+ \sum_{1 \leq i \leq p-1, \quad \alpha \leq a_{i} \atop r=a_{i-1}+a_{i}+1-\alpha} \frac{2^{2r+1}}{1-2^{2r}}\binom{2r}{2a_{i-1}+2} \zeta^{\sharp,\mathfrak{m}} (\cdots, 2 a_{i-2}+3,2\alpha+3,  2 a_{i+1}+3,\cdots, \overline{2a_{p}+2})\\
+ \textsc{(Deconcatenation)} \sum_{\alpha \leq a_{p} \atop r=a_{p-1}+a_{p}+1-\alpha} 2 \binom{2r}{2a_{p}+1}\zeta^{\sharp,\mathfrak{m}} (\cdots, 2 a_{p-1}+3,\overline{2\alpha+2}).
\end{multline}
\end{lemm}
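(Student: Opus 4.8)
The plan is to read the formula off the general derivation formula \ref{eq:Der}, restricted to its depth-graded, $\pi^{-1}$-projected part. First I would unfold the left-hand side into an iterated integral on $\{0,\pm 1\}$ by expanding each $\omega_{\pm\sharp}=2\omega_{\pm 1}-\omega_{0}$ as in \ref{eq:miistarsharp}. The word attached to $\zeta^{\sharp,\mathfrak{m}}(2a_{0}+1,\ldots,\overline{2a_{p}+2})$ has a very rigid shape: since the only negative sign sits at the end, one has $\epsilon_{i}\cdots\epsilon_{p+1}=-1$ for every $i$, so the leading letter is $-1$, every interior entry letter is $-\sharp$, and the zero-blocks are $0^{2a_{0}},0^{2a_{1}+2},\ldots,0^{2a_{p-1}+2}$ (even) followed by the terminal odd block $0^{2a_{p}+1}$ --- this is precisely the $\{\overline{\text{even}},\text{odd}\}$ pattern. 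I then apply $D_{2r+1}$ termwise via \ref{eq:Der}, which cuts out a consecutive arc of length $2r+1$.

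Next I pass to the depth-graded object $gr^{\mathfrak{D}}_{1}\mathcal{L}_{2r+1}\otimes gr^{\mathfrak{D}}_{p-1}\mathcal{H}$. Only cuts whose removed arc carries exactly one entry letter survive, since an arc of depth $d$ drops the right-hand depth by $d$ and contributes a depth-$d$ element on the left; everything with $d\neq 1$ dies in the graded piece. Arcs with coinciding endpoints vanish by property (ii) of the motivic iterated integrals. This leaves exactly three families of arcs, which I would treat separately: the \emph{leading} arc $I^{\mathfrak{l}}(0;-1,0^{2a_{0}};-\sharp)$, whose weight is forced to be $2a_{0}+1=2r+1$ and hence only contributes when $r=a_{0}$ (the source of $\delta_{r=a_{0}}$); the \emph{interior} arcs $0^{s}(-\sharp)0^{t}$ that delete one entry and fuse its two neighbouring arguments, which produce the two middle sums (one for each neighbour the freed weight attaches to); and the \emph{terminal} arc, which is the genuine deconcatenation dropping the last argument.

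Each surviving arc is a depth-$1$ integral $I^{\mathfrak{l}}(\cdot;0^{s},\pm\sharp,0^{t};\cdot)$ that I must bring to standard form. Absorbing the leading zeros by the shuffle regularization \ref{eq:shufflereg} produces the binomial coefficients $\binom{2r}{\cdot}$; the endpoints are normalised to $\{0,1\}$ and the signs fixed using path reversal, homothety, the hybrid relation (Theorem \ref{hybrid}) and the antipodal relations of Corollary \ref{esharprel}. Here the $\{\overline{\text{even}},\text{odd}\}$ parity is essential: it guarantees that the potentially dangerous $\overline{1}$- and odd-$0$-count terms $I^{\mathfrak{l}}(-1;W;1)$ vanish through the Sign rule. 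After reduction, the $\sharp$-doubling contributes a factor $2$; the interior and leading arcs reduce to $\zeta^{\mathfrak{l}}(+(2r+1))$, so invoking the depth-$1$ distribution relation \ref{eq:depth1} and then $\pi^{-1}$ (which reads off the coefficient of $\zeta^{\mathfrak{l}}(\overline{2r+1})$) gives $2\cdot\frac{2^{2r}}{1-2^{2r}}=\frac{2^{2r+1}}{1-2^{2r}}$, whereas the terminal arc reduces directly to $\zeta^{\mathfrak{l}}(\overline{2r+1})$ and hence carries only the coefficient $2$, exactly matching the deconcatenation line.

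Finally I would re-identify each surviving right-hand factor $I^{\mathfrak{m}}(\cdots)$ as the $\zeta^{\sharp,\mathfrak{m}}$ of the contracted index sequence and collect terms according to the cut position, recovering the four pieces of the stated formula (with the degenerate boundary cases interpreted as in the footnote). The genuine difficulty is concentrated in the third step: the exact sign and binomial bookkeeping in normalising the boundary arcs, and in verifying that \emph{all} non-admissible cuts cancel, so that precisely the $\zeta^{\mathfrak{l}}(\pm(2r+1))$ contributions listed remain. The rest of the argument is a direct, if lengthy, combination of \ref{eq:Der}, \ref{eq:shufflereg}, \ref{eq:depth1} and the symmetry relations of \S3.
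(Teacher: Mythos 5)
Your proposal is correct and takes essentially the same route as the paper: its proof likewise keeps only the depth-one cuts of the coaction (the $T_{0,0}$, $T_{\epsilon,0}$, $T_{0,\epsilon}$ terms of Lemma \ref{lemmt} dying in the depth-graded), identifies the initial, interior and deconcatenation cuts, and computes the coefficients via the depth-one shuffle-regularization binomial identity together with $\zeta^{\mathfrak{l}}(2r+1)=\frac{-2^{2r}}{2^{2r}-1}\zeta^{\mathfrak{l}}(\overline{2r+1})$ before applying $\pi^{-1}$. The only differences are cosmetic: you re-derive this bookkeeping directly from $\ref{eq:Der}$ and the expansion $\omega_{\pm\sharp}=2\omega_{\pm 1}-\omega_{0}$ instead of citing Lemma \ref{lemmt}, and the hybrid/\textsc{Sign} relations you invoke are actually superfluous for this particular family, since all its entry letters carry the same sign, so no $\epsilon$-to-$-\epsilon$ cut (hence no $\overline{1}$-type term) ever arises and the unwanted contributions already vanish for depth reasons.
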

\begin{proof}
Looking at the Appendix $A$ expression for $D_{2r+1}$ and keeping only the cuts of depth one (removing exactly one non zero element), in the depth graded:
$$\begin{array}{llll}
=& +\sum_{i, \alpha \leq a_{i}\atop r=a_{i+1}+a_{i}+1-\alpha} 2  & \zeta^{\mathfrak{l}} _{2a_{i}-2\alpha}(2a_{i+1}+3)  & \otimes \zeta^{\sharp,\mathfrak{m}} (\cdots, 2 a_{i-1}+3,2\alpha+3,  2 a_{i+2}+3,\cdots, \overline{2a_{p}+2})\\
&+\sum_{i, \alpha \leq a_{i} \atop r=a_{i-1}+a_{i}+1-\alpha} 2 &\zeta^{\mathfrak{l}} _{2a_{i}-2\alpha}(2a_{i-1}+3)  &\otimes \zeta^{\sharp,\mathfrak{m}} (\cdots, 2 a_{i-2}+3,2\alpha+3,  2 a_{i+1}+3,\cdots, \overline{2a_{p}+2})\\
&+ \sum_{\alpha \leq a_{p} \atop r=a_{p-1}+a_{p}+1-\alpha} 2 &\zeta^{\mathfrak{l}} _{2a_{p-1}-2\alpha+1}(\overline{2a_{p}+2}) & \otimes \zeta^{\sharp,\mathfrak{m}} (\cdots, 2 a_{p-1}+3,\overline{2\alpha+2})
\end{array}
$$
To lighten the result, some cases at the borders ($i=0$, or $i=p$) similar to the other terms (apart from index problems), have been included in the sum: these are clarified in the previous footnote\footnotemark[2]. In particular, with notations of the Lemma $\autoref{lemmt}$, $T_{0,0}$ terms can be neglected as they decrease the depth by at least $2$; same for the $T_{0,\epsilon}$ and  $T_{\epsilon,0}$ for cuts between $\epsilon$ and $\pm \epsilon$. It remains to check the coefficient of $\zeta^{\mathfrak{l}}(\overline{2r+1})$ for each term in the left side, using the known identities:
$$  \zeta^{\mathfrak{l}}(2r+1)= \frac{-2^{2r}}{2^{2r}-1} \zeta^{\mathfrak{l}}(\overline{2r+1})\quad  \text{  and  } \quad \zeta^{\mathfrak{l}}_{2r+1-a}(a)=(-1)^{a+1}\binom{2r}{a-1} \zeta^{\mathfrak{l}}(2r+1).$$
\end{proof}

\subsection{Proofs of Theorem $\autoref{ESsharphonorary}$ and $\autoref{ESsharpbasis}$}

\begin{proof}[\textbf{Proof of Theorem $\boldsymbol{\autoref{ESsharphonorary}}$}]
By Corollary $\ref{criterehonoraire}$, we can prove it in two steps:
\begin{itemize}
\item[$\cdot$] First, it is rather obvious that $D_{1}(\cdot)=0$ on this family since there is no sequence of the type $\lbrace 0, \epsilon, -\epsilon \rbrace$ or $\lbrace \epsilon, -\epsilon, 0 \rbrace$ in these iterated integrals.
\item[$\cdot$] Secondly, we have to prove that $D_{2r+1}(\cdot)$, for $r> 0$, are unramified for this family. This follows straight, by recursion on weight, from this stability statement:
\begin{center}
The family $\zeta^{\sharp,  \mathfrak{m}} \left( \lbrace \overline{\text{even }},  + \text{odd } \rbrace^{\times} \right) $ is stable under $D_{2r+1}$.
\end{center}
This is proved in Lemma $A.3$, using the relations of $\S.3$ in order to get rid of the \textit{unstable cuts}, i.e. cuts where a sequence of type $\epsilon, 0^{2a+1}, \epsilon$ or $\epsilon, 0^{2a}, -\epsilon$ appears (which corresponds to a $\text{even}$ or $\overline{\text{odd}}$ in the Euler $\sharp$ sum).
\end{itemize}
One features used in the lemma about this family is: for a subsequence of odd length from the iterated integral, because of these patterns of $\epsilon, \boldsymbol{0}^{2a}, \epsilon$, or $\epsilon, \boldsymbol{0}^{2a+1}, -\epsilon$, we can relate the depth $p$, the weight $w$ and $s$ the number of sign changes among the $\pm\sharp$: $w\equiv p-s  \pmod{2}$. Hence, if we have a cut $\epsilon_{0},\cdots \epsilon_{p+1}$ of odd weight, then:
\begin{center}
\hspace*{-0.5cm}\textsc{Either:} Depth $p$ is odd, $s$ even, $\epsilon_{0}=\epsilon_{p+1}$,  $\quad$  \textsc{Or:} Depth $p$ is even, $s$ odd, $\epsilon_{0}=-\epsilon_{p+1}$.
\end{center}
\end{proof}

\begin{proof}[\textbf{Proof of Theorem $\boldsymbol{4.2}$}]
By a cardinality argument, it is sufficient to prove the linear independence of the family, which is based on the injectivity of $\partial_{<n,p}$. Let us define: 
\begin{center}
$\mathcal{H}^{odd\sharp}$:  $\mathbb{Q}$-vector space generated by $\zeta^{ \sharp,\mathfrak{m}} (2a_{0}+1,2a_{1}+3,\cdots, 2 a_{p-1}+3, \overline{2a_{p}+2})$.
\end{center}
The first point, thanks to Lemma $A.4$, is that $\mathcal{H}^{odd\sharp}$ is stable under these derivations:
$$D_{2r+1} (\mathcal{H}_{n}^{odd\sharp}) \subset  \mathcal{L}_{2r+1} \otimes \mathcal{H}_{n-2r-1}^{odd\sharp},$$
Now, let consider the restriction on $\mathcal{H}^{odd\sharp}$ of $\partial_{<n,p}$ and prove:
$$\partial_{<n,p}: gr^{\mathfrak{D}}_{p} \mathcal{H}_{n}^{odd\sharp} \rightarrow \oplus_{2r+1<n}  gr^{\mathfrak{D}}_{p-1}\mathcal{H}_{n-2r-1}^{odd\sharp} \text{  is bijective. }$$
The formula $\eqref{eq:dgrderiv}$ gives the explicit expression of this map. Let us prove more precisely:
\begin{center}
\texttt{Claim 1}:  $M^{\mathfrak{D}}_{n,p}$ the matrix of $\partial_{<n,p}$ on $\left\lbrace \zeta^{ \sharp ,\mathfrak{m}} (2a_{0}+1,2a_{1}+3,\cdots, 2 a_{p-1}+3, \overline{2a_{p}+2})\right\rbrace$ $\quad\quad$ in terms of $\left\lbrace \zeta^{ \sharp ,\mathfrak{m}} (2b_{0}+1,2b_{1}+3,\cdots, 2 b_{p-2}+3, \overline{2b_{p-1}+2})\right\rbrace $ is invertible.
\end{center}
\texttt{Nota Bene}: The matrix $M^{\mathfrak{D}}_{n,p}$ is well (uniquely) defined provided that the $\zeta^{ \sharp ,\mathfrak{m}}$ of the second line are linearly independent. So first, we have to consider the formal matrix associated $\mathbb{M}^{\mathfrak{D}}_{n,p}$ defined explicitly (combinatorially) by the formula for the derivations given and prove $\mathbb{M}^{\mathfrak{D}}_{n,p}$ is invertible. Afterwards, we could state that $M^{\mathfrak{D}}_{n,p}$ is well defined and invertible too since equal to $\mathbb{M}^{\mathfrak{D}}_{n,p}$.
\begin{proof}[\texttt{Proof of Claim 1}]
The invertibility comes from the fact that the (strictly) smallest terms $2$-adically in $\eqref{eq:dgrderiv}$ are the deconcatenation ones, which is an injective operation. More precisely, let $\widetilde{M}^{\mathfrak{D}}_{n,p}$ be the matrix $\mathbb{M}_{n,p}$ where we have multiplied each line corresponding to $D_{2r+1}$ by ($2^{-2r}$). Then, order elements on both sides by lexicographical order on  ($a_{p}, \ldots, a_{0}$), resp. ($r,b_{p-1}, \ldots, b_{0}$), such that the diagonal corresponds to $r=a_{p}+1$ and $b_{i}=a_{i}$ for $i<p$. The $2$ -adic valuation of all the terms in $(\ref{eq:dgrderiv})$ (once divided by $2^{2r}$) is at least $1$, except for the deconcatenation terms since:
$$v_{2}\left( 2^{-2r+1} \binom{2r}{2a_{p}+1} \right)  \leq 0 \Longleftrightarrow  v_{2}\left( \binom{2r}{2a_{p}+1} \right)  \leq 2r-1.$$
Then, modulo $2$, only the deconcatenation terms remain, so the matrix $\widetilde{M}^{\mathfrak{D}}_{n,p}$ is triangular with $1$ on the diagonal. This implies that $\det (\widetilde{M}^{\mathfrak{D}}_{n,p})\equiv 1   \pmod{2}$, and in particular is non zero: the matrix $\widetilde{M}^{\mathfrak{D}}_{n,p}$ is invertible, and so does $\mathbb{M}^{\mathfrak{D}}_{n,p}$.
\end{proof}
This allows us to complete the proof since it implies:
\begin{center}
\texttt{Claim 2}:  The elements of $\mathcal{B}^{\sharp}$ are linearly independent.
\end{center}
\begin{proof}[\texttt{Proof of Claim 2}]
First, let prove the linear independence of this family of the same depth and weight, by recursion on $p$. Depth $0$ is obvious since $\zeta^{\mathfrak{m}}(\overline{2n})$ is a rational multiple of $\pi^{2n}$.\\
Assuming by recursion on the depth that the elements of weight $n$ and depth $p-1$ are linearly independent, since $M^{\mathfrak{D}}_{n,p}$ is invertible, this means both that the  $\zeta^{ \sharp,\mathfrak{m}} (2a_{0}+1,2a_{1}+3,\cdots, 2 a_{p-1}+3, \overline{2a_{p}+2})$ of weight $n$ are linearly independent and that $\partial_{<n,p}$ is bijective, as announced before.\\
The last step is just to realize that the bijectivity of $\partial_{<n,l}$ also implies that elements of different depths are also linearly independent. The proof could be done by contradiction: by applying $\partial_{<n,p}$ on a linear combination where $p$ is the maximal depth appearing, we arrive at an equality between same level elements.
\end{proof}
\end{proof}

\section{Hoffman $\star$ basis}

\begin{theo}\label{Hoffstar}
If the analytic conjecture  ($\ref{conjcoeff}$) holds, then the motivic \textit{Hoffman} $\star$ family $\lbrace \zeta^{\star,\mathfrak{m}} (\lbrace 2,3 \rbrace^{\times})\rbrace$ is a basis of $\mathcal{H}^{1}$, the space of MMZV.
\end{theo}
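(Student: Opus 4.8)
The plan is to follow the template that Francis Brown established for the non-star Hoffman basis, adapting it to absorb the extra $\star\star$ terms that the coaction produces. The first observation is a cardinality reduction: the words $\zeta^{\star,\mathfrak{m}}(\boldsymbol{2}^{a_0},3,\ldots,3,\boldsymbol{2}^{a_p})$ are indexed by the same set of words in $\{2,3\}$ as the non-star Hoffman words, so in each weight $n$ the Hoffman $\star$ family has exactly as many elements as the Hoffman family, namely $d^1_n = \dim\mathcal{H}^1_n$ (the count satisfying $d^1_n = d^1_{n-2}+d^1_{n-3}$ by splitting on the leading $2$ or $3$). Since every $\zeta^{\star,\mathfrak{m}}(\{2,3\}^{\times})$ already lies in $\mathcal{H}^1_n$, a linearly independent subfamily of cardinality $d^1_n$ is automatically a basis; hence it suffices to prove linear independence.

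Next I would install the level filtration $\mathcal{F}^L_\bullet$ on $\mathcal{H}^{2,3}$, where the level counts the number of $3$'s, and prove its stability under the motivic Galois action. Concretely this means computing $D_{2r+1}\zeta^{\star,\mathfrak{m}}(\boldsymbol{2}^{a_0},3,\ldots,3,\boldsymbol{2}^{a_p})$ via the coaction formula of Appendix A and checking that each $D_{2r+1}$ maps $\mathcal{F}^L_l$ into $\mathcal{L}_{2r+1}\otimes\mathcal{F}^L_{l-1}$, so that $\mathcal{F}^L_l\mathcal{H}^{2,3}$ is a $\mathcal{G}$-subcomodule. The delicate point, already signalled, is that slicing the $\star$-word produces non-convergent $\star\star$-fragments such as $\zeta^{\star\star,\mathfrak{m}}(\boldsymbol{2}^a,3,\boldsymbol{2}^b)$; these must be renormalized and, by Lemma \autoref{lemmcoeff}, rewritten as $\mathbb{Q}$-linear combinations of products of depth-one motivic MZV with powers of $(\mathbb{L}^{\mathfrak{m}})^2$. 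Once this is carried out, on the level-graded quotient the derivations act as a deconcatenation on the string of $3$'s, which is precisely the structure that drives the recursion.

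The linear independence is then obtained by a double recursion on the level $l$ and the weight $n$, in complete analogy with the treatment of the $\sharp$ family in \S 4.2. I would introduce the level-and-weight graded map $\partial$ extracted from $\Delta$ and reduce everything to the injectivity of $\partial$ on $gr^L_l\mathcal{H}^{2,3}_n$, i.e.\ to the invertibility of the associated transition matrix. As in Claim~1 of the proof of Theorem~\autoref{ESsharpbasis}, the matrix is made invertible by a $2$-adic dominance argument: after normalizing each $D_{2r+1}$-row by a suitable power of $2$, every non-deconcatenation entry acquires strictly positive $2$-adic valuation while the deconcatenation entries remain $2$-adic units, so the matrix is unitriangular modulo $2$ and its determinant is a $2$-adic unit. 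The coefficient estimates feeding this computation are exactly those packaged in Lemma~\autoref{lemmcoeff}.

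The main obstacle is concentrated entirely in the $\star\star$ renormalization and is twofold. First, unlike Brown's setting, where the analogous boundary contribution was genuinely of depth one and controlled by Zagier's closed formula, here the relevant $\star\star$ values are \emph{not} of depth one; controlling them requires the more intricate Lemma~\autoref{lemmcoeff}, and propagating the precise $2$-adic valuations of its coefficients through the coaction is the delicate computational heart of the argument. Second, the recursion cannot be seeded without one explicit evaluation of a family of coefficients — an analytic input of a nature entirely different from the Galois-theoretic machinery used everywhere else — and this is exactly what we are forced to assume as Conjecture~\autoref{conjcoeff}. Granting that conjecture, the $2$-adic invertibility of every $\partial$ goes through, the recursion closes, and the linear independence, hence the basis property, follows.
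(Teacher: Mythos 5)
Your proposal is correct and follows essentially the same route as the paper's own proof: the cardinality reduction to linear independence, the level filtration by the number of $3$'s and its stability under the coaction, the renormalization of the $\zeta^{\star\star,\mathfrak{m}}$ boundary terms via Lemma \autoref{lemmcoeff}, and the $2$-adic dominance of the deconcatenation entries (granting Conjecture \autoref{conjcoeff}) to make the level-and-weight graded matrix of $\partial$ unitriangular modulo $2$, closing the recursion on level and weight. No genuine differences or gaps to report.
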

\noindent For that purpose, we define an increasing filtration $\mathcal{F}^{L}_{\bullet}$ on $\mathcal{H}^{2,3}$, called \textbf{level}, such that:
\begin{equation}\label{eq:levelf}
 \mathcal{F}^{L}_{l}\mathcal{H}^{2,3} \text{ is spanned by } \zeta^{\star,\mathfrak{m}} (\boldsymbol{2}^{a_{0}},3,\cdots,3, \boldsymbol{2}^{a_{p}}) \text{, with less than 'l' } 3. 
\end{equation}
It corresponds to the motivic depth for this family, as we see through the proof below.

\subsection{Level graded coaction}

Let use the following form for a MMZV$^{\star}$, gathering the $2$\footnote{This writing is suitable for the Galois action (and coaction), since by the antipode relations ($\S 3.2$), many of the cuts from a $2$ to a $2$ get simplified (cf. Appendix $A$).}:
$$\zeta^{\star, \mathfrak{m}} (\boldsymbol{2}^{a_{0}},c_{1},\cdots,c_{p}, \boldsymbol{2}^{a_{p}}), \quad c_{i}\in\mathbb{N}^{\ast}, c_{i}\neq 2.$$
The expression obtained for the derivations by Lemma $A.2$:
\begin{flushleft}
\hspace*{-0.7cm}$D_{2r+1}   \zeta^{\star, \mathfrak{m}} (\boldsymbol{2}^{a_{0}},3,\cdots,3, \boldsymbol{2}^{a_{p}})$
\end{flushleft}
\begin{multline} \label{eq:dr3}
\hspace*{-1.3cm}= \delta_{2r+1}\sum_{i<j} \left[  
\begin{array}{lll}
 + \quad \zeta^{\star\star, \mathfrak{l}}_{1} (\boldsymbol{2}^{a_{i+1}},3,\cdots,3, \boldsymbol{2}^{\leq a_{j}}) & \otimes & \zeta^{\star, \mathfrak{m}} (\cdots,3, \boldsymbol{2}^{1+a_{i}+ \leq a_{j}},3, \cdots)\\
 - \quad \zeta^{\star\star, \mathfrak{l}}_{1} (\boldsymbol{2}^{\leq a_{i}},3,\cdots,3, \boldsymbol{2}^{ a_{j-1}}) & \otimes & \zeta^{\star, \mathfrak{m}} (\cdots,3, \boldsymbol{2}^{1+a_{j}+ \leq a_{i}},3, \cdots)\\
 + \left(  \zeta^{\star\star, \mathfrak{l}}_{2} (\boldsymbol{2}^{a_{i+1}},3,\cdots, \boldsymbol{2}^{a_{j}},3) +  \zeta^{\star\star, \mathfrak{l}}_{1} (\boldsymbol{2}^{<a_{i}},3,\cdots, \boldsymbol{2}^{a_{j}},3) \right) & \otimes&  \zeta^{\star, \mathfrak{m}} (\cdots,3, \boldsymbol{2}^{<a_{i}},3,\boldsymbol{2}^{a_{j+1}},3, \cdots)\\
  -  \left(\zeta^{\star\star, \mathfrak{l}}_{2} (\boldsymbol{2}^{a_{j+1}},3,\cdots,3) + \zeta^{\star\star, \mathfrak{l}}_{1}(\boldsymbol{2}^{<a_{j}},3,\cdots,3) \right)& \otimes &  \zeta^{\star, \mathfrak{m}} (\cdots,3, \boldsymbol{2}^{a_{i-1}},3,\boldsymbol{2}^{< a_{j}},3, \cdots) \\
\end{array}  \right] \\
\quad \quad \begin{array}{lll}
\quad \quad+ \quad\delta_{2r+1} \quad \left( \zeta^{\star, \mathfrak{l}} (\boldsymbol{2}^{a_{0}},3,\cdots,3, \boldsymbol{2}^{\leq a_{i}})- \zeta^{\star\star, \mathfrak{l}} (\boldsymbol{2}^{\leq a_{i}},3,\cdots,3, \boldsymbol{2}^{a_{0}}) \right) & \otimes & \zeta^{\star, \mathfrak{m}} (\boldsymbol{2}^{\leq a_{i}},3, \cdots)\\
 \quad\quad +\quad \delta_{2r+1} \quad\zeta^{\star\star, \mathfrak{l}} (\boldsymbol{2}^{\leq a_{j}},3,\cdots,3, \boldsymbol{2}^{ a_{p}}) & \otimes & \zeta^{\star, \mathfrak{m}} (\cdots,3, \boldsymbol{2}^{\leq a_{j}}).
\end{array}
\end{multline}
Notably, the coaction on the Hoffman $\star$ elements is stable. Moreover, the level filtration is stable under the action of $\mathcal{G}$ since each cut (of odd length) removes at least one $3$:
\begin{equation} \label{eq:levelfiltstrable}
D_{2r+1}(\mathcal{F}^{L}_{l}\mathcal{H}^{2,3}) \subset  \mathcal{L}_{2r+1} \otimes \mathcal{F}^{L}_{l-1}\mathcal{H}_{n-2r-1}^{2,3} .
\end{equation}
Then, let consider the level graded derivation, which amounts to restrict to the cuts which remove exactly one $3$ in the right side:
\begin{equation}
gr^{L}_{l} D_{2r+1}: gr^{L}_{l}\mathcal{H}_{n}^{2,3} \rightarrow  \mathcal{L}_{2r+1} \otimes gr^{L}_{l-1}\mathcal{H}_{n-2r-1}^{2,3}.
\end{equation}
\begin{flushleft}
\hspace*{-0.5cm}$gr^{L}_{l} D_{2r+1}   \zeta^{\star, \mathfrak{m}} (\boldsymbol{2}^{a_{0}},3,\cdots,3, \boldsymbol{2}^{a_{p}}) =$
\end{flushleft}
\begin{multline}\label{eq:gdr3}
\hspace*{-1.5cm}\begin{array}{lll}
\quad - \delta_{a_{0} < r \leq a_{0}+a_{1}+2} \quad \zeta^{\star\star, \mathfrak{l}}_{2} (\boldsymbol{2}^{a_{0}}, 3, \boldsymbol{2}^{r-a_{0}-2})  &\otimes & \zeta^{\star, \mathfrak{m}} (\boldsymbol{2}^{ a_{0}+a_{1}+1-r},3, \cdots)
\end{array}\\
\hspace*{-1.3cm}\sum_{i<j} \left[ \begin{array}{l}
\delta_{r\leq a_{i}} \quad \zeta^{\star\star, \mathfrak{l}}_{1} (\boldsymbol{2}^{r}) \quad \quad \quad \quad \quad  \otimes  \left(  \zeta^{\star, \mathfrak{m}} (\cdots,3, \boldsymbol{2}^{a_{i-1}+ a_{i}-r+1},3, \cdots)  - \zeta^{\star, \mathfrak{m}} (\cdots,3, \boldsymbol{2}^{a_{i+1}+  a_{i}-r+1},3, \cdots) \right) \\
 + \left( \delta_{r=a_{i}+2} \zeta^{\star\star, \mathfrak{l}}_{2} (\boldsymbol{2}^{a_{i}},3) +  \delta_{r< a_{i}+a_{i-1}+3} \zeta^{\star\star, \mathfrak{l}}_{1} (\boldsymbol{2}^{r-a_{i}-3}, 3, \boldsymbol{2}^{a_{i}},3) \right)  \otimes  \zeta^{\star, \mathfrak{m}} (\cdots,3, \boldsymbol{2}^{a_{i}+a_{i-1}-r+1},3,\boldsymbol{2}^{a_{i+1}},3, \cdots)\\
  - \left( \delta_{r=a_{i}+2} \zeta^{\star\star, \mathfrak{l}}_{2} (\boldsymbol{2}^{a_{i}},3) + \delta_{r< a_{i}+a_{i+1}+3} \zeta^{\star\star, \mathfrak{l}}_{1}(\boldsymbol{2}^{r-a_{i}-3},3, \boldsymbol{2}^{a_{i}}, 3) \right) \otimes  \zeta^{\star, \mathfrak{m}} (\cdots,3, \boldsymbol{2}^{a_{i-1}},3,\boldsymbol{2}^{a_{i}+a_{i+1}-r+1},3, \cdots) 
\end{array} \right] \\
\hspace*{-2cm} \textsc{(D)}  \begin{array}{lll}
 +\delta_{a_{p}+1 \leq r \leq a_{p}+a_{p-1}+1}  \quad  \zeta^{\star\star, \mathfrak{l}} (\boldsymbol{2}^{r- a_{p}-1},3, \boldsymbol{2}^{ a_{p}}) &\otimes & \zeta^{\star, \mathfrak{m}} (\cdots,3, \boldsymbol{2}^{a_{p}+ a_{p-1}-r+1}). 
 \end{array}
 \end{multline}
By the antipode $\shuffle$ relation (cf. $\ref{eq:antipodeshuffle2}$):
$$\zeta^{\star\star, \mathfrak{l}}_{1} (\boldsymbol{2}^{a},3, \boldsymbol{2}^{b},3)= \zeta^{\star\star, \mathfrak{l}}_{2} (\boldsymbol{2}^{b},3, \boldsymbol{2}^{a+1})=\zeta^{\star\star, \mathfrak{l}}(\boldsymbol{2}^{b+1},3, \boldsymbol{2}^{a+1})- \zeta^{\star, \mathfrak{l}}(\boldsymbol{2}^{b+1},3, \boldsymbol{2}^{a+1}).$$
Since by Lemma $\autoref{lemmcoeff}$ all the terms appearing in the left side of $gr^{L}_{l} D_{2r+1}$ are product of simple MZV, in the coalgebra $\mathcal{L}$, it gives simply a rational multiple of $\zeta^{\mathfrak{l}}(2r+1)$:
$$gr^{L}_{l} D_{2r+1} (gr^{L}_{l}\mathcal{H}_{n}^{2,3}) \subset \mathbb{Q}\zeta^{\mathfrak{l}}(2r+1)\otimes gr^{L}_{l-1}\mathcal{H}_{n-2r-1}^{2,3}.$$
Sending $\zeta^{\mathfrak{l}}(2r+1)$ to $1$ with the projection $\pi:\mathbb{Q} \zeta^{\mathfrak{l}}(2r+1)\rightarrow\mathbb{Q}$, we can then consider:
\begin{lemm} The maps:
\begin{description}
\item[$\boldsymbol{\cdot\quad \partial^{L}_{r,l}}$] $ : gr^{L}_{l}\mathcal{H}_{n}^{2,3}\rightarrow gr^{L}_{l-1}\mathcal{H}_{n-2r-1}^{2,3},  \quad \text{ defined as the composition }$
$$\partial^{L}_{r,l}\mathrel{\mathop:}=gr_{l}^{L}\partial_{2r+1}\mathrel{\mathop:}=m\circ(\pi\otimes id)(gr^{L}_{l} D_{r}): \quad gr^{L}_{l}\mathcal{H}_{n}^{2,3} \rightarrow \mathbb{Q}\otimes_{\mathbb{Q}} gr^{L}_{l-1}\mathcal{H}_{n-2r-1}^{2,3}  \rightarrow gr^{L}_{l-1}\mathcal{H}_{n-2r-1}^{2,3} .$$
\item[$\boldsymbol{\cdot\quad \partial^{L}_{<n,l}}$] $\mathrel{\mathop:}=\oplus_{2r+1<n}\partial^{L}_{r,l} .$
\end{description}
Its explicit expression, where coefficients $A_{\bullet}, B_{\bullet}, C_{\bullet}$ are those in $\autoref{lemmcoeff}$:
\begin{flushleft}
$\partial^{L}_{r,l} (\zeta^{\star, \mathfrak{m}} (\boldsymbol{2}^{a_{0}},3,\cdots,3, \boldsymbol{2}^{a_{p}}))=$
\end{flushleft}
$$\begin{array}{l}
 \quad  - \delta_{a_{0} < r \leq a_{0}+a_{1}+2} \widetilde{B}^{a_{0}+1,r-a_{0}-2}  \zeta^{\star, \mathfrak{m}} (\boldsymbol{2}^{ a_{0}+a_{1}+1-r},3, \cdots)  \\
   \\
+ \sum_{i<j} \left[ \begin{array}{l}
 \delta_{r\leq a_{i}}C_{r}  \left(  \zeta^{\star, \mathfrak{m}} (\cdots,3, \boldsymbol{2}^{a_{i-1}+ a_{i}-r+1},3, \cdots) - \zeta^{\star, \mathfrak{m}} (\cdots,3, \boldsymbol{2}^{a_{i+1}+  a_{i}-r+1},3, \cdots) \right) \\
   \\
+\delta_{a_{i}+2\leq r \leq a_{i}+a_{i-1}+2} \widetilde{B}^{a_{i}+1,r-a_{i}-2}  \zeta^{\star, \mathfrak{m}} (\cdots,3, \boldsymbol{2}^{a_{i}+a_{i-1}-r+1},3,\boldsymbol{2}^{a_{i+1}},3, \cdots) \\
   \\
-  \delta_{a_{i}+2 \leq r\leq a_{i}+a_{i+1}+2} \widetilde{B}^{a_{i}+1,r-a_{i}-2} \zeta^{\star, \mathfrak{m}} (\cdots,3, \boldsymbol{2}^{a_{i-1}},3,\boldsymbol{2}^{a_{i}+a_{i+1}-r+1},3, \cdots) \\
\end{array} \right]  \\
 \\
  \textsc{(D)} + \delta_{a_{p}+1 \leq r \leq a_{p}+a_{p-1}+1} B^{r-a_{p}-1,a_{p}} \zeta^{\star, \mathfrak{m}} (\cdots,3, \boldsymbol{2}^{a_{p}+ a_{p-1}-r+1}) , \\
      \\
      \quad \quad \quad   \text{ with }  \widetilde{B}^{a,b}\mathrel{\mathop:}=B^{a,b}C_{a+b+1}-A^{a,b}.
\end{array}$$ 
\end{lemm}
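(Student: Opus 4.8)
Since $\partial^{L}_{r,l}$ and $\partial^{L}_{<n,l}$ are \emph{defined} by the displayed compositions, the only content is the closed formula for $\partial^{L}_{r,l}$, and the plan is to run the level-graded coaction $(\ref{eq:gdr3})$ through $m\circ(\pi\otimes \mathrm{id})$. The starting observation is that every left-hand tensor factor in $(\ref{eq:gdr3})$ is a motivic Euler $\star\star$ or $\star$ value of weight $2r+1$ whose entries are blocks of $2$ separated by single $3$'s; by Lemma $\autoref{lemmcoeff}$ each such value is, modulo products, a $\mathbb{Q}$-multiple of the single coalgebra generator $\zeta^{\mathfrak{l}}(2r+1)$, and the rationals so produced are exactly the coefficients $A_{\bullet},B_{\bullet},C_{\bullet}$ of that lemma.

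First I would normalise the regularised factors. The factors $\zeta^{\star\star,\mathfrak{l}}_{1}(\ldots,3)$ and $\zeta^{\star\star,\mathfrak{l}}_{2}(\ldots)$ carrying a nonzero subscript are rewritten, via the antipode $\shuffle$ relation $(\ref{eq:antipodeshuffle2})$ in the form
$$\zeta^{\star\star,\mathfrak{l}}_{1}(\boldsymbol{2}^{a},3,\boldsymbol{2}^{b},3)=\zeta^{\star\star,\mathfrak{l}}_{2}(\boldsymbol{2}^{b},3,\boldsymbol{2}^{a+1})=\zeta^{\star\star,\mathfrak{l}}(\boldsymbol{2}^{b+1},3,\boldsymbol{2}^{a+1})-\zeta^{\star,\mathfrak{l}}(\boldsymbol{2}^{b+1},3,\boldsymbol{2}^{a+1}),$$
so that each becomes a difference of a convergent $\star\star$ value and a convergent $\star$ value of weight $2r+1$. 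Applying Lemma $\autoref{lemmcoeff}$ to these two pieces and packaging the resulting rationals yields the single coefficient $\widetilde{B}^{a,b}=B^{a,b}C_{a+b+1}-A^{a,b}$ in front of the corresponding right-hand $\zeta^{\star,\mathfrak{m}}$; this accounts for the first line of the formula (the $i=0$ boundary) and for the two interior sums over $i<j$. The two remaining factor types reduce directly through Lemma $\autoref{lemmcoeff}$: the central factor $\zeta^{\star\star,\mathfrak{l}}_{1}(\boldsymbol{2}^{r})$ contributes the bare coefficient $C_{r}$ (with the characteristic difference of two $\zeta^{\star,\mathfrak{m}}$ on the right), and the genuinely convergent deconcatenation factor $\zeta^{\star\star,\mathfrak{l}}(\boldsymbol{2}^{r-a_{p}-1},3,\boldsymbol{2}^{a_{p}})$ of line $\textsc{(D)}$ contributes the bare coefficient $B^{r-a_{p}-1,a_{p}}$.

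Once the left factors are scalars, $\pi$ sends each $\zeta^{\mathfrak{l}}(2r+1)$ to $1$ and $m$ multiplies the scalar into the surviving right-hand MMZV$^{\star}$. It then remains to match, term by term, the support conditions coming from the $\delta$-constraints of $(\ref{eq:gdr3})$ (the ranges $a_{0}<r\leq a_{0}+a_{1}+2$, $r\leq a_{i}$, $a_{i}+2\leq r\leq a_{i}+a_{i-1}+2$, $a_{i}+2\leq r\leq a_{i}+a_{i+1}+2$, and $a_{p}+1\leq r\leq a_{p}+a_{p-1}+1$) with the index shift $b\mapsto r-a_{i}-2$ forced by the antipode rewriting, and to absorb the two genuine boundary cases $i=0$ and $i=p$ into the stated conventions. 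This reproduces the announced expression.

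The main obstacle is the coefficient bookkeeping of the second step: one must keep careful track of how the subscript (the regularisation) interacts with the antipode rewriting so that the factor $C_{a+b+1}$ attaches to the $\star\star$ contribution but not to the $\star$ one, producing $\widetilde{B}=B\,C-A$ for the interior and first-line cuts while leaving the convergent deconcatenation cut of line $\textsc{(D)}$ with the bare $B$. This is precisely where Lemma $\autoref{lemmcoeff}$ is indispensable, since the values $\zeta^{\star\star,\mathfrak{l}}(\boldsymbol{2}^{a},3,\boldsymbol{2}^{b})$ are not of depth $1$ but reduce to products of depth-$1$ motivic MZV times a power of $\pi$, and only after that reduction do they collapse to a rational multiple of $\zeta^{\mathfrak{l}}(2r+1)$ in $\mathcal{L}$.
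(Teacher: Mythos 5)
Your plan follows the paper's own proof essentially line for line: reduce every left-hand tensor factor of $(\ref{eq:gdr3})$ to a rational multiple of the depth-one generator of $\mathcal{L}_{2r+1}$ using Lemma $\autoref{lemmcoeff}$ together with the antipode $\shuffle$ rewriting $\zeta^{\star\star, \mathfrak{l}}_{1} (\boldsymbol{2}^{a},3, \boldsymbol{2}^{b},3)= \zeta^{\star\star, \mathfrak{l}}_{2} (\boldsymbol{2}^{b},3, \boldsymbol{2}^{a+1})=\zeta^{\star\star, \mathfrak{l}}(\boldsymbol{2}^{b+1},3, \boldsymbol{2}^{a+1})- \zeta^{\star, \mathfrak{l}}(\boldsymbol{2}^{b+1},3, \boldsymbol{2}^{a+1})$, then apply $\pi\otimes id$ and multiply. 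Your treatment of the first line and of the interior cuts (giving $\widetilde{B}^{a,b}=B^{a,b}C_{a+b+1}-A^{a,b}$) and of the central factor $\zeta^{\star\star,\mathfrak{l}}_{1}(\boldsymbol{2}^{r})$ (giving $C_{r}$) is exactly the paper's computation and is correct.

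However, your handling of the deconcatenation line \textsc{(D)} contains a genuine error. You claim that the convergent factor $\zeta^{\star\star,\mathfrak{l}}(\boldsymbol{2}^{r-a_{p}-1},3,\boldsymbol{2}^{a_{p}})$ contributes the \emph{bare} coefficient $B^{r-a_{p}-1,a_{p}}$, and you even present as the crux of the proof a mechanism whereby the regularisation subscript decides whether the factor $C_{r}$ gets attached. No such mechanism exists, and the claim contradicts the very lemma you invoke: in Lemma $\autoref{lemmcoeff}$ $(v)$ every term of the expansion of $\zeta^{\star\star,\mathfrak{m}}(\boldsymbol{2}^{a},3,\boldsymbol{2}^{b})$ carries the product $\prod_{k}C_{i_{k}}^{s_{k}}/s_{k}!$, so the unique term surviving modulo products is $B^{a,b}C_{a+b+1}\,\zeta^{\mathfrak{l}}(\overline{2a+2b+3})$; with $a=r-a_{p}-1$, $b=a_{p}$ this is $B^{r-a_{p}-1,a_{p}}C_{r}\,\zeta^{\mathfrak{l}}(\overline{2r+1})$, not $B^{r-a_{p}-1,a_{p}}$ times the generator. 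Nor can this be repaired by renormalising $\pi$: any rescaling acts by one and the same constant on all cuts of a given $r$, so it cannot simultaneously produce $C_{r}$ on the central term, $\widetilde{B}$ on the interior terms, and a bare $B$ on \textsc{(D)}. The printed statement of the lemma is in fact inconsistent on this point (most plausibly a typo), and the proof of Theorem $\autoref{Hoffstar}$ confirms which version is intended: in $(\ref{eq:valuations})$ the deconcatenation terms are precisely \say{the terms with $B^{a,b}C_{r}$}, and the injectivity rests on the bound $v_{2}(B^{0,r-1}C_{r})=2+v_{2}(r)<2r+1$. Since the entire purpose of this lemma is to exhibit a deconcatenation coefficient whose $2$-adic valuation separates it from all the others, the step where you justify the bare $B$ is a step that fails; an honest run of your own reduction would have produced $B^{r-a_{p}-1,a_{p}}C_{r}$ and flagged the discrepancy with the stated formula.
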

\begin{proof}
Using Lemma $\autoref{lemmcoeff}$ for the left side of $gr^{L}_{p} D_{2r+1}$, and keeping just the coefficients of $\zeta^{2r+1}$, we obtain easily this formula. In particular:
\begin{flushleft}
$\zeta^{\star\star, \mathfrak{l}}_{2} (\boldsymbol{2}^{a},3, \boldsymbol{2}^{b})=\zeta^{\star\star, \mathfrak{l}}(\boldsymbol{2}^{a+1},3, \boldsymbol{2}^{b})- \zeta^{\star, \mathfrak{l}}(\boldsymbol{2}^{a+1},3, \boldsymbol{2}^{b}) = \widetilde{B}^{a+1,b} \zeta^{\mathfrak{l}}(\overline{2a+2b+5}).$\\
$\zeta^{\star\star, \mathfrak{l}}_{1} (\boldsymbol{2}^{a},3, \boldsymbol{2}^{b},3)= \zeta^{\star\star, \mathfrak{l}}_{2} (\boldsymbol{2}^{b},3, \boldsymbol{2}^{a+1})= \widetilde{B}^{b+1,a+1}\zeta^{\mathfrak{l}}(\overline{2a+2b+7}).$
\end{flushleft}
\end{proof}

\subsection{Proof of Theorem $5.1$}
Since the cardinal of the Hoffman $\star$ family in weight $n$ is equal to the dimension of $\mathcal{H}_{n}^{1}$, \footnote{Obviously same recursive relation: $d_{n}=d_{n-2}+d_{n-3}$} it remains to prove that they are linearly independent:
\begin{center}
\texttt{Claim 1}: The Hoffman $\star$ elements are linearly independent.
\end{center}
It fundamentally use the injectivity of the map defined above, $\partial^{L}_{<n,l}$, via a recursion on the level. Indeed, let first prove the following statement:
\begin{equation} \label{eq:bijective} \texttt{Claim 2}: \quad \partial^{L}_{<n,l}: gr^{L}_{l}\mathcal{H}_{n}^{2,3}\rightarrow \oplus_{2r+1<n} gr^{L}_{l-1}\mathcal{H}_{n-2r-1}^{2,3} \text{  is bijective}.
\end{equation}
Using Conjecture $\autoref{conjcoeff}$ (assumed here), regarding the $2$-adic valuations, with $r=a+b+1$:\footnote{The last inequality comes from the fact that $v_{2} (\binom{2r}{2b+1} )<2r $.}
\begin{equation}\label{eq:valuations}
\hspace*{-0.7cm}\left\lbrace  \begin{array}{ll}
 C_{r}=\frac{2^{2r+1}}{2r+1}  &\Rightarrow  v_{2}(C_{r})=2r+1 .\\
 \widetilde{B}^{a,b}\mathrel{\mathop:}= B^{a,b}C_{r}-A^{a,b}=2^{2r+1}\left(  \frac{1}{2r+1}-\frac{\binom{2r}{2a}}{2^{2r}-1} \right) &\Rightarrow  v_{2}(\widetilde{B}^{a,b}) \geq 2r+1.\\
B^{a,b}C_{r}=C_{r}-2\binom{2r}{2b+1} &\Rightarrow   v_{2}(B^{0,r-1}C_{r})= 2+ v_{2}(r) \leq v_{2}(B^{a,b}C_{r}) < 2r+1 .
\end{array} \right. 
\end{equation}
The deconcatenation terms in $\partial^{L}_{<n,l}$, which correspond to the terms with $B^{a,b}C_{r}$ are then the smallest 2-adically, which is crucial for the injectivity.\\
\\
Now, define a matrix $M_{n,l}$ as the matrix of $\partial^{L}_{<n,l}$ on $\zeta^{\star, \mathfrak{m}} (\boldsymbol{2}^{a_{0}},3,\cdots,3, \boldsymbol{2}^{a_{l}})$ in terms of $\zeta^{\star, \mathfrak{m}} (\boldsymbol{2}^{b_{0}},3,\cdots,3, \boldsymbol{2}^{b_{l-1}})$; even if we still do not know that these families are linearly independent, we order elements on both sides by lexicographical order on ($a_{l}, \ldots, a_{0}$), resp. ($r,b_{l-1}, \ldots, b_{0}$), such that the diagonal corresponds to $r=a_{l}$ and $b_{i}=a_{i}$ for $i<l$ and claim:
\begin{center}
\texttt{Claim 3}: The matrix $M_{n,l}$ of $\partial^{L}_{<n,l}$ on the Hoffman $\star$ elements is invertible 
\end{center}
\begin{proof}[\texttt{Proof of Claim 3}]
Indeed, let $\widetilde{M}_{n,l}$ be the matrix $M_{n,l}$ where we have multiplied each line corresponding to $D_{2r+1}$ by ($2^{-v_{2}(r)-2}$). Then modulo $2$, because of the previous computations on the $2$-adic valuations of the coefficients, only the deconcatenations terms remain. Hence, with the previous order, the matrix is, modulo $2$, triangular with $1$ on the diagonal; the diagonal being the case where $B^{0,r-1}C_{r}$ appears. This implies that $\det (\widetilde{M}_{n,l})\equiv 1   \pmod{2}$, and in particular is non zero. Consequently, the matrix $\widetilde{M}_{n,l}$ is invertible and so does $M_{n,l}$.
\end{proof}
Obviously, $\texttt{Claim 3} \Rightarrow \texttt{Claim 2} $, but it will also enables us to complete the proof:

\begin{proof}[\texttt{Proof of Claim 1}] Let first prove it for elements of a same level and weight, by recursion on level. Level $0$ is obvious: $\zeta^{\star,\mathfrak{m}}(2)^{n}$ is a rational multiple of $(\pi^{\mathfrak{m}})^{2n}$. Assuming that the Hoffman $\star$ elements of weight $\leq n$ and level $l-1$ are linearly independent, since $M_{n,l}$ is invertible, this implies that the Hoffman $\star$ elements of weight $n$ and level $l$ are linearly independent. The last step is to realize that the bijectivity of $\partial^{L}_{<n,l}$ also implies that Hoffman $\star$ elements of different levels are linearly independent. Indeed, proof can be done by contradiction: applying  $\partial^{L}_{<n,l}$ to a linear combination of Hoffman $\star$ elements, $l$ being the maximal number of $3$, we arrive at an equality between same level elements, and at a contradiction.
\end{proof}

\subsection{Analytic conjecture}

Here are the equalities needed for Theorem $5.1$, known up to some rational coefficients:
\begin{lemm} \label{lemmcoeff} With  $w$, $d$ resp.  $ht$ denoting the weight, the depth, resp. the height:
\begin{itemize}
\item[$(o)$] $\begin{array}{llll}
\zeta^{\mathfrak{m}}(\overline{r}) & = & (2^{1-r}-1) &\zeta^{\mathfrak{m}}(r).\\
\zeta^{\mathfrak{m}}(2n) & = & \frac{\mid B_{n}\mid 2^{3n-1}3^{n}}{(2n)!} &\zeta^{\mathfrak{m}}(2)^{n}.
\end{array}$
\item[$(i)$] $\zeta^{\star,\mathfrak{m}}(\boldsymbol{2}^{n})= -2 \zeta^{\mathfrak{m}}(\overline{2n}) =\frac{(2^{2n}-2)6^{n}}{(2n)!}\vert B_{2n}\vert\zeta^{\mathfrak{m}}(2)^{n}.$
\item[$(ii)$] $\zeta^{\star,\mathfrak{m}}_{1}(\boldsymbol{2}^{n})= -2 \sum_{r=1}^{n} \zeta^{\mathfrak{m}}(2r+1)\zeta^{\star,\mathfrak{m}}(\boldsymbol{2}^{n-r}).$
\item[$(iii)$] 
\begin{align}
\zeta^{\star\star,\mathfrak{m}}(\boldsymbol{2}^{n}) & = \sum_{d \leq n} \sum_{w(\textbf{m})=2n \atop ht(\textbf{m})=d(\textbf{m})=d} 2^{2n-2d}\zeta^{\mathfrak{m}}(\textbf{m})  \\
& =\sum_{2n=\sum s_{k}(2i_{k}+1)+2S \atop i_{k}\neq i_{j}}  \left( \prod_{k=1}^{p} \frac{C_{i_{k}}^{s_{k}}} {s_{k}!} \zeta^{\mathfrak{m}}(\overline{2i_{k}+1})^{s_{k}} \right) D_{S} \zeta^{\mathfrak{m}}(2)^{S}. \nonumber\\
\zeta^{\star\star,\mathfrak{m}}_{1}(\boldsymbol{2}^{n}) & =-\sum_{d \leq n} \sum_{w(\textbf{m})=2n+1 \atop ht(\textbf{m})=d(\textbf{m})=d} 2^{2n+1-2d}\zeta^{\mathfrak{m}}(\textbf{m}) \\
&=\sum_{2n+1=\sum s_{k}(2i_{k}+1)+2S \atop i_{k}\neq i_{j}}  \left( \prod_{k=1}^{p} \frac{C_{i_{k}}^{s_{k}}} {s_{k}!} \zeta^{\mathfrak{m}}(\overline{2i_{k}+1})^{s_{k}}\right) D_{S} \zeta^{\mathfrak{m}}(2)^{S}\nonumber
\end{align}

\item[$(iv)$] $\zeta^{\star,\mathfrak{m}}(\boldsymbol{2}^{a},3,\boldsymbol{2}^{b})= \sum A^{a,b}_{r} \zeta^{\mathfrak{m}}(\overline{2r+1})\zeta^{\star,\mathfrak{m}}(\boldsymbol{2}^{n-r}).$
\item[$(v)$] \begin{align}
\zeta^{\star\star,\mathfrak{m}}(\boldsymbol{2}^{a},3,\boldsymbol{2}^{b}) &= \sum_{w=\sum s_{k}(2i_{k}+1)+2S \atop i_{k}\neq i_{j}} B^{a,b}_{i_{1},\cdots, i_{p}\atop s_{1}\cdots s_{p}} \left( \prod_{k=1}^{p} \frac{C_{i_{k}}^{s_{k}}} {s_{k}!} \zeta^{\mathfrak{m}}(\overline{2i_{k}+1})^{s_{k}}\right)  D_{S} \zeta^{\mathfrak{m}}(2)^{S}.\\
\zeta^{\star\star,\mathfrak{m}}_{1}(\boldsymbol{2}^{a},3,\boldsymbol{2}^{b}) &=D^{a,b} \zeta^{\mathfrak{m}}(2)^{\frac{w}{2}}+ \sum_{w=\sum s_{k}(2i_{k}+1)+2S \atop i_{k}\neq i_{j}} B^{a,b}_{i_{1},\cdots, i_{p}\atop s_{1}\cdots s_{p}}  \left( \prod_{k=1}^{p} \frac{C_{i_{k}}^{s_{k}}} {s_{k}!} \zeta^{\mathfrak{m}}(\overline{2i_{k}+1})^{s_{k}}\right)  D_{S}\zeta^{\mathfrak{m}}(2)^{S}.
\end{align}
\end{itemize}
With  $C_{r}=\frac{2^{2r+1}}{2r+1}$ and the other rational coefficients satisfying: 
\begin{itemize}
\item[$\cdot$] \begin{equation} \label{eq:constrainta}
A^{a,b}_{r}=A_{r}^{a,r-a-1}+C_{r} \left( B^{r-b-1,b}- B^{r-a-1,a} +\delta_{r\leq b}-\delta_{r\leq a} \right). 
\end{equation}
\item[$\cdot$] The recursive formula for $B$-coefficients, where $B^{x,y}\mathrel{\mathop:}=B^{x,y}_{x+y+1 \atop 1}$ and $r<a+b+1$:
  \begin{equation} \label{eq:constraintb} 
\begin{array}{lll }
 B^{a,b}_{r \atop 1} & = & \delta_{r\leq b} -  \delta_{r< a}+ B^{r-b-1,b}+\frac{D^{a-r-1,b}}{a+b-r+1}+\delta_{r=a} \frac{2(2^{2b+1}-1)6^{b+1} \mid B_{2b+2} \mid}{(2b+2)! D_{b+1}}.\\
 B^{a,b}_{i_{1},\cdots, i_{p}\atop s_{1}\cdots s_{p}} &=& \left\{
\begin{array}{l}
 \delta_{i_{1}\leq b } - \delta_{i_{1}< a } + B^{i_{1}-b-1,b} + B^{a-i_{1}-1,b}_{i_{1}, \ldots, i_{p}\atop s_{1}-1, \ldots, s_{p}}  \quad  \text{ for } \sum s_{k} \text{ odd }  \\
 \delta_{i_{1}\leq b } - \delta_{i_{1}\leq a } + B^{i_{1}-b-1,b} +B^{a-i_{1},b}_{i_{1}, \ldots, i_{p}\atop s_{1}-1, \ldots, s_{p}}  \quad \text{ else }.
  \end{array}
  \right. 
\end{array}  
    \end{equation}
\end{itemize}

\end{lemm}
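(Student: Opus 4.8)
The plan is to treat the whole list $(o)$--$(v)$ as a single structural statement proved by induction on the weight $w$, supplemented by a separate determination of the rational coefficients. The organising principle is that each left-hand side lies in the sub-$\mathbb{Q}$-algebra of $\mathcal{H}^1$ generated by $\zeta^{\mathfrak{m}}(2)$ together with the depth-one odd zetas $\zeta^{\mathfrak{m}}(\overline{2i+1})$; by the cofreeness isomorphism $\phi^1$ of $(\ref{eq:phih})$ these generators are algebraically independent, so each such element has a \emph{unique} expression as a polynomial in them. The only direction invisible to the reduced coaction is the coefficient of the pure power $\zeta^{\mathfrak{m}}(2)^{w/2}$ (in even weight) or of $\zeta^{\mathfrak{m}}(w)$ (in odd weight), since $\ker D_{<w}\cap\mathcal{H}^1_w=\mathbb{Q}\zeta^{\mathfrak{m}}(w)$ by Theorem $\ref{kerdn}$.

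First I would dispose of $(o)$: the distribution relation is exactly $(\ref{eq:depth1})$, and the evaluation of $\zeta^{\mathfrak{m}}(2n)$ follows from Theorem $\ref{kerdn}$, both sides lying in $\ker D_{<2n}\cap\mathcal{H}^1_{2n}=\mathbb{Q}\zeta^{\mathfrak{m}}(2n)$, with the constant pinned down by the period map and Euler's classical formula. Identity $(i)$ is then immediate: computing $D_{2r+1}\zeta^{\star,\mathfrak{m}}(\boldsymbol{2}^{n})$ from $(\ref{eq:dr3})$ and collapsing the cuts with the $\star\star$ antipode symmetry of $\S 3.2$ shows that the reduced coaction vanishes, so $\zeta^{\star,\mathfrak{m}}(\boldsymbol{2}^{n})\in\mathbb{Q}\zeta^{\mathfrak{m}}(2n)$ again by $\ref{kerdn}$, and comparing periods against $\zeta^{\star}(\boldsymbol{2}^{n})=-2\zeta(\overline{2n})$ fixes the constant; $(ii)$ is the $\shuffle$-regularised companion, obtained by applying $(\ref{eq:shufflereg})$ to spread one $0$ inside and re-expanding in lower depth.

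The heart of the argument is $(iii),(iv),(v)$, proved simultaneously by induction on $w$. For the inductive step I would compute $D_{2r+1}$ on each left-hand side from the coaction formula of Appendix A (the star expansion $(\ref{eq:dr3})$ and its $\star\star$ analogues), then apply the antipode $\shuffle$, antipode $\ast$ and hybrid relations of $\S 3$ to collapse every cut. The crucial output is that, modulo products, each \emph{left} tensor factor reduces to a rational multiple of a single $\zeta^{\mathfrak{l}}(2r+1)$ while the \emph{right} factor is again an element of the same honorary shape but of weight $w-2r-1$; by the inductive hypothesis the right factor is a known polynomial in the generators. Matching the coefficient of $\zeta^{\mathfrak{l}}(2r+1)$ term by term against the claimed right-hand side then forces exactly the recursions $(\ref{eq:constrainta})$ and $(\ref{eq:constraintb})$ for $A$ and $B$. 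These recursions are well founded because the weight strictly drops at each step, and the normalisation $C_r=2^{2r+1}/(2r+1)$ drops out of the depth-one evaluation $\zeta^{\mathfrak{l}}_{2r+1-a}(a)=(-1)^{a+1}\binom{2r}{a-1}\zeta^{\mathfrak{l}}(2r+1)$ already used in $\S 4.1$.

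The main obstacle --- and the precise reason the lemma is asserted only up to rational coefficients --- is exactly the part the derivations cannot see. Because $\zeta^{\mathfrak{m}}(2)$ has trivial reduced coaction, the coefficients $D_S$ and $D^{a,b}$ multiplying the pure powers $\zeta^{\mathfrak{m}}(2)^{S}$ and $\zeta^{\mathfrak{m}}(2)^{w/2}$ lie in $\ker D_{<w}$ and are therefore completely undetermined by the coaction machinery above. Since the $B$-recursion $(\ref{eq:constraintb})$ itself feeds on the $D^{a-r-1,b}$ and $D_{b+1}$ values, the closed forms $(iii),(v)$ are only conditionally complete: the structural statements and all recursions for $A$ and $B$ are unconditional, but fixing the $D$-coefficients is a genuinely analytic problem, amounting to the evaluation of the corresponding real periods, which is the content of the analytic Conjecture $(\ref{conjcoeff})$. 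This is the slightly-more-complicated analogue of Zagier's formula that Brown needed in the non-star case.
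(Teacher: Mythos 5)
Your skeleton --- compute $D_{2r+1}$ via Appendix A, collapse the cuts with the antipode and hybrid relations of $\S 3$, recurse on the weight, and invoke Theorem \ref{kerdn} for the part invisible to the coaction --- is indeed the paper's strategy, and your observation that matching left-hand factors forces the recursions (\ref{eq:constrainta}) and (\ref{eq:constraintb}) is correct. But your accounting of which coefficients get pinned down, and by what, contains genuine errors. First, the explicit values in $(iii)$ --- both $C_r=2^{2r+1}/(2r+1)$ and the $D_S$ --- are \emph{not} left undetermined in the lemma: the paper fixes them unconditionally using the \emph{proven} Ohno--Zagier sum formula for MZV of fixed weight, depth and height, via the generating series $\phi_0(x,y,z)$ specialised to $\phi(x)=\phi_0(2x,0,x^2)$, which expresses $\zeta^{\star\star}(\boldsymbol{2}^{n})$ and $\zeta^{\star\star}_{1}(\boldsymbol{2}^{n})$ as explicit polynomials in single zetas and yields formula (\ref{eq:coeffds}). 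Your alternative claim that $C_r$ ``drops out of the depth-one evaluation $\zeta^{\mathfrak{l}}_{2r+1-a}(a)=(-1)^{a+1}\binom{2r}{a-1}\zeta^{\mathfrak{l}}(2r+1)$'' would fail: $C_r$ is the coefficient of $\zeta^{\mathfrak{l}}(\overline{2r+1})$ in $\zeta^{\star\star,\mathfrak{l}}_{1}(\boldsymbol{2}^{r})$, i.e. the depth-one shadow of identity $(iii)$ itself in weight $2r+1$; the coaction only propagates this value from lower weight, it cannot create it, and the analytic theorem is what anchors that recursion. The same issue, in miniature, affects $(ii)$: it is not a formal consequence of $\shuffle$-regularisation; the paper proves it by the same coaction-plus-\ref{kerdn} scheme and then needs the (known) Zagier--Ohno identity to fix the coefficient of $\zeta^{\mathfrak{m}}(2n+1)$, which $D_{<2n+1}$ cannot see.

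Second, you misattribute the role of Conjecture \ref{conjcoeff}. The conjecture asserts the \emph{analytic} version of $(v)$ with the specific value $B^{a,b}=1-\frac{2}{C_{a+b+1}}\binom{2a+2b+2}{2b+1}$; it says nothing about the $D$-coefficients, and in fact the $D^{a,b}$ remain undetermined even assuming the conjecture --- that is exactly the content of Appendix F. The genuine reason a conjecture enters at all is the point you glossed over: the family in $(iv)$ is not stable under the coaction --- its derivations produce the $\star\star$-elements of $(v)$ --- so although the analytic analogue of $(iv)$ is a known theorem (Ohno--Zagier), it cannot be lifted by itself: the constraint (\ref{eq:constrainta}) entangles the $A$'s with the unknown $B$'s, and closing the system requires the analytic analogue of $(v)$, which is precisely Conjecture \ref{conjcoeff}. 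So the conditional part of the lemma concerns the $B$'s (and hence the $A$'s), not the $D_S$ of $(iii)$, and your final paragraph would leave unproved exactly those explicit values ($C_r$, $D_S$, and the fact that $(ii)$ carries no free coefficient) that the lemma asserts unconditionally.
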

\noindent
Before giving the proof, here is the (analytic) conjecture remaining on some of these coefficients, sufficient to complete the Hoffman $\star$ basis proof (cf. Theorem $\autoref{Hoffstar}$):
\begin{conj}\label{conjcoeff}
The equalities $(v)$ are satisfied for real MZV, with:
$$B^{a,b}=1-\frac{2}{C_{a+b+1}}\binom{2a+2b+2}{2b+1}.$$
\end{conj}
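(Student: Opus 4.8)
The statement is a purely archimedean identity: it fixes the ``diagonal'' coefficient $B^{a,b}:=B^{a,b}_{a+b+1,1}$, that is, the coefficient of the single top-weight odd value $\zeta(\overline{2a+2b+3})$ in the real evaluation $(v)$ of $\zeta^{\star\star}(\boldsymbol{2}^{a},3,\boldsymbol{2}^{b})$. It is worth emphasising first why an external analytic input cannot be avoided: the recursions \eqref{eq:constrainta}--\eqref{eq:constraintb} already established in Lemma~\ref{lemmcoeff} express every $A$, every $D$, and every off-diagonal $B^{a,b}_{i_1,\dots,i_p;s_1,\dots,s_p}$ in terms of the diagonal values $B^{x,y}$, but they strictly increase the number of odd zeta factors and therefore \emph{build up} from the diagonal rather than closing on it. The diagonal $B^{a,b}$ is thus a genuine seed, exactly as Zagier's evaluation of $\zeta(\boldsymbol{2}^{a},3,\boldsymbol{2}^{b})$ was the seed for Brown in the non-star case. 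The plan is to supply this seed by a generating-function computation following \cite{Za} and \cite{Li}.

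Concretely, I would study a two-variable generating series, a representative choice being $\Psi(x,y):=\sum_{a,b\geq 0}\zeta^{\star\star}(\boldsymbol{2}^{a},3,\boldsymbol{2}^{b})\,x^{2a+1}y^{2b+1}$, with the divergent boundary terms shuffle-regularised through \eqref{eq:shufflereg}, and reduce it to closed form. Writing $\zeta^{\star\star}$ via its iterated-integral representation with the forms $\omega_{\pm\star}=\omega_{\pm 1}-\omega_{0}$, the two flanking strings of $\boldsymbol{2}$'s contribute the classical ``$\{2\}$-block'' generating functions, whose Taylor coefficients are the $\zeta^{\star}(\boldsymbol{2}^{n})=-2\zeta(\overline{2n})$ recorded in $(i)$ of Lemma~\ref{lemmcoeff} and which sum to elementary secant/hyperbolic-type functions, while the central $3$ contributes a single kernel $\omega_{\pm1}\omega_{0}\omega_{0}$. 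I expect $\Psi$ to collapse to an elementary expression built from $\sin$ and $\cos$ (equivalently from quotients of $\Gamma$-factors), the odd zeta values entering only through the logarithmic expansion $\log\Gamma(1+z)=-\gamma z+\sum_{k\geq 2}\tfrac{(-1)^{k}\zeta(k)}{k}z^{k}$.

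From the closed form I would then read off $B^{a,b}$ as the coefficient of the single odd value $\zeta(\overline{2a+2b+3})$, all genuine products $\prod_k\zeta(\overline{2i_k+1})^{s_k}\zeta(2)^{S}$ being irrelevant for the diagonal; this is precisely the part of $\log\Psi$ that is \emph{linear} in odd zetas, whose $(x^{2a+1}y^{2b+1})$-coefficient is a finite combination of binomial coefficients and powers of $2$. Matching it against $B^{a,b}C_{a+b+1}=C_{a+b+1}-2\binom{2a+2b+2}{2b+1}$ then reduces the whole conjecture to one binomial identity. A reassuring consistency check is available at no cost: since the weight $2a+2b+3$ is odd, the reversal relation \eqref{eq:antipodestaresss} in the coalgebra $\mathcal{L}$ gives $\zeta^{\star\star,\mathfrak{l}}(\boldsymbol{2}^{a},3,\boldsymbol{2}^{b})\equiv\zeta^{\star\star,\mathfrak{l}}(\boldsymbol{2}^{b},3,\boldsymbol{2}^{a})$, forcing $B^{a,b}=B^{b,a}$, which is compatible with the conjectured value because $\binom{2a+2b+2}{2b+1}=\binom{2a+2b+2}{2a+1}$.

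The principal obstacle is the regularisation bookkeeping rather than any single hard step. Because $\zeta^{\star\star}$ and $\zeta^{\star\star}_{1}$ are non-convergent, $\Psi$ must be manipulated as a shuffle-regularised object, and one has to track with care which contributions land on a single odd zeta as opposed to a product of lower ones --- a separation that is automatic once one passes to $\mathcal{L}$ (where products die and $\mathcal{L}_{2a+2b+3}$ is one-dimensional), but delicate at the level of honest real numbers. The secondary difficulties are the usual analytic justifications: convergence of the series, legitimacy of the closed-form collapse, and validity of term-by-term coefficient extraction. These are exactly the points for which \cite{Za} and \cite{Li} provide working templates, and I expect the concluding binomial identity to be routine once the closed form of $\Psi$ is secured.
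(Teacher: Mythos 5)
You should know at the outset that the paper contains no proof of this statement: it is precisely the analytic Conjecture~\ref{conjcoeff} that the paper leaves open, explicitly describing it as ``of an entirely different nature from the techniques developed'' there and expressing the hope that it can be proved ``using analytic methods'' as in \cite{Za} and \cite{Li}. So there is nothing in the paper to compare your attempt against; what you have written is an attempt to supply exactly the missing input. Your framing is faithful to the paper's own view: the diagonal coefficient $B^{a,b}$ is indeed a seed that the recursions $(\ref{eq:constrainta})$--$(\ref{eq:constraintb})$ cannot produce (they determine the $A$'s, $D$'s and off-diagonal $B$'s from diagonal ones, never the reverse), and it plays the same role Zagier's evaluation played for Brown in the non-star case. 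Your consistency check is also correct: since the weight $2a+2b+3$ is odd, the reversal relation $(\ref{eq:antipodestaresss})$ forces $B^{a,b}=B^{b,a}$, which the conjectured value satisfies because $\binom{2a+2b+2}{2b+1}=\binom{2a+2b+2}{2a+1}$.

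As a proof, however, the proposal has a genuine gap: its entire analytic core is deferred rather than executed. The collapse of $\Psi(x,y)$ to an elementary closed form is only \emph{expected}, never derived, and this is exactly the hard step --- in the non-star case of \cite{Za} the analogous statement required identifying the generating series with a ${}_{3}F_{2}$ hypergeometric function and evaluating it at special parameters through non-trivial transformation and digamma identities, and nothing in your outline produces the differential or functional equation satisfied by $\Psi$, nor reduces it to a known hypergeometric evaluation; the $\star\star$ version is moreover worse-behaved than Zagier's case, since every term of $\Psi$ is divergent and must be shuffle-regularised before any closed form can even be asserted, a point you flag but do not resolve. Consequently the two concluding steps --- extracting the part of the expansion linear in odd zetas and verifying the ``routine'' binomial identity --- cannot be assessed, because the object to which they would be applied does not yet exist. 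In effect the proposal restates the conjecture in generating-function language, correctly identifies the template and the dangers, but closes none of them: the statement is exactly as open after your argument as before it.
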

\noindent
\texttt{Nota Bene:}  This conjecture is of an entirely different nature from the techniques developed in this thesis. We can expect that it can proved using analytic methods as the usual techniques of identifying hypergeometric series, as in $\cite{Za}$, or $\cite{Li}$.

\begin{theo}
If the analytic conjecture ($\ref{conjcoeff}$) holds, the equalities $(iv)$, $(v)$ are true in the motivic case, with the same values of the coefficients. In particular:
$$A_{r}^{a,b}= 2\left( -\delta_{r=a}+ \binom{2r}{2a} \right)  \frac{2^{2r}}{2^{2r}-1}-2\binom{2r}{2b+1}.$$
\end{theo}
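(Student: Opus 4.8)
The plan is to lift the real identities $(iv)$ and $(v)$ to the motivic level by the two-step scheme recalled after Theorem~\ref{kerdn}: first match the \emph{full} coaction on both sides, then fix the single remaining rational ambiguity through the period map~\eqref{eq:period}. The whole argument runs by induction on the weight $w=2a+2b+3$ (resp. $w=2a+2b+4$ in the $\zeta^{\star\star,\mathfrak{m}}_{1}$ case), with the evaluations $(o)$--$(iii)$ serving as the level and depth base cases. It suffices to treat the $\star\star$ identities $(v)$, since $(iv)$ follows from them through the $\star/\star\star$ conversion $\zeta^{\star,\mathfrak{m}}(\boldsymbol{2}^{a},3,\boldsymbol{2}^{b})=\zeta^{\star\star,\mathfrak{m}}(\boldsymbol{2}^{a},3,\boldsymbol{2}^{b})-\zeta^{\star\star,\mathfrak{m}}_{2}(\boldsymbol{2}^{a-1},3,\boldsymbol{2}^{b})$ of \S3.1, which at the level of coefficients is encoded in relation~\eqref{eq:constrainta}; the conjecture~\ref{conjcoeff} supplies the only genuinely analytic input, namely the real version of $(v)$ with the closed form of $B^{a,b}$.

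For the inductive step I would compute $D_{2r+1}$, for every $2r+1<w$, on both sides of the claimed motivic identity. On the left I use the explicit coaction of $\zeta^{\star\star,\mathfrak{m}}(\boldsymbol{2}^{a},3,\boldsymbol{2}^{b})$ from Appendix~A (formula~\eqref{eq:dr3} and its level-graded version): crucially, in the coalgebra $\mathcal{L}=\mathcal{A}_{>0}/\mathcal{A}_{>0}\cdot\mathcal{A}_{>0}$ every left-hand factor is a product of positive-weight periods, so since $\mathcal{L}_{2r}=0$ and all products vanish, the inductive forms of $(iii)$ and $(v)$ force only the single odd pieces $\zeta^{\mathfrak{l}}(\overline{2r+1})$ to survive. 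On the right-hand side $D_{2r+1}$ acts by the Leibniz rule, $\zeta^{\mathfrak{m}}(2)$ (a power of $\mathbb{L}^{\mathfrak{m}}$) having trivial coaction and the depth-one odd zetas contributing by deconcatenation via~\eqref{eq:depth1}. Equating the two computations term by term then forces exactly the recursions~\eqref{eq:constrainta} and~\eqref{eq:constraintb} on $A,B,C,D$; the reduction of the non-convergent $\star\star$ terms to the normal form of Lemma~\ref{lemmcoeff} is carried out with the antipode relations of \S3.2 and the hybrid relation (Theorem~\ref{hybrid}).

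Once the two coactions agree, the difference of the two sides lies in $\ker D_{<w}\cap\mathcal{H}_{w}=\mathbb{Q}\,\zeta^{\mathfrak{m}}(w)$ by Theorem~\ref{kerdn}, a one-dimensional space (spanned in even weight by the relevant power of $\mathbb{L}^{\mathfrak{m}}$, which is precisely the pure-$\pi$ term governed by $D^{a,b}$). Applying the period map~\eqref{eq:period} and invoking Conjecture~\ref{conjcoeff}, which is exactly the corresponding real identity, the image of this difference vanishes; since $\mathrm{per}(\zeta^{\mathfrak{m}}(w))=\zeta(w)\neq0$, the rational multiple is zero, closing the induction in weight $w$. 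Finally, the explicit value of $A^{a,b}_{r}$ is read off by substituting the conjectural $B^{a,b}=1-\tfrac{2}{C_{a+b+1}}\binom{2a+2b+2}{2b+1}$ into~\eqref{eq:constrainta} and unwinding the recursion down to the diagonal values $A^{a,r-a-1}_{r}$ (themselves fixed by the base identities $(i)$--$(iii)$); the resulting telescoping sum collapses to $A_{r}^{a,b}=2\bigl(-\delta_{r=a}+\binom{2r}{2a}\bigr)\tfrac{2^{2r}}{2^{2r}-1}-2\binom{2r}{2b+1}$.

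The hard part is the term-by-term matching of $D_{2r+1}$. Unlike Brown's non-star case, where the left factors are honest depth-one objects, here they are the renormalized $\zeta^{\star\star,\mathfrak{l}}$, which are only depth-one \emph{modulo products}; controlling this reduction requires the full Shift/Reverse/Cut machinery of \S3 together with careful bookkeeping of the boundary cuts $i=0$ and $i=p$. I expect the genuinely delicate points to be the determination of the diagonal coefficients $A^{a,r-a-1}_{r}$ and the verification that the recursion~\eqref{eq:constraintb} for the multi-index $B^{a,b}_{i_{1},\dots,i_{p};\,s_{1},\dots,s_{p}}$ remains compatible with the single closed form $B^{a,b}$ across all partitions $(s_{1},\dots,s_{p})$, rather than the lifting mechanism itself, which becomes routine once the coactions are shown to coincide.
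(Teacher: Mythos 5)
Your proposal follows the same route as the paper's proof: compute $D_{2r+1}$ on both sides with the Appendix A simplification machinery, let the term-by-term matching force the recursions \eqref{eq:constrainta} and \eqref{eq:constraintb}, reduce the remaining ambiguity to $\ker D_{<w}\cap\mathcal{H}_{w}=\mathbb{Q}\,\zeta^{\mathfrak{m}}(w)$ by Theorem \ref{kerdn}, and eliminate it by applying the period map against a real identity. However, your closing step for $(iv)$ contains a genuine error. You assert that the diagonal coefficients $A^{a,r-a-1}_{r}$ are ``themselves fixed by the base identities $(i)$--$(iii)$'', and earlier that Conjecture \ref{conjcoeff} is ``the only genuinely analytic input''. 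Neither is true. The diagonal value $A^{a,r-a-1}_{r}$ is the coefficient of the full-weight term $\zeta^{\mathfrak{m}}(\overline{2r+1})$ in the weight-$(2r+1)$ instance of $(iv)$; every derivation $D_{2s+1}$ with $2s+1<2r+1$ annihilates that term, so no coaction computation can reach it, and the identities $(i)$--$(iii)$ concern only the elements $\zeta^{\star,\mathfrak{m}}(\boldsymbol{2}^{n})$, $\zeta^{\star,\mathfrak{m}}_{1}(\boldsymbol{2}^{n})$, $\zeta^{\star\star,\mathfrak{m}}(\boldsymbol{2}^{n})$, $\zeta^{\star\star,\mathfrak{m}}_{1}(\boldsymbol{2}^{n})$, which contain no entry $3$, hence say nothing about it.

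The missing ingredient is the second analytic input that the paper uses and stresses in its Nota Bene: the real version of $(iv)$ itself, i.e.\ the Ohno--Zagier evaluation (already a theorem, cf.\ \cite{IKOO}, \cite{Za}), which is logically independent of Conjecture \ref{conjcoeff}. At each weight the paper first fixes $B^{a,b}$ and $D^{a,b}$ from the real $(v)$ (Conjecture \ref{conjcoeff}) together with Theorem \ref{kerdn}; then \eqref{eq:constrainta} determines $A^{a,b}_{r}$ for $r<n$ from previously known data; and finally the top coefficient $A^{a,b}_{n}$ is obtained by applying the period map to the difference of the two sides of $(iv)$ --- a rational multiple of $\zeta^{\mathfrak{m}}(2n+1)$ by Theorem \ref{kerdn} --- and comparing with the real Ohno--Zagier identity. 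As written, your telescoping derivation of the explicit formula for $A^{a,b}_{r}$ has no valid base: without the analytic $(iv)$, the motivic $(iv)$ is established only up to an undetermined rational multiple of $\zeta^{\mathfrak{m}}(2n+1)$ at every weight, and those undetermined constants propagate through \eqref{eq:constrainta} into all lower-$r$ coefficients at higher weights. The repair is immediate within your own framework --- replace the appeal to $(i)$--$(iii)$ by an appeal to the known real $(iv)$ in the period-map step --- but it must be made explicitly.
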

\noindent
\texttt{Nota Bene:}  The equality $(iv)$ is already proven in the analytic case by Ohno-Zagier (cf.$\cite{IKOO}$, $\cite{Za}$), with these values for $A_{r}^{a,b}$. Nevertheless, as the proof highlights, to deduce these values for the motivic identity $(iv)$, we need the analytic version of $(v)$. 
\begin{proof}
Remind that if we know a motivic equality up to one unknown coefficient (of $\zeta(w)$), the analytic result analogue enables us to conclude on its value by Theorem $\autoref{kerdn}$.\\
Let assume now, in a recursion on $n$, that we know $\left\lbrace  B^{a,b}, D^{a,b}, B_{i_{1} \cdots i_{p} \atop s_{1} \cdots s_{p} }^{a,b} \right\rbrace _{a+b+1<n}$ and consider $(a,b)$ such that $a+b+1=n$. Then, by $(\ref{eq:constraintb})$, we are able to compute the $B_{\textbf{i}\atop \textbf{s}}^{a,b}$ with $(s,i)\neq (1,n)$. Using the analytic $(v)$ equality, and Theorem $\autoref{kerdn}$, we deduce the only remaining unknown coefficient $B^{a,b}$ resp. $D^{a,b}$ in $(v)$.\\
Lastly, by recursion on $n$ we deduce the $A_{r}^{a,b}$ coefficients: let assume they are known for $a+b+1<n$, and take $(a,b)$ with $a+b+1=n$. By 
the constraint $(\ref{eq:constrainta})$, since we already know $B$ and $C$ coefficients, we deduce $A_{r}^{a,b}$ for $r<n$. The remaining coefficient, $A_{n}^{a,b}$, is obtained using the analytic $(iv)$ equality and Theorem $\autoref{kerdn}$.
\end{proof}

\paragraph{\texttt{Proof of Lemma} $\autoref{lemmcoeff}$.}:
\begin{proof}
Computing the coaction on these elements, by a recursive procedure, we are able to prove these identities up to some rational coefficients, with Theorem $\autoref{kerdn}$. When the analytic analogue of the equality is known for MZV, we \textit{may} conclude on the value of the remaining rational coefficient of $\zeta^{\mathfrak{m}}(\omega)$ by identification (as for $(i),(ii),(iii)$). However, if the family is not stable under the coaction (as for $(iv)$), knowing the analytic case is not enough.\\
\texttt{Nota Bene:} This proof refers to the expression of $D_{2r+1}$ in Lemma $\autoref{lemmt}$: we look at cuts of length $2r+1$ among the sequence of $0, 1, $ or $\star$ (in the iterated integral writing); there are different kind of cuts (according their extremities), and each cut may bring out two terms ($T_{0,0}$ and $T_{0,\star}$ for instance). The simplifications are illustrated by the diagrams, where some arrows (term of a cut) get simplified by rules specified in Appendix $A$.\\
\begin{itemize}
\item[$(i)$] The corresponding iterated integral is:  $I^{\mathfrak{m}}(0; 1, 0, \star, 0 \cdots, \star, 0; 1)$.\\
The only possible cuts of odd length are between two $\star$ ($T_{0,\star}$ and $T_{\star,0}$) or $T_{1,0}$ from the first $1$ to a $\star$, or $T_{0,1}$ from a $\star$ to the last $1$. By \textsc{ Shift }(\ref{eq:shift}), these cuts get simplified two by two. Since $D_{2r+1}(\cdot)$, for $2r+1<2n$ are all zero, it belongs to $\mathbb{Q}\zeta^{\mathfrak{m}}(2n)$, by Theorem $\autoref{kerdn}$. Using the (known) analytic equality, we can conclude.
\item[$(ii)$] It is quite similar to $(i)$: using $\textsc{ Shift }$ $(\ref{eq:shift})$, it remains only the cut:\\
\includegraphics[]{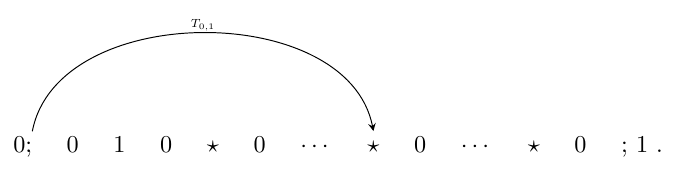}\\
$$\text{i.e.}: \quad D_{2r+1} (\zeta^{\star,\mathfrak{m}}_{1}(\boldsymbol{2}^{n}))= \zeta^{\mathfrak{l},\star}_{1}(\boldsymbol{2}^{r})\otimes \zeta^{\star,\mathfrak{m}}(\boldsymbol{2}^{n-r})=-2 \zeta^{ \mathfrak{l}}(\overline{2r+1})\otimes \zeta^{ \star,\mathfrak{m}}(\boldsymbol{2}^{n-r}).$$
The last equality is deduced from the recursive hypothesis (smaller weight). The analytic equality (coming from the Zagier-Ohno formula, and the $\shuffle$ regulation) enables us to conclude on the value of the remaining coefficient of $\zeta^{\mathfrak{m}}(2n+1)$.
\item[$(iii)$] Expressing these ES$\star\star$ as a linear combination of ES by $\shuffle$ regularisation:
$$\hspace*{-0.5cm}\zeta^{\star\star,\mathfrak{m}}(\boldsymbol{2}^{n})= \sum_{k_{i} \text{ even}} \zeta^{\mathfrak{m}}_{2n-\sum k_{i}}(k_{1},\cdots, k_{p})=\sum_{n_{i}\geq 2} \left(  \sum_{k_{i} \text{ even} \atop k_{i} \leq n_{i}}  \binom{n_{1}-1}{k_{1}-1} \cdots \binom{n_{d}-1}{k_{d}-1}  \right) \zeta^{\mathfrak{m}}(n_{1},\cdots, n_{d}) .$$
Using the multi-binomial formula:
$$2^{\sum m_{i}}=\sum_{l_{i} \leq m_{i}} \binom{m_{1}}{l_{1}}(1-(-1))^{l_{1}} \cdots \binom{m_{d}}{l_{d}}(1-(-1))^{l_{d}}= 2^{d} \sum_{l_{i} \leq m_{i}\atop l_{i} \text{ odd  }} \binom{m_{1}}{l_{1}} \cdots \binom{m_{d}}{l_{d}} .$$
Thus:
$$\zeta^{\star\star,\mathfrak{m}}(\boldsymbol{2}^{n})=\sum_{d \leq n} \sum_{w(\textbf{m})=2n \atop ht(\textbf{m})=d(\textbf{m})=d} 2^{2n-2d}\zeta^{\mathfrak{m}}(\textbf{m}).$$
Similarly for $(35)$, since:
$$\zeta^{\star\star,\mathfrak{m}}_{1}(\boldsymbol{2}^{n})=\sum_{k_{i} \text{ even}} \zeta^{\mathfrak{m}}_{2n+1-\sum k_{i}}(k_{1},\cdots, k_{p})=\sum_{d \leq n} \sum_{w(\textbf{m})=2n \atop ht(\textbf{m})=d(\textbf{m})=d} 2^{2n-2d}\zeta^{\mathfrak{m}}(\textbf{m}).$$
Now, using still only $\textsc{ Shift }$ $(\ref{eq:shift})$, it remains the following cuts:\\
\includegraphics[]{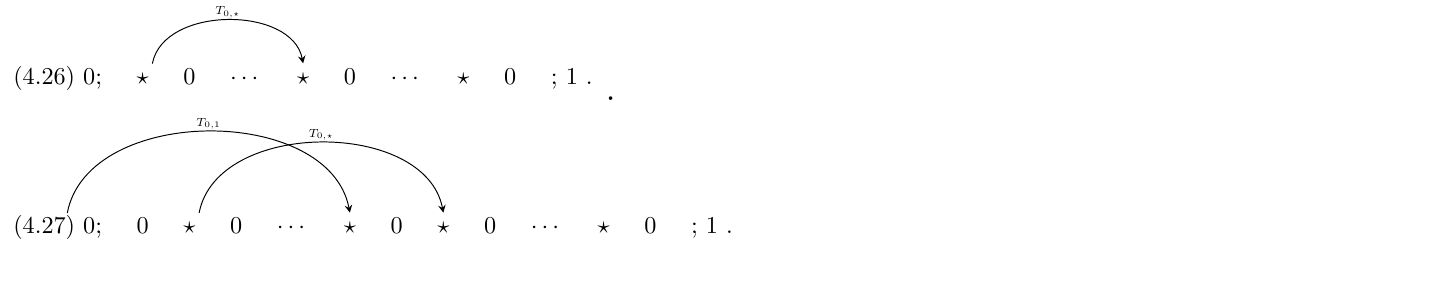}\\
With a recursion on $n$ for both $(34)$, $(35)$, we deduce:
$$D_{2r+1}(\zeta^{\star\star,\mathfrak{m}}(\boldsymbol{2}^{n}))=\zeta^{\star\star,\mathfrak{m}}_{1}(\boldsymbol{2}^{r})\otimes \zeta^{\star\star,\mathfrak{l}}_{1}(\boldsymbol{2}^{n-r})=C_{r} \zeta^{ \mathfrak{l}}(\overline{2r+1})\otimes  \zeta^{\star\star,\mathfrak{m}}_{1}(\boldsymbol{2}^{n-r-1}).$$
$$D_{2r+1}(\zeta^{\star\star,\mathfrak{m}}_{1}(\boldsymbol{2}^{n}))=\zeta^{\star\star,\mathfrak{l}}_{1}(\boldsymbol{2}^{r})\otimes \zeta^{\star\star,\mathfrak{m}}(\boldsymbol{2}^{n-r})=C_{r} \zeta^{ \mathfrak{l}}(\overline{2r+1})\otimes \zeta^{\star\star,\mathfrak{m}}(\boldsymbol{2}^{n-r}).$$
To find the remaining coefficients, we need the analytic result corresponding, which is a consequence of the sum relation for MZV of fixed weight, depth and height, by Ohno and Zagier ($\cite{OZa}$, Theorem $1$), via the hypergeometric functions.\\
Using $\cite{OZa}$, the generating series of these sums is, with $\alpha,\beta=\frac{x+y \pm \sqrt{(x+y)^{2}-4z}}{2}$:
$$\begin{array}{lll}
\phi_{0}(x,y,z)\mathrel{\mathop:} & = &  \sum_{s\leq d \atop w\geq d+s}  \left( \sum \zeta(\textbf{k}) \right) x^{w-d-s}y^{d-s}z^{s-1} \\
& = & \frac{1}{xy-z} \left( 1- \exp \left( \sum_{m=2}^{\infty} \frac{\zeta(m)}{m}(x^{m}+y^{m}-\alpha^{m}-\beta^{m}) \right)   \right) .
\end{array}$$
From this, let express the generating series of both $\zeta^{\star\star}(\boldsymbol{2}^{n})$ and $\zeta^{\star\star}_{1}(\boldsymbol{2}^{n})$:
$$\phi(x)\mathrel{\mathop:}= \sum_{w} \left( \sum_{ht(\textbf{k})=d(\textbf{k})=d\atop w\geq 2d} 2^{w-2d} \zeta(\textbf{k}) \right) x^{w-2}= \phi_{0}(2x, 0, x^{2}).$$
Using the result of Ohno and Don Zagier:
$$\phi(x)= \frac{1}{x^{2}} \left(\exp \left( \sum_{m=2}^{\infty} \frac{2^{m}-2}{m} \zeta(m) x^{m} \right)   -1\right).$$
Consequently, both $\zeta^{\star\star}(\boldsymbol{2}^{n})$ and $\zeta^{\star\star}_{1}(\boldsymbol{2}^{n})$ can be written explicitly as polynomials in simple zetas. For $\zeta^{\star\star}(\boldsymbol{2}^{n})$, by taking the coefficient of $x^{2n-2}$ in $\phi(x)$:
$$\zeta^{\star\star}(\boldsymbol{2}^{n})= \sum_{\sum m_{i} s_{i}=2n \atop m_{i}\neq m_{j}} \prod_{i=1}^{k} \left(  \frac{1}{s_{i} !}\left( \zeta(m_{i}) \frac{2^{m_{i}}-2}{m_{i}}\right)^{s_{i}} \right) .$$
Gathering the zetas at even arguments, it turns into:
$$\zeta^{\star\star}(\boldsymbol{2}^{n})= \sum_{\sum (2i_{k}+1) s_{k}+2S=2n \atop i_{k}\neq i_{j}} \prod_{i=1}^{p} \left(  \frac{1}{s_{k} !}\left( \zeta(2 i_{k}+1) \frac{2^{2 i_{k}+1}-2}{2i_{k}+1}\right)^{s_{k}} \right) d_{S} \zeta(2)^{S}, $$
\begin{equation}\label{eq:coeffds}
 \text{ where }  d_{S}\mathrel{\mathop:}=3^{S}\cdot 2^{3S}\sum_{\sum m_{i} s_{i}=S \atop m_{i}\neq m_{j}} \prod_{i=1}^{k} \left( \frac{1}{s_{i}!} \left( \frac{\mid B_{2m_{i}}\mid (2^{2m_{i}-1}-1) } {2m_{i} (2m_{i})!}\right)^{s_{i}}  \right).
\end{equation}
It remains to turn $\zeta(odd)$ into $\zeta(\overline{odd})$ by $(o)$ to fit the expression of the Lemma:
$$\zeta^{\star\star}(\boldsymbol{2}^{n})= \sum_{\sum (2i_{k}+1) s_{k}+2S=2n \atop i_{k}\neq i_{j}} \prod_{i=1}^{p} \left(  \frac{1}{s_{k} !}\left( c_{i_{k}}\zeta(\overline{2 i_{k}+1}) \right)^{s_{k}} \right) d_{S} \zeta(2)^{S}, \text{ where } c_{r}=\frac{2^{2r+1}}{2r+1}.$$
It is completely similar for $\zeta^{\star\star}_{1}(\boldsymbol{2}^{n})$: by taking the coefficient of $x^{2n-3}$ in $\phi(x)$, we obtained the analytic analogue of $(35)$, with the same coefficients $d_{S}$ and $c_{r}$.\\
Now, using these analytic results for $(34)$, $(35)$, by recursion on the weight, we can identify the coefficient $D_{S}$ and $C_{r}$ with resp. $d_{S}$ and $c_{r}$, since there is one unknown coefficient at each step of the recursion.
\item[$(iv)$] After some simplifications (antipodes, Appendix $A$), only the following cuts remain:\\
\includegraphics[]{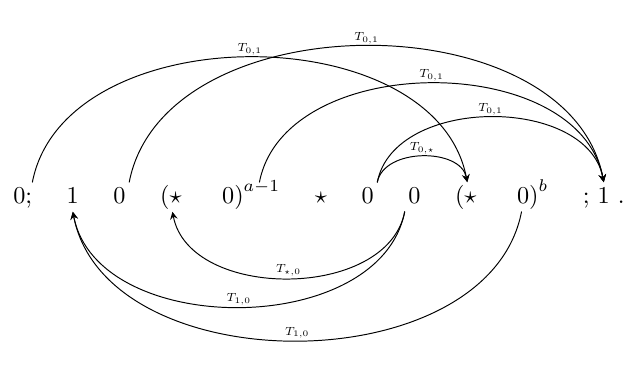}\\
This leads to the formula:\\
$$D_{2r+1} (\zeta^{\star,\mathfrak{m}}(\boldsymbol{2}^{a},3,\boldsymbol{2}^{b}))=  \left(\zeta^{\star,\mathfrak{m}}(\boldsymbol{2}^{a},3,\boldsymbol{2}^{r-a-1})+\right.$$
$$ \left.\left(  \delta_{r \leq b}-\delta_{r \leq a}\right)  \zeta^{\star\star,\mathfrak{m}}_{1}(\boldsymbol{2}^{r})  + \zeta^{\star\star , \mathfrak{m}}(\boldsymbol{2}^{r-b-1},3,\boldsymbol{2}^{b}) -\zeta^{\star\star ,\mathfrak{m}}(\boldsymbol{2}^{r-a-1},3,\boldsymbol{2}^{a})\right) \otimes \zeta^{\star,\mathfrak{m}}(\boldsymbol{2}^{n-r}).$$
Notably, the Hoffman $\star$ family is \textit{not stable} under the coaction, so we need first to prove $(v)$, and then:
$$\hspace*{-0.7cm}D_{2r+1} (\zeta^{\star ,\mathfrak{m}}(\boldsymbol{2}^{a},3,\boldsymbol{2}^{b}))= \left( A_{r}^{a,r-a-1}+C_{r} \left( B^{r-b-1,b}- B^{r-a-1,a} +\delta_{r\leq b}-\delta_{r\leq a} \right)\right)  \zeta^{ \mathfrak{l}}(\overline{2r+1})\otimes \zeta^{\star ,\mathfrak{m}}(\boldsymbol{2}^{n-r}). $$
It leads to the constraint $(\ref{eq:constrainta})$ above for coefficients $A$. To make these coefficients explicit, apart from the known analytic Ohno Zagier formula, we need the analytic analogue of $(v)$ identities, as stated in Conjecture $\autoref{conjcoeff}$.
\item[$(v)$] By Appendix rules, the following cuts get simplified (by colors, above with below):\footnote{The vertical arrows indicates a cut from the $\star$ to a $\star$ of the same group.}\\
\includegraphics[]{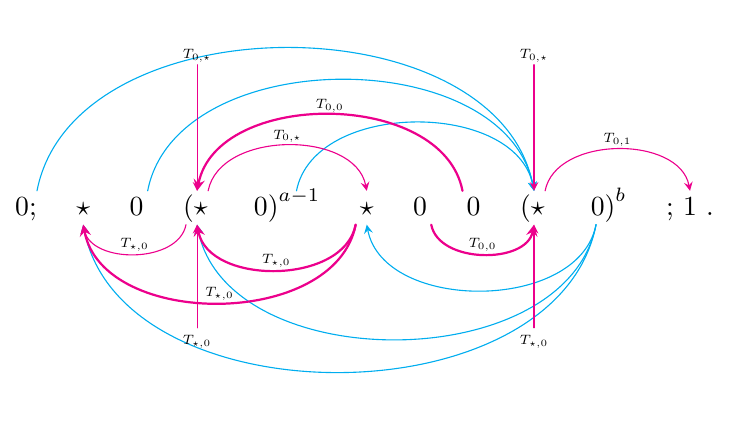}\\
Indeed, cyan arrows get simplified by \textsc{Antipode} $\shuffle$, $T_{0,0}$ resp. $T_{0, \star}$ above with $T_{0,0}$ resp. $T_{\star,0}$ below; magenta ones by $\textsc{ Shift }$ $(\ref{eq:shift})$, term above with the term below shifted by two on the left. It remains the following cuts for $(36)$:\\
\includegraphics[]{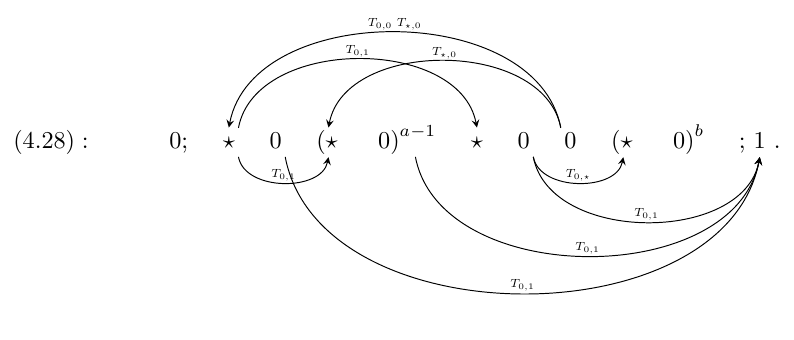}\\
In a very similar way, the simplifications lead to the following remaining terms:\\
\includegraphics[]{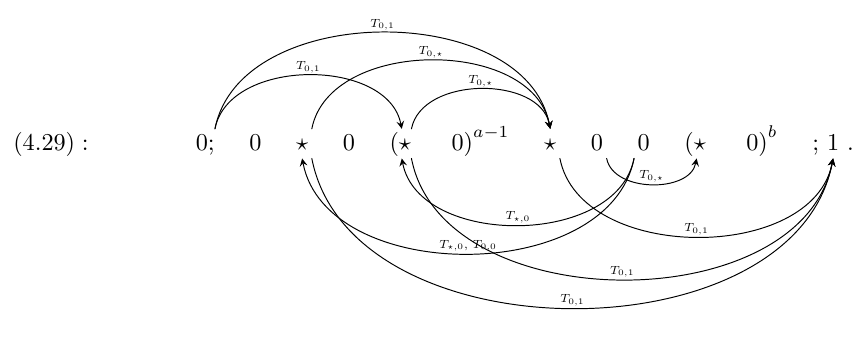}\\
Then, the derivations reduce to: 
$$\begin{array}{ll}
D_{2r+1} (\zeta^{\star\star ,\mathfrak{m}}(\boldsymbol{2}^{a},3,\boldsymbol{2}^{b})) = & 
\left( \left(  \delta_{r\leq b}-\delta_{r \leq a}\right)  \zeta^{\star\star ,\mathfrak{l}}_{1}(\boldsymbol{2}^{r}) +\delta_{r> b}\zeta^{\star\star, \mathfrak{m-l}}(\boldsymbol{2}^{r-b-1},3,\boldsymbol{2}^{b})\right) \otimes \zeta^{\star\star ,\mathfrak{m}}(\boldsymbol{2}^{n-r}) + \\
& +\delta_{r\leq a-1} \zeta^{\star\star ,\mathfrak{l}}_{1}(\boldsymbol{2}^{r}) \otimes \zeta^{\star\star ,\mathfrak{m}}_{1}(\boldsymbol{2}^{a-r-1},3,\boldsymbol{2}^{b})+ \delta_{r=a} \zeta^{\star\star ,\mathfrak{l}}_{1}(\boldsymbol{2}^{a})\otimes \zeta^{\star\star ,\mathfrak{m}}_{2}(\boldsymbol{2}^{b}) .\\
D_{2r+1} (\zeta^{\star\star ,\mathfrak{m}}_{1}(\boldsymbol{2}^{a},3,\boldsymbol{2}^{b}))= &  \left( \left(  \delta_{r\leq b}-\delta_{r \leq a}\right)  \zeta^{\star\star ,\mathfrak{l}}_{1}(\boldsymbol{2}^{r})  + \zeta^{\star\star ,\mathfrak{l}}(\boldsymbol{2}^{r-b-1},3,\boldsymbol{2}^{b})\right)\otimes \zeta^{\star\star,\mathfrak{m}}_{1}(\boldsymbol{2}^{n-r}) +\\
& + \zeta^{\star\star ,\mathfrak{l}}_{1}(\boldsymbol{2}^{r})\otimes \zeta^{\star\star,\mathfrak{m}}(\boldsymbol{2}^{a-r},3,\boldsymbol{2}^{b}) .
\end{array}$$
With a recursion on $w$ for both:
$$\hspace*{-1.4cm}\begin{array}{ll}
D_{2r+1} (\zeta^{\star\star ,\mathfrak{m}}(\boldsymbol{2}^{a},3,\boldsymbol{2}^{b})) & = C_{r} \zeta^{ \mathfrak{l}}(\overline{2r+1})\otimes\\
& \left( \left( \delta_{r\leq b}-\delta_{r < a}  + B_{r}^{r-b-1,b}\right) \zeta^{\star\star ,\mathfrak{m}}(\boldsymbol{2}^{n-r}) + \zeta^{\star\star ,\mathfrak{m}}_{1}(\boldsymbol{2^{a-r-1},}3,\boldsymbol{2}^{b})+ \delta_{r=a}\zeta^{\star ,\mathfrak{m}}(\boldsymbol{2}^{b+1}) \right) .\\
& \\
D_{2r+1} (\zeta^{\star\star ,\mathfrak{m}}_{1}(\boldsymbol{2}^{a},3,\boldsymbol{2}^{b}))& = C_{r} \zeta^{ \mathfrak{l}}(\overline{2r+1})\otimes\left( \left(  \delta_{r\leq b}-\delta_{r \leq a}   + B_{r}^{r-b-1,b}\right) \zeta^{ \star\star ,\mathfrak{m}}_{1}(\boldsymbol{2}^{n-r}) + \zeta^{\star\star ,\mathfrak{m}}(\boldsymbol{2}^{a-r},3,\boldsymbol{2}^{b}) \right).
\end{array}$$
This leads to the recursive formula $(\ref{eq:constraintb})$ for $B$.
\end{itemize}
\end{proof}


\section{Motivic Linebarger Zhao conjecture}
Conjecturally, each motivic MZV $\star$ can be expressed as a motivic ES $\sharp$ in the following way:
\begin{conj}\label{lzg}  
For $a_{i},c_{i} \in \mathbb{N}^{\ast}$, $c_{i}\neq 2$, $\gamma_{i}=c_{i}-3 + 2\delta_{c_{i}}$:
$$\zeta^{\star, \mathfrak{m}} \left( \boldsymbol{2}^{a_{0}},c_{1},\cdots,c_{p}, \boldsymbol{2}^{a_{p}}\right)  =(-1)^{1+\delta_{c_{1}}}\zeta^{\sharp,  \mathfrak{m}} \left(B_{0},\boldsymbol{1}^{\gamma_{1} },\cdots,\boldsymbol{1}^{ \gamma_{i} },B_{i}, \ldots, B_{p}\right), $$
\begin{flushright}
where $\delta_{c}=\delta_{c=1}=\left\lbrace \begin{array}{ll}
1 & \textrm{ if } c=1\\
0 & \textrm{ else }
\end{array} \right. $ and  $\left\lbrace \begin{array}{l}
B_{0}\mathrel{\mathop:}=  (-1)^{2a_{0}-\delta_{c_{1}}} (2a_{0}+1-\delta_{c_{1}})\\
B_{i}\mathrel{\mathop:}= (-1)^{2a_{i}-\delta_{c_{i}}-\delta_{c_{i+1}}} (2a_{i}+3-\delta_{c_{i}}-\delta_{c_{i+1}})\\
B_{p}\mathrel{\mathop:}=(-1)^{2a_{p}+1-\delta_{c_{p}}} ( 2 a_{p}+2-\delta_{c_{p}})
\end{array}\right.$.
\end{flushright}
\end{conj}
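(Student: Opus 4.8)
The plan is to prove the identity by the lifting strategy of Theorem \ref{kerdn}. Both sides lie in $\mathcal{H}^1_w$: the left side is a combination of MMZV by definition, and the right side is unramified by Theorem \ref{ESsharphonorary}, since a quick check of the block formulas shows that every $B_i$ is either positive odd or negative even, and the interspersed strings $\boldsymbol{1}^{\gamma_i}$ consist of positive odd entries, so the argument string fits the $\lbrace\overline{\text{even}},\text{odd}\rbrace^{\times}$ pattern exactly. It therefore suffices, by Theorem \ref{kerdn}, to verify that the reduced derivation $D_{2r+1}$ agrees on both sides for every $1\le 2r+1<w$, and then to pin down the single residual rational multiple of $\zeta^{\mathfrak{m}}(w)$ using the known real Linebarger--Zhao identity \cite{LZ}, with the Two--One [Ohno--Zudilin] and Three--One [Zagier] formulas covering the degenerate low-depth cases. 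The whole argument will be a double induction on the weight $w$ and on the number $p$ of non-$2$ blocks $c_i$, arranged so that the $\mathcal{H}$-factors produced on the right of each $D_{2r+1}$ are themselves instances of the conjecture at strictly smaller weight, hence identified by the inductive hypothesis.

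First I would compute $D_{2r+1}$ of the left-hand MMZV$^{\star}$ using the level-graded coaction $\eqref{eq:dr3}$ in its general-$c_i$ form; this produces terms whose $\mathcal{L}$-component is a motivic Euler $\star\star$ sum and whose $\mathcal{H}$-component is again an MMZV$^{\star}$ of the same shape with one block shortened or removed. Dually, I would compute $D_{2r+1}$ of the right-hand MES$^{\sharp}$ using the hybrid relation (Theorem \ref{hybrid}) and the $\sharp\sharp$ relations of Corollary \ref{esharprel} (\textsc{Shift}, \textsc{Cut}, \textsc{Sign}, \textsc{Minus}, \textsc{Reverse}) to kill the \emph{unstable cuts} and collapse the surviving $\mathcal{L}$-factors to simple $\zeta^{\mathfrak{l}}(\pm(2r+1))$. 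The crucial structural input here is the parity bookkeeping of Theorem \ref{ESsharphonorary}, namely $w\equiv p-s \pmod 2$, which forces the troublesome $\overline{1}$-type factors $I^{\mathfrak{l}}(\epsilon;\cdot;-\epsilon)$ to cancel by \textsc{Sign}, so that the $\sharp$ side remains inside its family under the coaction.

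The heart of the matter is then to match the two computations cut by cut, which requires an explicit evaluation of the $\mathcal{L}$-coefficients on the MMZV$^{\star}$ side. Using the antipode $\shuffle$ relation one rewrites each $\zeta^{\star\star,\mathfrak{l}}$ piece as MMZV$^{\star}$ minus MMZV$^{\star\star}$, and by Lemma \ref{lemmcoeff} these are, modulo products, rational multiples of $\zeta^{\mathfrak{l}}(\overline{2r+1})$ with the explicit coefficients $A^{a,b}$, $B^{a,b}$, $C_r$, $\widetilde{B}^{a,b}$. One must then check, after applying the depth-one proportionality $\zeta^{\mathfrak{l}}(2r+1)=\tfrac{-2^{2r}}{2^{2r}-1}\zeta^{\mathfrak{l}}(\overline{2r+1})$ and the distribution relations $\eqref{eq:depth1}$, that these rational coefficients coincide term-for-term with those read off from the $\sharp\sharp$ side. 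This matching is purely combinatorial but delicate, since it must hold simultaneously for all $r$ and all cut positions, and it is where the interplay of the $B_i$-signs, the $\boldsymbol{1}^{\gamma_i}$ strings, and the $\sharp\sharp$ simplification rules becomes decisive.

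The main obstacle I anticipate is precisely that \emph{neither family is stable under the coaction} in a way that closes the recursion cleanly: as in the Hoffman $\star$ case, $D_{2r+1}$ of an MMZV$^{\star}$ produces MES$^{\star\star}$ which are not of depth one but linear combinations of products of depth-one MMZV times powers of $\pi^{\mathfrak{m}}$ (Lemma \ref{lemmcoeff}), so Theorem \ref{kerdn} cannot be applied in one stroke. Consequently the argument is contingent on the analytic inputs $(v)$ of Lemma \ref{lemmcoeff} (Conjecture \ref{conjcoeff}) being available, and more seriously on having the \emph{full} real Linebarger--Zhao identity at every weight to fix the residual $\zeta^{\mathfrak{m}}(w)$ coefficient. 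The genuinely hard point is that a single motivic identity carries all of its Galois conjugates, so one cannot lift the real identity mechanically but must control the entire coaction; securing the exact combinatorial dictionary between the cuts on the MMZV$^{\star}$ side and those on the MES$^{\sharp}$ side is the real work, and is what currently keeps the statement at the level of a conjecture.
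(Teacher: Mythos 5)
Your overall skeleton --- recursion on the weight, computing $D_{2r+1}$ on both sides via the Appendix-A coaction formulas, using the Section 3 relations to kill unstable cuts, invoking Theorem \ref{kerdn}, and fixing the residual $\zeta^{\mathfrak{m}}(w)$ coefficient with the known analytic identity of Zhao --- is the same strategy as the paper's own (conditional) treatment, namely Theorem \ref{lzgt} proved in Appendix E; your observations that the right-hand side is unramified by Theorem \ref{ESsharphonorary} and that the real identity cannot be lifted mechanically because of Galois conjugates are both correct. However, there is a concrete step in your plan that fails. You propose to evaluate the $\mathcal{L}$-factors on the MMZV$^{\star}$ side by Lemma \ref{lemmcoeff}, collapsing each $\zeta^{\star\star,\mathfrak{l}}$ cut to a rational multiple of $\zeta^{\mathfrak{l}}(\overline{2r+1})$ and then matching rational coefficients term by term. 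That collapse is available only in the Hoffman case: Lemma \ref{lemmcoeff} treats exclusively $\zeta^{\star\star,\mathfrak{m}}(\boldsymbol{2}^{a},3,\boldsymbol{2}^{b})$, $\zeta^{\star\star,\mathfrak{m}}_{1}(\boldsymbol{2}^{a},3,\boldsymbol{2}^{b})$ and the pure-$\boldsymbol{2}$ strings, i.e.\ subsequences made of $2$'s and a single $3$. For general $c_{i}$ the cuts produced by Lemma A.2 have left factors such as $\zeta^{\star\star,\mathfrak{l}}_{2}(\boldsymbol{2}^{a_{i}},c_{i+1},\cdots,\boldsymbol{2}^{a_{j}-\beta-1})$, containing arbitrarily many entries $c_{k}\geq 3$ or $c_{k}=1$; these are genuinely higher-depth elements of $\mathcal{L}$ and admit no depth-one reduction (such a reduction would essentially be the conjecture itself in lower weight). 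Hence the ``purely combinatorial'' coefficient matching you describe does not exist as stated, and Conjecture \ref{conjcoeff} is not the relevant missing input: it concerns only the Hoffman $\star$ coefficients $A^{a,b}$, $B^{a,b}$, $C_{r}$, i.e.\ the special case of \eqref{eq:LZhoffman}.

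What the paper does instead, and what your plan is missing, is to keep the left factors as full Euler sums in $\mathcal{L}$ and to ask for an identity \emph{between} them: after grouping the cuts of both sides into three types according to their right factor (Appendix E, cases $(a)$, $(b)$, $(c)$, matched by the recursion hypothesis) and simplifying with \textsc{Shift}, \textsc{Cut}, \textsc{Minus}, \textsc{Sign} and the antipodes, every required equality of left factors funnels into the single coalgebra identity \eqref{eq:conjid},
$$\zeta^{\sharp,\mathfrak{l}}_{B_{0}-1}(\boldsymbol{1}^{\gamma_{1}},\cdots,\boldsymbol{1}^{\gamma_{p}},B_{p})\equiv \zeta^{\star\star,\mathfrak{l}}_{2}(\boldsymbol{2}^{a_{0}-1},c_{1},\cdots,\boldsymbol{2}^{a_{p}})-\zeta^{\star\star,\mathfrak{l}}_{1}(\boldsymbol{2}^{a_{0}},c_{1}-1,\cdots,\boldsymbol{2}^{a_{p}}),$$
together with its corollary \eqref{eq:toolid}, which is proved from it inside the same recursion. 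That is the precise content of Theorem \ref{lzgt}: the conjecture is \emph{reduced} to \eqref{eq:conjid}, which remains open. Your route, as written, stalls exactly where this identity is needed; the statement therefore stays conjectural in both approaches, but the paper isolates the obstruction sharply as one identity in $\mathcal{L}$ for odd weights, whereas your plan misattributes it to Conjecture \ref{conjcoeff} and to a depth-one collapse that fails outside the all-$c_{i}=3$ case.
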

\texttt{Nota Bene:} The analytic version of this conjecture is proved: J. Zhao deduced it from its Theorem 1.4 in $\cite{Zh3}$.\\
\\
\textsc{Remarks}:
\begin{itemize}
\item[$\cdot$] The $B_{i}$ are then always positive odd or negative even integers.
\item[$\cdot$] Motivic Euler $\sharp$ sums appearing on the right side have already been proven to be unramified in Theorem $\autoref{ESsharphonorary}$, i.e. MMZV. 
\item[$\cdot$] This conjecture implies that the motivic Hoffman $\star$ family is a basis, since it corresponds to the motivic basis $\mathcal{B}^{\sharp}$ of Theorem $\autoref{ESsharpbasis}$: cf. ($\ref{eq:LZhoffman}$).
\item[$\cdot$] The number of sequences of consecutive $1$ in $\zeta^{\star}$, $n_{1}$ is linked with the number of even in $\zeta^{\sharp}$, $n_{e}$, here: $n_{e}=1+2n_{1}-2\delta_{c_{p}} -\delta_{c_{1}}$.
\end{itemize}
Special cases of this conjecture (references for proof for real ES indicated in the braket):
\begin{description}
\item[Two-One] [Ohno Zudilin, $\cite{OZ}$.]
\begin{equation}\label{eq:OZ21}
\zeta^{\star, \mathfrak{m}} (\boldsymbol{2}^{a_{0}},1,\cdots,1, \boldsymbol{2}^{a_{p}})= - \zeta^{\sharp,  \mathfrak{m}} \left( \overline{2a_{0}}, 2a_{1}+1, \ldots, 2a_{p-1}+1, 2 a_{p}+1\right) . 
\end{equation}
\item [Three-One] [Broadhurst et alii, $\cite{BBB}$.] 
\begin{equation}\label{eq:Z31} \zeta^{\star, \mathfrak{m}} (\boldsymbol{2}^{a_{0}},1,\boldsymbol{2}^{a_{1}},3 \cdots,1, \boldsymbol{2}^{a_{p-1}}, 3, \boldsymbol{2}^{a_{p}}) = -\zeta^{\sharp,  \mathfrak{m}} \left( \overline{2a_{0}}, \overline{2a_{1}+2}, \ldots, \overline{2a_{p-1}+2}, \overline{2 a_{p}+2} \right) .
\end{equation}
\item[Linebarger-Zhao$\star$] [Linebarger Zhao, $\cite{LZ}$]  With $c_{i}\geq 3$:
\begin{equation}\label{eq:LZ}
\zeta^{\star, \mathfrak{m}} \left( \boldsymbol{2}^{a_{0}},c_{1},\cdots,c_{p}, \boldsymbol{2}^{a_{p}}\right)  =
-\zeta^{\sharp,  \mathfrak{m}} \left( 2a_{0}+1,\boldsymbol{1}^{ c_{1}-3  },\cdots,\boldsymbol{1}^{ c_{i}-3  },2a_{i}+3, \ldots, \overline{ 2 a_{p}+2} \right) 
\end{equation}
In particular, when all $c_{i}=3$:
\begin{equation}\label{eq:LZhoffman}
\zeta^{\star, \mathfrak{m}} \left( \boldsymbol{2}^{a_{0}},3,\cdots,3, \boldsymbol{2}^{a_{p}}\right)  = - \zeta^{\sharp,  \mathfrak{m}} \left( 2a_{0}+1, 2a_{1}+3, \ldots, 2a_{p-1}+3, \overline{2 a_{p}+2}\right) . 
\end{equation}
\end{description}
\texttt{Examples}: Particular identities implied:
\begin{itemize}
\item[$\cdot$] $ \zeta^{\star, \mathfrak{m}}(1, \left\lbrace 2 \right\rbrace^{n} )=2 \zeta^{ \mathfrak{m}}(2n+1).$
\item[$\cdot$] $ \zeta^{\star, \mathfrak{m}}(1, \left\lbrace 2 \right\rbrace^{a}, 1, \left\lbrace 2 \right\rbrace^{b} )= \zeta^{ \sharp\mathfrak{m} }(2a+1,2b+1)= 4 \zeta^{ \mathfrak{m} }(2a+1,2b+1)+ 2 \zeta^{ \mathfrak{m} }(2a+2b+2). $
\item[$\cdot$] $ \zeta^{ \mathfrak{m}} (n)= - \zeta^{\sharp, \mathfrak{m}} (\lbrace 1\rbrace^{n-2}, -2)= -\sum_{ w(\boldsymbol{k})=n \atop \boldsymbol{k} \text{admissible}} \boldsymbol{2}^{p} \zeta^{\mathfrak{m}}(k_{1}, \ldots, k_{p-1}, -k_{p}).$
\item[$\cdot$] $ \zeta^{ \star, \mathfrak{m}} (\lbrace 2 \rbrace ^{n})= \sum_{\boldsymbol{k} \in \lbrace \text{ even }\rbrace^{\times} \atop w(\boldsymbol{k})= 2n} \zeta^{\mathfrak{m}} (\boldsymbol{k})=- 2 \zeta^{\mathfrak{m}} (-2n) .$
\item[$\cdot$] And some specific examples:
$$\begin{array}{ll}
\zeta^{\star, \mathfrak{m}} (2,2,3,3,2)  =-\zeta^{\sharp, \mathfrak{m}} (5,3,-4) &\zeta^{\star, \mathfrak{m}} (5,6,2)  =-\zeta^{\sharp, \mathfrak{m}} (1,1,1,3,1,1,1,-4) \\
\zeta^{\star, \mathfrak{m}} (1,6)  =\zeta^{\sharp, \mathfrak{m}} (-2,1,1,1,-2) &\zeta^{\star, \mathfrak{m}} (2,4, 1, 2,2,3)  =-\zeta^{\sharp, \mathfrak{m}} (3,1, -2, -6,-2) .
\end{array}$$
\end{itemize}

We paved the way for the proof of Conjecture $\autoref{lzg}$, bringing it back to an identity in $\mathcal{L}$:
\begin{theo}\label{lzgt}  
Let assume that in the coalgebra $\mathcal{L}$, for odd weights:
\begin{equation}\label{eq:conjid}
 \zeta^{\sharp,  \mathfrak{l}} _{B_{0}-1}(\boldsymbol{1}^{ \gamma_{1}},\cdots, \boldsymbol{1}^{\gamma_{p}  },B_{p})\equiv
  \zeta^{\star\star, \mathfrak{l}}_{2} (\boldsymbol{2}^{a_{0}-1},c_{1},\cdots,\boldsymbol{2}^{a_{p}})-\zeta^{\star\star, \mathfrak{l}}_{1} (\boldsymbol{2}^{a_{0}}, c_{1}-1, \ldots, \boldsymbol{2}^{a_{p}}) ,
\end{equation}
with $c_{1}\geq 3$, $a_{0}>0$, $\gamma_{i}, B_{i}$ as above except $B_{p}=(-1)^{2a_{p}+\delta_{c_{p}}} (2a_{p}+3-\delta_{c_{p}})$. Then:
\begin{itemize}
\item[$(i)$] Conjecture $\autoref{lzg}$ is true.
\item[$(ii)$] In the coalgebra $\mathcal{L}$, for odd weights, with $c_{1}\geq 3$ and the previous notations:
\begin{equation}\label{eq:toolid}
\zeta^{\sharp,  \mathfrak{l}} (\boldsymbol{1}^{ \gamma_{1}},\cdots, \boldsymbol{1}^{ \gamma_{p} },B_{p})\equiv  - \zeta^{\star, \mathfrak{l}}_{1} (c_{1}-1, \boldsymbol{2}^{a_{1}},c_{2},\cdots,c_{p}, \boldsymbol{2}^{a_{p}}).
\end{equation}
\end{itemize}
\end{theo}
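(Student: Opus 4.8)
The plan is to lift the single coalgebra identity \eqref{eq:conjid} to the full motivic statement by induction on the weight, exactly in the spirit of the proofs of Theorems~\ref{ESsharpbasis} and~\ref{Hoffstar}. The mechanism is that the derivations $D_{2r+1}$ reduce an equality in $\mathcal{H}_w$ to equalities in $\mathcal{L}_{2r+1}\otimes\mathcal{H}_{w-2r-1}$ of strictly smaller weight, and Theorem~\ref{kerdn} then guarantees that two elements of $\mathcal{H}_w$ which agree under every $D_{2r+1}$ with $2r+1<w$ differ by a rational multiple of $\zeta^{\mathfrak{m}}(w)$. That last scalar is fixed through the period map~\eqref{eq:period} by the \emph{analytic} Linebarger--Zhao identity, which is proven (Zhao, \cite{Zh3}). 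Thus all genuinely analytic content is isolated in the hypothesis \eqref{eq:conjid}, and what remains is to propagate it through the coaction.

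First I would establish $(ii)$ directly from \eqref{eq:conjid}, working entirely inside $\mathcal{L}$. The statement \eqref{eq:toolid} is the degenerate $a_0=0$ incarnation of \eqref{eq:conjid} (where the block $\boldsymbol{2}^{a_0-1}$ disappears and the subscript $B_0-1$ collapses to $0$), and I would obtain it from \eqref{eq:conjid} by converting the $\zeta^{\star\star,\mathfrak{l}}$ on its right-hand side into the single $\zeta^{\star,\mathfrak{l}}_1(c_1-1,\boldsymbol{2}^{a_1},\dots)$ of \eqref{eq:toolid}. The tools are the $\star/\star\star$ conversion formulas of \S3.1, together with the \textsc{Shift} and reversal relations for $\zeta^{\star\star,\mathfrak{l}}$ (the Corollary following Theorem~\ref{hybrid}) and the Antipode~$\shuffle$ relation (Lemma~\ref{eq:antipodeshuffle2}). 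This step is pure dihedral bookkeeping in $\mathcal{L}$ and contains no analytic input.

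For $(i)$ I would argue by induction on $w$. Both sides of Conjecture~\ref{lzg} lie in $\mathcal{H}^1$: the left side is a MMZV$^\star$ by construction, and the right side is unramified by Theorem~\ref{ESsharphonorary}, so the equality makes sense in $\mathcal{H}^1$. I then compute $D_{2r+1}$ of each side for every $0<2r+1<w$, using the coaction formula of Appendix~A (of which \eqref{eq:dr3} is the all-threes specialization) on the $\star$ side and the $\sharp$-version of Lemma~\ref{drz} on the $\sharp$ side, after applying the simplifications of \S3 --- the antipode and hybrid relations and the \textsc{Sign}/\textsc{Cut} rules of Corollary~\ref{esharprel} that kill the unstable cuts and the spurious $\overline{1}$ terms. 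Each right-hand tensor factor is a shorter MZV$^\star$ or ES$^\sharp$, to which the induction hypothesis applies, so both sides are rewritten in a common form; matching the left-hand factors in $\mathcal{L}_{2r+1}$ then reduces precisely to \eqref{eq:conjid} at the subweight $2r+1$ for the interior cuts and to the tool identity $(ii)$ for the boundary (deconcatenation) cut. Hence $D_{2r+1}(\mathrm{LHS}-\mathrm{RHS})\equiv 0$ for all $2r+1<w$, so by Theorem~\ref{kerdn} the difference is $\lambda\,\zeta^{\mathfrak{m}}(w)$, and the proven analytic identity of \cite{Zh3} forces $\lambda=0$ through the period map.

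The main obstacle is the combinatorial matching inside the inductive step: organizing the numerous cuts produced by the Appendix~A formula, reconciling the $\star/\star\star$ and $\sharp/\sharp\sharp$ renormalizations that the coaction introduces, and verifying that after the \S3 simplifications the interior cuts align with \eqref{eq:conjid} while the boundary cut aligns with $(ii)$. The delicate points are the degenerate indices (some $a_i=0$), the sign bookkeeping $\epsilon_i=\prod\mathrm{sign}(n_j)$ that distinguishes the $(\boldsymbol{\dagger})$ and $(\boldsymbol{\ddagger})$ cases of Theorem~\ref{hybrid}, and the need to apply \eqref{eq:conjid} together with its dihedral variants rather than the identity alone. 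This is a finite but intricate verification, and it is what Appendix~E carries out in full.
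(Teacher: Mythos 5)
Your strategy for part $(i)$ --- recursion on the weight, matching the coactions of the two sides cut by cut, then invoking Theorem~\ref{kerdn} and Zhao's proven analytic identity through the period map to kill the leftover rational multiple of $\zeta^{\mathfrak{m}}(w)$ --- is exactly the paper's strategy for $(i)$. The genuine gap is your first step. You claim \eqref{eq:toolid} is ``the degenerate $a_0=0$ incarnation of \eqref{eq:conjid}'' and can be extracted from it by pure dihedral bookkeeping in $\mathcal{L}$. But the hypothesis \eqref{eq:conjid} is assumed only for $a_{0}>0$: its right-hand side contains the block $\boldsymbol{2}^{a_{0}-1}$ and does not even parse at $a_{0}=0$, so \eqref{eq:toolid} is \emph{not} a specialization of the hypothesis, and nothing in your sketch explains how the subscript $B_{0}-1\geq 2$ on the left of \eqref{eq:conjid} is to be lowered to the subscript $0$ of \eqref{eq:toolid} by $\star/\star\star$ conversions, \textsc{Shift} and antipodes alone. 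Indeed the paper's proof of $(ii)$ does not proceed from \eqref{eq:conjid} at all: it first writes $-\zeta^{\star,\mathfrak{l}}_{1}(c_{1}-1,\textbf{X})\equiv \zeta^{\star,\mathfrak{l}}(1,c_{1}-1,\textbf{X})-\zeta^{\star,\mathfrak{l}}(c_{1},\textbf{X})-\zeta^{\star,\mathfrak{l}}(c_{1}-1,\textbf{X},1)$ and then converts each of these three terms into Euler $\sharp$ sums by Conjecture~\ref{lzg} itself, i.e.\ by part $(i)$ \emph{at the same weight}, before simplifying with \textsc{Shift}, \textsc{Cut} and \textsc{Minus}. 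So in the paper $(ii)$ is a consequence of $(i)$, not an independent input derivable from \eqref{eq:conjid}; your proposed order of proof ($(ii)$ first, for all weights, then $(i)$) is unjustified as written.

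The repair is the interleaved recursion the paper actually runs, and your step~2 survives it unchanged: at weight $w$, prove $(i)$ first, observing that every left factor produced by $D_{2r+1}$ has weight $2r+1<w$, so only \eqref{eq:conjid} and \emph{lower-weight} instances of \eqref{eq:toolid} (already available from earlier stages of the recursion) are needed; then deduce $(ii)$ at weight $w$ from the freshly established $(i)$ at weight $w$. Two further inaccuracies in your description of the inductive step, both harmless once the induction hypothesis includes $(i)$ and $(ii)$ at all lower weights: the left-factor matching does not reduce ``precisely'' to \eqref{eq:conjid} for interior cuts and $(ii)$ for the boundary cut --- the cuts of type $(a)$ in the paper's Appendix~E are matched via \eqref{eq:toolid}, one term (the paper's $(\boldsymbol{\diamond_{3}})$) requires Conjecture~\ref{lzg} at lower weight, and the regime $\beta>a_{j}>a_{i}$ requires the auxiliary identity \eqref{eq:corresp3}, which itself has to be derived from \eqref{eq:conjid} by a further chain of \textsc{Antipode}/\textsc{Shift}/\textsc{Cut} manipulations rather than quoted verbatim.
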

\begin{proof}
The proof, rather tedious, is hence given in Appendix $E$. 
\end{proof}
\textsc{Remark:} This hypothesis (only valid $\mathcal{L}^{2}$, not in $\mathcal{H}^{2}$) should be proven either directly via the various relations in $\mathcal{L}$ proven in section 3 (as for $\ref{eq:toolid}$), or using the coaction, which would require the analytic identity corresponding.

\newpage

\begin{appendices}
\addtocontents{toc}{\protect\setcounter{tocdepth}{1}}
\makeatletter
\addtocontents{toc}{%
  \begingroup
  \let\protect\l@chapter\protect\l@section
  \let\protect\l@section\protect\l@subsection
}
\makeatother
\section{Coaction calculus}
The coaction formula given by Goncharov and extended by Brown for motivic iterated integrals applies to the $\star$, $\star\star$, $\sharp$ or $\sharp\sharp$ version by linearity ($\ref{eq:miistarsharp}$). Here is the version obtained for MMZV $\star,\star\star$, $\sharp$ or $\sharp\sharp$:\footnote{\textit{For purpose of stability}: if there is a $\pm 1$ at the beginning, as for $\star$  or $\sharp$ versions, the cut with this first $\pm 1$ will be let as a $T_{\pm 1, 0}$ term (and not converted into a $T_{\epsilon, 0}$ less a $T_{0,0}$), in order to still have a $\pm 1$ at the beginning; whereas, if there is no $\pm 1$ at the beginning, as for $\star\star$ or $\sharp\sharp$ version even the first cut (first line) has to be converted into a $T_{0, \epsilon}$ less a $T_{0,0}$, in order to still have a $\epsilon$ at the beginning.}
\begin{lemm}\label{lemmt}
$L$ being a sequence in $\lbrace 0, \pm\star\rbrace$ resp. $\lbrace 0,\pm\sharp\rbrace$, with possibly $1$ at the beginning,  $\epsilon \in \lbrace \pm\star \rbrace$ resp. $\in \lbrace\pm\sharp\rbrace$, and $s_{\epsilon}\mathrel{\mathop:}=\textrm{sign}(\epsilon)$.
$$D_{r} I^{\mathfrak{m}}_{s}\left(0;L;1 \right)=\delta_{ L= A \epsilon B \atop w(A)=r}   I^{\mathfrak{l}}_{k}\left(0;A ;s_{\epsilon} \right)  \otimes  I^{\mathfrak{m}}_{s-k}\left(0;s_{\epsilon}, B ;1 \right) $$
$$+ \sum_{L=A \epsilon B 0 C \atop w(B)=r}  I^{\mathfrak{l}}\left(s_{\epsilon};B ;0 \right) \otimes \left( \underbrace{I^{\mathfrak{m}}_{s}\left(0;A, \epsilon, 0, C ;1\right)}_{T_{\epsilon,0}} + \underbrace{I^{\mathfrak{m}}_{s}(0;A,0,0,C ;1)} _{T_{0,0}} \right)$$
$$+ \sum_{L=A 0 B \epsilon C \atop w(B)=r}  I^{\mathfrak{l}}\left(0;B, s_{\epsilon}\right) \otimes \left( \underbrace{I^{\mathfrak{m}}_{s}\left(0;A,0, \epsilon, C ;1 \right)}_{T_{0, \epsilon}} + \underbrace{I^{\mathfrak{m}}_{s}(0;A,0,0,C ;1)}_{T_{0,0}} \right)$$
$$+ \sum_{L=A \epsilon B \epsilon C\atop w(B)=r}  I^{\mathfrak{l}}\left(0;B, s_{\epsilon}\right) \otimes \left( \underbrace{I^{\mathfrak{m}}_{s}(0;A,\epsilon,0,C ;1)}_{T_{\epsilon,0}} - \underbrace{I^{\mathfrak{m}}_{s}\left(0;A,0, \epsilon, C ;1 \right)}_{T_{0,\epsilon}}  \right)$$

\begin{multline}\nonumber
+ \sum_{L=A  \epsilon B -\epsilon C\atop w(B)=r} \left[  I^{\mathfrak{l}}\left(0;B; -s_{\epsilon} \right) \otimes \underbrace{I^{\mathfrak{m}}_{s}(0;A,\epsilon, 0,C ;1)}_{T_{\epsilon,0}} + I^{\mathfrak{l}}\left(s_{\epsilon};B;0\right)\otimes \underbrace{I^{\mathfrak{m}}_{s}(0; A,0,-\epsilon, C ;1)}_{T_{0,\epsilon}} \right.  \\
\left. I^{\mathfrak{l}}\left(s_{\epsilon};B; -s_{\epsilon} \right) \otimes  \underbrace{I^{\mathfrak{m}}_{s}\left(0;A, \epsilon, - \epsilon, C ;1 \right)}_{T_{\epsilon, -\epsilon}}  \right] .
\end{multline}
\end{lemm}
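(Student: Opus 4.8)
The plan is to derive the formula directly from the general Brown--Goncharov derivation formula $(\ref{eq:Der})$ by expanding every $\star$- (resp. $\sharp$-) letter into ordinary letters and then repackaging. First I would use bilinearity $(\ref{eq:miistarsharp})$ to write $I^{\mathfrak{m}}_{s}(0;L;1)$ as a $\mathbb{Z}$-linear combination of ordinary motivic iterated integrals $I^{\mathfrak{m}}(0;L';1)$, where $L'$ runs over the words obtained from $L$ by replacing each $\pm\star$ (resp. $\pm\sharp$) by $\pm 1$ or by $0$ with coefficients $+1,-1$ (resp. $+2,-1$). Applying $(\ref{eq:Der})$ to each $I^{\mathfrak{m}}(0;L';1)$ produces a sum over cuts: a window $a_{p+1},\dots,a_{p+r}$ of weight $r$ is excised, contributing $I^{\mathfrak{l}}(a_{p};a_{p+1},\dots,a_{p+r};a_{p+r+1})$ on the left and, on the right, the integral with that window replaced by the direct edge from $a_{p}$ to $a_{p+r+1}$.

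The crucial bookkeeping point is that in $(\ref{eq:Der})$ the two boundary letters $a_{p}$ and $a_{p+r+1}$ occur \emph{simultaneously} as endpoints of the left ($\mathfrak{l}$) factor and as retained letters of the right ($\mathfrak{m}$) factor; hence their $\star$/$\sharp$-expansions are correlated, whereas the interior of the window and the letters outside it expand independently. I would therefore fix a cut position relative to the $\star$-word $L$, let the interior reconstitute $I^{\mathfrak{l}}(\cdot\,;B;\cdot)$ with $B$ still written in $\star$-letters, let the exterior reconstitute the segments $A,C$ in $\star$-letters, and treat the two shared boundary letters by summing over their (two) expansions with the attached signs. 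Classifying cuts by the types of the left and right neighbours --- namely $0$, $\epsilon$, $-\epsilon$ --- then yields exactly the four families of sums displayed in the statement; the $\star$ and $\sharp$ cases are handled at once, since the factor $2$ of the $\sharp$-expansion is reproduced automatically inside the $T$-terms (e.g. $T_{\epsilon,0}+T_{0,0}=2\,I^{\mathfrak{m}}(0;A,s_{\epsilon},0,C;1)$ for $\sharp$).

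The decisive simplifications come from two relations valid in the coalgebra $\mathcal{L}$. First, property (ii) of the motivic iterated integrals gives $I^{\mathfrak{l}}(a;W;a)\equiv 0$; this annihilates every cut whose two expanded boundary points coincide --- in particular all cuts with two genuine $0$ neighbours, and the branch in which a shared $\epsilon$ is expanded to $0$ against a $0$ neighbour. Second, the path-composition property (iv), read modulo products, yields the additivity $I^{\mathfrak{l}}(a;W;b)\equiv I^{\mathfrak{l}}(a;W;c)+I^{\mathfrak{l}}(c;W;b)$ in $\mathcal{L}$, because every term of the composition formula with two factors of positive weight dies in $\mathcal{L}$. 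Taking $c=0$ gives both $I^{\mathfrak{l}}(s_{\epsilon};B;0)\equiv -I^{\mathfrak{l}}(0;B;s_{\epsilon})$ (which collapses the same-sign case $L=A\epsilon B\epsilon C$ to the single factor $I^{\mathfrak{l}}(0;B;s_{\epsilon})$) and $I^{\mathfrak{l}}(s_{\epsilon};B;-s_{\epsilon})\equiv I^{\mathfrak{l}}(s_{\epsilon};B;0)+I^{\mathfrak{l}}(0;B;-s_{\epsilon})$ (for the opposite-sign case $L=A\epsilon B(-\epsilon)C$). In each family one then checks, re-expanding the retained boundary letters via $\pm\star=\pm 1-0$ (resp. $\pm\sharp=2(\pm 1)-0$), that the surviving plain terms reassemble into the stated combinations $T_{\epsilon,0},T_{0,\epsilon},T_{\epsilon,-\epsilon}$ together with their $T_{0,0}$ correction terms.

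Finally, the initial cut $p=0$ meeting the basepoint $0$, and the cut whose window abuts the endpoint $1$, are treated in the same way but with the fixed points $0$ and $1$ in place of a $\star$-neighbour: the leading block $0^{s}$ is distributed between the two factors, property (ii) again kills the pieces with equal endpoints, and one is left with the initial $\delta$-term $I^{\mathfrak{l}}_{k}(0;A;s_{\epsilon})\otimes I^{\mathfrak{m}}_{s-k}(0;s_{\epsilon},B;1)$ (the footnote convention of keeping a leading $\pm 1$ as a $T_{\pm 1,0}$ fixing which re-expansion to perform there). I expect the main obstacle to be organizational rather than conceptual: keeping the signs from the four-fold boundary expansions mutually consistent across all cut types, and controlling the leading-$0^{s}$ bookkeeping at the two ends, while ensuring that each use of $I^{\mathfrak{l}}(a;W;a)\equiv 0$ and of the $\mathcal{L}$-additivity is legitimate modulo products.
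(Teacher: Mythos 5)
Your proposal is correct and follows essentially the same route as the paper's proof: expand the $\star$/$\sharp$ letters by linearity $(\ref{eq:miistarsharp})$, apply the derivation formula $(\ref{eq:Der})$, re-expand the retained boundary letters on the right-hand factor, and classify cuts by the types of their two boundary letters. The cancellations you justify explicitly --- via $I^{\mathfrak{l}}(a;W;a)\equiv 0$ and path composition read modulo products, which kill the $T_{0,0}$ terms in the $\epsilon,\epsilon$ and $\epsilon,-\epsilon$ cases --- are exactly the simplifications the paper invokes (more tersely) when it says those terms \say{get simplified together}.
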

\begin{proof}
The proof is straightforward from $\eqref{eq:Der}$, using the linearity (by $\ref{eq:miistarsharp}$) in both ways:
\begin{itemize}
\item[$(i)$] First, to turn $\epsilon$ into a difference of $\pm 1$ minus $0$ in order to use $\eqref{eq:Der}$.
\item[$(ii)$] Then, in the right side, a $\pm 1$ appeared inside the iterated integral when looking at the usual coaction formula which is turned into a sum of a term with $\epsilon$ (denoted $T_{\epsilon,0}$ or $T_{0,\epsilon}$) and a term with $0$ (denoted $T_{0,0}$) by linearity of the iterated integrals and in order to end up only with $0,\epsilon$ in the right side.
\end{itemize}
Once listing the different cuts, we found the expression of the lemma, since:
\begin{itemize}
\item[$\cdot$] The first line corresponds to the initial cut (from the $s+1$ first $0$).
\item[$\cdot$] The second line corresponds to a cut either from $\pm\epsilon$ to $0$; the $\pm\epsilon$ being $\pm 1$.
\item[$\cdot$] The third line corresponds to a cut from $0$ to $\pm\epsilon$.
\item[$\cdot$] The fourth line corresponds to cut from $\epsilon$ to $\epsilon$, with two choices: a $\epsilon$ being fixed to $0$, the other one fixed to $1$. Replacing $1$ by $(\epsilon)+(0)$, this leads to a $T_{0,\epsilon}$, a $T_{\epsilon,0}$ and two $T_{0,0}$ terms which get simplified together.
\item[$\cdot$] The last lines correspond to cuts from $\epsilon$ to $-\epsilon$, with three possibilities: one being fixed to $0$, the other one fixed to $\pm 1$, or the first being $1$, the second $-1$. This leads to a $T_{\epsilon,0}$, a $T_{0, -\epsilon}$ and a $T_{\epsilon,-\epsilon}$, since the $T_{0,0}$ terms get simplified.
\end{itemize}
\end{proof}

\subsection{Simplification rules}
Thanks to some relations ($\S 3$) between MES in the coalgebra $\mathcal{L}$ applied to its left side, the coaction for some MES may be simplified. \\
\\
\texttt{Notations:} We use the notation of the iterated integrals inner sequences and represent a term of a cut in $D_{2r+1}$ (cf. Lemma \autoref{lemmt}) by arrows between two elements of this sequence. The weight of the cut (which is the length of the subsequence in the iterated integral) would always be considered odd here. We gather here terms in $D_{2r+1}$ according to their right side and the diagrams show which terms get simplified together: i.e. terms which have same right side, but opposite left side, by antipodal and hybrid relations. \\
\begin{description}
\item[\textsc{ Composition }:]  The composition rule ($\S. \ref{propii}$) in the coalgebra $\mathcal{L}$ boils down to:
\begin{equation} 
I^{\mathfrak{l}}(a; X; b)\equiv -I^{\mathfrak{l}}(b; X; a), \quad  \text{ with $X$ any sequence of  }  0, \pm 1, \pm \star, \pm \sharp.
\end{equation}
It allows us to switch the two extremities of the integral if we multiply by $-1$ the integral: this exchange is considerably used below, without mentioning.
\item[\textsc{ Antipode }  $\shuffle$:] It corresponds to a reversal of path for iterated integrals (cf. $\ref{eq:antipodeshuffle2}$):
\begin{center}
$I^{\mathfrak{l}}(a; X; b)\equiv(-1)^{w}I^{\mathfrak{l}}(b; \widetilde{X}; a)$ for any X sequence of $0, \pm 1, \pm \star, \pm \sharp$. Hence:
\end{center}
\begin{itemize}
\item[$\cdot$] If X symmetric, i.e.  $\widetilde{X}=X$, these two cuts get simplified,  \includegraphics[]{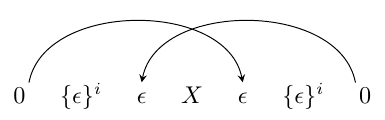}
since  $ I^{\mathfrak{l}}(\epsilon; X \epsilon^{i+1}; 0) \equiv - I^{\mathfrak{l}}(0;  \epsilon^{i+1}\widetilde{X} ; \epsilon) \equiv - I^{\mathfrak{l}}(0;  \epsilon^{i+1} X ; \epsilon)$.
\item[$\cdot$] If X antisymmetric, i.e. $\widetilde{X}=-X$,the cut \includegraphics[]{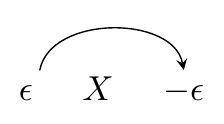}  is zero since:\\
$ \begin{array}{ll}
I^{\mathfrak{l}}(\epsilon; X; -\epsilon) & \equiv I^{\mathfrak{l}}(\epsilon; X; 0)+ I^{\mathfrak{l}}(0; X; -\epsilon)\\
&\equiv I^{\mathfrak{l}}(\epsilon; X; 0)- I^{\mathfrak{l}}(-\epsilon; \widetilde{X}; 0)  \\
&\equiv I^{\mathfrak{l}}(\epsilon; X; 0)- I^{\mathfrak{l}}(-\epsilon; -X; 0)\\
&\equiv 0
\end{array}$.
\end{itemize}
\item[\textsc{ Shift }] For MES $\star\star$ and, when weight and depth odd for Euler $\sharp\sharp$ sums:
\begin{equation}\label{eq:shift} \textsc{(Shift) } \zeta^{\bullet}_{n-1} (n_{1},\cdots, n_{p})= \zeta^{\bullet}_{n_{1}-1} (n_{2},\cdots, n_{p},n)
\end{equation}
\includegraphics[]{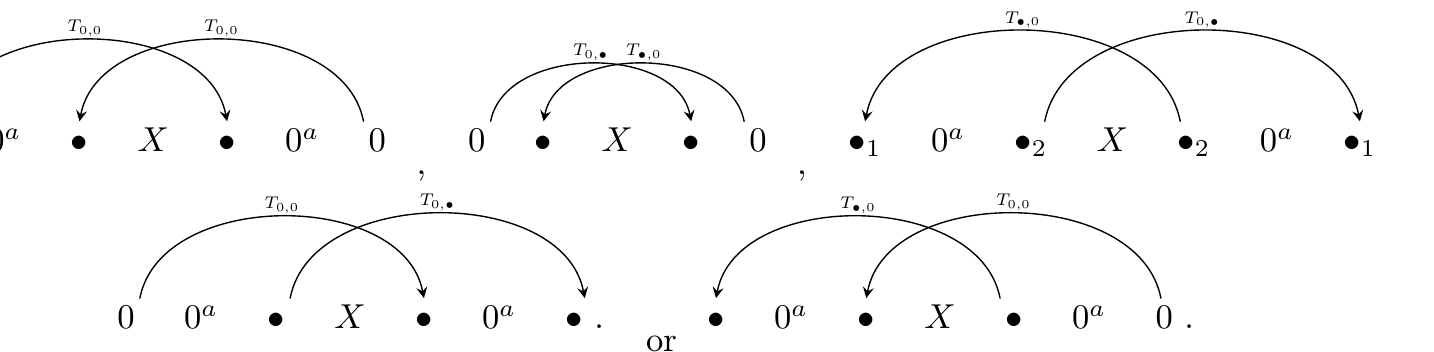}
A dot belongs to $\lbrace \pm\star,\pm\sharp\rbrace$, and two dots with a same index, $\bullet_{i}$ shall be identical.
\item[\textsc{ Cut }] For ES $\sharp\sharp$, with even depth\footnotemark[1], odd weight:\footnotetext[1]{Note that the depth considered here needed to be even is the depth of the bigger cut.}\\
\begin{equation}\label{eq:cut} 
\includegraphics[]{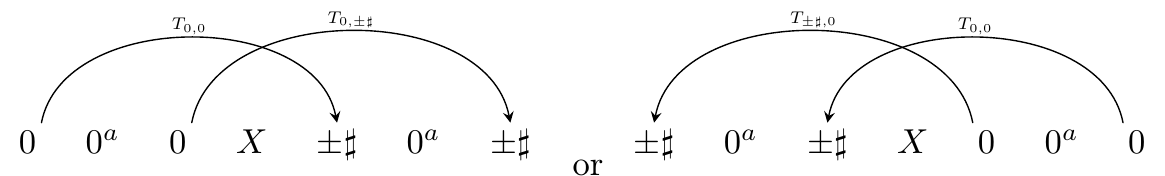}
\end{equation}
\item[\textsc{ Cut Shifted }]: For ES $\sharp\sharp$, with even depth\footnotemark[1], odd weight, composing Cut with Shift: 
\begin{equation}\label{eq:cutshifted}  
\includegraphics[]{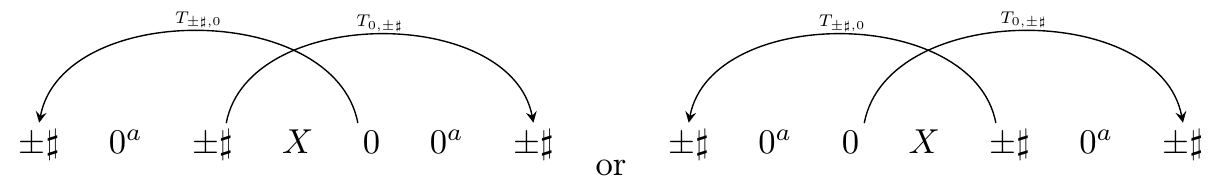}
\end{equation}
\item[\textsc{ Minus }] For ES $\sharp\sharp$, with even depth, odd weight:
\begin{equation}\label{eq:minus} 
\includegraphics[]{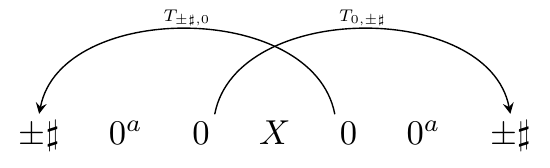}
\end{equation}
\item[\textsc{ Sign }] For ES $\sharp\sharp$ with even depth, odd weight, i.e. $X\in \lbrace 0, \pm \sharp\rbrace^{\times}$:
\begin{equation}\label{eq:sign} 
\includegraphics[]{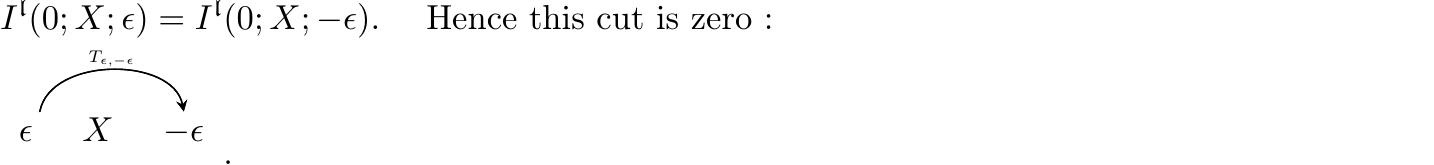}
\end{equation}
\textsc{ Sign } hence also means that the $\pm$ sign at one end of a cut does not matter.
\end{description}

\subsection{MMZV $\star$}

Let express each MMZV$^{\star}$ as: $\boldsymbol{\zeta^{\star, \mathfrak{m}} (2^{a_{0}},c_{1},\cdots,c_{p}, 2^{a_{p}})}$, which corresponds to the II:
\begin{equation}\label{eq:iistar}
I\left( 0; 1, 0, \left(\star, 0\right)^{a_{0}-1}, \star \cdots 0^{c_{i}-1} \left( \star 0 \right)^{a_{i}} \star , \ldots, 0^{c_{j}-1}  \left(\star 0 \right)^{a_{j}} \star, \ldots, 0^{c_{p}-1}  \left( \star 0 \right)^{a_{p}} ; 1\right) .
\end{equation}
Considering $D_{2r+1}$ after some simplifications:\footnotemark[2] \footnotetext{Here $\delta_{r}$ imposes that the left side must have a weight equal to $2r+1$.} 
\begin{lemm}
\begin{equation} \label{eq:DerivStar} D_{2r+1} \left(   \zeta^{\star, \mathfrak{m}} (2^{a_{0}},c_{1},\cdots,c_{p}, 2^{a_{p}})\right)  =  
\end{equation}
$$\delta_{r} \sum_{i<j} \left[    \delta_{3\leq \alpha \leq c_{i+1}-1 \atop 0\leq \beta \leq a_{j}} \zeta^{\star, \mathfrak{l}}_{c_{i+1}-\alpha} (2^{a_{j}-\beta}, \ldots, 2^{ a_{i+1}}) \otimes  \zeta^{\star, \mathfrak{m}} (\cdots,2^{a_{i}}, \alpha, 2^{\beta}, c_{j+1}, \cdots) \right.$$
$$\left( -\delta_{c_{i+1}>3} \zeta^{\star\star, \mathfrak{l}}_{2} (2^{a_{j}-\beta-1}, \ldots, 2^{ a_{i+1}})  + \delta_{c_{j+1}>3} \zeta^{\star\star, \mathfrak{l}}_{2} (2^{a_{j}}, \ldots, 2^{ a_{i+1}-\beta -1}) + \right. $$
$$ - \delta_{c_{i+1}=1} \zeta^{\star\star, \mathfrak{l}} (2^{a_{j}-\beta}, \ldots, 2^{ a_{i+1}})  + \delta_{c_{j+1}=1} \zeta^{\star\star, \mathfrak{l}} (2^{a_{i+1}-\beta}, \ldots, 2^{ a_{j}}) $$
$$+ \delta_{c_{i+2}=1 \atop \beta>a_{i+1}} \zeta^{\star\star, \mathfrak{l}}_{1} (2^{a_{j}+a_{i+1}-\beta}, \ldots, 2^{ a_{i+2}})  - \delta_{c_{j}=1 \atop \beta>a_{j}} \zeta^{\star\star, \mathfrak{l}}_{1} (2^{a_{i+1}+a_{j}-\beta}, \ldots, 2^{ a_{j-1}}) .$$
$$ \left. +\delta_{\beta > a_{i+1}}\zeta^{\star\star, \mathfrak{l}}_{c_{i+2}-2} (2^{a_{i+1}+a_{j}-\beta+1}, \ldots, 2^{ a_{i+2}})  - \delta_{\beta > a_{j}} \zeta^{\star\star, \mathfrak{l}}_{c_{j}-2} (2^{a_{i+1}+a_{j}-\beta+1}, \ldots, 2^{ a_{j-1}}) \right) $$
$$\otimes  \zeta^{\star, \mathfrak{m}} (\cdots,2^{a_{i}}, c_{i+1}, 2^{\beta}, c_{j+1}, \cdots)$$
$$\left. - \delta_{3\leq \alpha \leq c_{j}-1 \atop 0\leq \beta \leq a_{i}} \zeta^{\star, \mathfrak{l}}_{c_{j}-\alpha} (2^{a_{i}-\beta}, \ldots, 2^{ a_{j-1}}) \otimes  \zeta^{\star, \mathfrak{m}} (\cdots, c_{i}, 2^{\beta}, \alpha, 2^{a_{j}}, \cdots)\right] . $$
\end{lemm}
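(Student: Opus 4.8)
The plan is to obtain $(\ref{eq:DerivStar})$ by applying the general coaction formula of Lemma \ref{lemmt} directly to the iterated-integral word $(\ref{eq:iistar})$ representing $\zeta^{\star,\mathfrak{m}}(2^{a_0},c_1,\cdots,c_p,2^{a_p})$, and then discarding the overwhelming majority of cuts by the simplification rules of $\S$A.1. The crucial structural observation to exploit first is that a MMZV$^{\star}$ word involves only the single letter $+\star$ (together with $0$ and the leading $+1$), so all the formal signs $\epsilon$ coincide. Consequently the last family of cuts in Lemma \ref{lemmt} — those running from $\epsilon$ to $-\epsilon$, which require a sign change — never occurs, and we are left only with the cut-types $T_{0,0}$, $T_{0,\star}$ and $T_{\star,0}$. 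This already removes the $T_{\epsilon,-\epsilon}$ contributions and keeps the combinatorics manageable.

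Next I would classify the remaining odd-weight cuts according to the nature of their two endpoints in the word $(\ref{eq:iistar})$: whether each endpoint is a $\star$ lying inside a block of $2$'s, the $\star$ heading some argument $c_{i+1}$, or a $0$ among the string $0^{c_i-1}$ trailing a $c_i$. The bulk of the cancellation happens among cuts whose endpoints sit inside $2$-blocks or which only re-base a block of $2$'s: these pair off and vanish, the $T_{0,0}$ pieces with symmetric interior cancelling by \textsc{Antipode} $\shuffle$ $(\ref{eq:antipodeshuffle2})$ and the block-shifting pieces cancelling by \textsc{Shift} $(\ref{eq:shift})$, exactly as the diagrams of $\S$A.1 record; \textsc{Composition} supplies the sign $-1$ each time an endpoint is flipped. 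After these cancellations the left-hand $\mathfrak{l}$-factors reduce to the $\star$- and $\star\star$-integrals with the explicit degree shifts $\zeta^{\star\star,\mathfrak{l}}_{1}$, $\zeta^{\star\star,\mathfrak{l}}_{2}$ appearing in the statement.

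The surviving cuts are precisely those \emph{anchored at a non-$2$ argument} $c_i$: a cut whose subword starts or ends at the $\star$ heading some $c_{i+1}$ (or within its trailing $0$'s) cannot be cancelled, because it interrupts the alternating $(\star,0)$ pattern. These split into the two families of $(\ref{eq:DerivStar})$. When a cut swallows only part of a $c$-argument, leaving a genuine argument $\alpha$ with $3\le\alpha\le c_{i+1}-1$ on the main side, the $\mathfrak{l}$-factor begins with a boundary $\star$ and is therefore a $\zeta^{\star,\mathfrak{l}}$; this gives the first and last lines. When the cut instead swallows the $\star$ head cleanly, the $\mathfrak{l}$-factor has no leading $\pm1$ and is a $\zeta^{\star\star,\mathfrak{l}}$, giving the middle block. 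The individual Kronecker factors $\delta_{c_{i+1}>3}$, $\delta_{c_{j+1}>3}$, $\delta_{c_{i+1}=1}$, $\delta_{c_{i+2}=1}$, $\delta_{c_j=1}$ and the constraints $\beta>a_{i+1}$, $\beta>a_j$ are just the arithmetic records of which of the types $T_{0,0}$, $T_{0,\star}$, $T_{\star,0}$ actually occurs at that anchor and of whether the adjacent argument is a full $2$-block or a degenerate length-one argument.

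The main obstacle is purely organizational: this lemma is essentially a catalogue of the surviving cuts, so the content is bookkeeping rather than a new idea, and the real danger is miscounting the degenerate configurations at the ends of the word — the leading $+1$, the first and last $2$-blocks, and the borderline cases $c_i\in\{1,3\}$ — together with getting every sign right through the repeated use of \textsc{Composition} and \textsc{Antipode} $\shuffle$. To keep this under control I would structure the argument term-by-term around the diagrams of $\S$A.1, and verify the result against the low-depth specialization $\zeta^{\star,\mathfrak{m}}(2^a,3,2^b)$, whose derivation is computed independently in the proof of Lemma \autoref{lemmcoeff}(iv); agreement there serves as a consistency check on both the surviving-term list and the signs.
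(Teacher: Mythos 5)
Your overall strategy coincides with the paper's: apply Lemma $\ref{lemmt}$ to the word $(\ref{eq:iistar})$, observe that only the cut types $T_{0,0}$, $T_{0,\star}$, $T_{\star,0}$ can occur (all letters carry the same sign, so the $T_{\epsilon,-\epsilon}$ family is empty), cancel the bulk of the cuts by the rules of Appendix A.1 --- in this lemma it is \textsc{Shift} $(\ref{eq:shift})$ that pairs the cuts ``above'' with the cuts ``below'', rather than the antipode-$\shuffle$ cancellation you invoke, but this is a minor attribution point --- and then catalogue the survivors grouped by their right-hand factor, exactly as the paper does.

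There is, however, one concrete misstep, and it concerns the only non-mechanical identification in the whole lemma: your rule ``cut leaving a partial argument $\alpha$ $\Rightarrow$ the $\mathfrak{l}$-factor is a $\zeta^{\star,\mathfrak{l}}$; cut swallowing the $\star$ head cleanly $\Rightarrow$ it is a $\zeta^{\star\star,\mathfrak{l}}$'' is false. Every cut other than the initial one has, by Lemma $\ref{lemmt}$, a left factor of the form $I^{\mathfrak{l}}(0;B;1)$ or $I^{\mathfrak{l}}(1;B;0)$ with $B$ a word in $\lbrace 0,\star\rbrace$, hence is of $\zeta^{\star\star}$ type (possibly with leading zeros); no single such cut ever yields a $\zeta^{\star,\mathfrak{l}}$. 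The $\zeta^{\star,\mathfrak{l}}_{c_{i+1}-\alpha}$ entries in the first and last lines of $(\ref{eq:DerivStar})$ arise only after \emph{pairing} two surviving cut families with identical right factors, namely $\zeta^{\star\star,\mathfrak{l}}_{c_{i+1}-\alpha}(2^{a_{i+1}},\ldots,2^{a_j-\beta})$ and $\zeta^{\star\star,\mathfrak{l}}_{c_{i+1}-\alpha+2}(2^{a_{i+1}},\ldots,2^{a_j-\beta-1})$: applying \textsc{Shift} to the first, the difference becomes, by the very definition of the $\star$ version as a difference of two $\star\star$ versions, $\zeta^{\star,\mathfrak{l}}_{1}(c_{i+1}-\alpha+1,2^{a_{i+1}},\ldots,2^{a_j-\beta-1})$, and the antipode chain $A_{\shuffle}\circ A_{\ast}\circ A_{\shuffle}$ then converts this into the stated $\zeta^{\star,\mathfrak{l}}_{c_{i+1}-\alpha}(2^{a_j-\beta},\ldots,2^{a_{i+1}})$. (Initial cuts, whose left factor does begin with the leading $1$ and so is genuinely of $\zeta^{\star}$ type, are not the terms at issue: the displayed $\zeta^{\star,\mathfrak{l}}$ terms have right factors retaining the beginning of the word.) Executed literally, your classification would produce a catalogue consisting solely of $\zeta^{\star\star,\mathfrak{l}}$ left factors and would not reproduce $(\ref{eq:DerivStar})$; the missing pairing-plus-antipode step is precisely where the actual content of the paper's proof lies.
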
 

\begin{proof}
We look at cuts of odd interior length between two elements of the sequence inside $\ref{eq:iistar}$. By \textsc{Shift}, the following cuts of same colors get simplified, above with below:\footnote{More precisely, in the first diagram: $T_{0,0}$ resp. $T_{0,\star}$ above get simplified with $T_{0,0}$ resp. $T_{\star,0}$ below (shifted by one at the right). The dotted arrows mean that if $c_{i}=1$ resp. $c_{j}=1$, only $T_{0,0}$ get simplified.}\\
\includegraphics[]{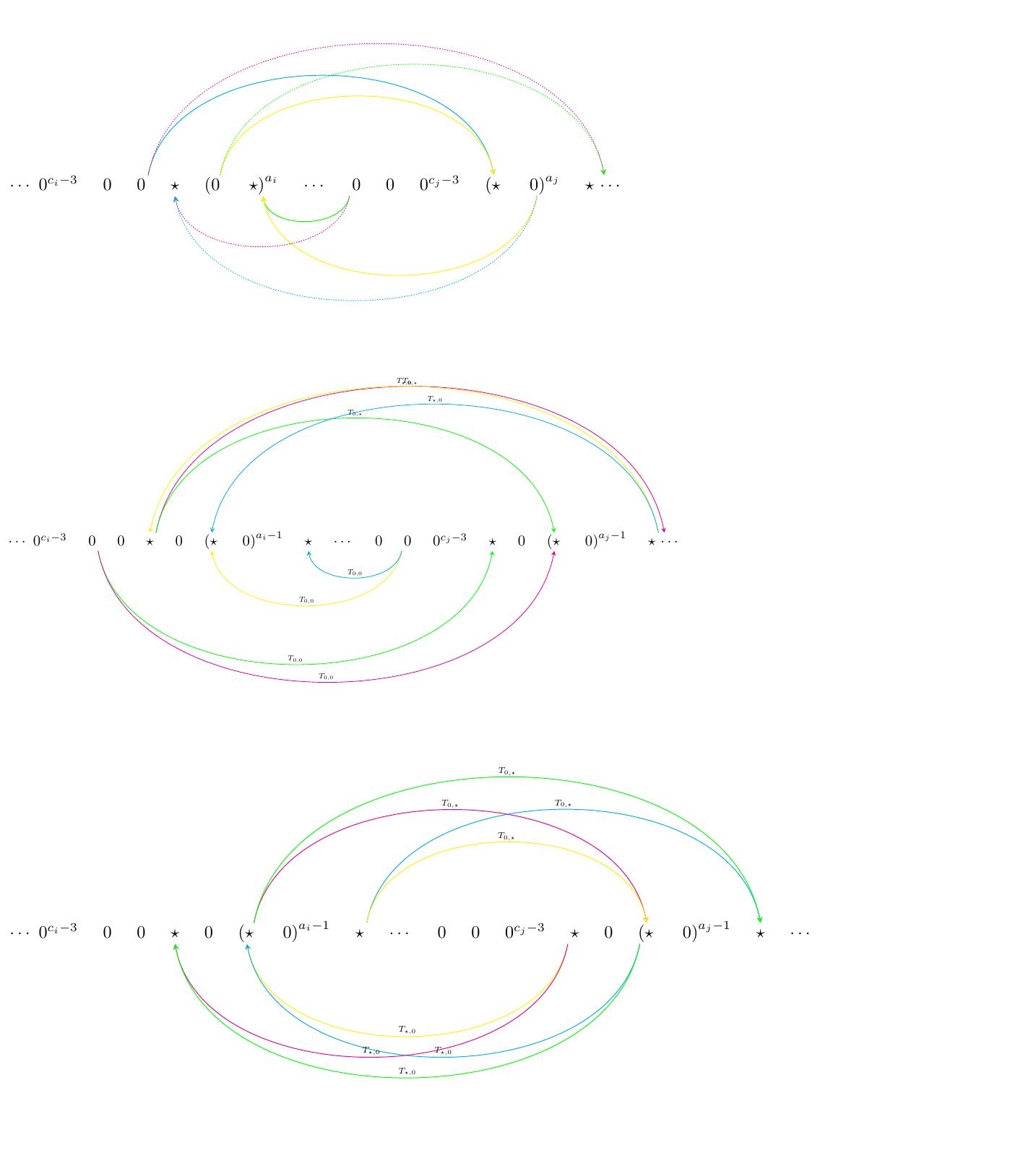}\\
It remains\footnote{Cyan arrows above resp. below are $T_{0,\star}$ resp. $T_{0,\star}$ terms; magenta ones above resp. below stand for $T_{0,0}$ and $T_{0,\star}$ resp. $T_{0,0}$ and $T_{\star,0}$ terms.}:
\includegraphics[]{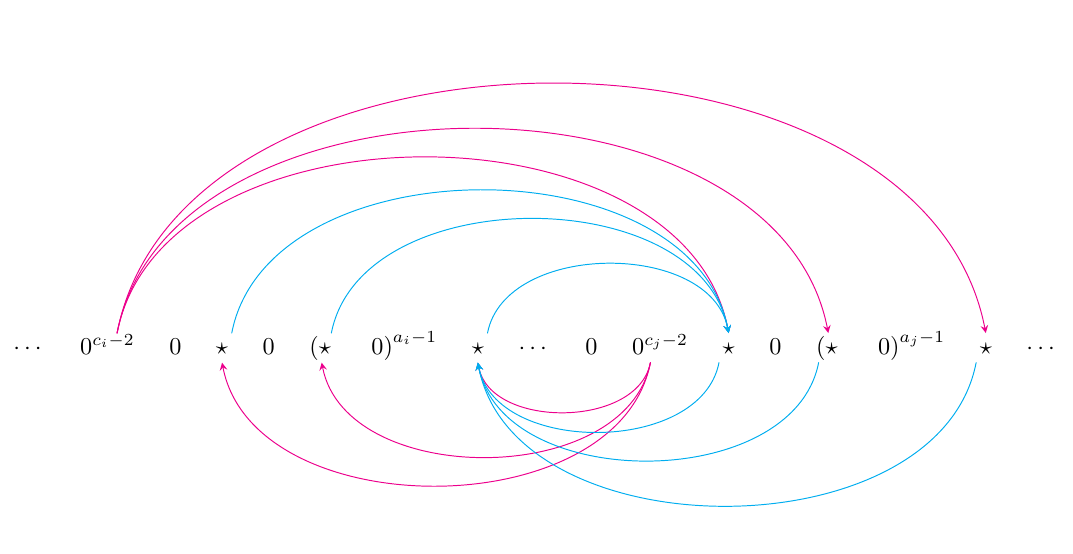}\\
More, if $c_{i}=1$:\footnote{Black, resp. cyan arrows are $T_{\star,0}$ resp. $T_{0,\star}$ terms. The case $c_{j}=1$, antisymmetric, is omitted here.}
\includegraphics[]{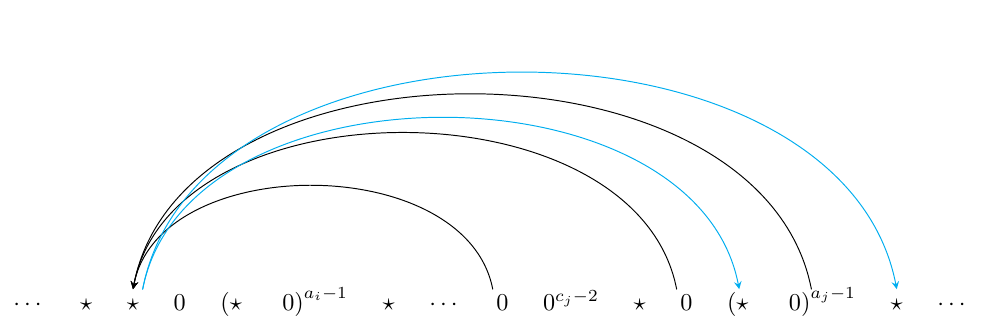}\\
Gathering the remaining cuts in this diagram, according the right side:\\
\begin{enumerate}
\item For $\zeta^{\star, \mathfrak{m}} (\cdots,2^{a_{i}}, \boldsymbol{\alpha, 2^{\beta}}, c_{j+1}, \cdots)$:
$$ \left( \textcolor{magenta}{\delta_{3\leq \alpha < c_{i+1} \atop 0\leq \beta < a_{j}}} \zeta^{\star\star, \mathfrak{l}}_{c_{i+1}-\alpha} (2^{a_{i+1}}, \ldots, 2^{ a_{j}-\beta}) - \left( \textcolor{magenta}{\delta_{4\leq \alpha < c_{i+1} \atop 0\leq \beta < a_{j}}} + \textcolor{cyan}{\delta_{\alpha=3 \atop 0\leq \beta < a_{j}}} \right) \zeta^{\star\star, \mathfrak{l}}_{c_{i+1}-\alpha+2} (2^{a_{i+1}}, \ldots, 2^{ a_{j}-\beta-1}) \right).$$
Using \textsc{Shift} $(\ref{eq:shift})$ for the first term and then the definition of $\zeta^{\star}$ it turns into:
$$ \delta_{3\leq \alpha < c_{i+1} \atop 0\leq \beta <a_{j}} \left(  \zeta^{\star\star, \mathfrak{l}}_{1} (c_{i+1}-\alpha+1, 2^{a_{i+1}}, \ldots, 2^{ a_{j}-\beta-1}) -\zeta^{\star\star, \mathfrak{l}}_{c_{i+1}-\alpha+2} (2^{a_{i+1}}, \ldots, 2^{ a_{j}-\beta-1}) \right) $$
$$=  \delta_{3\leq \alpha < c_{i+1} \atop 0\leq \beta < a_{j}}  \zeta^{\star, \mathfrak{l}}_{1} (c_{i+1}-\alpha+1, 2^{a_{i+1}}, \ldots, 2^{ a_{j}-\beta-1}). $$
Applying antipodes $A_{\shuffle} \circ A_{\ast} \circ A_{\shuffle}$, it gives $ \delta_{3\leq \alpha < c_{i+1} \atop 0\leq \beta < a_{j}}  \zeta^{\star, \mathfrak{l}}_{c_{i+1}-\alpha} (2^{ a_{j}-\beta}, \ldots, 2^{a_{i+1}}),$ and consequently the first line in $\eqref{eq:DerivStar}$.

\item For $\zeta^{\star, \mathfrak{m}} (\cdots,2^{a_{i}}, \boldsymbol{\alpha}, 2^{a_{j}}, c_{j+1}, \cdots)$, the corresponding left sides are:
$$\begin{array}{l}
 -\left( \textcolor{magenta}{\delta_{c_{j}+2\leq \alpha \leq c_{i+1}}} + \textcolor{cyan}{\delta_{\alpha=c_{j}+1\atop c_{i+1}>c_{j}} } \right)  \zeta^{\star\star, \mathfrak{l}}_{c_{i+1}+ c_{j}-\alpha} (2^{a_{i+1}}, \ldots, 2^{a_{j-1}}) \\
 +\left( \textcolor{magenta}{ \delta_{c_{i+1}+2 \leq \alpha \leq c_{j}}} + \textcolor{cyan}{ \delta_{\alpha = c_{i+1}+1 \atop c_{j}> c_{i+1}}} \right)  \zeta^{\star\star, \mathfrak{l}}_{c_{i+1}+c_{j}-\alpha} (2^{a_{j-1}}, \ldots, 2^{a_{i+1}})\\
 +\delta_{3\leq \alpha < c_{i+1}}  \zeta^{\star\star, \mathfrak{l}}_{c_{i+1}-\alpha} (2^{a_{i+1}}, \ldots, c_{j}) -\delta_{3\leq \alpha <c_{j}}\zeta^{\star\star, \mathfrak{l}}_{c_{j}-\alpha} (2^{a_{j-1}}, \ldots,c_{i+1})
\end{array}$$
Using Antipode $\ast$ and turning some $\epsilon$ into $'1+0'$:
$$=+\delta_{3\leq \alpha < c_{i+1}}  \zeta^{\star\star, \mathfrak{l}}_{c_{i+1}-\alpha} (2^{a_{i+1}}, \ldots, c_{j}) -\delta_{3\leq \alpha < c_{j}}\zeta^{\star\star, \mathfrak{l}}_{c_{j}-\alpha} (2^{a_{j-1}}, \ldots,c_{i+1})$$
$$+  (-1)^{{c_{j}<c_{i+1}}} \delta_{\min(c_{j},c_{i+1}) < \alpha \leq \max(c_{j},c_{i+1})}  \zeta^{\star\star, \mathfrak{l}}_{c_{i+1}+ c_{j}-\alpha} (2^{a_{i+1}}, \ldots, 2^{a_{j-1}}) $$
$$= \delta_{3\leq \alpha < c_{i+1}} \zeta^{\star, \mathfrak{l}}_{c_{i+1}-\alpha} (c_{j}, \ldots, 2^{ a_{i+1}}) - \delta_{3\leq \alpha < c_{j} } \zeta^{\star, \mathfrak{l}}_{c_{j}-\alpha} (c_{i+1}, \ldots, 2^{ a_{j-1}})$$
This gives exactly the same expression than the first and fourth cases for $\beta=a_{i}$ or $a_{j}$, and are integrated to them in $\eqref{eq:DerivStar}$.

\item For $\zeta^{\star, \mathfrak{m}} (\cdots, c_{i+1},\boldsymbol{2^{\beta}}, c_{j+1}, \cdots)$,  including the case $\alpha=2$:
$$\begin{array}{llll}
-\delta_{c_{i+1}>3 \atop 0 \leq \beta < a_{j}}& \zeta^{\star\star, \mathfrak{l}}_{2} (2^{a_{j}-\beta-1}, \ldots, 2^{ a_{i+1}}) & + \delta_{c_{j+1}>3 \atop 0 \leq \beta < a_{i+1}}  & \zeta^{\star\star, \mathfrak{l}}_{2} (2^{a_{j}}, \ldots, 2^{ a_{i+1}-\beta -1})\\
+\delta_{\beta > a_{i+1} \atop c_{i+1}>3 }& \zeta^{\star\star, \mathfrak{l}}_{c_{i+2}-2} (2^{a_{i+1}+a_{j}-\beta+1}, \ldots, 2^{ a_{i+2}})  &- \delta_{\beta > a_{j}} &\zeta^{\star\star, \mathfrak{l}}_{c_{j}-2} (2^{a_{i+1}+a_{j}-\beta+1}, \ldots, 2^{ a_{j-1}}) \\
- \delta_{c_{i+1}=1 \atop 1\leq \beta < a_{j}} &\zeta^{\star\star, \mathfrak{l}} (2^{a_{j}-\beta}, \ldots, 2^{ a_{i+1}})  &+ \delta_{c_{j+1}=1 \atop 1 \leq \beta <a_{i+1}} &\zeta^{\star\star, \mathfrak{l}} (2^{a_{i+1}-\beta}, \ldots, 2^{ a_{j}}) \\
+ \delta_{c_{i+2}=1 \atop \beta>a_{i+1}} &\zeta^{\star\star, \mathfrak{l}}_{1} (2^{a_{j}+a_{i+1}-\beta}, \ldots, 2^{ a_{i+2}}) & - \delta_{c_{j}=1 \atop \beta>a_{j}} &\zeta^{\star\star, \mathfrak{l}}_{1} (2^{a_{i+1}+a_{j}-\beta}, \ldots, 2^{ a_{j-1}}) .\\
\end{array}$$
\item For $ \zeta^{\star, \mathfrak{m}} (\cdots, c_{i}, \boldsymbol{2^{\beta}, \alpha}, 2^{a_{j}}, \cdots)$, antisymmetric to 1:
$$ \left( - \textcolor{magenta}{\delta_{2\leq \alpha \leq c_{j}-1 \atop 0\leq \beta \leq a_{i}}} \zeta^{\star\star, \mathfrak{l}}_{c_{j}-\alpha} (2^{a_{j-1}}, \ldots, 2^{ a_{i}-\beta}) + \left(\textcolor{magenta}{\delta_{4\leq \alpha \leq c_{j}+1 \atop 0\leq \beta \leq a_{i}-1}} + \textcolor{cyan}{\delta_{\alpha=3 \atop 0\leq \beta \leq a_{i}-1}} \right) \zeta^{\star\star, \mathfrak{l}}_{c_{j}-\alpha+2} (2^{a_{j-1}}, \ldots, 2^{ a_{i}-\beta-1}) \right) $$
\end{enumerate}
This leads to the lemma, with the second case incorporated in the first and last line.
\end{proof}

\subsection{Euler $\sharp$ sums with $\boldsymbol{\overline{even}}, \boldsymbol{odd}$}

Let us consider the following family:
$$\zeta^{\sharp, \mathfrak{m}}\left( \lbrace\boldsymbol{\overline{even}}, \boldsymbol{odd}\rbrace^{\times} \right) , \text{i.e.  negative even and positive odd integers},$$
which, in terms of iterated integrals corresponds to, with $\epsilon\in \lbrace\pm \sharp\rbrace$:
\begin{equation}\label{eq:iisharp}
I^{ \mathfrak{m}} \left( 0; \left\lbrace \begin{array}{l}
1,  \boldsymbol{0}^{odd} ,-\sharp \\
1,  \boldsymbol{0}^{even} , \sharp
\end{array}\right\rbrace  ,  \cdots, \quad \left\lbrace 
\begin{array}{l}
\epsilon, \boldsymbol{0}^{odd}, -\epsilon \\
\epsilon, \boldsymbol{0}^{even}, \epsilon 
\end{array}\right\rbrace \quad, \cdots; 1 \right) .
\end{equation}

\begin{lemm}
The family $\zeta^{\sharp \mathfrak{m}}\left( \lbrace\overline{even}, odd\rbrace^{\times} \right) $ is stable under the coaction.
\end{lemm}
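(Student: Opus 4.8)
The plan is to compute $D_{2r+1}$ directly on the iterated integral representation (\ref{eq:iisharp}) using the explicit coaction formula of Lemma \autoref{lemmt}, and to show, cut by cut, that after the simplifications of Appendix A the right-hand factor $I^{\mathfrak{m}}$ of every surviving term is again a word of type $\lbrace \overline{\text{even}}, \text{odd}\rbrace^{\times}$. Recall that in the iterated integral the admissible blocks are exactly $\epsilon\,\boldsymbol{0}^{2a}\,\epsilon$ (a positive odd argument) and $\epsilon\,\boldsymbol{0}^{2a+1}\,(-\epsilon)$ (a negative even argument), with $\epsilon\in\lbrace\pm\sharp\rbrace$. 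Contracting a cut joins the two boundary letters $a_p$ and $a_{p+r+1}$, so the only way to leave the family is to create a \emph{forbidden} block, namely $\epsilon\,\boldsymbol{0}^{\text{odd}}\,\epsilon$ (a positive even argument) or $\epsilon\,\boldsymbol{0}^{\text{even}}\,(-\epsilon)$ (a negative odd argument, the extreme case $\overline{1}$ included). Thus the whole statement reduces to proving that every cut producing a forbidden block either vanishes in $\mathcal{L}$ or cancels against a companion cut.

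The decisive tool will be the parity relation $w\equiv p-s \pmod 2$ satisfied by this family, where $s$ counts the sign changes among the $\pm\sharp$. Applied to the \emph{interior} of a cut, which always has odd weight $2r+1$, it forces the dichotomy already recorded before the proof of Theorem \autoref{ESsharphonorary}: either the cut has odd depth, an even number of sign changes, and equal endpoints $\epsilon_0=\epsilon_{p+1}$; or it has even depth, an odd number of sign changes, and opposite endpoints $\epsilon_0=-\epsilon_{p+1}$. I would use this to restrict, in each of the cut types $T_{0,0}$, $T_{\epsilon,0}$, $T_{0,\epsilon}$, $T_{\epsilon,\epsilon}$, $T_{\epsilon,-\epsilon}$ of Lemma \autoref{lemmt}, the sign pattern of the letters that become adjacent, and hence to read off which cuts are stable and which are unstable.

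The cancellation of the unstable cuts is then handled by the simplification rules of Appendix A. The terms $T_{\epsilon,-\epsilon}$ are precisely the ones that would insert a $\overline{1}$; their left factor is an integral $I^{\mathfrak{l}}(s_\epsilon; B; -s_\epsilon)$ whose word $B$ has, by the parity dichotomy, even depth and odd weight, so by \textsc{Sign} (\ref{eq:sign}) and Corollary \autoref{esharprel} one has $I^{\mathfrak{l}}(1;W;-1)\equiv 0$ and these terms drop out entirely. The remaining unstable contributions, coming from $T_{\epsilon,0}$ and $T_{0,\epsilon}$ cuts that would merge a block of the wrong length parity, pair off across adjacent cut positions and annihilate by \textsc{Shift} (\ref{eq:shift}) (together with \textsc{Cut} and \textsc{Cut Shifted} when the bigger cut has even depth), exactly as in the diagrams used for the MMZV$^\star$ and $\star\star$ computations. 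What is left are only the stable cuts, each of whose $I^{\mathfrak{m}}$-factor is manifestly built from admissible blocks $\epsilon\,\boldsymbol{0}^{2a}\,\epsilon$ and $\epsilon\,\boldsymbol{0}^{2a+1}\,(-\epsilon)$, so $D_{2r+1}$ maps the family into $\mathcal{L}_{2r+1}\otimes\mathcal{H}^{2}$ with right factor again in the family, which is the desired stability.

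The main obstacle is not conceptual but the diagrammatic bookkeeping: one must enumerate all cut positions relative to the block structure of (\ref{eq:iisharp}), verify in each case that the parity constraint selects the claimed endpoint signs, and then match every unstable term with the correct partner so that the \textsc{Sign}/\textsc{Shift}/\textsc{Cut} simplifications apply without leftover terms. This is the content of the figures accompanying the proof, and getting the pairing exactly right at the borders of the word (first and last blocks, and the leading $\pm 1$ kept by the stability convention of footnote to Lemma \autoref{lemmt}) is the delicate point.
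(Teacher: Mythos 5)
Your proposal is correct and follows essentially the same route as the paper's proof: reduce stability to showing that every cut creating a forbidden block $\epsilon\,\boldsymbol{0}^{\text{odd}}\,\epsilon$ or $\epsilon\,\boldsymbol{0}^{\text{even}}\,(-\epsilon)$ dies, use the parity relation $w\equiv p-s \pmod 2$ to get the endpoint dichotomy for odd-weight cuts, kill the $T_{\epsilon,-\epsilon}$ terms (the would-be $\overline{1}$'s) via \textsc{Sign} and Corollary \autoref{esharprel}, and cancel the remaining unstable cuts pairwise against adjacent cuts by the Appendix A rules. The paper executes exactly this bookkeeping in its table of the six cut types, pairing them by \textsc{Shift}, \textsc{Cut}, \textsc{Cut Shifted} and also \textsc{Minus} (the one rule your list omits), so your sketch matches its argument in both structure and mechanism.
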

\begin{proof}
Looking at the possible cuts, and gathering them according the right side:\\
\includegraphics[]{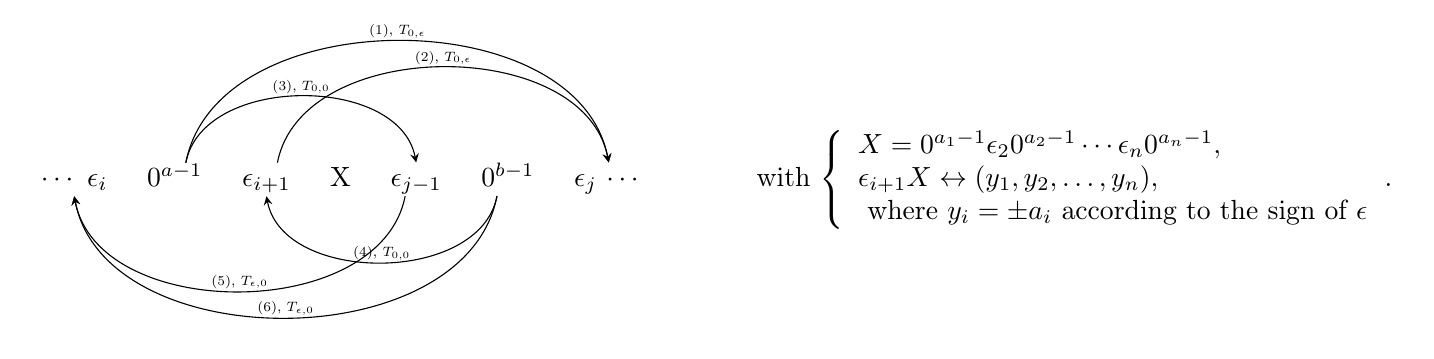}\\
These cuts have the same form for the right side in the coaction:
$$I^{\mathfrak{m}}(0; \cdots, \boldsymbol{\epsilon_{i} 0^{\alpha}  \epsilon_{j}}, \cdots ;1).$$
Notice there would be no term $T_{\epsilon, -\epsilon}$ in a cut from $\epsilon$ to $-\epsilon$ thanks to \textsc{Sign} $(\ref{eq:sign})$ identity, therefore you have there all the possible cuts pictured.\\
A priori, cuts can create in the right side a sequence $\left\lbrace  \epsilon, \boldsymbol{0}^{even}, -\epsilon \right\rbrace $ or $\left\lbrace \epsilon, \boldsymbol{0}^{odd}, \epsilon\right\rbrace $ inside the iterated integral; these cuts are the \textit{unstable} ones, since they are out of the considered family. However, by coupling these cuts two by two, and using the rules listed at the beginning of the Appendix, the unstable cuts would all get simplified.\\
Indeed, let examine each of the terms $(1-6)$\footnote{There is no remaining cuts between $\epsilon$ and $\epsilon$. Notice also that the left sides of the remaining terms have an even depth.}:\\
\\
\begin{tabular}{c | c | l | l}
Term & Left side & Unstable if & Simplified with \\
\hline
\multirow{4}{*}{$(1)$}   & $\zeta^{ \sharp\sharp,\mathfrak{l}}_{a-1-\alpha} (a_{1}, \ldots, a_{n},b)$, & $n$ even & the previous cut: \\
 &  with $\alpha<a$.  & & either $(6)$ by \textsc{Minus}   \\
 & & & or $(5)$ by \textsc{Cut}  \\
 & & &  or $(3)$ by \textsc{Cut} \footnotemark[1]\\ \hline
\multirow{4}{*}{$(2)$} & $\zeta^{ \sharp\sharp,\mathfrak{l}}_{b-1} (a_{n}, \ldots, a_{1})$ & $\epsilon_{i+1}=\epsilon_{j}$ & the previous cut: \\
 & with $\alpha=a$. & &   either with $(5)$ by \textsc{Shift} \\
  & & &  or with $(6)$ by \textsc{Cut Shifted}  \\
   & & & or with $(3)$ by \textsc{Shift}.\\ \hline
\multirow{4}{*}{$(3)$}  & $-\zeta^{ \sharp\sharp,\mathfrak{l}}_{a+b-\alpha-1} (a_{1}, \ldots, a_{n})$ &  $n$ odd & the following cut:\\ 
 & with $\alpha>b$. & &  either $(1)$ by \textsc{Cut} \\
  & & &  or $(2)$ by \textsc{Shift}  \\
   & & & or $(4)$ by \textsc{Shift}. \\ \hline
   \multirow{4}{*}{$(4)$} & $\zeta^{ \sharp\sharp,\mathfrak{l}}_{a+b-\alpha-1}(a_{n}, \ldots, a_{1})$ &  $n$ odd & the previous cut:\\
 & with $\alpha>a$ .& & either $(6)$ by \textsc{Cut Shifted}  \\
  & & &  or with $(5)$ by \textsc{Shift}  \\
   & & & or with $(3)$ by \textsc{Shift}. \\ \hline
   \multirow{4}{*}{$(5)$}& $-\zeta^{ \sharp\sharp,\mathfrak{l}}_{a-1} (a_{1}, \ldots, a_{n})$ & $\epsilon_{i}=\epsilon_{j-1}$ & the following cut:\\
 & with $\alpha=b$. & &  either  with $(1)$ by \textsc{Cut} \\
  & & &   or with $(2)$ by \textsc{Shift} \\
   & & & or with $(4)$ by \textsc{Shift}. \\ \hline
   \multirow{4}{*}{$(6)$} & $-\zeta^{ \sharp\sharp,\mathfrak{l}}_{b-1-\alpha} (a_{n}, \cdots a_{1}, a)$ & $n$ even & the following cut:\\
 & & &  either $(1)$ by \textsc{Minus} \\
  & & &   or with $(2)$ by \textsc{Cut Shifted}  \\
   & & & or with $(4)$ by \textsc{Cut Shifted} \\ \hline
\end{tabular} \\
\footnotetext[1]{It depends on the sign of $b+1-\alpha$ here for instance.}
\end{proof}
\noindent
\paragraph{Derivations.}
Let use the writing of the Conjecture $\autoref{conjcoeff}$:
\begin{equation}\label{eq:essharpgather}
 \zeta^{\sharp,\mathfrak{m}}(B_{0}, 1^{\gamma_{1}}, \ldots, 1^{\gamma_{p}}, B_{p}) \text{ with } B_{i}<0 \text{ if and only if } B_{i} \text{ even }.
\end{equation}
\texttt{Nota Bene:} Beware, for instance $B_{i}$ may be equal to $1$, which implies that $\gamma_{i}= \gamma_{i+1}=0$. Indeed, we look at the indices corresponding to a sequence $(2^{a_{0}}, c_{1}, \ldots, c_{p}, 2^{a_{p}})$ as in the Conjecture $\autoref{conjcoeff}$:
$$\begin{array}{l}
B_{i}= 2a_{i}+3 - \delta_{c_{i}}- \delta_{c_{i+1}}\\
B_{0}= 2a_{0}+1 - \delta_{c_{1}}\\
B_{p}= 2a_{p}+2 - \delta_{c_{p}}\\
\end{array}, \gamma_{i}\mathrel{\mathop:}= c_{i}-3 +2  \delta_{c_{i}}, \quad  \text{ where }   \left\lbrace  \begin{array}{l} a_{i} \geq 0 \\ c_{i}>0,c_{i}\neq 2 \\
\delta_{c}\mathrel{\mathop:}= \left\lbrace \begin{array}{ll}
1 & \text{ if } c=1\\
0 & \text{ else }.
\end{array}\right.
\end{array}.  \right.  $$
 
\begin{lemm}
\begin{equation}
D_{2r+1}\left( \zeta^{\sharp,\mathfrak{m}}(B_{0}, 1^{\gamma_{1}}, \ldots, 1^{\gamma_{p}}, B_{p})\right) =\footnotemark[2] 
\end{equation}
$$\delta_{r} \left[  -\delta_{{2 \leq B \leq B_{j}+1 \atop 0\leq\gamma\leq\gamma_{i+1}-1 }}\zeta^{\sharp,\mathfrak{l}}(B_{j}-B+1, 1^{\gamma_{j}}, \ldots, 1^{\gamma_{i+1}-\gamma-1})\otimes\zeta^{\sharp,\mathfrak{m}}(B_{0} \cdots, B_{i}, \textcolor{magenta}{1^{\gamma}, B}, 1^{\gamma_{j+1}}, \ldots, B_{p}) \right. $$

$$\left[  
\begin{array}{l}
+ \delta_{B_{i+1}< B}\zeta^{\sharp\sharp,\mathfrak{l}}_{B_{i+1}+B_{j}-B}(1^{\gamma_{j}}, \ldots, 1^{\gamma_{i+2}})\\
- \delta_{B_{j}< B}\zeta^{\sharp\sharp,\mathfrak{l}}_{B_{i+1}+B_{j}-B}(1^{\gamma_{i+2}}, \ldots, 1^{\gamma_{j}})\\
 + \zeta^{\sharp\sharp,\mathfrak{l}}_{B_{i+1}-B}(1^{\gamma_{i+2}}, \ldots, B_{j}) - \zeta^{\sharp\sharp,\mathfrak{l}}_{B_{j}-B}(1^{\gamma_{j}}, \ldots, B_{i+1})
\end{array} \right] \otimes\zeta^{\sharp,\mathfrak{m}}(B_{0} \cdots, B_{i}, 1^{\gamma_{i+1}}, \textcolor{green}{B}, 1^{\gamma_{j+1}}, \ldots, B_{p}) $$

$$\left.  \delta_{{1 \leq B \leq B_{i+1}+1 \atop 0\leq\gamma\leq\gamma_{j+1}-1}}\zeta^{\sharp,\mathfrak{l}}(B_{i+1}-B+1, 1^{\gamma_{i+2}}, \ldots, 1^{\gamma_{j+1}-\gamma-1})\otimes\zeta^{\sharp,\mathfrak{m}}(B_{0} \cdots, 1^{\gamma_{i+1}},\textcolor{cyan}{ B, 1^{\gamma}}, B_{j+1}, \ldots, B_{p}) \right],$$
where $\delta_{r}$ imposes $(\text{weight of left side}=2r+1)$ and $B$ is positive if odd, negative if even.
\end{lemm}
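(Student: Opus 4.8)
The plan is to proceed exactly as in the proof of the star-derivation formula \eqref{eq:DerivStar}, specializing the universal coaction formula of Lemma \ref{lemmt} to the iterated integral \eqref{eq:iisharp} representing $\zeta^{\sharp,\mathfrak{m}}(B_0, \mathbf{1}^{\gamma_1}, \ldots, \mathbf{1}^{\gamma_p}, B_p)$. First I would write $D_{2r+1}$ as a sum over all cuts (arcs) of odd interior length $2r+1$ joining two marked letters $\pm\sharp$ of the sequence, recording for each cut the left factor in $\mathcal{L}$ and the right factor in $\mathcal{H}$, together with the $T_{\bullet,\bullet}$ decomposition prescribed by Lemma \ref{lemmt}. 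Because every interior block is of the shape $\epsilon\,\mathbf{0}^{2a}\,\epsilon$ (an $odd$ entry) or $\epsilon\,\mathbf{0}^{2a+1}\,(-\epsilon)$ (an $\overline{even}$ entry), the admissible cuts split naturally according to whether their two endpoints bound an even or odd number of interior zeros, which is precisely the parity datum $w \equiv p - s \pmod 2$ used throughout Section 4.

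The key simplification step is to discard the \emph{unstable} cuts, i.e. those whose right side would contain a newly created $\epsilon\,\mathbf{0}^{2a}\,(-\epsilon)$ ($\overline{odd}$) or $\epsilon\,\mathbf{0}^{2a+1}\,\epsilon$ ($even$) block and hence fall outside the family. This is exactly the content of the stability lemma proved just above: pairing the unstable cuts two by two and applying \textsc{Shift} \eqref{eq:shift}, \textsc{Cut} \eqref{eq:cut}, \textsc{Cut Shifted} \eqref{eq:cutshifted}, \textsc{Minus} \eqref{eq:minus} and \textsc{Sign} \eqref{eq:sign} makes them cancel. In particular \textsc{Sign} kills the would-be $T_{\epsilon,-\epsilon}$ contributions, which is why no such term survives; this is the feature that makes the $\{\overline{even}, odd\}$ family (rather than an arbitrary $\sharp$ family) amenable to a clean derivation formula.

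Then I would gather the surviving cuts by the shape of their right-hand side, which can only be one of the three displayed patterns: an insertion $(\ldots, B_i, \mathbf{1}^{\gamma}, B, \mathbf{1}^{\gamma_{j+1}}, \ldots)$ (the magenta term), a merge $(\ldots, B_i, \mathbf{1}^{\gamma_{i+1}}, B, \mathbf{1}^{\gamma_{j+1}}, \ldots)$ where the two surrounding odd slots coalesce (the four green terms), and the mirror insertion $(\ldots, \mathbf{1}^{\gamma_{i+1}}, B, \mathbf{1}^{\gamma}, B_{j+1}, \ldots)$ (the cyan term). For each pattern I would collect the left factors $I^{\mathfrak{l}}$ coming from the various $T_{\bullet,\bullet}$ terms and rewrite them in the stated $\zeta^{\sharp,\mathfrak{l}}$ / $\zeta^{\sharp\sharp,\mathfrak{l}}$ normal forms using the \textsc{Antipode} $\shuffle$ relation (Lemma \ref{eq:antipodeshuffle2}) to reverse a path and \textsc{Shift} to move a leading zero-block to the far end; the signs and the ranges of the Kronecker deltas on $B$ and $\gamma$ are then read off from the endpoint constraints, with the boundary cuts $i=0$ and $j=p$ producing the $\zeta^{\sharp,\mathfrak{l}}$ deconcatenation terms and matching the conventions of \eqref{eq:essharpgather}.

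The main obstacle, as in the star case, is purely combinatorial bookkeeping: correctly matching each unstable cut with the unique partner that cancels it (the partner depends on the parity of the intervening depth, which is why the stability table distinguishes cases by $n$ even/odd and by $\epsilon_i = \epsilon_{j-1}$), and then verifying that after the antipodal reductions the coefficients and delta-ranges line up with the claimed formula. No new identity beyond those of Section 3 and Lemma \ref{lemmt} is needed; the delicate point is keeping the signs consistent through the repeated use of \textsc{Composition}, reversal, and \textsc{Shift}.
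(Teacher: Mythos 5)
Your proposal is correct and follows essentially the same route as the paper's own proof: specialize Lemma \ref{lemmt} to the integral representation \eqref{eq:iisharp}, invoke the stability lemma to cancel all unstable cuts (including the disappearance of $T_{\epsilon,-\epsilon}$ via \textsc{Sign}), gather the surviving cuts by their right-hand sides into the magenta/green/cyan patterns (the paper's fourth, ``yellow'' group being exactly the magenta case $B=B_j$ and the cyan case $B=B_i$), and normalize the left factors with the antipodal, \textsc{Shift} and \textsc{Cut} relations of Section 3 to read off the deltas and signs. The only detail worth making explicit in a full write-up is the parity argument in the green case, where the wrong-parity contributions cancel by \textsc{Cut} and \textsc{Antipode} $\ast$, forcing $B$ to be positive odd or negative even as stated.
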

\begin{proof}
\texttt{Nota Bene}: For the left side, we only look at odd weight $w$, and the parity of the depth $d$ is fundamental since the relations stated above depend on the parity of $w-d$. For instance, for such a sequence $(\boldsymbol{1}^{\gamma_{i}},B_{i}, \ldots,B_{j-1},\boldsymbol{1}^{\gamma_{j}})$ (with the previous notations), $weight- depth$ has the same parity than $\delta_{c_{i}}+\delta_{c_{j}}$. \\
\\
The following cuts get simplified, with \textsc{Shift}, since depth is odd ($B_{i}$ odd if $c_{i},c_{i+1}\neq 1$):\\
 \includegraphics[]{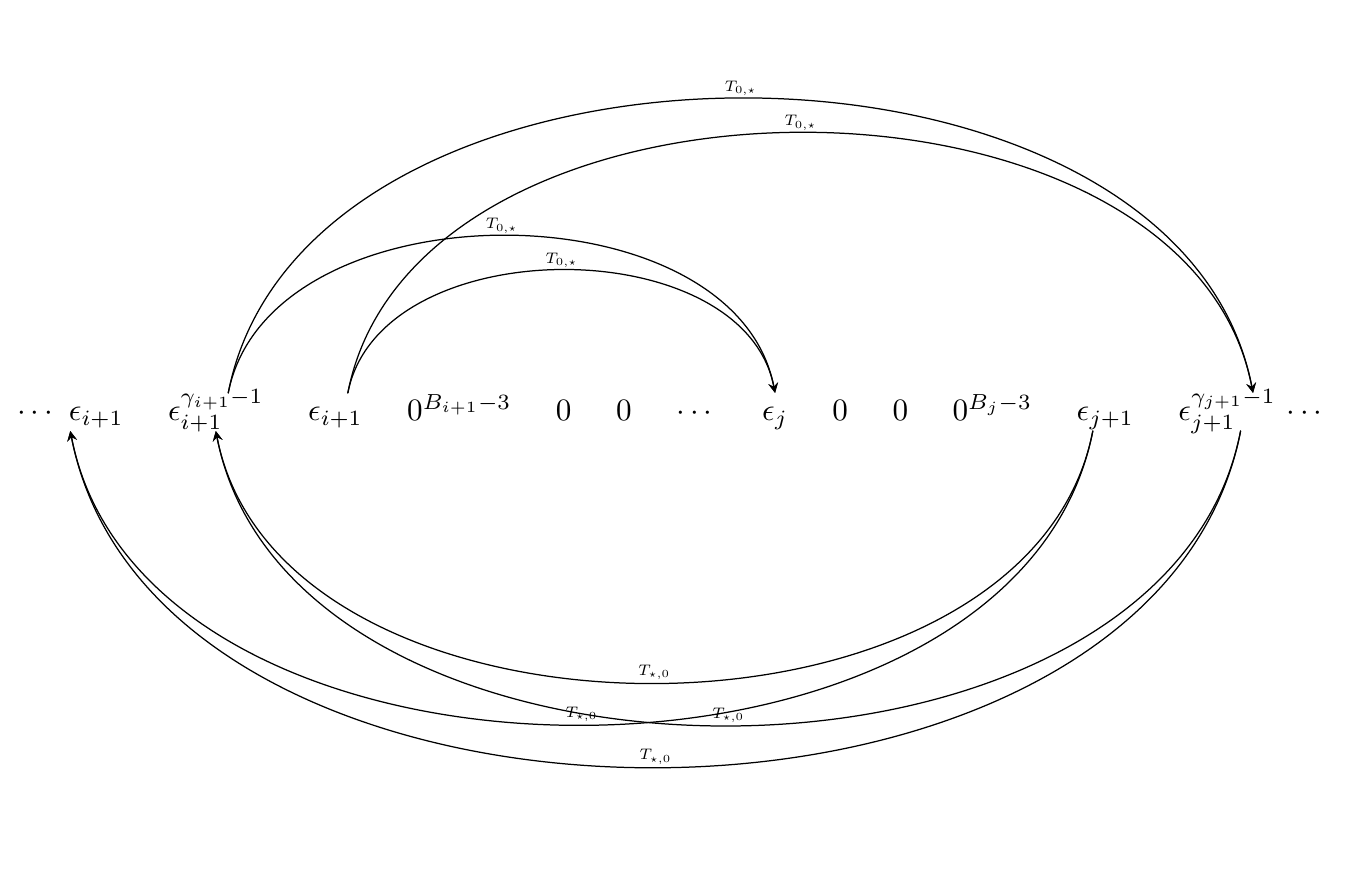}\\ 
It remains, where all the unstable cuts are simplified by the Lemma $A.4$, cuts that we can gather into four groups, according to the right side of the coaction:
\begin{itemize}
\item[$(i)$]  $\zeta^{\sharp,\mathfrak{m}}(B_{0} \cdots, B_{i}, \textcolor{magenta}{1^{\gamma}, B}, 1^{\gamma_{j+1}}, \ldots, B_{p}) $.
\item[$(ii)$]  $\zeta^{\sharp,\mathfrak{m}}(B_{0} \cdots, B_{i}, \textcolor{yellow}{1^{\gamma}}, B_{j}, \ldots, B_{p}) $.
\item[$(iii)$]  $\zeta^{\sharp,\mathfrak{m}}(B_{0} \cdots, B_{i}, 1^{\gamma_{i+1}}, \textcolor{green}{B}, 1^{\gamma_{j+1}}, \ldots, B_{p}) $.
\item[$(iv)$]  $\zeta^{\sharp,\mathfrak{m}}(B_{0} \cdots, 1^{\gamma_{i+1}},\textcolor{cyan}{ B, 1^{\gamma}}, B_{j+1}, \ldots, B_{p}) $.
\end{itemize}
It remains, where $(iv)$ terms, antisymmetric of $(i)$, are omitted to lighten the diagrams: \\
\includegraphics[]{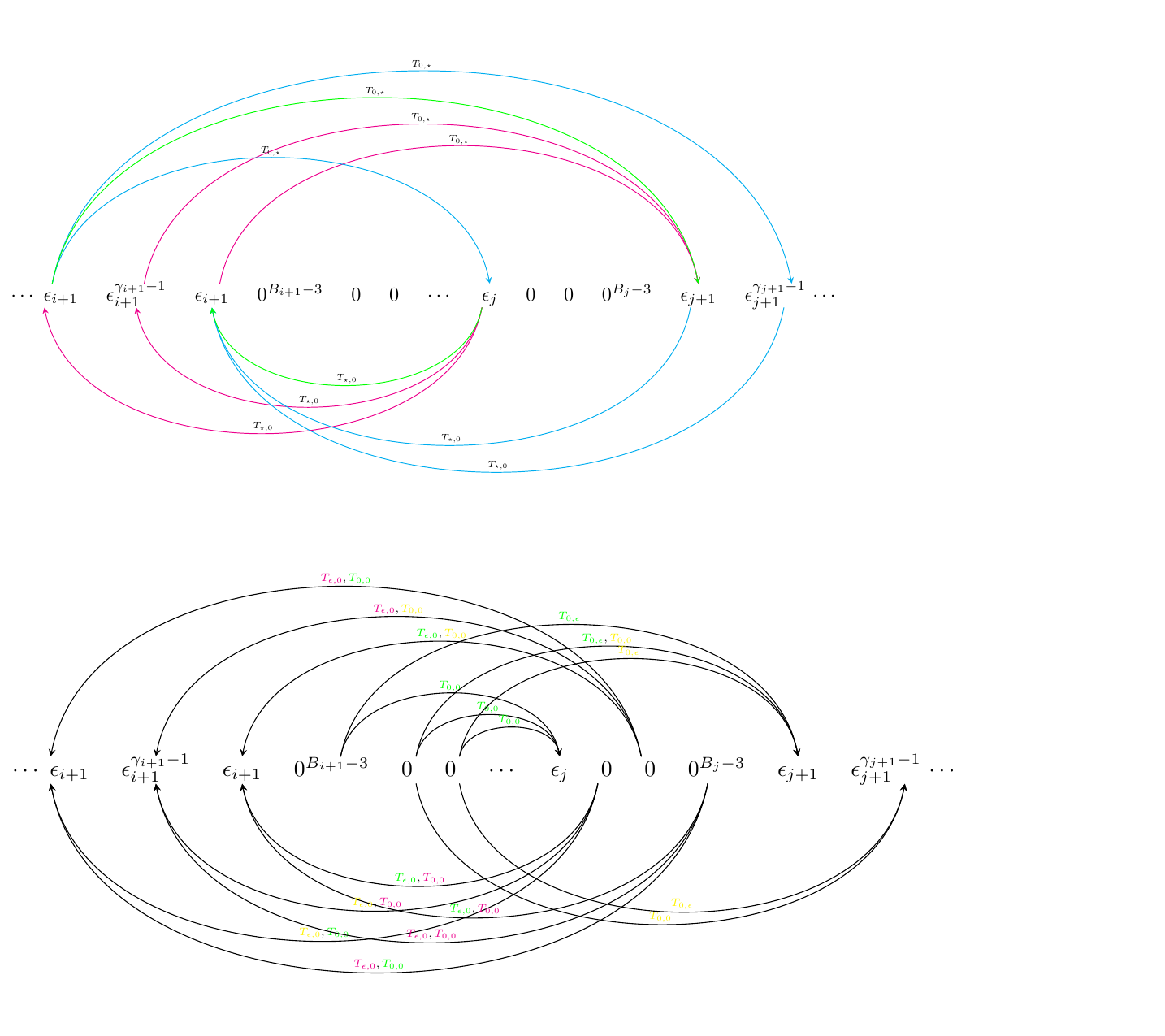}\\
Now, let list these remaining terms, gathered according to their right side as above:
\begin{itemize}
\item[$(i)$] Looking at the magenta terms, with $2 \leq B \leq B_{j}-1$ or $B=B_{j}+1$ and $0\leq\gamma\leq\gamma_{i+1}-1$:
$$\zeta^{\sharp\sharp,\mathfrak{l}}_{B_{j}-B+1}(1^{\gamma_{j}}, \ldots, 1^{\gamma_{i+1}-\gamma-1})-\zeta^{\sharp\sharp,\mathfrak{l}}_{B_{j}-B}(1^{\gamma_{j}}, \ldots, 1^{\gamma_{i+1}-\gamma}) =-\zeta^{\sharp,\mathfrak{l}}(B_{j}-B+1, 1^{\gamma_{j}}, \ldots, 1^{\gamma_{i+1}-\gamma-1})$$
With $even$ depth for the first term and $odd$ for the second since otherwise the cuts would be unstable and simplified by $\textsc{Cut}$; here also $c_{i+1}\neq 1$. 
\item[$(ii)$]  This matches exactly with the left side of $(i)$ for $B=B_{j}$ and $(iv)$ terms for $B=B_{i}$.
\item[$(iii)$]  The following cuts:
$$\delta_{B_{i+1}\geq B}\zeta^{\sharp\sharp,\mathfrak{l}}_{B_{i+1}-B}(1^{\gamma_{i+2}}, \ldots, B_{j}) -\delta_{B_{j}\geq B} \zeta^{\sharp\sharp,\mathfrak{l}}_{B_{j}-B}(1^{\gamma_{j}}, \ldots, B_{i+1})+$$ 
$$\delta_{B_{i+1}< B}\zeta^{\sharp\sharp,\mathfrak{l}}_{B_{i+1}+B_{j}-B}(1^{\gamma_{j}}, \ldots, 1^{\gamma_{i+2}}) - \delta_{B_{j}< B}\zeta^{\sharp\sharp,\mathfrak{l}}_{B_{i+1}+B_{j}-B}(1^{\gamma_{i+2}}, \ldots, 1^{\gamma_{j}}) .$$
The parity of $weight-depth$ for the first line is equal to the parity of $\delta_{c_{i+1}}+ \delta_{c_{j+1}}+B$. Notice that if this is even, the first line has odd depth whereas the second line has even depth, and by $\textsc{Cut}$ and $\textsc{Antipode} \ast$, all terms got simplified. Hence, we can restrict to $B$ written as $2\beta+3- \delta_{c_{i+1}}- \delta_{c_{j+1}}$, the first line being of even depth, the second line of odd depth.

\item[$(iv)$]  Antisymmetric of $(i)$.
\end{itemize}
\end{proof}

\section{Homographies of $\boldsymbol{\mathbb{P}^{1}\diagdown \lbrace 0, \mu_{N}, \infty\rbrace}$}

The homographies of the projective line $\mathbb{P}^{1}$ which permutes $\lbrace 0, \mu_{N}, \infty \rbrace$, induce automorphisms $\mathbb{P}^{1}\diagdown \lbrace 0, \mu_{N}, \infty\rbrace \rightarrow \mathbb{P}^{1}\diagdown \lbrace 0, \mu_{N}, \infty\rbrace$. The projective space $\mathbb{P}^{1} \diagdown \lbrace 0, \mu_{N}, \infty \rbrace$ has a dihedral symmetry, the dihedral group $Di_{N}= \mathbb{Z}\diagup 2 \mathbb{Z} \ltimes \mu_{N}$ acting with $x \mapsto x^{-1}$ and $x\mapsto \eta x$. In the special case of $N=1,2,4$, and for these only, the group of homographies is bigger than the dihedral group, due to particular symmetries of the points $\mu_{N}\cup \lbrace 0, \infty\rbrace$ on the Riemann sphere. Let precise these cases:
\begin{itemize}
\item[For $N=1:$] The homography group is the anharmonic group generated by $z\mapsto\frac{1}{z}$ and $z \mapsto 1-z$, and corresponds to the permutation group $\mathfrak{S}_{3}$. Precisely, projective transformations of $\mathbb{P}^{1}\diagdown \lbrace 0, 1, \infty\rbrace$ are:
$$\begin{array}{lll}\label{homography1}
\phi_{\tau}: & t \mapsto 1-t : &  \left\lbrace \begin{array}{l} 
(0,1,\infty)\mapsto (1,0,\infty)\\
(\omega_{0},\omega_{1}, \omega_{\star}, \omega_{\sharp}) \mapsto (\omega_{1},\omega_{0}, -\omega_{\star}, \omega_{0}-\omega_{\star}).
\end{array} \right. \\
\phi_{c}: &  t \mapsto \frac{1}{1-t}  :&  \left\lbrace \begin{array}{l} 
0\mapsto 1 \mapsto \infty \mapsto 0\\
(\omega_{0},\omega_{1}, \omega_{\star}, \omega_{\sharp}) \mapsto (\omega_{\star},-\omega_{0}, -\omega_{1}, -\omega_{0}-\omega_{1})
\end{array} \right. \\
\phi_{\tau c} : &  t \mapsto \frac{t}{t-1} :& \left\lbrace \begin{array}{l}
 (0,1,\infty)\mapsto (0,\infty,1)\\
 (\omega_{0},\omega_{1}, \omega_{\star}) \mapsto (-\omega_{\star},-\omega_{1}, -\omega_{0})
\end{array} \right.\\
\phi_{c\tau}: &  t \mapsto \frac{1}{t} : & \left\lbrace \begin{array}{l} 
(0,1,\infty)\mapsto (\infty,1,0)\\
(\omega_{0},\omega_{1}, \omega_{\star},\omega_{\sharp}) \mapsto (-\omega_{0},\omega_{\star}, \omega_{1}, \omega_{\sharp})
\end{array} \right. \\
 \phi_{c^{2}}: &  t \mapsto \frac{t-1}{t}  : & \left\lbrace \begin{array}{l} 
0\mapsto \infty \mapsto 1 \mapsto 0\\
(\omega_{0},\omega_{1}, \omega_{\star}) \mapsto (-\omega_{1},-\omega_{\star}, \omega_{0})
\end{array} \right. \\
\end{array}$$
Remark that hexagon relation corresponds to a cycle $c$ whereas the reflection relation corresponds to a transposition $\tau$, and :
$$\mathfrak{S}_{3}= \langle c, \tau \mid c^{3}=id, \tau^{2},c\tau c =\tau \rangle= \lbrace 1, c, c^{2}, \tau, \tau c, c\tau\rbrace.$$
\item[For $N=2:$] Here, $(0,\infty, 1,-1)$ has a cross ratio $-1$ (harmonic conjugates) and there are $8$ permutations of $(0,\infty, 1,-1)$ preserving its cross ratio. The homography group corresponds indeed to the group of automorphisms of a square with consecutive vertices $(0, 1, \infty, -1)$, i.e. the dihedral group of degree four $Di_{4}$ defined by the presentation $\langle \sigma, \tau \mid \sigma^{4}= \tau^{2}=id, \sigma\tau \sigma= \tau \rangle$:
$$\begin{array}{lll}\label{homography2}
\phi_{\tau}: & t \mapsto \frac{1}{t} :& \left\lbrace \begin{array}{l} 
\pm 1\mapsto \pm 1  \quad 0 \leftrightarrow \infty\\
(\omega_{0},\omega_{1}, \omega_{\star},\omega_{-1}, \omega_{-\star}, \omega_{\pm\sharp}) \mapsto (-\omega_{0},\omega_{\star}, \omega_{1},\omega_{-\star},\omega_{-1}, \omega_{\pm\sharp})
\end{array} \right.\\
\\
 \phi_{\sigma}: &  t \mapsto \frac{1+t}{1-t}  : & \left\lbrace \begin{array}{l} 
-1\mapsto 0\mapsto 1\mapsto \infty\mapsto -1\\
(\omega_{0},\omega_{1},\omega_{\star},\omega_{-1}, \omega_{-\star}) \mapsto (\omega_{-1}- \omega_{1}, -\omega_{-1}, - \omega_{1}, - \omega_{-\star}, - \omega_{\star})\\
(\omega_{\sharp}, \omega_{-\sharp})  \mapsto  (-\omega_{1}-\omega_{-1}, -\omega_{\star}-\omega_{-\star})
\end{array} \right. \\
\\
 \phi_{\sigma^{2}\tau}: &  t \mapsto -t:& \left\lbrace \begin{array}{l} 
-1 \leftrightarrow 1 \\
(\omega_{0},\omega_{ 1}, \omega_{-1}, \omega_{ \pm \ast}, \omega_{\pm \sharp}) \mapsto (\omega_{0},\omega_{-1}, \omega_{1},\omega_{\mp \ast}, \omega_{\mp \sharp})
\end{array} \right. \\
\\
\phi_{\sigma^{2}}: & t \mapsto \frac{-1}{t} : &  \left\lbrace \begin{array}{l}
0 \leftrightarrow \infty \quad -1 \leftrightarrow 1\\
(\omega_{0},\omega_{1},\omega_{\star},\omega_{-1}, \omega_{-\star}, \omega_{\pm \sharp}) \mapsto (-\omega_{0}, \omega_{-\star}, - \omega_{-1}, \omega_{\star}, \omega_{1}, \omega_{\mp\sharp})
\end{array} \right.\\
\\
\phi_{\sigma^{-1}}: & t \mapsto \frac{t-1}{1+t} : & \left\lbrace \begin{array}{l}
0 \mapsto -1 \mapsto \infty \mapsto 1 \mapsto 0 \\
(\omega_{0}, \omega_{1}, \omega_{-1}, \omega_{\star}, \omega_{-\star}) \mapsto (\omega_{-1}-\omega_{1}, - \omega_{\star}, -\omega_{1}, -\omega_{-\star}, -\omega_{-1}) \\
(\omega_{\sharp}, \omega_{-\sharp})  \mapsto  ( -\omega_{\star}-\omega_{-\star}, -\omega_{1}-\omega_{-1})
\end{array} \right.\\
\\
\phi_{\tau\sigma}: &  t \mapsto \frac{1-t}{1+t} : & \left\lbrace \begin{array}{l}
-1 \leftrightarrow \infty \quad 0 \leftrightarrow 1 \\
(\omega_{0},\omega_{1},\omega_{\star},\omega_{-1}, \omega_{-\star}) \mapsto (\omega_{1}-\omega_{-1},-\omega_{-\star},-\omega_{\star},-\omega_{-1}, -\omega_{1}) \\
(\omega_{\sharp}, \omega_{-\sharp})  \mapsto  ( -\omega_{\star}-\omega_{-\star}, -\omega_{1}-\omega_{-1})
\end{array} \right.\\
\\
\phi_{\sigma \tau}: & t \mapsto \frac{1+t}{t-1} : & \left\lbrace \begin{array}{l}
-1 \leftrightarrow 0  \quad 1 \leftrightarrow \infty\\
(\omega_{0},\omega_{1},\omega_{\star},\omega_{-1}, \omega_{-\star}) \mapsto  (\omega_{1}-\omega_{-1},-\omega_{1},-\omega_{-1},-\omega_{\star}, -\omega_{-\star}) \\
(\omega_{\sharp}, \omega_{-\sharp})  \mapsto  ( -\omega_{1}-\omega_{-1}, -\omega_{\star}-\omega_{-\star})
\end{array} \right.
\end{array} $$
Remark that the octagon relation ($\ref{eq:octagon}$) comes from the cycle $\sigma$ of order $4$; the other permutations above could also leads to relations.
\end{itemize}

\section{From the linearized octagon relation}
The identities in the coalgebra $\mathcal{L}$ obtained from the linearized octagon relation $\ref{eq:octagonlin}$:

\begin{lemm}\label{lemmlor}
In the coalgebra $\mathcal{L}$, $n_{i}\in\mathbb{Z}^{\ast}$:\footnote{Here, $\mlq + \mrq$ still denotes the operation where absolute values are summed and signs multiplied.}
\begin{itemize}
\item[$(i)$] $\zeta^{\star\star, \mathfrak{l}}(n_{0},\cdots, n_{p})= (-1)^{w+1} \zeta^{\star\star,\mathfrak{l}}(n_{p},\cdots, n_{0})$.
\item[$(ii)$] $\zeta^{\mathfrak{l}}(n_{0},\cdots, n_{p})+(-1)^{w+p} \zeta^{\star\star, \mathfrak{l}}(n_{0},\cdots, n_{p})+(-1)^{p} \zeta^{\star\star, \mathfrak{l}}_{\mid n_{p}\mid}(n_{p-1},\cdots,n_{1},n_{0})=0$.
\item[$(iii)$] 
$$\hspace*{-1cm} \zeta^{\mathfrak{l}}_{n_{0}-1}(n_{1},\cdots, n_{p})-  \zeta^{\mathfrak{l}}_{n_{0}}(n_{1},\cdots,n_{p-1}, n_{p}\mlq + \mrq 1 )=(-1)^{w} \left[ \zeta^{\star\star,\mathfrak{l}}_{n_{0}-1}(n_{1},\cdots, n_{p})-  \zeta^{\star\star,\mathfrak{l}}_{n_{0}}(n_{1},\cdots,n_{p-1}, n_{p}\mlq + \mrq 1)\right].$$
\end{itemize}
\end{lemm}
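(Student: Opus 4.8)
The plan is to extract the three identities directly from the linearized octagon relation \eqref{eq:octagonlin}, in exactly the manner used to prove the hybrid relation in Theorem~\ref{hybrid}: read off in $\mathcal{L}$ the coefficient of a well-chosen family of words in $\langle e_0,e_1,e_{-1}\rangle$ and set it equal to zero, then translate each term via the associator dictionary and simplify. Since $\mathcal{L}_{2r}=0$ and the relation already lives in the coalgebra, no further reduction modulo products is needed.

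First I would set up the coefficient dictionary for the three associators occurring in \eqref{eq:octagonlin}, namely $\Phi^{\mathfrak{l}}(e_{-1},e_0,e_\infty)$, $\Phi^{\mathfrak{l}}(e_\infty,e_{-1},e_1)$ and $\Phi^{\mathfrak{l}}(e_1,e_\infty,e_0)$. As in the proof of Theorem~\ref{hybrid}, the coefficient of a generic word $e_0^{a_0}e_{\eta_1}e_0^{a_1}\cdots e_{\eta_p}e_0^{a_p}$ (with $\eta_i\in\{\pm1\}$) in each of these equals an iterated integral $I^{\mathfrak{l}}$ whose endpoints are the images of $0$ and $1$ under the homography sending the standard triple $(0,1,\infty)$ to the relevant ordering of $\{0,\pm1,\infty\}$. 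Using the action of the $Di_4$ homographies on the forms $\omega_0,\omega_{\pm1},\omega_{\pm\star}$ recorded in Appendix~B (\ref{homography2}), I would convert each such integral back to a standard $I^{\mathfrak{l}}(0;\cdots;1)$, recovering in particular that the coefficient in $\Phi^{\mathfrak{l}}(e_\infty,e_{-1},e_1)$ is $(-1)^{w+p}\zeta^{\star\star,\mathfrak{l}}_{|n_0|-1}(n_1,\ldots,n_{p-1},-n_p)$, while the coefficients in $\Phi^{\mathfrak{l}}(e_{-1},e_0,e_\infty)$ and $\Phi^{\mathfrak{l}}(e_1,e_\infty,e_0)$ are ordinary $\zeta^{\mathfrak{l}}$'s, each up to an explicit sign and a word reversal. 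This dictionary is the technical heart of the argument.

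With the dictionary in hand, each identity comes from a different word family. Identity $(i)$ is the reversal symmetry \eqref{eq:antipodestaresss} of $\zeta^{\star\star,\mathfrak{l}}$ in the case $k=0$; I would obtain it by matching the coefficient of a word in the single term $\Phi^{\mathfrak{l}}(e_\infty,e_{-1},e_1)$ against the image of that word under Antipode~$\shuffle$ (\eqref{eq:antipodeshuffle2}), which flips the word and produces the factor $(-1)^{w+1}$. For $(ii)$ and $(iii)$ I would instead select words beginning and/or ending with $e_0$, so that the contributions of $\Phi^{\mathfrak{l}}(e_{-1},e_0,e_\infty)$ and $\Phi^{\mathfrak{l}}(e_1,e_\infty,e_0)$ (which yield genuine $\zeta^{\mathfrak{l}}$'s) appear alongside that of $\Phi^{\mathfrak{l}}(e_\infty,e_{-1},e_1)$ (which yields $\zeta^{\star\star,\mathfrak{l}}$'s). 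Reading off the coefficient of $e_0^{a_0}e_{\eta_1}e_0^{a_1}\cdots$ and using the composition rule $I^{\mathfrak{l}}(a;X;b)\equiv -I^{\mathfrak{l}}(b;X;a)$ together with Antipode~$\shuffle$ to align endpoints yields $(ii)$. The difference identity $(iii)$ then comes from the pair of words distinguished by whether the final letter is $e_{\pm1}$ or is absorbed into the trailing block of zeros — precisely the contrast between $n_p$ and $n_p\mlq+\mrq1$ — after the same endpoint manipulations and a use of Shift (\eqref{eq:shift}).

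The main obstacle I anticipate is the sign and parity bookkeeping in the dictionary step: each homography changes the signs of individual forms $\omega_\bullet$ (cf.~\ref{homography2}), each path reversal contributes a global $(-1)^{w}$, and the passage from $\omega_{\pm\star}$-words to $\omega_{\pm1}$-words introduces further signs, so assembling the net factor $(-1)^{w+1}$, $(-1)^{w+p}$ or $(-1)^{w}$ in each identity requires tracking weight, depth and the number of interior zeros simultaneously. Once those signs are pinned down, the remaining simplifications are the routine applications of composition, Antipode~$\shuffle$ and Shift already used throughout \S3 and Appendix~A, so the entire substance of the proof lies in getting the three associator dictionaries correct.
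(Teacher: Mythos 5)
Your overall plan --- extract coefficients of chosen word families from the linearized octagon relation $(\ref{eq:octagonlin})$, translate through the associator dictionaries, then simplify with composition, Antipode $\shuffle$ and \textsc{Shift} --- is exactly the paper's proof. But the dictionary you state, and which you correctly identify as the technical heart, is wrong for one of the three associators, and the error is fatal to $(ii)$ and to half of $(i)$. The coefficients of $\Phi^{\mathfrak{l}}(e_1,e_\infty,e_0)$ are \emph{not} ordinary $\zeta^{\mathfrak{l}}$'s: pulling back along the homography $\phi_{\tau\sigma}: t\mapsto\frac{1-t}{1+t}$ of Appendix B turns the letters into $\omega_{\pm\star}$-forms, so the coefficient of $e_0^{a_0}e_{\eta_1}\cdots e_{\eta_p}e_0^{a_p}$ there is $(-1)^{w+p+1}\zeta^{\star\star,\mathfrak{l}}_{\mid n_0\mid-1}(n_1,\ldots,n_p)$, a star-star value just like for $\Phi^{\mathfrak{l}}(e_\infty,e_{-1},e_1)$; only $\Phi^{\mathfrak{l}}(e_{-1},e_0,e_\infty)$ produces genuine $\zeta^{\mathfrak{l}}$'s. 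This asymmetry is precisely what gives $(ii)$ its shape: for words beginning with $e_1$ and ending with $e_0$, the three contributions $\Phi^{\mathfrak{l}}(e_{-1},e_0,e_\infty)e_0$, $-\Phi^{\mathfrak{l}}(e_\infty,e_{-1},e_1)e_0$ and $-e_1\Phi^{\mathfrak{l}}(e_1,e_\infty,e_0)$ split as one ordinary Euler sum plus two star-stars, matching the statement $\zeta^{\mathfrak{l}}+(-1)^{w+p}\zeta^{\star\star,\mathfrak{l}}+(-1)^{p}\zeta^{\star\star,\mathfrak{l}}_{\mid n_p\mid}=0$. With your dictionary you would instead derive a relation involving two ordinary Euler sums and one star-star, which is false. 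The same defect breaks the case of $(i)$ coming from words running from $e_1$ to $e_{-1}$ (the case $\prod\epsilon_i=-1$), where $-e_1\Phi^{\mathfrak{l}}(e_1,e_\infty,e_0)$ must contribute a star-star to cancel against $-\Phi^{\mathfrak{l}}(e_\infty,e_{-1},e_1)e_{-1}$. Only $(iii)$ is immune, because words beginning and ending with $e_0$ receive no contribution from $\Phi^{\mathfrak{l}}(e_1,e_\infty,e_0)$ at all.

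A second gap is the mechanism you propose for $(i)$. Antipode $\shuffle$ reverses the \emph{word}, not the \emph{argument string}: the reversal of $\star 0^{\mid n_0\mid-1}\star 0^{\mid n_1\mid-1}\cdots\star 0^{\mid n_p\mid-1}$ begins with a block of zeros and ends with a $\star$, so it represents (up to sign) $\zeta^{\star\star,\mathfrak{l}}_{\mid n_p\mid-1}(n_{p-1},\ldots,n_0,\pm 1)$ and not $\zeta^{\star\star,\mathfrak{l}}(n_p,\ldots,n_0)$. Hence matching a coefficient of $\Phi^{\mathfrak{l}}(e_\infty,e_{-1},e_1)$ against its own antipode image proves nothing new and never uses the octagon. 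What is needed is the genuine octagon input: for words of the form $e_{-1}We_{-1}$, only $e_{-1}\Phi^{\mathfrak{l}}(e_\infty,e_{-1},e_1)-\Phi^{\mathfrak{l}}(e_\infty,e_{-1},e_1)e_{-1}$ contributes to $(\ref{eq:octagonlin})$, and setting this coefficient to zero equates the coefficients of $We_{-1}$ and $e_{-1}W$, i.e.\ lets the initial $\star$ circulate to the end of the word (this is the hybrid relation of Theorem $\ref{hybrid}$); only after this does Antipode $\shuffle$ convert word reversal into argument reversal. Once the dictionary is corrected and $(i)$ is run through this two-term mechanism, the remaining steps you describe do coincide with the paper's proof.
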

\begin{proof}
The sign of $n_{i}$ is denoted $\epsilon_{i}$ as usual. First, we remark that, with $\eta_{i}=\pm 1$, $n_{i}=\epsilon_{i} (a_{i}+1)$, and $\epsilon_{i}= \eta_{i}\eta_{i+1}$:
$$\hspace*{-0.5cm}\begin{array}{ll}
\Phi^{\mathfrak{m}}(e_{\infty}, e_{-1},e_{1}) & = \sum  I^{\mathfrak{m}} \left(0;  (-\omega_{0})^{a_{0}} (-\omega_{-\eta_{1}\star})  (-\omega_{0})^{a_{1}} \cdots (-\omega_{-\eta_{p}\star})  (-\omega_{0})^{a_{p}} ;1 \right)  e_{0}^{a_{0}}e_{\eta_{1}} e_{0}^{a_{1}} \cdots e_{\eta_{p}} e_{0}^{a_{p}}\\
& \\
& = \sum  (-1)^{n+p}\zeta^{\star\star,\mathfrak{m}}_{n_{0}-1} \left( n_{1}, \cdots, n_{p-1}, -n_{p}\right)  e_{0}^{a_{0}}e_{\eta_{1}} e_{0}^{a_{1}} \cdots e_{\eta_{p}} e_{0}^{a_{p}}. \\
\end{array}$$
Similarly, with $ \mu_{i}\mathrel{\mathop:}= \left\lbrace \begin{array}{ll}
\star & \texttt{if } \eta_{i}=1\\
1 & \texttt{if } \eta_{i}=-1
\end{array}  \right. $, applying $\phi_{\tau\sigma}$ to get the second line:
$$\hspace*{-0.5cm}\begin{array}{ll}
\Phi^{\mathfrak{m}}(e_{-1}, e_{0},e_{\infty}) & = \sum I^{\mathfrak{m}} \left(0;  (\omega_{1}-\omega_{-1})^{a_{0}} \omega_{\mu_{1}}  (\omega_{1}-\omega_{-1})^{a_{1}} \cdots \omega_{\mu_{p}} (\omega_{1}-\omega_{-1})^{a_{p}} ;1 \right)  e_{0}^{a_{0}}e_{\eta_{1}} e_{0}^{a_{1}} \cdots e_{\eta_{p}} e_{0}^{a_{p}}\\
& \\
\Phi^{\mathfrak{l}}(e_{-1}, e_{0},e_{\infty}) & = \sum (-1)^{p} I^{\mathfrak{m}} \left(0;  0^{a_{0}} \omega_{-\eta_{1}}  0^{a_{1}} \cdots \omega_{-\eta_{p}} 0^{a_{p}} ;1 \right)  e_{0}^{a_{0}}e_{\eta_{1}} e_{0}^{a_{1}} \cdots e_{\eta_{p}} e_{0}^{a_{p}}\\
& \\
& = \sum  \zeta^{\mathfrak{m}}_{n_{0}-1} \left( n_{1}, \cdots, n_{p-1}, -n_{p}\right)  e_{0}^{a_{0}}e_{\eta_{1}} e_{0}^{a_{1}} \cdots e_{\eta_{p}} e_{0}^{a_{p}}. \\
\end{array}$$
Lastly, still using  $\phi_{\tau\sigma}$, with here $\mu_{i}\mathrel{\mathop:}= \left\lbrace \begin{array}{ll}
\star & \texttt{if } \eta_{i}=1\\
1 & \texttt{if } \eta_{i}=-1
\end{array}  \right. $:
$$\hspace*{-0.5cm}\begin{array}{ll}
\Phi^{\mathfrak{m}}(e_{1}, e_{\infty},e_{0}) & = \sum  I^{\mathfrak{m}} \left(0;  (\omega_{-1}-\omega_{1})^{a_{0}} \omega_{\mu_{1}}  (\omega_{-1}-\omega_{1})^{a_{1}} \cdots \omega_{\mu_{p}} (\omega_{-1}-\omega_{1})^{a_{p}} ;1 \right)  e_{0}^{a_{0}}e_{\eta_{1}} e_{0}^{a_{1}} \cdots e_{\eta_{p}} e_{0}^{a_{p}}\\
& \\
\Phi^{\mathfrak{l}}(e_{1}, e_{\infty},e_{0}) & = \sum  (-1)^{w+1} I^{\mathfrak{m}} \left(0;  0^{a_{0}} \omega_{\eta_{1}\star}  0^{a_{1}} \cdots \omega_{\eta_{p}\star} 0^{a_{p}} ;1 \right)  e_{0}^{a_{0}}e_{\eta_{1}} e_{0}^{a_{1}} \cdots e_{\eta_{p}} e_{0}^{a_{p}}\\
& \\
& = \sum  (-1)^{n+p+1}\zeta^{\star\star,\mathfrak{m}}_{n_{0}-1} \left( n_{1}, \cdots, n_{p-1}, n_{p}\right)  e_{0}^{a_{0}}e_{\eta_{1}} e_{0}^{a_{1}} \cdots e_{\eta_{p}} e_{0}^{a_{p}}. \\
\end{array}$$

\begin{itemize}
\item[$(i)$] 
This case is the one used in Theorem $\ref{hybrid}$. This identity is equivalent to, in terms of iterated integrals, for $X$ any sequence of $\left\lbrace  0, \pm 1 \right\rbrace $ or of $\left\lbrace  0, \pm \star \right\rbrace $:
$$\left\lbrace  \begin{array}{llll}
I^{\mathfrak{l}}(0;0^{k}, \star, X ;1) & = & I^{\mathfrak{l}}(0; X, \star, 0^{k}; 1)  & \text{ if } \prod_{i=0}^{p} \epsilon_{i}=1 \Leftrightarrow \eta_{0}=1\\
I^{\mathfrak{l}}(0;0^{k}, -\star, X ;1) & = & I^{\mathfrak{l}}(0; -X, -\star, 0^{k}; 1)  & \text{ if } \prod_{i=0}^{p} \epsilon_{i}=-1 \Leftrightarrow \eta_{0}=-1\\
\end{array} \right. $$
The first case is deduced from $\ref{eq:octagonlin}$ when looking at the coefficient of a word beginning and ending by $e_{-1}$ (or beginning and ending by $e_{1}$), whereas the second case is obtained from the coefficient of a word beginning by $e_{-1}$ and ending by $e_{1}$, or beginning by $e_{1}$ and ending by $e_{-1}$.

\item[$(ii)$] Let split into two cases, according to the sign of $\prod \epsilon_{i}$:
\begin{itemize}
\item[$\cdot$]  In $\ref{eq:octagonlin}$, when looking at the coefficient of a word beginning by $e_{1}$ and ending by $e_{0}$, only these three terms contribute:
$$ \Phi^{\mathfrak{l}}(e_{-1}, e_{0},e_{\infty})e_{0}- \Phi^{\mathfrak{l}}(e_{\infty}, e_{-1},e_{1})e_{0}- e_{1} \Phi^{\mathfrak{l}}(e_{1}, e_{\infty},e_{0}) .$$
Moreover, the coefficient of $e_{-1} e_{0}^{a_{0}} e_{\eta_{1}} \cdots  e_{\eta_{p}} e_{0}^{a_{p}+1}$ is, using the expressions above for $\Phi^{\mathfrak{l}}(\cdot)$:
\begin{multline}\nonumber
 (-1)^{p} I^{\mathfrak{l}}(0; -1, -X; 1)+ (-1)^{w+1} I^{\mathfrak{l}}(0; -\star, -X_{\star}; 1)+ (-1)^{w}I^{\mathfrak{l}}(0; X_{\star}, 0; 1)=0, \\
\text{where }\begin{array}{l}
X:= \omega_{0}^{a_{0}} \omega_{\eta_{1}} \cdots  \omega_{\eta_{p}} \omega_{0}^{a_{p}}\\
X_{\star}:= \omega_{0}^{a_{0}} \omega_{\eta_{1}\star} \cdots  \omega_{\eta_{p}\star} \omega_{0}^{a_{p}}
\end{array}.
\end{multline}
In terms of motivic Euler sums, it is, with $\prod \epsilon_{i}=1$:
$$ \zeta^{\mathfrak{l}} (n_{0},\cdots, -n_{p}) +(-1)^{w+p} \zeta^{\star\star\mathfrak{l}}(n_{0},\cdots, -n_{p})+(-1)^{w+p} \zeta^{\star\star\mathfrak{l}}_{n_{0}-1}(n_{1},\cdots,n_{p-1}, n_{p} \mlq + \mrq 1)=0.$$
Changing $n_{p}$ into $-n_{p}$, and applying Antipode $\shuffle$ to the last term, it gives, with now $\prod \epsilon_{i}=-1$: 
$$ \zeta^{\mathfrak{l}} (n_{0},\cdots, n_{p}) +(-1)^{w+p} \zeta^{\star\star\mathfrak{l}}(n_{0},\cdots, n_{p})+(-1)^{p} \zeta^{\star\star\mathfrak{l}}_{n_{p}}(n_{p-1},\cdots,n_{1},n_{0})=0.$$ 
\item[$\cdot$] Similarly, for the coefficient of a word beginning by $e_{-1}$ and ending by $e_{0}$, only these three terms contribute:
$$ \Phi^{\mathfrak{l}}(e_{-1}, e_{0},e_{\infty})e_{0}- \Phi^{\mathfrak{l}}(e_{\infty}, e_{-1},e_{1})e_{0}+ e_{-1} \Phi^{\mathfrak{l}}(e_{\infty}, e_{-1},e_{1}) .$$
Similarly than above, it leads to the identity, with $\prod \epsilon_{i}=-1$:
$$ \zeta^{\mathfrak{l}} (n_{0},\cdots, -n_{p}) +(-1)^{w+p} \zeta^{\star\star\mathfrak{l}}(n_{0},\cdots, -n_{p})+(-1)^{w+p} \zeta^{\star\star\mathfrak{l}}_{n_{0}-1}(n_{1},\cdots,n_{p-1},- (n_{p} \mlq + \mrq 1) )=0.$$
Changing $n_{p}$ into $-n_{p}$, and applying Antipode $\shuffle$ to the last term, it gives, with now $\prod \epsilon_{i}=1$: 
$$ \zeta^{\mathfrak{l}} (n_{0},\cdots, n_{p}) +(-1)^{w+p+1} \zeta^{\star\star\mathfrak{l}}(n_{0},\cdots, n_{p})+(-1)^{p} \zeta^{\star\star\mathfrak{l}}_{n_{p}}(n_{p-1},\cdots,n_{1},n_{0})=0.$$ 
\end{itemize}

\item[$(iii)$] When looking at the coefficient of a word beginning by $e_{0}$ and ending by $e_{0}$ in $\ref{eq:octagonlin}$, only these three terms contribute:
$$ -e_{0} \Phi^{\mathfrak{l}}(e_{-1}, e_{0},e_{\infty})+ \Phi^{\mathfrak{l}}(e_{-1}, e_{0},e_{\infty})e_{0}+ e_{0} \Phi^{\mathfrak{l}}(e_{\infty}, e_{-1},e_{1})- \Phi^{\mathfrak{l}}(e_{\infty}, e_{-1},e_{1})e_{0}.$$
If we identify the coefficient of the word $ e_{0}^{a_{0}+1} e_{-\eta_{1}} \cdots  e_{-\eta_{p}} e_{0}^{a_{p}+1}$, it leads straight to the identity $(iii)$.
\end{itemize}
\textsc{Remark}: Looking at the coefficient of words beginning by $e_{0}$ and ending by $e_{1}$ or $e_{-1}$ in $\ref{eq:octagonlin}$ would lead to the same identity than the second case.
\end{proof}

\section{Some unramified ES up to depth $5$.}

The proof relies upon the criterion $\ref{criterehonoraire}$, which enables us to construct infinite families of unramified Euler sums with parity patterns by iteration on the depth, up to depth $5$.\\
\\
\texttt{Notations:} The occurrences of the symbols $E$ or $O$ can denote arbitrary even or odd integers, whereas every repeated occurrence of symbols $E_{i}$  (respectively $O_{i}$) denotes the same positive even (resp. odd) integer. The bracket $\left\{\cdot, \ldots, \cdot \right\}$ means that every permutation of the listed elements is allowed.
\begin{theo}
The following motivic Euler sums are unramified, i.e. motivic MZV:\footnote{Beware, here, $\overline{O}$ and $\overline{n}$ must be different from $\overline{1}$, whereas $O$ and $n$ may be 1. There is no $\overline{1}$ allowed in these terms if not explicitly written.} \\
\\
\hspace*{-0.5cm} \begin{tabular}{| c | l | l | }
           \hline  
     &  \textsc{Even Weight} & \textsc{Odd Weight}  \\       \hline
\textsc{Depth } 1  &  \text{ All } &  \text{ All }\footnotemark[1] \\    
         \hline    
\textsc{Depth } 2   & $\zeta^{\mathfrak{m}}(\overline{O},\overline{O}),  \zeta^{\mathfrak{m}}(\overline{E},\overline{E})$ & \text{ All }   \\        
         \hline              
\multirow{2}{*}{ \textsc{Depth } 3 } & $\zeta^{\mathfrak{m}}(\left\{E,\overline{O},\overline{O}\right\}), \zeta^{\mathfrak{m}}(O,\overline{E},\overline{O}), \zeta^{\mathfrak{m}}(\overline{O},\overline{E}, O)$  &  $\zeta^{\mathfrak{m}}(\left\{\overline{E},\overline{E},O\right\}), \zeta^{\mathfrak{m}}(\overline{E},\overline{O},E), \zeta^{\mathfrak{m}}(E,\overline{O},\overline{E})$ \\  
 & $ \zeta^{\mathfrak{m}}(\overline{O_{1}}, \overline{E},\overline{O_{1}}), \zeta^{\mathfrak{m}}(O_{1}, \overline{E},O_{1}), \zeta^{\mathfrak{m}}(\overline{E_{1}}, \overline{E},\overline{E_{1}}) .$ & \\
         \hline              
\multirow{2}{*}{ \textsc{Depth } 4 }   &  $\zeta^{\mathfrak{m}}(E,\overline{O},\overline{O},E),\zeta^{\mathfrak{m}}(O,\overline{E},\overline{O},E), $ &   \\   
& $\zeta^{\mathfrak{m}}(O,\overline{E},\overline{E},O), \zeta^{\mathfrak{m}}(E,\overline{O},\overline{E},O)$ & \\
         \hline              
 \textsc{Depth } 5     &   & $\zeta^{\mathfrak{m}}(O_{1}, \overline{E_{1}},O_{1},\overline{E_{1}}, O_{1}).$  \\     
    \hline
  \end{tabular}
Similarly for these linear combinations, in depth $2$ or $3$:
  $$\zeta^{\mathfrak{m}}(n_{1},\overline{n_{2}}) +  \zeta^{\mathfrak{m}}(\overline{n_{2}},n_{1}) , \zeta^{\mathfrak{m}}(n_{1},\overline{n_{2}}) +  \zeta^{\mathfrak{m}}(\overline{n_{1}},n_{2}), \zeta^{\mathfrak{m}}(n_{1},\overline{n_{2}}) -  \zeta^{\mathfrak{m}}(n_{2}, \overline{n_{1}}) .$$
		$$(2^{n_{1}}-1) \zeta^{\mathfrak{m}}(n_{1},\overline{1}) +  (2^{n_{1}-1}-1) \zeta^{\mathfrak{m}}(\overline{1},n_{1}).$$
				$$ \zeta^{\mathfrak{m}}(n_{1},n_{2},\overline{n_{3}}) + (-1)^{n_{1}-1}  \zeta^{\mathfrak{m}}(\overline{n_{3}},n_{2},n_{1}) \text{ with } n_{2}+n_{3} \text{ odd }.$$
\end{theo}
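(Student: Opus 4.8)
The plan is to prove each assertion by invoking the unramified criterion of Corollary \ref{criterehonoraire}: a motivic Euler sum $\mathfrak{Z}\in\mathcal{H}^{2}_{n}$ lies in $\mathcal{H}^{1}$ exactly when $D_{1}(\mathfrak{Z})=0$ and $D_{2r+1}(\mathfrak{Z})\in\mathcal{L}^{1}_{2r+1}\otimes\mathcal{H}^{1}$ for every $r\geq 1$. Since each $D_{2r+1}$ strictly lowers both the weight and the depth, this criterion is naturally amenable to a double induction on the depth $p$ and the weight $w$. The base case $p=1$ is furnished by the distribution relation $(o)$ of Lemma \ref{lemmcoeff}, namely $\zeta^{\mathfrak{m}}(\overline{r})=(2^{1-r}-1)\zeta^{\mathfrak{m}}(r)$ together with $(\ref{eq:depth1})$, so that every depth-one Euler sum is unramified; the depth-two entries are then checked by hand from the explicit formula for $D_{2r+1}$ below.

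For the inductive step I would compute $D_{1}(\mathfrak{Z})$ and $D_{2r+1}(\mathfrak{Z})$ directly from Lemma \ref{drz} (equivalently, from the $\star/\sharp$-free specialisation of Lemma \ref{lemmt}). The vanishing $D_{1}(\mathfrak{Z})=0$ is the crucial point: a length-one cut contributes the obstruction class $\zeta^{\mathfrak{l}}(\overline{1})\notin\mathcal{L}^{1}$ only through an adjacency of the form $\epsilon,\,\mp\epsilon$ or $0,\pm1$ inside the iterated integral, and the alternating even/odd sign pattern of each listed family forces these contributions either not to occur or to cancel pairwise after Antipode $\shuffle$. Concretely, one uses the parity bookkeeping $w\equiv p-s\pmod 2$ (with $s$ the number of sign changes): for a subsequence of odd interior weight the two extremities are forced to be equal or opposite according to the parity of its depth, and in each family this is exactly what makes the coefficient of $\zeta^{\mathfrak{l}}(\overline{1})$ zero.

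It then remains to show, for $r\geq1$, that both tensor factors of $D_{2r+1}(\mathfrak{Z})$ are unramified. The depth-one left factors are automatically admissible, since $\zeta^{\mathfrak{l}}(\overline{2r+1})$ is a rational multiple of $\zeta^{\mathfrak{l}}(2r+1)\in\mathcal{L}^{1}$ and $\zeta^{\mathfrak{l}}(\pm 2r)=0$ by $(\ref{eq:depth1})$; any higher-depth left factor is reduced inside $\mathcal{L}$ using the antipodal relations of \S 3.2, the hybrid relation (Theorem \ref{hybrid}) and the depth-drop of Corollary \ref{hybridc}. The right factors are Euler sums of depth $p-1$, and the heart of the argument is to verify that, after the same reductions, their sign/parity profile is again one of the patterns tabulated in the theorem, so that the induction hypothesis applies. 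The statements about the explicit $\mathbb{Q}$-linear combinations are handled identically: for each combination one computes the coaction of the whole expression and checks that the (individually ramified) $\zeta^{\mathfrak{l}}(\overline{1})$-terms and any ramified right factors cancel between the summands, the prescribed coefficients (e.g.\ $2^{n_{1}}-1$ and $2^{n_{1}-1}-1$) being precisely those dictated by $(\ref{eq:depth1})$.

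The main obstacle is the combinatorial closure in the last paragraph: one must check, pattern by pattern and for every admissible cut length $2r+1$, that the right-hand Euler sums produced by $D_{2r+1}$ land back among the listed families once the relations of \S 3 are applied, while simultaneously no residual $\zeta^{\mathfrak{l}}(\overline{1})$ or ramified left factor survives. This is routine but lengthy in low depth, and becomes genuinely delicate in the depth-five case $\zeta^{\mathfrak{m}}(O_{1},\overline{E_{1}},O_{1},\overline{E_{1}},O_{1})$, where closure hinges on the coincidences among the repeated entries $O_{1}$ and $E_{1}$ and on the cancellations forced by the symmetry of the pattern; this is exactly why the family is required to carry repeated, rather than arbitrary, arguments at that depth.
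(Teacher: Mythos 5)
Your overall strategy is the same as the paper's (the criterion of Corollary \ref{criterehonoraire} combined with a recursion on depth and weight, using the explicit derivation formulas), but your inductive step has a genuine gap: it does not account for the cuts in which a $\overline{1}$ is created \emph{inside the right-hand tensor factor}. You localize the entire $\overline{1}$-obstruction in $D_{1}$ (length-one cuts), and then assert that for $r\geq 1$ ``the right factors are Euler sums of depth $p-1$'' whose sign/parity profile can be matched against the table. This is false: by the $T_{\epsilon,-\epsilon}$ terms of Lemma \ref{lemmt} (the terms labelled (\textsc{c}) in the paper's formula (\ref{eq:derhonorary})), every derivation $D_{2r+1}$ with $r\geq 1$ contains, for each subsequence $(n_{i},\ldots,n_{j})$, $i<j$, of total weight $2r+2$ and sign product $\prod_{k=i}^{j}\epsilon_{k}=-1$, a contribution
$$\left( \zeta^{\mathfrak{l}}_{\mid n_{i}\mid -1}(n_{i+1},\ldots,n_{j})-\zeta^{\mathfrak{l}}_{\mid n_{j}\mid -1}(n_{j-1},\ldots,n_{i})\right) \otimes \zeta^{\mathfrak{m}}(n_{1},\ldots,n_{i-1},\overline{1},n_{j+1},\ldots,n_{p}),$$
whose right factor has an inserted $\overline{1}$. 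A single Euler sum containing $\overline{1}$ is never unramified and never belongs to the tabulated families, so no induction hypothesis can ever apply to it; the only way to conclude is to show that its \emph{left} coefficient vanishes in $\mathcal{L}$.

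That vanishing is precisely what the paper isolates as its sufficient condition (\textsc{c}4): every subsequence of even weight with negative sign product must be \emph{symmetric}, for then the two left factors displayed above are literally equal and cancel. This mechanism is not a peripheral ``closure'' difficulty confined to depth five, as your last paragraph suggests: it is what forces the repeated entries already in the depth-three families $\zeta^{\mathfrak{m}}(O_{1},\overline{E},O_{1})$, $\zeta^{\mathfrak{m}}(\overline{O_{1}},\overline{E},\overline{O_{1}})$, $\zeta^{\mathfrak{m}}(\overline{E_{1}},\overline{E},\overline{E_{1}})$, and it is what rules out the patterns absent from the table, the surviving asymmetric patterns being exactly those in which no even-weight subsequence with negative sign product occurs (for those, only the (\textsc{a})-type cuts appear and your recursion does work, via the paper's conditions (\textsc{c}2)--(\textsc{c}3)). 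The same remark applies to the linear combinations: the coefficients there are tuned so that these (\textsc{c})-type terms cancel \emph{between} the summands, not merely the $D_{1}$ ones. A final minor point: your proposed reduction of higher-depth left factors via the antipodal, hybrid and depth-drop relations is unnecessary — the criterion of Corollary \ref{criterehonoraire} constrains only the right factors, so a left factor of arbitrary depth is harmless wherever the accompanying right factor is unramified by induction.
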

\texttt{Examples}:
These motivic Euler sums are motivic multiple zeta values: 
$$\zeta^{\mathfrak{m}}(\overline{25}, 14,\overline{17}),\zeta^{\mathfrak{m}}(17, \overline{14},17), \zeta^{\mathfrak{m}}(\overline{24}, \overline{14},\overline{24}), \zeta^{\mathfrak{m}}(6, \overline{23}, \overline{17}, 10) , \zeta^{\mathfrak{m}}(13, \overline{24}, 13,\overline{24}, 13).$$
\textsc{Remarks:}
\begin{itemize}
\item[$\cdot$] This result for motivic ES implies the analogue statement for ES.
\item[$\cdot$] Notice that for each honorary MZV above, the reverse sum is honorary too, which was not obvious a priori, since the condition $\textsc{c}1$ below is not symmetric. 
\end{itemize}
\begin{proof} The proof amounts to the straight-forward control that $D_{1}(\cdot)=0$ (immediate) and that all the elements appearing in the right side of $D_{2r+1}$ are unramified, by recursion on depth: here, these elements satisfy the sufficient criteria given below. Let only point out a few things, referring to the expression $(\ref{eq:derhonorary})$:
\begin{description}
\item[\texttt{Terms} $\textsc{c}$:] The symmetry condition $(\textsc{c}4)$, obviously true for these single elements above, get rid of these terms. For the few linear combinations of MES given, the cuts of type (\textsc{c}) get simplified together.
\item[\texttt{Terms} $\textsc{a,b}$:] Checking that the right sides are unramified is straightforward by depth-recursion hypothesis, since only the (previously proven) unramified elements of lower depth emerge. For example, the possible right sides (not canceled by a symmetric cut and up to reversal symmetry) are listed below, for some elements from depth 3. \\
 \hspace*{-1cm} \begin{tabular}{| c | l | l | }
           \hline  
     &  Terms \textsc{a0} & Terms  \textsc{a,b}  \\       \hline
  $(O,\overline{E},\overline{E}) $  &   & $(\overline{E},\overline{E})$ ,$(O,O)$  \\   
  $(\overline{E},O,\overline{E}) $  &  / &   $(\overline{E},\overline{E})$ \\   
 $(E,\overline{O},\overline{E}) $  &  / &  $(\overline{E},\overline{E}), (E,E)$ \\  
         \hline              
    $(E,\overline{O},\overline{O},E) $  &  $(\overline{O},E)$ & $(\overline{E},\overline{O},E), (E,O,E),(E,\overline{O},\overline{E}), (E,O),(O,E)$ \\   
   $(O,\overline{E},\overline{O},E) $ &  $(\overline{E},\overline{O},E),(\overline{O},E)$ & $(\overline{E},\overline{O},E),(O,E,E),(O,\overline{E},\overline{E}), (O,E)$ \\
  $(O,\overline{E},\overline{E},O) $&   $(\overline{E},\overline{E},O) ,(\overline{E},O)$ & $(\overline{E},\overline{E},O), (O,O,O), (O,\overline{E},\overline{E}), (O,E), (E,O)$ \\
    $(\overline{E}, O_{1},\overline{E},O_{1}) $& $(\overline{E},O)$ & $(\overline{E},\overline{E},O) , (\overline{E},O,\overline{E}) ,(\overline{E},\overline{O}), (E,O)$  \\
$(\overline{E_{1}}, \overline{E_{2}},\overline{E_{1}}, \overline{E_{2}}) $& / & $(O,\overline{E},\overline{E}) ,(\overline{E},O,\overline{E}) ,(\overline{E},\overline{E},O) ,(\overline{E},\overline{O}),(\overline{O},\overline{E})$ \\
         \hline    
    $(O_{1}, \overline{E_{1}},O_{1},\overline{E_{1}}, O_{1})$ & $(\overline{E_{1}},O_{1},\overline{E_{1}}, O_{1}),$ & $(\overline{E},O_{1},\overline{E}, O_{1}), (O_{1}, \overline{E},\overline{E}, O_{1}), (O_{1}, \overline{E},O_{1},\overline{E}),$ \\
    & $(O_{1},\overline{E}, O_{1})$ & $(\overline{O},\overline{E},O),(O,\overline{E},\overline{O}), (O,O)$\\
    \hline
  \end{tabular}
\end{description}
It refers to the expression of the derivations $D_{2r+1}$ (Lemma $\ref{drz}$):
\begin{multline}\label{eq:derhonorary}
 D_{2r+1} \left(\zeta^{\mathfrak{m}} \left(n_{1}, \ldots , n_{p} \right)\right) = \textsc{(a0) }  \delta_{2r+1 = \sum_{k=1}^{i} \mid n_{k} \mid} \zeta^{\mathfrak{l}} (n_{1}, \ldots , n_{i}) \otimes \zeta^{\mathfrak{m}} (n_{i+1},\cdots n_{p}) \\
\textsc{(a,b) }   \sum_{1\leq i < j \leq p \atop 2r+1=\sum_{k=i}^{j} \mid n_{k}\mid  - y } \left\lbrace  \begin{array}{l}
  -\delta_{2\leq y \leq \mid n_{j}\mid } \zeta_{\mid n_{j}\mid -y}^{\mathfrak{l}} (n_{j-1}, \ldots ,n_{i+1}, n_{i}) \\
  +\delta_{2\leq y \leq \mid n_{i}\mid} \zeta_{\mid n_{i}\mid -y}^{\mathfrak{l}} (n_{i+1},  \cdots ,n_{j-1}, n_{j})
 \end{array} \right.  \otimes \zeta^{\mathfrak{m}} (n_{1}, \ldots, n_{i-1},\prod_{k=i}^{j}\epsilon_{k} \cdot y,n_{j+1},\cdots n_{p}). \\
\textsc{(c)  } + \sum_{1\leq i < j \leq p\atop {2r+2=\sum_{k=i}^{j} \mid n_{k}\mid} } \delta_{ \prod_{k=i}^{j} \epsilon_{k} \neq 1}  \left\lbrace  \begin{array}{l} 
+  \zeta_{\mid n_{i}\mid -1}^{\mathfrak{l}} (n_{i+1},  \cdots ,n_{j-1}, n_{j}) \\
- \zeta_{\mid n_{j}\mid -1}^{\mathfrak{l}} (n_{j-1},  \cdots ,n_{i+1}, n_{i})
 \end{array} \right. \otimes \zeta^{\mathfrak{m}} (n_{1}, \ldots, n_{i-1},\overline{1},n_{j+1},\cdots n_{p}).
\end{multline}
\end{proof}

\paragraph{Sufficient condition. }\label{sufficientcondition}
Let $\mathfrak{Z}= \zeta^{\mathfrak{m}}(n_{1}, \ldots, n_{p})$ a motivic Euler sum. These four conditions are \textit{sufficient} for $\mathfrak{Z}$ to be unramified:
\begin{description}
	\item [\textsc{c}1]: No $\overline{1}$ in $\mathfrak{Z}$.
	\item [\textsc{c}2]: For each $(n_{1}, \ldots, n_{i})$ of odd weight, the MES $\zeta^{\mathfrak{m}}(n_{i+1}, \ldots, n_{p})$ is a MMZV.
	\item [\textsc{c}3]: If a cut removes an odd-weight part (such that there is no symmetric cut possible), the remaining MES (right side in terms \textsc{a,b}), is a MMZV.
	\item [\textsc{c}4]: Each sub-sequence $(n_{i}, \ldots, n_{j})$ of even weight such that $\prod_{k=i}^{j} \epsilon_{k} \neq 1$ is symmetric.
\end{description}
\begin{proof}
The condition $\textsc{c}1$ implies that $D_{1}(\mathfrak{Z})=0$; conditions $\textsc{c}2$, resp. $\textsc{c}3$ take care that the right side of terms \textsc{a0}, resp. \textsc{a,b} are unramified, while the condition $\textsc{c}4$ cancels the (disturbing) terms \textsc{c}: indeed, a single ES with an $\overline{1}$ can not be unramified.\\
Note that a MES $\mathfrak{Z}$ of depth $2$, weight $n$ is unramified if and only if $ \left\lbrace \begin{array}{l}
D_{1}(\mathfrak{Z})=0\\
D_{n-1}(\mathfrak{Z})=0
\end{array}\right.$. 
\end{proof}
\noindent
\texttt{Nota Bene:} This criterion is not \textit{necessary}: it does not cover the unramified $\mathbb{Q}$-linear combinations of motivic ES, such as those presented in section $4$, neither some isolated (\textit{symmetric enough}) examples where the unramified terms appearing in $D_{2r+1}$ could cancel between them. However, it embrace general families of single Euler sums which are unramified.\\
Nevertheless, 
\begin{framed}
\emph{If we \textit{comply with these conditions}, the \textit{only} general families of single MES obtained are the one listed in Theorem $D.1$.}
\end{framed}
\begin{proof}[Sketch of the proof]
Notice first that the condition \textsc{c}$4$ implies in particular that there are no consecutive sequences of the type (since it would create type $\textsc{c}$ terms):
$$\textsc{ Seq. not allowed :  }  O\overline{O}, \overline{O}O, \overline{E}E, E\overline{E}.$$
It implies, from depth $3$, that we can't have the sequences (otherwise one of the non allowed depth $2$ sequences above appear in $\textsc{a,b}$ terms):
$$\textsc{ Seq. not allowed : }  \overline{E}\overline{E}\overline{O}, \overline{E}\overline{E}\overline{E}O, \overline{E}\overline{E}O\overline{E}, E\overline{O}\overline{E},EE\overline{O}, \overline{O}EE, \overline{E}OE, \overline{E}\overline{O}\overline{E}, \overline{O}\overline{O}\overline{O}.$$
Going on in this recursive way, carefully, leads to the previous theorem.\\
\texttt{For instance,} let $\mathfrak{Z}$ a MES of even weight, with no $\overline{1}$, and let detail two cases:
\begin{description}
\item[\texttt{Depth} $3$:]  The right side of $D_{2 r+1}$ has odd weight and depth smaller than $2$, hence is always MMZV if there is no $\overline{1}$ by depth $2$ results. It boils down to the condition $\textsc{c}4$: $\mathfrak{Z}$ must be either symmetric (such as $O_{1}E O_{1}$ or $E_{1}EE_{1}$ with possibly one or three overlines) either have exactly two overlines.  Using the analysis above of the allowed sequences in depth $2$ and $3$ for condition $\textsc{c3,4}$ leads to the following:
$$(E,\overline{O},\overline{O}),(\overline{O},\overline{O},E), (O,\overline{E},\overline{O}), (\overline{O},\overline{E}, O), (\overline{O},E,\overline{O}), (\overline{O_{1}}, \overline{E},\overline{O_{1}}), (O_{1}, \overline{E},O_{1}), (\overline{E_{1}}, \overline{E},\overline{E_{1}}) .$$
\item[\texttt{Depth} $4$:] Let $\mathfrak{Z}=\zeta^{\mathfrak{m}}\left( n_{1}, \ldots, n_{4}\right) $, $\epsilon_{i}=\textrm{sign}(n_{i})$. To avoid terms of type $
\textsc{c}$ with a right side of depth $1$: if $\epsilon_{1}\epsilon_{2}\epsilon_{3}\neq 1$, either $n_{1}+n_{2}+n_{3}$ is odd, or $n_{1}=n_{3}$ and $\epsilon_{2}=-1$;  if $\epsilon_{2}\epsilon_{3}\epsilon_{4}\neq 1$, either $n_{2}+n_{3}+n_{4}$ is odd, or $n_{2}=n_{4}$ and $\epsilon_{3}=-1$. The following sequences are then not allowed:
$$ (\overline{E}, O,O,\overline{E}), (\overline{E}, \overline{O},\overline{O},\overline{E}), (\overline{E}, \overline{O},E,O), (\overline{E}, \overline{E},O,O), (O,O,\overline{E}, \overline{E}).$$
\end{description}
\end{proof}

\section{Proof of Theorem $\autoref{lzgt}$}

To prove the identity of conjecture $\autoref{lzg}$ of Theorem $\autoref{lzgt}$, we would need to prove, by recursion, that the coaction is equal on both side, and use the analytic version to conclude. We prove $(i)$ and $(ii)$ successively, in a same recursion on the weight:
\begin{itemize}
\item[$(i)$] Using the formulas of the coactions $D_{r}$ for these families (Lemma $A.2$ and $A.4$), we can gather terms in both sides according to the right side, which leads to three types:
$$ \begin{array}{llll}
(a) &  \zeta^{\star, \mathfrak{m}} (\cdots,\boldsymbol{2}^{a_{i}}, \alpha, \boldsymbol{2}^{\beta}, c_{j+1}, \cdots)  & \longleftrightarrow &   \zeta^{\sharp ,\mathfrak{m}}(B_{0} \cdots, B_{i}, \textcolor{magenta}{1^{\gamma}, B}, 1^{\gamma_{j+1}}, \ldots, B_{p}) \\
(b) &   \zeta^{\star, \mathfrak{m}} (\cdots,\boldsymbol{2}^{a_{i-1}}, c_{i}, \boldsymbol{2}^{\beta}, c_{j+1}, \cdots)    & \longleftrightarrow &   \zeta^{\sharp ,\mathfrak{m}}(B_{0} \cdots, B_{i-1}, 1^{\gamma_{i}}, \textcolor{green}{B}, 1^{\gamma_{j+1}}, \ldots, B_{p})   \\
(c) &   \zeta^{\star, \mathfrak{m}} (\cdots, c_{i}, \boldsymbol{2}^{\beta}, \alpha, \boldsymbol{2}^{a_{j}}, \cdots)    & \longleftrightarrow & \zeta^{\sharp, \mathfrak{m}}(B_{0} \cdots, 1^{\gamma_{i+1}},\textcolor{cyan}{ B, 1^{\gamma}}, B_{j+1}, \ldots, B_{p}) 
\end{array},$$
with $ \gamma=\alpha-3$ and $B=2\beta +3-\delta_{c_{j+1}}$, or $B=2\beta+3 - \delta_{c_{i}}- \delta_{c_{j+1}}$ for $(b)$.\\
The third case, antisymmetric of the first case, may be omitted below. By recursive hypothesis, these right sides are equal and it remains to compare the left sides associated:
\begin{enumerate}
\item On the one hand, by lemma $A.2$, the left side corresponding:
$$ \delta_{3\leq \alpha \leq c_{i+1}-1 \atop 0\leq \beta  a_{j}}  \zeta^{\star, \mathfrak{l}}_{c_{i+1}-\alpha}- (\boldsymbol{2}^{ a_{j}-\beta}, \ldots, \boldsymbol{2}^{a_{i+1}}).$$
On the other hand (Lemma $A.4$), the left side is:
$$-\delta_{2 \leq B \leq B_{j} \atop 0\leq\gamma\leq\gamma_{i+1}-1}\zeta^{\sharp,\mathfrak{l}}(B_{j}-B+1, 1^{\gamma_{j}}, \ldots, 1^{\gamma_{i+1}-\gamma-1}).$$
They are both equal, by $\ref{eq:toolid}$, where $c_{i+1}-\alpha+2$ corresponds to $c_{1} $ and is greater than $3$.\\
\item By lemma $A.2$, the left side corresponding for $\zeta^{\star}$:
$$\hspace*{-1.2cm}\begin{array}{llll}
-& \delta_{c_{i}>3} \zeta^{\star\star, \mathfrak{l}}_{2} (\boldsymbol{2}^{a_{i}}, \ldots, \boldsymbol{2}^{ a_{j}-\beta-1}) & + & \delta_{c_{j}>3} \zeta^{\star\star, \mathfrak{l}}_{2} (\boldsymbol{2}^{a_{j}}, \ldots, \boldsymbol{2}^{ a_{i}-\beta-1})  \\
 - & \delta_{c_{i}=1} \zeta^{\star\star, \mathfrak{l}} (\boldsymbol{2}^{a_{j}-\beta}, \ldots, \boldsymbol{2}^{ a_{i}}) & +&  \delta_{c_{j+1}=1} \zeta^{\star\star, \mathfrak{l}} (\boldsymbol{2}^{a_{i}-\beta}, \ldots, \boldsymbol{2}^{ a_{j}})\\
   + & \delta_{c_{i+1}=1 \atop \beta> a_{i}} \zeta^{\star\star, \mathfrak{l}}_{1} (\boldsymbol{2}^{a_{i}+a_{j}-\beta}, \ldots, \boldsymbol{2}^{ a_{i+1}})  & - & \delta_{c_{j}=1 \atop \beta> a_{j}} \zeta^{\star\star, \mathfrak{l}}_{1} (\boldsymbol{2}^{a_{i}+a_{j}-\beta}, \ldots, \boldsymbol{2}^{ a_{j-1}})\\
   - & \delta_{a_{j}< \beta \leq a_{i}+ a_{j}+1}  \zeta^{\star\star, \mathfrak{l}}_{c_{j}-2} (\boldsymbol{2}^{a_{j-1}}, \ldots, \boldsymbol{2}^{a_{i}+ a_{j}-\beta+1})  &  + & \delta_{a_{i}< \beta \leq a_{j}+a_{i}+1} \zeta^{\star\star, \mathfrak{l}}_{c_{i+1}-2} (\boldsymbol{2}^{a_{i+1}}, \ldots, \boldsymbol{2}^{ a_{i}+ a_{j} -\beta+1}) .
\end{array}$$
It should correspond to (using still lemma $A.4$), with $B_{k}=2a_{k}+3-\delta_{c_{k}}-\delta_{c_{k+1}}$, $\gamma_{k}=c_{k}-3+2\delta_{c_{k}}$ and $B=2\beta+3 - \delta_{c_{i}}- \delta_{c_{j+1}}$:
$$\left( \delta_{B_{i}< B}\zeta^{\sharp\sharp,\mathfrak{l}}_{B_{i}+B_{j}-B}(1^{\gamma_{j}}, \ldots, 1^{\gamma_{i+1}}) - \delta_{B_{j}< B}\zeta^{\sharp\sharp,\mathfrak{l}}_{B_{i}+B_{j}-B}(1^{\gamma_{i+1}}, \ldots, 1^{\gamma_{j}}) \right.  $$
$$\left. + \zeta^{\sharp\sharp,\mathfrak{l}}_{B_{i}-B}(1^{\gamma_{i+1}}, \ldots, B_{j}) - \zeta^{\sharp\sharp,\mathfrak{l}}_{B_{j}-B}(1^{\gamma_{j}}, \ldots, B_{i})\right) .$$
The first line has even depth, while the second line has odd depth, as noticed in Lemma $A.4$.
Let distinguish three cases, and assume $a_{i}<a_{j}$:\footnote{The case $a_{j}<a_{i}$ is anti-symmetric, hence analogue.}
\begin{itemize}

\item[$(a)$]  When $\beta< a_{i}<a_{j}$, we should have:
\begin{equation}\label{eq:ci}
 \zeta^{\sharp\sharp,\mathfrak{l}}_{B_{i}-B}(1^{\gamma_{i+1}}, \ldots, B_{j}) - \zeta^{\sharp\sharp,\mathfrak{l}}_{B_{j}-B}(1^{\gamma_{j}}, \ldots, B_{i})   \text{ equal to:} 
\end{equation} 
$$\begin{array}{llll}
-  \delta_{c_{i} >3} & \zeta^{\star\star, \mathfrak{l}}_{2} (\boldsymbol{2}^{a_{j}-\beta-1}, \ldots, \boldsymbol{2}^{ a_{i}})  & - \delta_{c_{i}=1}  &  \zeta^{\star\star, \mathfrak{l}} (\boldsymbol{2}^{a_{j}-\beta}, \ldots, \boldsymbol{2}^{ a_{i}}) \\
+\delta_{c_{j+1}>3} & \zeta^{\star\star, \mathfrak{l}}_{2} (\boldsymbol{2}^{a_{i}-\beta-1}, \ldots, \boldsymbol{2}^{ a_{j}})  & + \delta_{c_{j+1}=1}  & \zeta^{\star\star, \mathfrak{l}} (\boldsymbol{2}^{a_{i}-\beta}, \ldots, \boldsymbol{2}^{ a_{j}}) 
\end{array}$$
\begin{itemize}
\item[$\cdot$] Let first look at the case where $c_{i}>3, c_{j+1}>3$. Renumbering the indices, using $\textsc{Shift}$ for odd depth for the second line, it is equivalent to, with $\alpha=\beta +1$, $B_{p}=2a_{p}+3, B_{0}=2a_{0}+3$:
$$\begin{array}{llll}
&\zeta^{\star\star, \mathfrak{l}}_{2} (\boldsymbol{2}^{a_{0}-\alpha},c_{1},\cdots,c_{p},\boldsymbol{2}^{a_{p}}) & -& \zeta^{\star\star, \mathfrak{l}}_{2} (\boldsymbol{2}^{a_{0}},c_{1},\cdots,c_{p},\boldsymbol{2}^{a_{p}-\alpha}) \\
\equiv & \zeta^{\sharp\sharp,  \mathfrak{l}} _{B_{0}-B}(1^{\gamma_{1}},\cdots, 1^{\gamma_{p}},B_{p}) & - & \zeta^{\sharp\sharp,  \mathfrak{l}} _{B_{p}-B}(B_{0}, 1^{\gamma_{1}},\cdots, 1^{\gamma_{p}})\\
\equiv & \zeta^{\sharp\sharp,  \mathfrak{l}} _{B_{p}-1}(B_{0}-B+1,1^{\gamma_{1}},\cdots, 1^{\gamma_{p}}) & -& \zeta^{\sharp\sharp,\mathfrak{l}} _{B_{p}-B}(B_{0}, 1^{ \gamma_{1}},\cdots, 1^{\gamma_{p}})\\
\equiv & \zeta^{\sharp,  \mathfrak{l}} _{B_{p}-1}(B_{0}-B+1,1^{\gamma_{1}},\cdots, 1^{\gamma_{p}}) & -& \zeta^{\sharp,\mathfrak{l}} _{B_{p}-B}(B_{0}, 1^{ \gamma_{1}},\cdots, 1^{\gamma_{p}}).
\end{array}$$
This boils down to $(\ref{eq:conjid})$ applied to each $\zeta^{\star\star}_{2}$, since by \textsc{Shift} $(\ref{eq:shift})$ the two terms of the type $\zeta^{\star\star}_{1}$ get simplified.\\
\item[$\cdot$] Let now look at the case where $c_{i}=1, c_{j+1}>3$ \footnote{The case $c_{j+1}=1, c_{i}>3$ being analogue, by symmetry.}; hence $B_{i}=2a_{i}+2-\delta_{c_{i+1}}$, $B=2\beta+2$. In a first hand, we have to consider:
$$ \zeta^{\star\star, \mathfrak{l}}_{2} (\boldsymbol{2}^{a_{i}-\beta-1},c_{i+1},\cdots,c_{j},\boldsymbol{2}^{a_{j}}) - \zeta^{\star\star, \mathfrak{l}} (\boldsymbol{2}^{a_{j}-\beta},c_{j},\cdots,c_{i+1},\boldsymbol{2}^{a_{i}}).$$
By renumbering indices in $\ref{eq:ci}$, the correspondence boils down here to the following $\boldsymbol{\diamond} = \boldsymbol{\Join}$, where $B_{0}=2a_{0}+3-\delta_{c_{1}}$, $B_{i}=2a_{i}+3-\delta_{c_{i}}-\delta_{c_{i+1}}$, $B=2\beta +2$:
$$ (\boldsymbol{\diamond}) \quad  \zeta^{\star\star, \mathfrak{l}}_{2} (\boldsymbol{2}^{a_{0}-\beta},c_{1},\cdots,c_{p},\boldsymbol{2}^{a_{p}}) - \zeta^{\star\star, \mathfrak{l}} (\boldsymbol{2}^{a_{0}+1},c_{1},\cdots,c_{p},\boldsymbol{2}^{a_{p}-\beta})$$
$$(\boldsymbol{\Join}) \quad \zeta^{\sharp\sharp,\mathfrak{l}}_{B_{0}-B+1}(1^{\gamma_{1}}, \ldots,1^{\gamma_{p}}, B_{p}) - \zeta^{\sharp\sharp,\mathfrak{l}}_{B_{p}-B}(1^{\gamma_{p}}, \ldots,1^{\gamma_{1}}, B_{0}+1).$$
Turning in $(\boldsymbol{\diamond})$ the second term into a $\zeta^{\star, \mathfrak{l}}(2, \cdots)+ \zeta^{\star\star, \mathfrak{l}}_{2} (\cdots)$, and applying the identity $(\ref{eq:conjid})$ for both terms $\zeta^{\star\star, \mathfrak{l}}_{2}(\cdots)$ leads to:
$$\hspace*{-0.7cm}(\boldsymbol{\diamond}) \left\lbrace \begin{array}{lll}
 + \zeta^{\star\star, \mathfrak{l}}_{1} (\boldsymbol{2}^{a_{0}-\beta+1},c_{1}-1,\cdots,c_{p},\boldsymbol{2}^{a_{p}}) & - \zeta^{\star\star, \mathfrak{l}}_{1} (\boldsymbol{2}^{a_{0}+1},c_{1}-1,\cdots,c_{p},\boldsymbol{2}^{a_{p}-\beta})& \quad (\boldsymbol{\diamond_{1}})  \\
+ \zeta^{\sharp, \mathfrak{l}}_{B_{0}-B+1} (\boldsymbol{1}^{\gamma_{1}},\cdots,\boldsymbol{1}^{\gamma_{p}},B_{p}) &- \zeta^{\sharp, \mathfrak{l}}_{B_{0}-1} (\boldsymbol{1}^{\gamma_{1}},\cdots,\boldsymbol{1}^{\gamma_{p}},B_{p}-B+2)& \quad(\boldsymbol{\diamond_{2}})  \\
- \zeta^{\star, \mathfrak{l}} (\boldsymbol{2}^{a_{0}+1},c_{1},\cdots,c_{p},\boldsymbol{2}^{a_{p}-\beta}) &  & \quad (\boldsymbol{\diamond_{3}}) \\
\end{array} \right. $$
The first line, $(\boldsymbol{\diamond}_{1}) $ by $\textsc{Shift}$ is zero. We apply $\textsc{Antipode}$ $\ast$ on the terms of the second line, then turn each into a difference $\zeta^{\sharp\sharp}_{n}(m, \cdots)- \zeta^{\sharp\sharp}_{n+m}(\cdots)$; the terms of the type $\zeta^{\sharp\sharp}_{n+m}(\cdots)$, are identical and get simplified:

$$(\boldsymbol{\diamond_{2}}) \quad \begin{array}{lll}
\equiv & \zeta^{\sharp\sharp, \mathfrak{l}}_{B_{0}-B+1} (B_{p},\boldsymbol{1}^{\gamma_{p}},\cdots,\boldsymbol{1}^{\gamma_{1}}) & - \zeta^{\sharp\sharp, \mathfrak{l}}_{B_{0}-B+1+ B_{p}} (\boldsymbol{1}^{\gamma_{p}},\cdots,\boldsymbol{1}^{\gamma_{1}}) \\
& -\zeta^{\sharp\sharp, \mathfrak{l}}_{B_{0}-1} (B_{p}-B+2,\boldsymbol{1}^{\gamma_{p}},\cdots,\boldsymbol{1}^{\gamma_{1}}) & + \zeta^{\sharp\sharp, \mathfrak{l}}_{B_{0}-B+1+ B_{p}} (\boldsymbol{1}^{\gamma_{p}},\cdots,\boldsymbol{1}^{\gamma_{1}}) \\
\equiv & \zeta^{\sharp\sharp, \mathfrak{l}}_{B_{0}-B+1} (B_{p},\boldsymbol{1}^{\gamma_{p}},\cdots,\boldsymbol{1}^{\gamma_{1}}) & - \zeta^{\sharp\sharp, \mathfrak{l}}_{B_{0}-1} (B_{p}-B+2,\boldsymbol{1}^{\gamma_{p}},\cdots,\boldsymbol{1}^{\gamma_{1}}).
\end{array}$$
Furthermore, applying the recursion hypothesis (i), i.e. conjecture $\autoref{lzg}$ on $(\boldsymbol{\diamond}_{3})$, and turn it into a difference of $\zeta^{\sharp\sharp}$:
$$(\boldsymbol{\diamond_{3}})\quad \begin{array}{ll}
& - \zeta^{\star, \mathfrak{l}} (\boldsymbol{2}^{a_{0}+1},c_{1},\cdots,c_{p},\boldsymbol{2}^{a_{p}-\beta})\\
\equiv & - \zeta^{\sharp, \mathfrak{l}} (B_{p}-B+1,\boldsymbol{1}^{\gamma_{p}},\cdots,\boldsymbol{1}^{\gamma_{1}},B_{0})\\
\equiv & - \zeta^{\sharp\sharp, \mathfrak{l}} (B_{p}-B+1,\boldsymbol{1}^{\gamma_{p}},\cdots,\boldsymbol{1}^{\gamma_{1}},B_{0}) + \zeta^{\sharp\sharp, \mathfrak{l}}_{B_{p}-B+1} (\boldsymbol{1}^{\gamma_{p}},\cdots,\boldsymbol{1}^{\gamma_{1}},B_{0})
\end{array}$$
When adding $(\boldsymbol{\diamond_{2}})$ and $(\boldsymbol{\diamond_{3}})$ to get $(\boldsymbol{\diamond})$, the two last terms (odd depth) being simplified by $\textsc{Shift}$, it remains:
$$(\boldsymbol{\diamond}) \quad  \zeta^{\sharp\sharp, \mathfrak{l}}_{B_{0}-B+1} (B_{p},\boldsymbol{1}^{\gamma_{p}},\cdots,\boldsymbol{1}^{\gamma_{1}}) - \zeta^{\sharp\sharp, \mathfrak{l}} (B_{p}-B+1,\boldsymbol{1}^{\gamma_{p}},\cdots,\boldsymbol{1}^{\gamma_{1}},B_{0}). $$
This, applying \textsc{Antipode} $\ast$ to the first term, $\textsc{Cut}$ and $\textsc{Shift}$ to the second, corresponds to $(\boldsymbol{\Join})$.\\
\end{itemize}
\item[$(b)$] When $\beta >a_{j}>a_{i}$, we should have:
$$\begin{array}{lll}
& - \zeta^{\star\star, \mathfrak{l}}_{c_{j}-2} (\boldsymbol{2}^{a_{j-1}}, \ldots, \boldsymbol{2}^{a_{i}+ a_{j}-\beta+1}) & + \zeta^{\star\star, \mathfrak{l}}_{c_{i+1}-2} (\boldsymbol{2}^{a_{i+1}}, \ldots, \boldsymbol{2}^{ a_{i}+ a_{j} -\beta+1})\\
\equiv & + \zeta^{\sharp\sharp,\mathfrak{l}}_{B_{i}+B_{j}-B}(1^{\gamma_{j}}, \ldots, 1^{\gamma_{i+1}}) & -\zeta^{\sharp\sharp,\mathfrak{l}}_{B_{i}+B_{j}-B}(1^{\gamma_{i+1}}, \ldots, 1^{\gamma_{j}}).
\end{array}$$
Using \textsc{Shift} $(\ref{eq:shift})$ for the first line, and renumbering the indices, it is equivalent to, with $c_{1},c_{p} \geq 3$ and $a_{0}>0$:
\begin{equation} \label{eq:corresp3}
\zeta^{\star\star, \mathfrak{l}}_{1} (\boldsymbol{2}^{a_{0}},c_{1}-1,\cdots,c_{p})- \zeta^{\star\star, \mathfrak{l}}_{1} (\boldsymbol{2}^{a_{0}},c_{1},\cdots,c_{p}-1)
\end{equation}
$$ \equiv \zeta^{\sharp\sharp,  \mathfrak{l}} _{B_{0}+2}(1^{\gamma_{1}},\cdots, 1^{\gamma_{p}})-\zeta^{\sharp\sharp,  \mathfrak{l}} _{B_{0}+2}(1^{\gamma_{p}},\cdots, 1^{\gamma_{1}}) \equiv \zeta^{\sharp,  \mathfrak{l}} _{B_{0}+2}(1^{\gamma_{1}}, \ldots, 1^{\gamma_{p}}).$$
The last equality comes from Corollary $\ref{esharprel}$, since depth is even. By $(\ref{eq:corresp})$ applied on each term of the first line
$$\zeta^{\star\star, \mathfrak{l}}_{1} (\boldsymbol{2}^{a_{0}},c_{1}-1,\cdots,c_{p})- \zeta^{\star\star, \mathfrak{l}}_{1} (\boldsymbol{2}^{a_{0}},c_{1},\cdots,c_{p}-1)$$
$$\hspace*{-1.5cm} \equiv \zeta^{\star\star, \mathfrak{l}}_{2} (\boldsymbol{2}^{a_{0}-1},c_{1},\cdots,c_{p})+ \zeta^{\sharp  \mathfrak{l}} _{2a_{0}}(3,1^{\gamma_{p}},\cdots, 1^{\gamma_{1}}) - \zeta^{\star\star, \mathfrak{l}}_{2} (c_{p},\cdots,c_{1},\boldsymbol{2}^{a_{0}-1}) - \zeta^{\sharp\sharp,  \mathfrak{l}}_{2}(2a_{0}+1,1^{\gamma_{p}},\cdots, 1^{\gamma_{1}}).$$
By Antipode $\shuffle$, the $\zeta^{\star\star}$ get simplified, and by the definition of $\zeta^{\sharp\sharp}$, the previous equality is equal to: 
$$\equiv- \zeta^{\sharp  \mathfrak{l}} _{2a_{0}+3}(1^{\gamma_{p}},\cdots, 1^{\gamma_{1}}) + \zeta^{\sharp\sharp  \mathfrak{l}} _{2a_{0}}(3,1^{\gamma_{p}},\cdots, 1^{\gamma_{1}}) + \zeta^{\sharp\sharp  \mathfrak{l}} _{2a_{0}+4}(1^{\gamma_{p}-1},\cdots, 1^{\gamma_{1}})  $$
$$ - \zeta^{\sharp\sharp  \mathfrak{l}} _{2}(2a_{0}+1, 1^{\gamma_{p}},\cdots, 1^{\gamma_{1}}) + \zeta^{\sharp\sharp  \mathfrak{l}} _{2a_{0}+3}(1^{\gamma_{r}},\cdots, 1^{\gamma_{1}}).$$
Then, by \textsc{Shift} $(\ref{eq:shift})$, the second and fourth term get simplified while the third and fifth term get simplified by \textsc{Cut} $(\ref{eq:cut})$. It remains:
$$- \zeta^{\sharp , \mathfrak{l}} _{2a_{0}+3}(1^{\gamma_{p}},\cdots, 1^{\gamma_{1}}), \quad \text{ which leads straight to } \ref{eq:corresp3}.$$
\item[$(c)$] When $a_{i}< \beta <a_{j}$, we should have:
\begin{multline}\nonumber
 - \zeta^{\star\star, \mathfrak{l}}_{2} (\boldsymbol{2}^{a_{i}}, \ldots, \boldsymbol{2}^{a_{j}-\beta-1})  +  \zeta^{\star\star, \mathfrak{l}}_{c_{i+1}-2} (\boldsymbol{2}^{a_{i+1}}, \ldots, \boldsymbol{2}^{a_{i}+ a_{j} -\beta+1})  \\
\equiv \zeta^{\sharp\sharp,\mathfrak{l}}_{B_{i}+B_{j}-B}(1^{\gamma_{j}}, \ldots, 1^{\gamma_{i+1}})-\zeta^{\sharp\sharp,\mathfrak{l}}_{B_{j}-B}(1^{\gamma_{j}}, \ldots, B_{i}). 
\end{multline}
Using resp. \textsc{Antipode} \textsc{Shift} $(\ref{eq:shift})$ for the first line, and re-ordering the indices, it is equivalent to, with $c_{1}\geq 3$, $B_{0}=2a_{0}+1-\delta c_{1}$ here:
\begin{equation} \label{eq:corresp} \zeta^{\star\star, \mathfrak{l}}_{2} (\boldsymbol{2}^{a_{0}-1},c_{1},\cdots,c_{p},\boldsymbol{2}^{a_{p}})- \zeta^{\star\star, \mathfrak{l}}_{1} (\boldsymbol{2}^{a_{0}},c_{1}-1,\cdots,c_{p},\boldsymbol{2}^{a_{p}})  
\end{equation}
$$\equiv \zeta^{\sharp\sharp,  \mathfrak{l}} _{B_{p}-1}(B_{0}, 1^{\gamma_{1}},\cdots, 1^{\gamma_{p}}) - \zeta^{\sharp\sharp,  \mathfrak{l}} _{B_{0}+B_{p}-1}(1^{ \gamma_{p}},\cdots, 1^{\gamma_{1}}) \equiv  \zeta^{\sharp,  \mathfrak{l}} _{B_{0}-1}(1^{\gamma_{1}},\cdots, 1^{\gamma_{p}},B_{p}).$$
This matches with the identity $\ref{eq:conjid}$; the last equality coming from $\textsc{Shift}$ since depth is odd.
\end{itemize}
\item Antisymmetric of the first case.\\
\end{enumerate}

\item[$(ii)$] Let us denote the sequences $\textbf{X}=\boldsymbol{2}^{a_{1}}, \ldots , \boldsymbol{2}^{a_{p}}$ and $\textbf{Y}= \boldsymbol{1}^{\gamma_{1}-1}, B_{1},\cdots,  \boldsymbol{1}^{\gamma_{p}} $.\\
We want to prove that:
\begin{equation} \label{eq:1234567}
\zeta^{\sharp,\mathfrak{l}} (1,\textbf{Y},B_{p})\equiv -\zeta^{\star,\mathfrak{l}}_{1} (c_{1}-1,\textbf{X})
\end{equation}
Relations used are mostly these stated in Section 3. Using the definition of $\zeta^{\star\star}$:
\begin{equation}\label{eq:12345}
\begin{array}{ll}
-\zeta^{\star,\mathfrak{l}}_{1} (c_{1}-1,\textbf{X}) & \equiv -\zeta^{\star\star,\mathfrak{l}}_{1} (c_{1}-1,\textbf{X})+ \zeta^{\star\star,\mathfrak{l}}_{c_{1}} (\textbf{X})\\
& \equiv - \zeta^{\star\star,\mathfrak{l}} (1,c_{1}-1,\textbf{X})+ \zeta^{\star,\mathfrak{l}} (1,c_{1}-1,\textbf{X})+ \zeta^{\star\star,\mathfrak{l}}(c_{1},\textbf{X})- \zeta^{\star,\mathfrak{l}} (c_{1},\textbf{X}) \\
& \equiv \zeta^{\star,\mathfrak{l}} (1,c_{1}-1,\textbf{X})- \zeta^{\star,\mathfrak{l}} (c_{1},\textbf{X})- \zeta^{\star,\mathfrak{l}}(c_{1}-1,\textbf{X},1).
\end{array}
\end{equation}
There, the first and third term in the second line, after applying \textsc{Shift}, have given the last $\zeta^{\star}$ in the last line.\\
Using then Conjecture $\ref{lzg}$, in terms of MMZV$^{\sharp}$, then MMZV$^{\sharp\sharp}$, it gives:
\begin{multline}
\zeta^{\sharp,\mathfrak{l}} (2,\textbf{Y},B_{p}-1)+ \zeta^{\sharp,\mathfrak{l}}  (1,1,\textbf{Y},B_{p}-1)+ \zeta^{\sharp,\mathfrak{l}} (1,\textbf{Y},B_{p}-1,1)\\
 \equiv \zeta^{\sharp\sharp,\mathfrak{l}} (2,\textbf{Y},B_{p}-1)- \zeta^{\sharp\sharp,\mathfrak{l}} _{2}(\textbf{Y},B_{p}-1)+ \zeta^{\sharp\sharp,\mathfrak{l}}  (1,1,\textbf{Y},B_{p}-1)\\
-\zeta^{\sharp\sharp,\mathfrak{l}}_{1} (1,\textbf{Y},B_{p}-1)+ \zeta^{\sharp\sharp,\mathfrak{l}} (1,\textbf{Y},B_{p}-1,1)-\zeta^{\sharp\sharp,\mathfrak{l}}_{1} (\textbf{Y},B_{p}-1,1)   
\end{multline}
First term (odd depth)\footnote{Since weight is odd, we know also depth parity of these terms.} is simplified with the last, by $\textsc{Schift}$. Fifth term (even depth) get simplified by \textsc{Cut} with the fourth term. Hence it remains two terms of even depth:
$$\equiv - \zeta^{\sharp\sharp,\mathfrak{l}} _{2}(\textbf{Y},B_{p}-1)+ \zeta^{\sharp\sharp,\mathfrak{l}}  (1,1,\textbf{Y},B_{p}-1) \equiv - \zeta^{\sharp\sharp,\mathfrak{l}} _{1}(\textbf{Y},B_{p})+ \zeta^{\sharp\sharp,\mathfrak{l}}_{B_{p}-1}  (1,1,\textbf{Y}) , $$
where \textsc{Minus} resp. \textsc{Cut} have been applied. This matches with $(\ref{eq:1234567})$ since, by $\textsc{Shift}:$
$$\equiv - \zeta^{\sharp\sharp,\mathfrak{l}} _{1}(\textbf{Y},B_{p})+ \zeta^{\sharp\sharp,\mathfrak{l}} (1,\textbf{Y},B_{p})\equiv \zeta^{\sharp,\mathfrak{l}}(1,\textbf{Y},B_{p}). $$
The case $c_{1}=3$ slightly differs since $(\ref{eq:12345})$ gives, by recursion hypothesis I.($\autoref{lzg}$):
$$ -\zeta^{\star,\mathfrak{l}}_{1} (2,\textbf{X})\equiv \zeta^{\sharp,\mathfrak{l}} (B_{1}+1,\textbf{Y}',B_{p}-1)+ \zeta^{\sharp,\mathfrak{l}}  (1,B_{1},\textbf{Y}',B_{p}-1)+ \zeta^{\sharp,\mathfrak{l}} (B_{1},\textbf{Y}',B_{p}-1,1),$$
where $\textbf{Y}'= \boldsymbol{1}^{\gamma_{2}},\cdots,  \boldsymbol{1}^{\gamma_{p}} $, odd depth.
Turning into MES$^{\sharp\sharp}$, and using identities of $\S 3.$ in the same way than above, leads to the result. Indeed, from:
$$\equiv \zeta^{\sharp\sharp,\mathfrak{l}} (B_{1}+1,\textbf{Y}',B_{p}-1)+ \zeta^{\sharp\sharp,\mathfrak{l}}  (1,B_{1},\textbf{Y}',B_{p}-1)+ \zeta^{\sharp\sharp,\mathfrak{l}} (B_{1},\textbf{Y}',B_{p}-1,1)$$
$$-\zeta^{\sharp\sharp,\mathfrak{l}}_{B_{1}+1} (\textbf{Y}',B_{p}-1)- \zeta^{\sharp\sharp,\mathfrak{l}}_{1}  (B_{1},\textbf{Y}',B_{p}-1)-\zeta^{\sharp\sharp,\mathfrak{l}}_{B_{1}} (\textbf{Y}',B_{p}-1,1)$$
First and last terms get simplified via $\textsc{Shift}$, while third and fifth term get simplified by $\textsc{Cut}$; besides, we apply \textsc{minus}  for second term, and \textsc{minus} for the fourth term, which are both of even depth. This leads to $\ref{eq:toolid}$, using again $\textsc{Shift}$ for the first term:
$$\begin{array}{l}
\equiv\zeta^{\sharp\sharp,\mathfrak{l}}_{B_{p}-1}  (1,B_{1},\textbf{Y}')-\zeta^{\sharp\sharp,\mathfrak{l}}_{B_{1}} (\textbf{Y}',B_{p}) \\
\equiv \zeta^{\sharp\sharp,\mathfrak{l}} (B_{1},\textbf{Y}',B_{p})-\zeta^{\sharp\sharp,\mathfrak{l}}_{B_{1}} (\textbf{Y}',B_{p}) \\
\equiv \zeta^{\sharp,\mathfrak{l}} (B_{1},\textbf{Y}',B_{p}).
\end{array}$$
\end{itemize}
This ends the proof.

\section{Missing coefficients}

In Lemma $\autoref{lemmcoeff} (v)$, the coefficients $D_{a,b}$ appearing are the only one which are not conjectured. Albeit these values are not required for the proof of Theorem $5.1$, let present here a table of values in small weights for them. Let examine the coefficient corresponding to $\zeta^{\star}(\boldsymbol{2}^{n})$ instead of $\zeta^{\star}(2)^{n}$, which is (by $(i)$ in Lemma $\autoref{lemmcoeff}$), with $n=a+b+1$:
\begin{equation}
\widetilde{D}^{a,b}\mathrel{\mathop:}= \frac{(2n)!}{6^{n}\mid B_{2n}\mid (2^{2n}-2)} D^{a,b}  \quad \text{  and   }  \quad \widetilde{D}_{n}\mathrel{\mathop:}= \frac{(2n)!}{6^{n}\mid B_{2n}\mid (2^{2n}-2)}D_{n} .
\end{equation}
We have an expression $(\ref{eq:coeffds})$ for $D_{n}$, albeit not very elegant, which would give:
\begin{equation} \label{eq:coeffdstilde}
\widetilde{D}_{n}= \frac{2^{2n} (2n)!}{(2^{2n}-2)\mid B_{2n}\mid }\sum_{\sum m_{i} s_{i}=n \atop m_{i}\neq m_{j}} \prod_{i=1}^{k} \left( \frac{1}{s_{i}!} \left( \frac{\mid B_{2m_{i}}\mid (2^{2m_{i}-1}-1) } {2m_{i} (2m_{i})!}\right)^{s_{i}}  \right).
\end{equation}
Here is a table of values for $\widetilde{D}_{n}$ and $\widetilde{D}^{k,n-k-1}$ in small weights:\\
\\
  \begin{tabular}{| l || c | c |c | c |}
   \hline
     $\cdot \quad \quad \diagdown  n$ & $2$ &  $3$ & $4$ &  $5$   \\ \hline
     $\widetilde{D_{n}}$ & $\frac{19}{2^{3}-1}$ & $\frac{275}{2^{5}-1}$& $\frac{11813}{3(2^{7}-1)}$ & $\frac{783}{7}$\\
     & & & & \\ \hline
    $\widetilde{D}_{k,n-1-k}$ &$\frac{-12}{7}$ & $\frac{-84}{31}, \frac{160}{31}$& $\frac{1064}{127}, \frac{-1680}{127}, \frac{-9584}{381}$ & $\frac{189624}{2555}$,$\frac{-137104}{2555}$,$\frac{-49488}{511}$,$\frac{-17664}{511}$ \\ 
     & & & &\\
    \hline
     \hline
     $\cdot \quad \quad \diagdown  n$ & $6$ &  $7$ & $8$ &  $9$  \\ \hline
     $\widetilde{D_{n}}  \quad \quad $ & $\frac{581444793}{691(2^{11}-1)}$& $\frac{263101079}{21(2^{13}-1)}$& $\frac{6807311830555}{3617(2^{15}-1)}$& $\frac{124889801445461}{43867(2^{17}-1)}$\\ 
      & & & & \\
     \hline
  \end{tabular} \\
  \\
  \\
The denominators of  $\widetilde{D_{n}},\widetilde{D}_{k,n-1-k}$ can be written as $(2^{2n-1}-1)$ times the numerator of the Bernoulli number $B_{2n}$. No formula has been found yet for their numerators, that should involve binomial coefficients. These coefficients are related since, by shuffle:
$$\begin{array}{lll}
 & \zeta^{\star\star, \mathfrak{m}}_{2} (\boldsymbol{2}^{n})+ \sum_{k=0}^{n-1}\zeta^{\star\star, \mathfrak{m}}_{1} (\boldsymbol{2}^{k},3,\boldsymbol{2}^{n-k-1}) & =0\\
&  \zeta^{\star\star, \mathfrak{m}}(\boldsymbol{2}^{n+1})-\zeta^{\star, \mathfrak{m}}(\boldsymbol{2}^{n+1}) \sum_{k=0}^{n-1}\zeta^{\star\star, \mathfrak{m}}_{1} (\boldsymbol{2}^{k},3,\boldsymbol{2}^{n-k-1}) & =0.
\end{array}$$
Identifying the coefficients of $\zeta^{\star}(\boldsymbol{2}^{n})$ in formulas $(iii),(v)$ in Lemma $\autoref{lemmcoeff}$ leads to:
\begin{equation}\label{eq:coeffdrel}
1-\widetilde{D_{n}}= \sum_{k=0}^{n-1} \widetilde{D}_{k,n-1-k}.
\end{equation}

\end{appendices}

\end{document}